\newtheorem{theorem}{Theorem}[section]
\newtheorem{proposition}{Proposition}[section]
\newtheorem{lemma}[theorem]{Lemma}
\newtheorem{corollary}[theorem]{Corollary}
\theoremstyle{remark}
\newtheorem{remark}{\emph{ Remark}}
\newtheorem{definition}[theorem]{Definition}
\newcommand{\Prob}[1]{\mathbb{P}\left[#1\right]}
\newcommand{\Exp}[1]{\mathbb{E}\left[#1\right]}
\newcommand{\Var}[1]{\mathbb{V}\left[#1\right]}
\newcommand{\mdd}{\mathrm{d}}
\newcommand{\ee}{\mathrm{e}}
\newcommand{\ii}{\mathrm{i}}
\newcommand{\cadlag}{\emph{c\`adl\`ag} }
\newcommand{\tF}{\widetilde{F}}
\newcommand{\tf}{\widetilde{f}}
\newcommand{\tg}{\widetilde{g}}
\newcommand{\tT}{\widetilde{\Theta}}
\newcommand{\tG}{\widetilde{G}}
\newcommand{\tGamma}{\widetilde{\Gamma}}
\newcommand{\mId}{\mathcal{I}_\delta}
\newcommand{\mIz}{\mathcal{I}_0}
\newcommand{\mIn}{\mathcal{I}_{\delta_n}}
\begin{document}

\begin{frontmatter}
\title{Physical blowups via buffered time change in a mean-field neural network}
\runtitle{Buffered blowup solutions}

\begin{aug}

\author[A,B]{\fnms{Nikolaos} \snm{Papadopoulos}\ead[label=e1]{nikospap@austin.utexas.edu}}
\and
\author[A,B]{\fnms{Thibaud} \snm{Taillefumier}\ead[label=e2]{ttaillef@austin.utexas.edu}}

\address[A]{Department of Mathematics,
University of Texas, Austin,
}

\address[B]{Department of Neuroscience,
University of Texas, Austin,
}
\end{aug}

\begin{abstract}
Idealized networks of integrate-and-fire neurons with impulse-like interactions obey McKean-Vlasov diffusion equations in the mean-field limit. 
These equations are prone to blowups: for a strong enough interaction coupling, the mean-field rate of interaction diverges in finite time with 
a finite fraction of neurons spiking simultaneously, thereby marking a macroscopic synchronous event.
Characterizing these blowup singularities analytically is the key to understanding the emergence and persistence of spiking synchrony in mean-field neural models. 
Such a treatment is possible via time change techniques for a Poissonian variation of the classically considered integrate-and-fire dynamics.
However, just as for shock solutions for  nonlinear conservation laws, there are several admissible blowup solutions to the corresponding McKean-Vlasov equations.
Because of this ambiguity, it is unclear which notion of blowup solutions shall be adopted. 
Here, we unambiguously define physical blowup dynamics as solutions to a fixed-point problem bearing on the time change associated to the McKean-Vlasov equation.
To justify the physicality of this formulation, we introduce a buffering  mechanism that regularizes blowup dynamics, while satisfying the same conservation principles
as the finite-dimensional particle-system dynamics.
We then show that  physical blowup dynamics are recovered from these regularized solutions in the limit of vanishing buffering.
Our approach also shows that these physical solutions are unique, globally defined, and avoid the so-called eternal blowup phenomenon, as long 
as the neurons exhibit nonzero, distributed refractory periods, a reasonable modeling assumption. 
\end{abstract}

\begin{keyword}[class=MSC2020]
\kwd[Primary ]{60G99}
\kwd{60K15, 35Q92, 35D30, 35K67, 45H99}
\end{keyword}

\begin{keyword}
\kwd{mean-field neural network models; blowups in parabolic partial differential equation; regularization by time change;  singular interactions; delayed integral equations; inhomogeneous renewal processes}
\kwd{second keyword}
\end{keyword}

\end{frontmatter}


\section{Introduction}


\subsection{Background} 

This work considers neural network models for which the emergence of synchrony can be studied analytically in the idealized, mean-field limit of infinite-size networks.
By synchrony, we refer to the possibility that a finite fraction of the network's neurons simultaneously spikes.
Dynamics exhibiting such synchrony can serve as models to study the maintenance of precise temporal information in neural networks \cite{Panzeri:2010,Kasabov:2010,Brette:2015}.
The maintenance of precise temporal information in the face of neural noise remains a debated issue from an experimental and computational perspective.
Mathematical approaches to understand synchrony involve making simplifying assumptions about the individual neuronal processing as well as about the network supporting their interactions.

Integrate-and-fire neurons \cite{Lapicque:1907,Knight:1972} constitute perhaps the simplest class of models susceptible to displaying synchrony \cite{Taillefumier:2013uq}.
In integrate-and-fire models, the internal state of a neuron $i$ is modeled as a continuous-time diffusive process $X_{i,t}$, whose dynamics stochastically integrates past neural interactions.
Spiking times are then defined as first-passage times of this diffusive process to a spiking boundary $L$. 
Upon spiking, the process $X_{i,t}$ resets to a base real value $\Lambda$.
In other words, $X_{i,t}=\Lambda$ whenever neuron $i$ spikes at time $t$.
There is no loss of generality in assuming that  $L=0$ and $\Lambda>0$, so that $X_{i,t}$ has nonnegative state space.
We consider integrate-and-fire models for which $X_{i,t}$ follows a Wiener diffusive dynamics with negative drift $-\nu<0$ \cite{Carrillo:2015}.
Drifted Wiener processes are the simplest diffusive dynamics for which spikes occur in finite time with probability one, even in the absence of interactions.

A key feature of integrate-and-fire models is that they allow for the occurrence of synchronous spiking events.
This is most conveniently seen by considering a finite neural network with instantaneous, homogeneous, impulse-like excitatory interactions.
For such interactions, if neuron $i$ spikes at time $t$, downstream neurons $j \neq i$ instantaneously update their internal states according to $X_{j,t} = X_{j,t^-} - w$, where $ w>0$ is the size of the impulse-like interaction.
Thus, the spiking of neuron $i$ in $t$ causes the states of all other neurons to move toward the zero spiking boundary, leading to two possible outcomes for downstream neuron $j$: 
either $X_{j,t}>w$ and the update merely hastens the next spiking time of neuron $j$, or  $X_{j,t} \leq w$ and the interaction causes neuron $j$ to spike in synchrony with $i$ \cite{Taillefumier:2012uq}.
The latter synchronous spiking events occur with finite probability, as we generically have $\Prob{X_{j,t^-} \in (0,w]}>0$ for regular diffusion processes.
In turn, the synchronous spiking of downstream neuron $j$ can trigger additional synchronous spiking events in the network, via branching processes referred to as spiking avalanches.
Spiking avalanches are well-defined under the modeling assumptions that neurons transiently enter a post-spiking refractory state and always exit this refractory state by reseting to $\Lambda$  \cite{Taillefumier:2014fk}.
Under such assumptions, neurons can spike at most once within an avalanche and synchronously spiking neurons can be distinguished according to their generation number \cite{Delarue:2015}.

Tellingly, the finite probability to observe a spiking avalanche is maintained in certain simplifying limits, such as the thermodynamic mean-field limit \cite{Amari:1975aa, Faugeras:2009,Touboul:2014,Renart:2004}.
For a homogeneous, excitatory, integrate-and-fire network, the thermodynamic mean-field limit considers a network of $N$ exchangeable neurons in the infinite-size limit, $N\to \infty$, with vanishingly small impulse size $w_N = \lambda/N \to 0$, where $\lambda$ is a parameter quantifying the interaction coupling.
In this mean-field limit, individual neurons only interact with one another via a deterministic population-averaged firing rate $f(t)$ \cite{Caceres:2011,Carrillo:2013,Carrillo:2015}.
Specifically, the dynamics of a representative process $X_t$ obeys a nonlinear partial differential equation (PDE) of the McKean-Vlasov type
\begin{eqnarray}\label{eq:classicalMV}
\partial_t p = \big(\nu + \lambda f(t) \big)  \partial_x p + \partial_x^2 p/2  + f(t^-) \delta_\Lambda   \quad  \mathrm{with} \quad p(t,x) \, \mdd x =\Prob{X_t \in \mdd x} \, ,
\end{eqnarray}
where the effective drift features the firing rate $f(t)$.
The nonlinearity of the above equation stems from the conservation of probability, which equates $f(t)$ with a boundary flux of probability:
\begin{eqnarray}\label{eq:classicalFlux}
 f(t) = \partial_x p(t,0)/2 \, .
\end{eqnarray}
In the following, we refer to \eqref{eq:classicalMV} and \eqref{eq:classicalFlux} as the classical McKean-Vlasov (cMV) equations and to the underlying dynamics supporting these equations as the classical mean-field (cMF) dynamics.
Within the setting of cMF dynamics, a blowup occurs at time $T_0$ if the spiking rate diverges when $t \to T_0^-$ and a synchronous event happens if a  fraction of the neurons $0<J_0 \leq 1$ synchronously spikes in $T_0$.


\subsection{Motivation} 

Following on seminal computational work in \cite{Brunel:1999aa,Brunel:2000aa}, the cMF dynamics was first investigated in a PDE setting by C\'aceres {\it et al.} \cite{Caceres:2011}, who established the occurrence of blowups.
The existence and regularity of solutions to \eqref{eq:classicalMV} and \eqref{eq:classicalFlux} have been considered by several authors \cite{Delarue:2015,Delarue:2015b,Hambly:2019,Nadtochiy:2019,Nadtochiy:2020}.
These authors combined results from the theory of interacting-particle systems \cite{Liggett:1985,Sznitman:1989} and of the convergence of probability measures \cite{Billingsley:2013} to establish criteria for the existence of global solutions \cite{Delarue:2015,Delarue:2015b} and to classify the type of singularities displayed by these solutions \cite{Hambly:2019,Nadtochiy:2019,Nadtochiy:2020}.
However, the analytical characterization of blowup singularities have proven rather challenging.
Here, we consider a modified interacting-particle system with Poisson-like attributes that is also prone to blowup, the so-called delayed Poissonian mean-field (dPMF) model.
By contrast with \cite{Delarue:2015b} and in line with \cite{Caceres:2011,Carrillo:2013}, we only conjecture that the propagation of chaos holds to 
motivate the form of the corresponding mean-field PDE problem.
This conjecture is numerically supported in the weak interaction regime  $\lambda < \Lambda$ and for the strong interaction regime $\lambda > \Lambda$.

The motivation for our rigorous treatment of synchrony via dPMF dynamics is two-fold:

First, our approach allows for the consideration of more realistic neural dynamics than those classically considered for analysis.
In practice, realistic neural dynamics are heuristically derived as mean-field limits of Poissonian network models in the diffusion approximation~\cite{Brunel:1999aa,Brunel:2000aa}. 
Perhaps the most studied examples of these diffusion-based models are the so-called balanced networks~\cite{Vreeswijk:1996aa,Doiron:2014}.
These models obey systems of PDEs of the form
\begin{eqnarray*}\label{eq:neuroMV}
\tau \partial_t p_\ee &=&  \partial_x (x + \mu_\ee(t) )p_\ee + \sigma_\ee^2 \partial_x^2 p_\ee/2  + f_\ee(t^-) \delta_\Lambda  \quad  \mathrm{with} \quad  f_\ee(t) = \partial_x p_\ee(t,0)/2  \, , \\
\tau  \partial_t p_\ii &=&  \partial_x ( x + \mu_\ii(t)  )p_\ii +  \sigma_\ii^2\partial_x^2 p_\ii/2  + f_\ii(t^-) \delta_\Lambda    \quad  \mathrm{with} \quad  f_\ii(t) = \partial_x p_\ii(t,0)/2 \,  ,
\end{eqnarray*}
where the subscripts $\ee$ and $\ii$ refer to the neurons being excitatory or inhibitory, respectively.
In the above systems, biophysical modeling specifies the drift and diffusion coefficients for  a neuron of type $\alpha$ in $\{\ee,\ii\}$ as
\begin{eqnarray*}\label{eq:neuroMV}
\mu_\alpha(t) = w_{\alpha} \nu_\alpha + \sum_{\beta \in \{ \ee, \ii\}} w_{\alpha\beta} c_\beta f_\beta(t) 
\quad
\mathrm{and}
\quad
\sigma_\alpha(t) = w_{\alpha}^2  \nu_\alpha + \sum_{\beta \in \{ \ee, \ii\}} w^2_{\alpha\beta} c_\beta f_\beta(t) \, ,
\end{eqnarray*}
where $w_{\alpha\beta}$ denotes effective coupling weights with $w_{\ee\ee}, w_{\ii\ee} \geq 0$ and $w_{\ii\ii}, w_{\ee\ii} \leq 0$ and where the coefficients $c_\ee, c_\ii \geq 0$ measures the relative density of recurrent excitatory and inhibitory inputs, respectively.
Thus, in balanced models, both the drift and diffusion coefficients $\mu_\alpha$ and $\sigma_\alpha$ depend linearly on the firing rates $f_\ee$ and $f_\ii$.
This dependence indicates that neurons experience interactions as if delivered via Poisson arrival processes, at least in the conjectured mean-field dynamics.
For this reason, we refer to the joint linear dependence of $\mu_\alpha$ and $\sigma_\alpha$ on $f_\ee$ and $f_\ii$ as Poissonian attributes of the dynamics.
Owing to these Poissonian attributes, the rigorous study of balanced models requires one to consider the framework of dPMF dynamics rather than that of cMF dynamics.

Second, our approach aims to solve the problem of defining physical solutions with blow-ups.
Just as for shock solutions in nonlinear conservation PDEs, there is in principle many formally admissible solutions with blowup for dPMF dynamics.
We recently proposed one such class of admissible solutions in~\cite{TTPW} for dPMF dynamics with full blowup, whereby a finite fraction of neurons fires synchronously.  
Almost at the same time, Dou \emph{et al.} leveraged results from the theory of free-boundary problems to consider a larger class of equations, including dPMF dynamics, and proposed a conflicting method of resolution for blowups~\cite{dou2022dilating}.
However, the latter method can lead to unphysical behavior such as eternal blowups, whereby one cannot terminate---or rather integrate---a blowup episode.
This is by contrast with the expected behavior of physical particle systems for which blowups must represent integrable singularities, as the fraction of synchronously firing neurons cannot exceed one.
Our goal here is to confirm the validity of the blowup resolution presented for dPMF dynamics in~\cite{TTPW} in the sense that the thus-obtained solutions are the only ones that satisfy the same conservation principles as those obeyed by the finite-size particle systems.

%
%
%

%
%


\subsection{Results}

Our main contribution is to specify physical solutions for a class of mean-field models for neural network dynamics,
which we refer to as delayed Poissonian mean-field (dPMF) models.
The PDEs governing these models have been considered by prior works
~\cite{Carrillo:2013,Carrillo:2015,dou2022dilating,dou2024noisy} and conjectured 
to hold under the assumption of propagation of chaos. 
Such an assumption is supported by numerical simulations 
and related results obtained for classical mean-field (cMF) models~\cite{Delarue:2015b}.
The specific class of dPMF dynamics considered here are  as follows:

\begin{definition}\label{def:mainProb1} 
\begin{subequations}\label{eq:mainProb1}
Given parameters $\lambda_1, \lambda_2, \nu_1, \nu_2, \Lambda, \epsilon>0$, the McKean-Vlasov equation 
governing the density functions $\mathbbm{R}^+ \times \mathbbm{R}^+ \ni (t,x) \mapsto p(t,x)=  \Prob{X_t \in \mdd x} / \mdd x$ of the dPMF dynamics $X_t$
is defined as the probability-conserving boundary value problem
\begin{eqnarray}
&\partial_t p  = \big(\nu_1 + \lambda_1 f(t) \big) \partial_x p + \big(\nu_2 + \lambda_2 f(t)  \big) \partial^2_{x} p /2  +  f_\epsilon(t) \delta_{\Lambda}   \, , &  \label{eq:mainProb1a}\\
&p(t,0) = 0 \, , &\\
&\partial_t  \left( \int_0^\infty p(t,x) \, \mdd x \right) = f_\epsilon(t) -  f(t) \, , & \label{eq:mainProb1c}
\end{eqnarray}
with firing rate function $f: \mathbbm{R}^+ \to  \mathbbm{R}^+$ and with delay reset rate given as the convolution $f_\epsilon = f * p_\epsilon$, where $p_\epsilon$ is a smooth probability density with support in $[\epsilon,\infty)$.
\end{subequations}
\end{definition}

The dynamics defined above differ from mean-field dynamics 
considered in other works by the inclusion of distributed refractory delays. 
This inclusion represents a reasonable modeling assumption
as real neurons engage in refractory periods after spiking.
At the same time, including i.i.d. refractory delays with a smooth 
density $p_\epsilon$ (as opposed to considering, e.g., a single delay $\epsilon$ with $p_\epsilon=\delta_\epsilon$) 
regularizes the reset rate of dPMF dynamics, 
which  offers several mathematical advantages as we will see.
Specifically, such a regularization will allow us to use classical fixed-point arguments
to show the existence and uniqueness of globally defined blowup solutions 
and that these solutions cannot exhibit unphysical eternal blowups.
Additional constraints may be imposed on the delay distribution $p_\epsilon$,
which is always assumed smooth with support in $[\epsilon, \infty)$, $\epsilon>0$,
 to reflect that \eqref{eq:mainProb1} emerges as a mean-field limit.
For instance, we expect that propagation of chaos only holds 
if $p_\epsilon$ has finite first-order moment, and we may
assume that $p_\epsilon$ has finite moments of all orders. 
That said, the analysis  presented in this manuscript 
will only assume that $p_\epsilon$  has finite first and second moments.

Definition~\ref{def:mainProb1} does not cover leaky neural dynamics, 
for which the drift term includes a linear component in $x$.
Although including such a term in the analysis is possible~\cite{Carrillo:2013,dou2022dilating},
this inclusion does not impact the emergence of blowups, which is 
the main focus of this work.
In fact, the emergence of blowups in dPMF dynamics is a diffusion phenomenon 
rather than a drift one.
This can be seen by realizing that conservation of probability implies that $f$, the instantaneous
firing rate of a representative neuron governed by \eqref{eq:mainProb1} 
is related to $\partial_x p(t,0)$, the slope of the density at the spiking boundary via:
\begin{eqnarray}\label{eq:fluxBound}
f(t) = \frac{ \nu_2 \partial_x p(t,0)}{2 -  \lambda_2 \partial_x p(t,0)} \, .   
\end{eqnarray}
The above relation reveals that blowups will occur whenever the slope
$\partial_x p(t,0)$ attains the threshold value $2/\lambda_2$, 
so that the emergence of blowups is controlled
by the diffusion terms rather than the drift ones in \eqref{eq:mainProb1}.
Given this relation, we introduce the notion of non-explosive initial conditions
as a matter of convenience  to avoid  immediate blowups:

\begin{definition}\label{def:mainProb2} 
\begin{subequations}\label{eq:mainProb2}
Nonexplosive initial conditions are specified as $p(0,x) =  p_0(x)$, $x>0$, and $f(t) = f_0(t)$, $-\infty< t < 0$, where $(p_0,f_0)$ are nonnegative measures in $\mathcal{M}(\mathbbm{R}^+) \times \mathcal{M}(\mathbbm{R}^-) $
satisfying:
\begin{itemize}
\item the integrability condition:
\begin{eqnarray}\label{eq:mainProb2a}
\vert F_0(t) \vert = O(\vert t\vert)  \, , \quad t \to - \infty \, , \quad \mathrm{where} \quad F_0(t) =- \int_{(t,0]} f_0(\mdd s) \leq 0 \, ,
\end{eqnarray}
\item the no-initial-explosion condition:
\begin{eqnarray}\label{eq:mainProb2b}
\partial_x p(0) =  \lim_{x \downarrow 0} \frac{p_0((0,x])}{x} <\frac{2}{\lambda_2} \, ,
\end{eqnarray}
\item  the normalization condition:
\begin{eqnarray}\label{eq:mainProb2c}
\int_{0}^\infty p_0(x) \, \mdd x + \int_{-\infty}^0 (1-P_\epsilon(t)) \, \mdd  F_0( t) = 1 \, .
\end{eqnarray}
\end{itemize}
\end{subequations}
\end{definition}

Given the above notion of initial conditions, the central result of this manuscript is to show that physical blowup solutions to the dPMF problem stated 
in Definition~\ref{def:mainProb1} are uniquely defined as follows:


\begin{theorem}\label{th:mainRes} 
\begin{subequations}
\label{all:mainRes}
Given nonexplosive initial conditions, the physical solutions of the McKean-Vlasov equation \eqref{eq:mainProb1} are specified as $(t,x) \mapsto p(t,x)=q(\Phi(t),x)$
where $\mathbbm{R}^+\times \mathbbm{R}^+ \ni (\sigma,x) \mapsto q(\sigma,x)$ and $\Psi=\Phi^{-1}: \mathbbm{R}^+ \to \mathbbm{R}^+$ uniquely solve the system of equations
\begin{eqnarray}\label{eq:mainTCdyn}
\partial_\sigma q 
&=&
-\mu(\sigma)  \partial_x q + \frac{1}{2} \partial^2_x q + \partial_\sigma G_\epsilon(\sigma)  \delta_\Lambda \,   ,  \quad  q(\sigma,0) = 0 \, , \label{eq:mainTCdyn1}\\
\mu(\sigma) 
&=& 
 -\left(\left( \nu_1 - \frac{\lambda_1}{\lambda_2} \nu_2   \right)\Psi'(\sigma)  +  \frac{\lambda_1}{\lambda_2} \right) \, , \label{eq:mainTCdyn2}\\
G_{\epsilon}(\sigma)
&=& 
\int_0^\sigma \frac{1}{2}  \left( \int_0^\sigma  \partial_x q(\tau-\eta, 0) \, \mdd \tau \right)   \, \mdd P_\epsilon \big(\Psi(\sigma) - \Psi(\sigma-\eta)\big) \label{eq:mainTCdyn3} \\
&& +
\int_\sigma^\infty   F_0(\Psi_0(\sigma-\eta))  \, \mdd P_\epsilon \big(\Psi(\sigma) - \Psi_0(\sigma-\eta)\big) \nonumber
\, , \\
\Psi(\sigma)
&=&
\frac{1}{\nu_2} \sup_{0 \leq \tau \leq \sigma} \left( \tau-\frac{\lambda_2}{2}  \left( \int_0^\sigma  \partial_x q(\tau, 0) \, \mdd \tau \right) \right)  \, . \label{eq:mainTCdyn4}
\end{eqnarray}
where we have defined $\Psi_0= \Phi_0^{-1}: \mathbbm{R}^- \to  \mathbbm{R}^-$  with
$\Phi_0(t)=\nu_2 t - \lambda_2 \int_{(t,0]} f_0(s) \, \mdd s$, $t \leq 0$.
\end{subequations}
\end{theorem}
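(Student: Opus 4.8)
The plan is to split the statement into two tasks: (i)~show that the time-changed system \eqref{eq:mainTCdyn1}--\eqref{eq:mainTCdyn4} admits a unique global solution $(q,\Psi)$, by a fixed-point argument; and (ii)~show that any physical solution---defined as a vanishing-buffer limit of the regularized dynamics---is recovered by undoing the time change, $p(t,x)=q(\Phi(t),x)$ with $\Phi=\Psi^{-1}$. To motivate the system, I would start from the flux identity \eqref{eq:fluxBound}: the singularity of $f$ is tied to the boundary slope $\partial_x p(t,0)$ reaching $2/\lambda_2$, and the diffusion coefficient $\nu_2+\lambda_2 f(t)$ of \eqref{eq:mainProb1a} blows up in tandem. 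I would therefore introduce the time change $\sigma=\Phi(t)$ with $\Phi'(t)=\nu_2+\lambda_2 f(t)$, equivalently $\Psi'=1/(\nu_2+\lambda_2\, f\circ\Psi)$; writing $p(t,x)=q(\Phi(t),x)$ and applying the chain rule to \eqref{eq:mainProb1a} normalizes the diffusion coefficient to a constant and produces \eqref{eq:mainTCdyn1}, while the algebraic identity relating $\Psi'$ to $\nu_1+\lambda_1 f$ and $\nu_2+\lambda_2 f$ gives the drift \eqref{eq:mainTCdyn2}. Integrating \eqref{eq:mainTCdyn1} over $x$ and using $q(\sigma,0)=0$ identifies $\tfrac12\partial_x q(\sigma,0)$ as the firing flux and $\partial_\sigma G_\epsilon$ as the reset flux per unit computational time; substituting $\Phi(t)=\nu_2 t+\lambda_2\int_0^t f$ at $t=\Psi(\sigma)$, together with $\int_0^{\Psi(\sigma)} f=\int_0^\sigma \tfrac12\partial_x q\,\mathrm{d}\tau$, yields the relation $\nu_2\Psi(\sigma)=\sigma-\tfrac{\lambda_2}{2}\int_0^\sigma\partial_x q\,\mathrm{d}\tau$, and imposing that $\Psi$ be nondecreasing---the physicality requirement that a blowup is a flat stretch of $\Psi$, i.e.\ a jump of $\Phi$---upgrades it to the Skorokhod running supremum \eqref{eq:mainTCdyn4}. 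Finally $G_\epsilon$ follows by transporting the reset flux $f_\epsilon=f*p_\epsilon$ through the time change and interchanging integrals, which splits it into a contribution from firings after time $0$ (driven by $q$ and $\Psi$) and a contribution from the prescribed history (driven by $F_0$ and $\Psi_0$), producing \eqref{eq:mainTCdyn3}.

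For task (i) I would freeze a candidate $\Psi$ on a short interval $[0,\Sigma]$, build the coefficients $\mu(\cdot\,;\Psi)$ and $G_\epsilon(\cdot\,;\Psi)$, solve the linear Dirichlet problem \eqref{eq:mainTCdyn1} for $q$, and read off a new $\widehat\Psi$ from \eqref{eq:mainTCdyn4}; the fixed points of $\Psi\mapsto\widehat\Psi$ are exactly the solutions of the system. Standard half-line heat-kernel estimates give H\"older control of $\partial_x q(\sigma,0)$ in terms of the coefficients, the map $\partial_x q(\cdot,0)\mapsto\widehat\Psi$ is Lipschitz because the running supremum is nonexpansive, and---the point where the distributed delay matters---convolution with $p_\epsilon$, whose support lies in $[\epsilon,\infty)$, makes $G_\epsilon$ Lipschitz in $\sigma$ with constants depending only on an a priori bound for $\Psi'$; together these make $\Psi\mapsto\widehat\Psi$ a contraction on $[0,\Sigma]$ for $\Sigma$ small, in a weighted supremum norm, so that a unique local solution exists. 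To continue it to all $\sigma\geq 0$, I would propagate a priori bounds that do not degenerate: \eqref{eq:mainProb2a} controls the history contribution, while nonzero, distributed refractory periods bound the instantaneous reset mass from above and hence keep $\Psi'$ from vanishing on any interval of positive length---precisely what rules out the eternal-blowup phenomenon and permits indefinite continuation. Uniqueness is inherited from the contraction.

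For task (ii) I would invoke the buffering construction: for each buffer size $\delta>0$ the regularized dynamics is a genuine, probability-conserving flow with no singular flux, whose time-changed description solves a $\delta$-regularized version of \eqref{eq:mainTCdyn} in which the running supremum \eqref{eq:mainTCdyn4} is replaced by a mollified variant. Uniform-in-$\delta$ estimates---again leaning on \eqref{eq:mainProb2a} and the refractory delay---give compactness of $\{(q^\delta,\Psi^\delta)\}$, the mollified supremum converges to the genuine one, and the uniqueness from task (i) forces the whole family to converge to $(q,\Psi)$; undoing the time change exhibits $p=q\circ\Phi$ as the physical solution, with uniqueness inherited from the fixed point.

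I expect the main obstacle to be the coupling of the two nonlocal ingredients of the system. The Skorokhod supremum \eqref{eq:mainTCdyn4} is only Lipschitz---not differentiable---in $q$, and on a blowup interval it freezes $\Psi$, so $\Psi'=0$ and the drift \eqref{eq:mainTCdyn2} locks at the constant $-\lambda_1/\lambda_2$; meanwhile the delayed convolution couples $q(\sigma,\cdot)$ to $\Psi$ over a physical-time window whose length is itself set by $\Psi$. Closing the contraction requires showing that a synchronous event, which occupies zero physical time, cannot feed back into the reset flux before a positive physical time $\epsilon$ has elapsed---because $p_\epsilon$ is supported in $[\epsilon,\infty)$---so that $G_\epsilon$ stays Lipschitz in $\sigma$ across blowups. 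Equivalently, the heart of the proof is a careful bookkeeping of probability mass through the time change: one must check that the supremum in \eqref{eq:mainTCdyn4} releases, during a blowup, exactly the synchronously firing fraction and no more, which is the conservation law obeyed by the finite-size particle system and the precise sense in which the resulting solution is physical.
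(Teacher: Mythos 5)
Your proposal tracks the paper's own proof closely: it splits into (a) contraction in the time-changed picture for the quasi-renewal system (Proposition~\ref{prop:solG}, Proposition~\ref{prop:contract}, Theorem~\ref{th:globSol}) and (b) physicality via the vanishing-buffer limit (Section~\ref{sec:limBuff}, culminating in Proposition~\ref{prop:GHEta}), and it correctly identifies the role of the support of $P_\epsilon$ in $[\epsilon,\infty)$ both for closing the contraction across blowups and for global continuation. The only cosmetic slip is describing the $\delta$-buffered analogue of \eqref{eq:mainTCdyn4} as a ``mollified'' supremum---the paper instead uses a $\delta_2$-tilted running supremum (equation~\eqref{eq:fixedPoint}), which simply forces $\Psi'\geq\delta_2$ rather than smoothing the operator---but this does not alter the structure or correctness of the argument.
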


The above theorem will directly follow from Proposition~\ref{prop:solG} and Proposition~\ref{prop:physFP}.
Proposition~\ref{prop:solG} states that assuming $\Psi$ known, 
the system of equations \eqref{eq:mainTCdyn1},  \eqref{eq:mainTCdyn2},  \eqref{eq:mainTCdyn3} admits
a unique solution $(\sigma,x) \mapsto q(\sigma,x)$ on $\mathbbm{R}^+\times \mathbbm{R}^+$.
Proposition~\ref{prop:physFP} complements this result by characterizing $\Psi$ self-consistently as the unique
solution to \eqref{eq:mainTCdyn4}, which defines a fixed-point equation for $\Psi$ once one
recognizes that $q=q[\Psi]$ depends on $\Psi$ by Proposition~\ref{prop:solG}.

Rigorously deriving \eqref{eq:mainTCdyn4} and establishing its physicality is the core contribution of the manuscript. 
By contrast with other approaches that treat McKean-Vlasov equations similar 
to \eqref{eq:mainProb1} as a free-boundary analysis problem,
our approach leverages probabilistic arguments to treat \eqref{eq:mainProb1} via the lens of renewal theory.
Taking such an approach is the key to formulate the fixed-point equation \eqref{eq:mainTCdyn4}.
Specifically, in the absence of blowups, one can check that upon performing the change of variable
\begin{eqnarray*}
\sigma=\Phi(t) \Leftrightarrow t = \Psi(\sigma)
\quad \mathrm{with}
\quad
\Phi(t)
=
\nu_2 t + \lambda_2  \int_0^t f(s) \, \mdd s \, , 
\end{eqnarray*}
nonexplosive dPMF dynamics satisfy  \eqref{all:mainRes} but with \eqref{eq:mainTCdyn4} reading
\begin{eqnarray}\label{eq:smoothRel}
\Psi(\sigma)
=
\frac{1}{\nu_2} \left( \sigma-\frac{\lambda_2}{2}  \left( \int_0^\sigma  \partial_x q(\sigma, 0) \, \mdd \tau \right) \right)  \, .
\end{eqnarray}
Simple renewal theory arguments show that the above relation cannot hold in the presence of full blowups, 
when $\Phi$ exhibits jump discontinuities, or equivalently, when $\Psi$ has flat sections.
Furthermore, examining probabilistically solvable cases obtained for $\lambda_1/\lambda_2=\nu_1/\nu_2$
reveals that during full blowups, \eqref{eq:smoothRel}  specifies $\Psi$ as a locally decreasing function,
which is paradoxical for any time change.
Our insight is to solve this paradox by restricting the fixed-point equation 
specifying $\Psi$ on the set of nondecreasing functions
via the introduction of a running supremum operator in \eqref{eq:mainTCdyn4}.
However, \eqref{eq:mainTCdyn4} only represents one possible notion of blowup solutions
and other notions have been proposed (see~\cite{dou2022dilating}).
Theorem~\ref{th:mainRes} states that the fixed-point problem resulting from adopting \eqref{eq:mainTCdyn4}
admits a unique global solution on the whole time half-line
and that this solution is the only physical one in a precise sense that we discussed in the next section.



\subsection{Methodology and innovation}

In order to show that the solution defined in Theorem~\ref{th:mainRes} represents a physical blowup dPMF dynamics, 
our approach is to introduce a regularized version of the PDE problem stated in Definition~\ref{def:mainProb1}.
We obtain this regularized dPMF problem by imposing that the rate of interactions cannot exceed some level $1/\delta$, with $\delta>0$.
A simple way to implement this rate limitation is to distinguish the firing rate $f$ 
from the rate of interaction, now denoted $\tf $, with which a representative particle experiences interactions.
The main innovation of our approach is to define $\tf$ via a buffer mechanism, and for this reason, 
we refer to the thus obtained regularized dynamics as $\delta$-buffered dynamics:

\begin{definition}\label{def:mainBuffer1} 
\begin{subequations}
\label{eq:distribBuffPDE}
Given nonexplosive initial conditions $(p_0,f_0)$ in $\mathcal{M}(\mathbbm{R}^+) \times \mathcal{M}(\mathbbm{R}^-)$ and $\delta>0$, the probability-conserving McKean-Vlasov equation governing the $\delta$-buffered dPMF density function $\mathbbm{R}^+ \times \mathbbm{R}^+ \ni (t,x) \mapsto p(t,x)= \mdd \Prob{0<X_t \leq x} / \mdd x$ is defined as the boundary value problem
\begin{eqnarray}
&\partial_t p  =\big(\nu_1 + \lambda_1 \tf (t) \big) \partial_x p + \big(\nu_2 + \lambda_2 \tf (t)  \big) \partial^2_{x} p /2  + f_\epsilon(t) \delta_{\Lambda}  \, ,  & \\
& \quad p(t,0) = 0 \, , & \\
&\partial_t  \left( \int_0^\infty p(t,x) \, \mdd x \right) = f_\epsilon(t) -  f(t) \, , &
\end{eqnarray}
where $\tf$, the $\delta$-buffered rate of the firing rate $f$ ensuring probability conservation, is specified in Definition~\ref{def:mainBuffer2}. 
\end{subequations}
\end{definition}

Fully specifying the regularized dPMF dynamics given in Definition~\ref{def:mainBuffer1} 
requires to define the buffer mechanism by which $\tf$ is obtained:



\begin{definition}\label{def:mainBuffer2} 
\begin{subequations}
\label{all:mainBuffer}
The the $\delta$-buffered version $\tf$ of the firing rate $f$ is specified as 
\begin{eqnarray}\label{eq:mainBuffer}
\tf (t)= (1-B(t)) f(t) + B(t) /\delta \, ,
\end{eqnarray}
where the blowup indicator function $B$ is given by
\begin{eqnarray}\label{eq:mainIndicator}
B(t)  = \mathbbm{1}_{\{ f(t) > 1/\delta \} \cup \{E(t) > 0\}}    \, .
\end{eqnarray}
and where the interaction excess function $E$ satisfies
\begin{eqnarray}\label{eq:mainExcess}
\partial_t E(t)  &=&  B(t) (f(t) -1/\delta)  \, , \quad  E(0)=0\, .
\end{eqnarray}
\end{subequations}
\end{definition}

The buffer mechanism described in Definition~\ref{def:mainBuffer2}  is designed 
to satisfy a physical conservation principle that is obeyed by the finite-size particle systems.
Indeed, according to Definition~\ref{def:mainBuffer2}, the capped rate $\tf $ is obtained by buffering the firing rate $f$,
 i.e., by diverting the excess of interactions to a fictitious reservoir state with rate $f-1/\delta$.
Buffered blowups are then defined as these periods of time when the reservoir is not empty, i.e., when it contains some excess amount of interactions $E>0$.
The hallmark of our proposed buffer mechanism is that it is conservative:  excess interactions must be purged from the reservoir and restituted to the representative particle for the blowup to terminate when $E=0$.
This is done by imposing that as long as $E>0$, excess interactions must be restituted with fixed rate $1/\delta$.

\begin{figure}[htbp]
\begin{center}
\includegraphics[width=\textwidth]{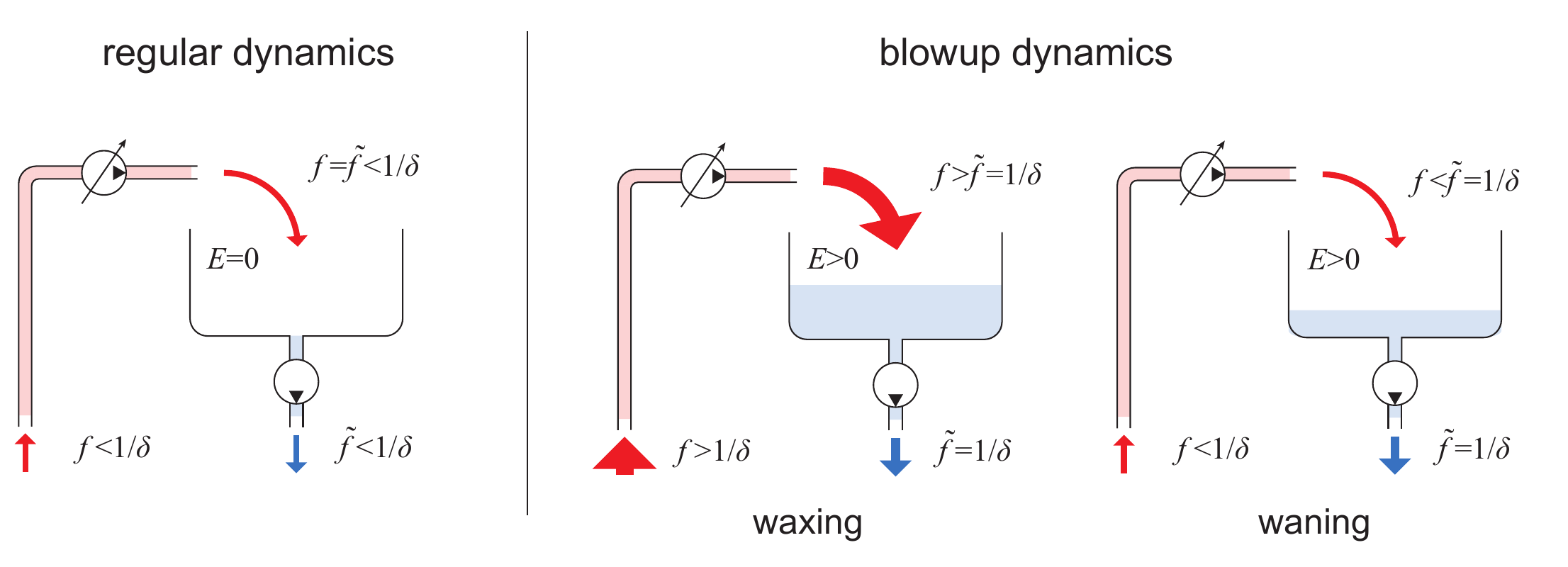}
\caption{{\bf Physical picture of the buffer mechanism.}
It is convenient to think of the buffer mechanism in terms of a fictitious pumping physical system whereby water flows represent interaction rates between particles.
The buffer mechanism is implemented by a reservoir which is fed by a pump that operates at the spiking rate $f$.
At the same time, the reservoir is depleted by a pump that operates at a capped 
interaction rate $\tf$, which cannot exceed the buffer value $1/\delta$.
During regular dynamics, interactions are experienced by particles at the same rate $\tf$ as the rate $f$ at which these particles emit them.
There is no water in the reservoir, indicating the absence of blowups.
Blowups are triggered when the inlet rate $\tf$ transiently exceeds $1/\delta$, leading to water accumulation in the reservoir $E>0$ (waxing period).
The outlet pump operates at capacity, i.e., at rate $1/\delta$, as long as there is water in the reservoir, even though the inlet rate may be below $1/\delta$ (waning period).
The defining property of such a physical system is to conserve interaction rates.}
\label{fig:buffMech}
\end{center}
\end{figure}

The physical picture inspiring  Definition~\ref{def:mainBuffer1} and Definition~\ref{def:mainBuffer2} for $\delta$-buffered 
dPMF dynamics is illustrated in Fig.~\ref{fig:buffMech}.
Although this illustration exploits an analogy with a pumping physical system,
its governing abstract principle is that of rate conservation.
Imposing this principle ensures that every nonspiking neuron must be impacted
by spiking neurons, as is the case in the corresponding finite-dimensional particle systems.
In other word, there is no loss of interactions.
Systems that do not conserve interactions can be subjected to paradoxical effects
such as eternal blowups, which are unphysical~\cite{dou2022dilating}.
Our main result is to show that the dPMF solutions specified in Theorem~\ref{th:mainRes2}
represent physical blowup solutions in the sense that they are limits
of the  rate-conserving, $\delta$-buffered dPMF dynamics obtained when $\delta \downarrow 0$. 
Specifically, we show the following theorem:

\begin{theorem}\label{th:mainRes2} 
The unique inverse time change $\Psi=\Phi^{-1}: \mathbbm{R}^+ \to \mathbbm{R}^+$ parametrizing 
the dPMF dynamics solution from Theorem~\ref{th:mainRes} is specified as $\Psi = \lim_{\delta \downarrow 0} \Psi_\delta$, where $\Psi_\delta$
is the unique inverse time change parametrizing the $\delta$-buffered version of the dPMF dynamics.
\end{theorem}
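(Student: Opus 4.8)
The plan is to exhibit $\Psi$ as the unique cluster point of the family $\{\Psi_\delta\}_{\delta>0}$ in $C_{\mathrm{loc}}(\mathbbm{R}^+)$, the identification being made through the fixed-point characterization \eqref{eq:mainTCdyn4}. The starting point is a set of $\delta$-uniform estimates. Because the $\delta$-buffered rate obeys $\tf(t)\le 1/\delta$ for all $t$ (immediate from \eqref{eq:mainBuffer}, since $E_\delta\ge 0$ is forced by \eqref{eq:mainExcess}), the associated time change $\Phi_\delta(t)=\nu_2 t + \lambda_2\int_0^t \tf(s)\,\mdd s$ has derivative in $[\nu_2,\nu_2+\lambda_2/\delta]$, so $\Psi_\delta=\Phi_\delta^{-1}$ is strictly increasing, $1/\nu_2$-Lipschitz and satisfies $\Psi_\delta(0)=0$, all uniformly in $\delta$. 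By Arzel\`a--Ascoli, $\{\Psi_\delta\}$ is precompact in $C_{\mathrm{loc}}(\mathbbm{R}^+)$; fix any sequence $\delta_n\downarrow 0$ along which $\Psi_{\delta_n}\to\Psi_\star$ locally uniformly, the limit being again nondecreasing, $1/\nu_2$-Lipschitz, with $\Psi_\star(0)=0$. Note also that the time-changed buffered density $q_\delta(\sigma,x)=p_\delta(\Psi_\delta(\sigma),x)$ coincides with $q[\Psi_\delta]$ in the sense of Proposition~\ref{prop:solG}, since the time-change computation producing \eqref{eq:mainTCdyn1}--\eqref{eq:mainTCdyn3} out of \eqref{eq:mainProb1} or \eqref{eq:distribBuffPDE} is insensitive to whether the interaction rate has been capped.

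Next I would establish a \emph{buffered time-change identity}. Rewriting \eqref{eq:mainBuffer}--\eqref{eq:mainExcess} as $\tf=f-\partial_t E_\delta$ and using $E_\delta(0)=0$ gives $\int_0^t\tf(s)\,\mdd s=\int_0^t f(s)\,\mdd s-E_\delta(t)$; combining with the conservation relation $\int_0^{\Psi_\delta(\sigma)}f(s)\,\mdd s=\tfrac12\int_0^\sigma\partial_x q_\delta(\tau,0)\,\mdd\tau$ (valid because the buffered dynamics is blowup-free, so $\Phi_\delta$ is a genuine time change) yields, with $h_\delta(\sigma):=\sigma-\tfrac{\lambda_2}{2}\int_0^\sigma\partial_x q_\delta(\tau,0)\,\mdd\tau$,
\begin{equation*}
\nu_2\,\Psi_\delta(\sigma)=h_\delta(\sigma)+\lambda_2\,E_\delta\!\big(\Psi_\delta(\sigma)\big),\qquad \sigma\ge 0 .
\end{equation*}
Since $E_\delta\ge 0$ and $\nu_2\Psi_\delta$ is nondecreasing, this forces $\nu_2\Psi_\delta(\sigma)\ge\sup_{0\le\tau\le\sigma}h_\delta(\tau)$. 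For the matching upper bound, put $t=\Psi_\delta(\sigma)$: either $E_\delta(t)=0$, so $\nu_2\Psi_\delta(\sigma)=h_\delta(\sigma)$, or $t$ lies in a buffered blowup episode with left endpoint $t_1$, so $E_\delta(t_1)=0$ and $\tf\equiv1/\delta$ on $(t_1,t]$; then, with $\sigma_1=\Phi_\delta(t_1)$, one has $\nu_2\Psi_\delta(\sigma_1)=h_\delta(\sigma_1)$ and $\sigma-\sigma_1\ge(\lambda_2/\delta)(t-t_1)$, whence $\nu_2\Psi_\delta(\sigma)-\nu_2\Psi_\delta(\sigma_1)=\nu_2(t-t_1)\le(\nu_2\delta/\lambda_2)(\sigma-\sigma_1)$. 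Altogether, on every interval $[0,S]$,
\begin{equation*}
\sup_{0\le\tau\le\sigma}h_\delta(\tau)\ \le\ \nu_2\,\Psi_\delta(\sigma)\ \le\ \sup_{0\le\tau\le\sigma}h_\delta(\tau)+\frac{\nu_2 S}{\lambda_2}\,\delta .
\end{equation*}

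The analytic core of the proof, and what I expect to be the main obstacle, is a stability statement for the density map: that $\Psi_{\delta_n}\to\Psi_\star$ locally uniformly implies $q[\Psi_{\delta_n}]\to q[\Psi_\star]$ strongly enough that $\int_0^\sigma\partial_x q[\Psi_{\delta_n}](\tau,0)\,\mdd\tau\to\int_0^\sigma\partial_x q[\Psi_\star](\tau,0)\,\mdd\tau$ locally uniformly, i.e.\ $h_{\delta_n}\to h_\star$ locally uniformly, where $h_\star(\sigma):=\sigma-\tfrac{\lambda_2}{2}\int_0^\sigma\partial_x q[\Psi_\star](\tau,0)\,\mdd\tau$. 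I would argue through the representation of $q[\Psi]$ from Proposition~\ref{prop:solG}: in the time-changed frame the advection enters only via the primitive $\int_0^\sigma\mu(\tau)\,\mdd\tau=-\big((\nu_1-(\lambda_1/\lambda_2)\nu_2)\Psi(\sigma)+(\lambda_1/\lambda_2)\sigma\big)$, which is affine in $(\Psi(\sigma),\sigma)$ and hence converges locally uniformly, so that the buffered drift coefficient $\mu_{\delta_n}(\sigma)=-\big((\nu_1-(\lambda_1/\lambda_2)\nu_2)\Psi_{\delta_n}'(\sigma)+\lambda_1/\lambda_2\big)$ converging only weakly-$\ast$ is harmless; and the reset rate $\partial_\sigma G_{\epsilon}$, built from $P_\epsilon$ composed with increments of $\Psi_{\delta_n}$ and $\Psi_0$ and from $\int\partial_x q[\Psi_{\delta_n}]$, converges simultaneously with $q[\Psi_{\delta_n}]$ by propagating the uniform continuity of the solution map through the fixed point defining $q$ in Proposition~\ref{prop:solG}, with moduli controlled by the finite first and second moments of $p_\epsilon$. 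The delicate points inside this step should be the uniform control of the solution map near $\sigma=0$ and across the would-be flat sections of $\Psi_\star$, and matching the initial-data contributions (those carried by $F_0$, $\Psi_0$ and $P_\epsilon$) on both sides.

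Finally I would let $n\to\infty$ in the two-sided bound above. Local uniform convergence $h_{\delta_n}\to h_\star$ makes $\sigma\mapsto\sup_{0\le\tau\le\sigma}h_{\delta_n}(\tau)$ converge locally uniformly, and the slack $(\nu_2 S/\lambda_2)\delta_n\to 0$, so $\nu_2\Psi_\star(\sigma)=\sup_{0\le\tau\le\sigma}h_\star(\tau)$ on every $[0,S]$; this is precisely \eqref{eq:mainTCdyn4} with $q=q[\Psi_\star]$, and together with the fact that $q[\Psi_\star]$ solves \eqref{eq:mainTCdyn1}--\eqref{eq:mainTCdyn3} (Proposition~\ref{prop:solG}), the pair $(\Psi_\star,q[\Psi_\star])$ solves the full system \eqref{all:mainRes}. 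By the uniqueness statement of Proposition~\ref{prop:physFP} (equivalently Theorem~\ref{th:mainRes}), $\Psi_\star=\Psi$. Since every sequence $\delta_n\downarrow 0$ thus admits a further subsequence along which $\Psi_{\delta_{n_k}}\to\Psi$, we conclude that $\Psi_\delta\to\Psi$ in $C_{\mathrm{loc}}(\mathbbm{R}^+)$, hence in particular pointwise, as $\delta\downarrow 0$.
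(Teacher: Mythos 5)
Your proposal is correct in outline and follows the same skeleton as the paper: extract a locally-uniform cluster point of the $\delta$-family by Arzel\`a--Ascoli, show it satisfies the running-supremum fixed-point equation, and invoke uniqueness (Theorem~\ref{th:globSol}/Proposition~\ref{prop:physFP}). There is however a genuine difference in bookkeeping. The paper (Section~\ref{sec:recov}) first compacts the cumulative-rate family $G_{\delta_n}=G[\Psi_{\delta_n}]$ in $C^1_{\mathrm{loc}}$, using the uniform bound of Proposition~\ref{prop:boundedness}, and then reads off $\Psi_{\delta_n}=\Psi[G_{\delta_n}]$ from the explicit buffered fixed-point formula \eqref{eq:fixedPoint}, estimating $\Vert \Psi[G_a]-\Psi[G_b]\Vert$ by a Lipschitz dependence on $\Vert G_a-G_b\Vert$ and on $\delta_a,\delta_b$. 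You instead compact $\Psi_\delta$ first and derive the clean two-sided inequality $\sup_{\tau\le\sigma}h_\delta(\tau)\le\nu_2\Psi_\delta(\sigma)\le\sup_{\tau\le\sigma}h_\delta(\tau)+(\nu_2 S/\lambda_2)\delta$ on $[0,S]$ directly from $\nu_2\Psi_\delta=h_\delta+\lambda_2 E_\delta\circ\Psi_\delta$ and the fact that $\tf_\delta\equiv 1/\delta$ during blowup episodes. Your identity matches \eqref{eq:PsiG}, and your quantitative error term is the same thing as the $\delta_2\sigma$-tilt in \eqref{eq:fixedPoint}: the two routes are dual, but your inequality gives an explicit rate for the vanishing of that tilt, so that passing to the limit in the running supremum is immediate once $h_{\delta_n}\to h_\star$ locally uniformly.

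The heart of the matter --- which you correctly flag as "the main obstacle" --- is exactly Proposition~\ref{prop:GHEta}: that $\Psi_{\delta_n}\to\Psi_\star$ in $C_{\mathrm{loc}}$ implies $G[\Psi_{\delta_n}]\to G[\Psi_\star]$, i.e.\ continuity of the renewal map $\Psi\mapsto G[\Psi]$ with respect to locally uniform convergence. You outline this only in plan. The observation that the advection enters through $\int_0^\sigma\mu(\tau)\,\mdd\tau$, affine in $(\Psi(\sigma),\sigma)$, is indeed why the lack of any useful convergence of $\Psi'_{\delta_n}$ is harmless, and the refractory kernel $\mdd Q_{\sigma,n}(\eta)=\mdd P_\epsilon(\Psi_{\delta_n}(\sigma)-\Psi_{\delta_n}(\sigma-\eta))$ does converge weakly because $P_\epsilon$ has a continuous density. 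But the actual argument requires the uniform stability of the first-passage distribution $H[\Psi]$ under locally uniform perturbations of $\Psi$ (Lemma~\ref{lem:Hconv}), a uniform Lipschitz estimate in the lower time argument to license integration by parts in the reset integral (Lemma~\ref{lem:HLipsch}), and the $\Psi$-uniform bound of Proposition~\ref{prop:boundedness} to justify dominated convergence. None of this is carried out in your proposal, so as written the proof has a gap exactly at this step; the ingredients you name are the right ones, and the delicate points you flag (behavior near $\sigma=0$, flat sections of $\Psi_\star$, matching of the data carried by $F_0$, $\Psi_0$, $P_\epsilon$) are precisely where the paper's Lemmas~\ref{lem:Hprop}, \ref{lem:FP}, \ref{lem:Hconv}, \ref{lem:HLipsch} do their work.
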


The above theorem directly follows from Theorem~\ref{th:deltaGlobSol} about the existence and uniqueness 
of $\delta$-buffered dPMF dynamics and from Proposition~\ref{prop:GHEta} about
the limit behavior of these solutions when $\delta \downarrow 0$.
To establish these results, we study the regularizing effect of the buffer mechanism 
proposed  in this manuscript. 
In particular, we give a characterization of the cumulative $\delta$-buffered
rate functions in terms of the $\delta$-tilted running extrema of the cumulative unbuffered rate, 
both in the original and in the time-changed picture.
Given $\delta>0$, this characterization allows us to relate the time change functions $\Phi_\delta$
and $\Psi_\delta=\Phi_\delta^{-1}$ to the excess functions $E_\delta$ and $D_\delta = E_\delta \circ \Psi_\delta$, respectively.
It turns out that while taking $\delta \downarrow 0$ negates buffering in the original picture,
the buffer mechanism, and its regularizing action, persist in the time-changed picture.
This latter point allows us to justify that physical dPMF blowup dynamics are uniquely specified as solutions 
to the PDE problem given in Theorem~\ref{th:mainRes}.
This result follows from deriving the fixed-point equation \eqref{eq:mainTCdyn4}, which involves a running supremum operator,
from the characterization of the buffer mechanism in terms of $\delta$-tilted running extrema.

It is unclear whether the buffered mechanism we originally introduced in this manuscript 
may be used to analyze other singularly behaved PDE problems. 
However, we expect similarly inspired buffer mechanisms to be useful 
for defining blowup solutions to PDE problems that obey rate conservation principles.
At the very least, we expect our approach to generally apply to the coupled McKean-Vlasov equations
describing the mean-field dynamics of neural networks with heterogeneous structure. 
In support of the possible general scope of our approach, 
we note that our buffered blowup solutions are defined by introducing 
a new physical variable, captured by the excess function $E$, 
an approach that is reminiscent of the introduction of entropic conditions
to specify shock solutions to conservation laws~\cite{Evans:2010}.


\subsection{Structure of the manuscript}

In Section~\ref{sec:modelDef}, we introduce the McKean-Vlasov equations governing dPMF dynamics under conjecture of propagation of chaos. 
In Section~\ref{sec:physTC}, we show the existence and uniqueness of our candidate solutions for blowup DPMF dynamics.
In Section~\ref{sec:bufferAnalysis}, we introduce the physical notion of buffered dPMF solutions, which are uniquely defined and prone to regularized blowups.
In Section~\ref{sec:limBuff}, we show that our candidate solutions are physical for being specified as limit buffered dPMF solutions. 
In Section~\ref{sec:compRes}, we provide complementary results to explain the regularizing role of the buffer mechanism and refractory delays. 
In Section~\ref{sec:proofs}, we provide various proofs supporting the results presented in the preceding sections.


\section{Background on delayed Poisson-McKean-Vlasov dynamics}\label{sec:modelDef}

In this section, we justify the statements of Definitions~\ref{def:mainProb1} and~\ref{def:mainProb2},
which are the focus of this work.
In Section~\ref{sec:FSSM}, we introduce the finite-dimensional particle system from which dPMF dynamics are derived.
In Section~\ref{sec:propChaos}, we conjecture propagation of chaos to exhibit the stochastic dynamics governing a representative dPMF particle.
In Section~\ref{sec:MKV}, we justify that the Fokker-Plank equation satisfied by the density of a representative dPMF particle 
specifies the same McKean-Vlasov problem as defined in Definitions~\ref{def:mainProb1} and~\ref{def:mainProb2}.


\subsection{Finite-size stochastic model}\label{sec:FSSM}

We start by defining the finite-size version of the dPMF dynamics in terms of a particle system whose dynamics is unconditionally well-posed.
This particle system consists of a network of $K$ interacting processes $X_{K,i,t}$, $1\leq i \leq K$, whose  dynamics is as follows:
$(i)$ Whenever a process $X_{K,i,t}$ hits the spiking boundary at zero, it spikes and instantaneously enters an inactive refractory state.
$(ii)$ At the same time, all the other active processes $X_{K,j,t}$ (which are not in the inactive refractory state) are 
respectively decreased by amounts $w_{K,j,t}$, which are independently drawn from a normal law with mean $\lambda_1/K$, $\lambda_1>0$, and variance $\lambda_2/K$, $\lambda_2>0$.
$(iii)$ After an inactive (refractory) period of duration $s \geq \epsilon$ where $s$ is sampled i.i.d. according to some distribution $P_\epsilon$ with support in $[\epsilon,\infty)$, $\epsilon>0$, the process $X_{K,i,t}$ restarts its autonomous stochastic dynamics from the reset state $\Lambda>0$.
$(iv)$ In between spiking/interaction times, the autonomous dynamics of active processes follow independent drifted Wiener processes with negative drift $-\nu_1$ and positive diffusion coefficient $\nu_2>0$.
Correspondingly, an initial condition for the network is given by specifying the starting values of the active processes, i.e. $X_{K,i,0}>0$ if $i$ is active, and the last hitting time of the inactive processes, denoted by $\tau_{K,i,0} \leq 0$, if $i$ is inactive.

The coupled dynamics of $X_{K,i,t}$, $1\leq i \leq K$, described above are governed by the stochastic differential equations 
\begin{eqnarray}\label{eq:stochEq}
X_{K,i,t} = X_{K,i,0} - \int_0^t \mathbbm{1}_{ \{ X_{K,i,s^-} >0 \} } \mdd Z_{K,i,s} + \Lambda N_{K,i,t} \, , 
\end{eqnarray}
where $Z_{K,i,t}$, $1\leq i \leq K$, denote continuous-time driving processes with Poisson-like attributes and 
where $N_{K,i,t}$, $1\leq i \leq K$, are increasing processes counting the number of times that $X_{K,i,t}$ has reset before $t$.
In \eqref{eq:stochEq}, the driving processes $Z_{K,i,t}$ are specified in terms of the mean empirical counting functions $F_{K,i}$ via
\begin{eqnarray}
Z_{K,i,t} = \nu_1 t+ \lambda_1 F_{K,i}(t)   +  W_{i,\nu_2 t + \lambda_2 F_{K,i}(t)} \, , \quad F_{K,i}(t) =  \frac{1}{K} \sum_{j\neq i} M_{K,j,t} \nonumber \, ,
\nonumber
\end{eqnarray}
where $W_{i,t}$, $1\leq i \leq N$, are independent Wiener processes and where the individual 
counting processes $M_{K,i,t}$, $1\leq i \leq N$, register each time particle $i$ spikes.
In turn, the newly introduced counting processes $M_{K,i,t}$ and the counting processes $N_{K,i,t}$ featured in \eqref{eq:stochEq} are defined as
\begin{eqnarray}
M_{K,i,t} = \sum_{n>0} \mathbbm{1}_{[0,t]}(\tau_{K,i,n}) \quad \mathrm{and} \quad N_{K,i,t} = \sum_{n>0} \mathbbm{1}_{[0,t]}(r_{K,i,n}) \, , \nonumber 
\end{eqnarray}
where $\tau_{K,i,n}$, $n \geq 0$, denote the successive first-passage times of $X_{K,i,t}$ in zero 
and where $r_{K,i,n}=\tau_{K,i,n}+s_{K,i,n}$, $n \geq 0$,
denote the subsequent reset times.
By definition, we consider refractory delays $s_{K,i,n}$ that are i.i.d. according to $P_\epsilon$,
while the times $\tau_{K,i,n}$ are formally defined for all $n\in \mathbbm{N}$ by 
\begin{eqnarray}
\tau_{K,i,n+1} = \inf \left\{  t>r_{K,i,n} \, \big  \vert \,  X_{K,i,t}  \leq 0 \right\} \, . \nonumber 
\end{eqnarray}
By convention, we set $r_{K,i,0} = 0$ for processes that are active in zero with $X_{K,i,0}>0$.  
We say that the driving processes $Z_{K,i,t}$ exhibit Poisson-like attributes because
both their drift and diffusion coefficients are linearly related to the empirical spiking rate of the network.
By contrast, classical particle-system approaches are restricted to models for which 
only the drift coefficient exhibits such dependencies, i.e., to the case $\lambda_2=0$.
Note that by definition, the paths of $t \mapsto M_{K,i,t}$ and $t \mapsto N_{K,i,t}$ are right-continuous with left limits,
 which is commonly referred to as having the \emph{c\`{a}dl\`{a}g} property in the probabilistic terminology.

Thus defined, the particle-system dynamics is self-exciting: 
every spiking event of a neuron $i$  hastens the spiking of other neurons $j \neq i$ by bringing their states closer to the zero threshold boundary.
Moreover, the particle-system dynamics allows for synchronous spiking as whenever neuron $i$ spikes due to its autonomous dynamics, 
we generically have that $\Prob{X_{K,j,t^-} \in (0,w_{K,j,t}]}>0$. 
If the process $X_{K,i,t}$ first hits zero at time $\tau$,  \eqref{eq:stochEq} implies that $X_{K,i,t}$ remains in zero for all $t$ in $[\tau, r) \supset [\tau, \tau+\epsilon)$, until it receives an instantaneous kick that enforces a reset in $\Lambda$ at time $r=\tau+s$, with $s \sim P_\epsilon$.
Thus, \eqref{eq:stochEq} formally identifies the refractory state with zero.
However, it is better to consider the refractory state as an isolated state, where time-stamped inactivated process can be stored before reset.
The introduction of this isolated state will allow one to consider regular absorbing boundary conditions in zero.

\begin{remark} The inclusion of refractory delays is worth a few observations:

\begin{enumerate}
\item Including inactive periods $s \geq \epsilon$ guarantees the uniqueness of the particle-system dynamics 
during spiking avalanches, thereby ensuring that the overall dynamics is well-posed. 
Spiking avalanches occur when the spiking of a neuron triggers the instantaneous spiking of other neurons.
Neurons that engage in a spiking avalanche can be sorted out according to a generation number $g \in \mathbbm{N}$.
Generation $0$ contains the lone triggering neuron which is driven to the absorbing boundary by its autonomous dynamics.
Generation $1$ comprises all those neurons that spike due to interactions with the triggering neuron alone.
In general, generation $g>1$, comprises all the neurons that spike from interacting with the neurons of generation $g-1$ alone.
In the absence of a post-spiking inactive period, it is ambiguous whether the neurons 
from previous generations are impacted by the spiking of neurons from the following generations.
By contrast, in the presence of inactive periods, $s \geq \epsilon>0$, neurons from previous generations 
are unresponsive to neurons from following generations due to post-spiking transient inactivation.
\item
The need to clearly define spiking avalanche was identified by Delarue \emph{et al.} in \cite{Delarue:2015b},
 which led these authors to introduce the notion of physical solution for cMF dynamics that may exhibit blowups. 
These physical solutions assume that independent of their generation number, every neuron engaging 
in a spiking avalanche at time $t$ spikes only once and resets in $\Lambda$.
Thus, this notion of physical solutions precisely corresponds to considering delayed cMF dynamics
 in the limit of vanishing refractory period $\epsilon \to 0^+$.
Note that Delarue \emph{et al.} also considered delayed dynamics in \cite{Delarue:2015b}.
However, these dynamics include interaction delays rather than reset delay and therefore differ from the dPMF considered in this work.
\item Importantly, considering inactive period with $s \geq \epsilon>0$ implies that neurons 
cannot spike more than once over a duration $\epsilon$, so that we must have $F_{N,i,t} >t/\epsilon+1$ for all time $t>0$.
In Section~\ref{sec:limBuff}, we will see that the existence of a uniform bound on the cumulative counts, only available for $\epsilon>0$, 
allows one to exclude the possibility of nonphysical blowups, such as eternal blowups.
Eternal blowups are nonphysical in the sense that they corresponds to blowups with self-sustaining, infinite-size spiking avalanche, 
a phenomenon which precludes to consider dynamics past the time at which an eternal blowup occurs.
This will not be the case for delayed dynamics as considering $\epsilon>0$ guarantees the existence of physical solutions with blowups for all time $t>0$.

\end{enumerate}

\end{remark}

\subsection{Conjectured mean-field dynamics under propagation of chaos}\label{sec:propChaos}

The particle-system dynamics introduced above primarily differs from the classically considered one by its Poisson-like attributes.
In classically defined particle systems, the jump discontinuities of the driving inputs have fixed size $\lambda_1/K$ instead of being i.i.d according to a normal law with mean $\lambda_1/K$ and variance $\lambda_2/K$.
In \cite{Delarue:2015b}, Delarue {\it et al.} showed that the property of propagation of chaos holds in the infinite-size limit of classically defined particle systems.
This property establishes that in the infinite-size limit, a representative process $X_t=\lim_{K \to \infty} X_{K,i,t}$ follows 
a mean-field dynamics satisfying the PDE problem \eqref{eq:classicalMV} and  \eqref{eq:classicalFlux} originally considered in \cite{Caceres:2011,Carrillo:2013}.
This particle-system-based approach automatically yields the existence of---possibly explosive---solutions to the PDE problem  \eqref{eq:classicalMV} and \eqref{eq:classicalFlux}.
Here, by contrast with \cite{Delarue:2015b} and in line with \cite{Caceres:2011,Carrillo:2013}, we only conjecture 
propagation of chaos to motivate the form of the PDE problem considered in this work.
The form of this PDE problem is supported by numerical simulations even when the particle system asymptotically experiences blowups.

The propagation of chaos states that for exchangeable initial conditions, the processes $X_{K,i,t}$, $1 \leq i \leq K$, 
become i.i.d. in the limit of infinite-size networks $K \to \infty$, conditioning to the mean interaction field they experience.
By the law of large numbers, the mean-field interaction governing the dynamics of a representative process $X_t$
 is mediated by the expectation of the cumulative count
\begin{eqnarray}\label{eq:MFE}
F(t)
=
\lim_{K \to \infty} F_{K,i}(t)
=
\Exp{M_t}\, ,
\end{eqnarray}
where the process $M_t$ counts the successive first-passage times of the representative process $X_t$ to the zero spiking threshold.
This corresponds to the formal definition
\begin{eqnarray*}
M_t = \sum_{n>0} \mathbbm{1}_{[0,t]}(\tau_n) \, , \quad \mathrm{with} \quad \tau_{n+1} = \inf \left\{  t>r_n = \tau_n+s_n  \, \big  \vert \,  X_{t}  \leq 0 \right\} \, ,
\end{eqnarray*}
where $s_n$, $n \in \mathbbm{N}$, are i.i.d. according to $P_\epsilon$.
Observe that by definition, $F$ is a nondecreasing \cadlag function.
This allows one to define the firing rate $f$ in the distribution sense 
as the generalized derivative of $F$, i.e., $f = \mdd F /\mdd t$.
Correspondingly, we will refer to the $F$  as a cumulative rate function. 
Synchronous events whereby a finite fraction of processes spikes simultaneously,
which we refer to as full blowups, are marked by the presence of a jump discontinuity in $F$.

\begin{remark}
For the conjecture of propagation of chaos to hold, we expect certain moment constraints to hold on $P_\epsilon$. 
In particular, we expect that $P_\epsilon$ at least has a finite first moment, so that a representative
neuron spikes an infinite number of times over $\mathbbm{R}^+$ with probability one.
In this work, we only use the fact that $P_\epsilon$ has finite mean and finite variance.
\end{remark}

By contrast with cMF models, the interaction-mediating cumulative flux $F$ impacts neurons with Poissonian attributes in dPMF models, i.e., via a process 
\begin{eqnarray}
Z_t = \nu_1 t+ \lambda_1 F(t)   +  W_{\nu_2 t + \lambda_2 F(t)} ,\nonumber
\end{eqnarray}
where $W$ is a driving Wiener process.
Accordingly, a candidate stochastic equation for the dPMF dynamics of a representative process is given by
\begin{eqnarray}\label{eq:stochEqMF}
X_t = X_0 - \int_0^t \mathbbm{1}_{ \{ X_{s^-} >0 \} } \mdd Z_s + \Lambda N_t \, ,
\end{eqnarray}
where the  process $N_t$ counts the past reset times  of the representative process
\begin{eqnarray*}
N_t = \sum_{n>0} \mathbbm{1}_{[0,t]}(\tau_n+s_n)  \, .
\end{eqnarray*}
The stochastic integral featured above can be classically interpreted if one assumes that the cumulative flux $F$ is differentiable, 
which corresponds to nonexplosive dPMF dynamics.
By contrast, it is unclear how to interpret that same stochastic integral when $F$ is allowed to behave singularly and to exhibit jump discontinuities.
In fact, many interpretations are possible leading to distinct possible notions of blowup solutions.

\subsection{McKean-Vlasov equations with Poissonian attributes}\label{sec:MKV}

In order to justify the definition of the dPMF problem given in Definition~\ref{def:mainProb1}, 
let us consider the PDE problem satisfied by dPMF dynamics under assumptions that these are well behaved~\cite{Delarue:2015}.
By well-behaved, we mean that the cumulative rate $t \mapsto F(t) = \Exp{M_t}$ is differentiable 
on $[0,T)$ for some $T>0$ (which excludes blowups).
Such a well-behaved regime always exists for initial conditions far enough from the spiking threshold,
which includes any elementary initial conditions of the form $p(0,x)=\delta_{x_0}(x)$ with $x_0>0$.
Assuming $f$ known in the well-behaved regime, the probability density $(t,x) \mapsto p(t,x)$ 
of a representative process $X_t$ satisfying \eqref{eq:stochEqMF} solves the Fokker-Plank equation \eqref{eq:mainProb1a}
given by
\begin{eqnarray}
\partial_t p = (\nu_1+\lambda_1 f(t))  \partial_x p + (\nu_2+\lambda_2 f(t))  \partial^2_x p/2  +  f_\epsilon(t) \delta_{\Lambda} \, , \nonumber
\end{eqnarray}
with absorbing boundary condition  $p(t,0) = 0$.
Such an absorbing boundary condition ensures that the process becomes inactive upon reaching zero.
The reset rate $f_\epsilon$ in \eqref{eq:mainProb1a} is expressed as the convolution 
\begin{eqnarray}
f_\epsilon(t)=\int_\epsilon^t f(t-s) \, \mdd P_\epsilon(s) = \int_\epsilon^t f(t-s) \, p_\epsilon(s)  \, \mdd s \, . \nonumber
\end{eqnarray}
owing to the i.i.d. nature of the refractory delays and 
the fact that the process is assumed initially active, i.e., $X_0>0$.
Finally, the Dirac-delta source term $f_\epsilon(t) \delta_\Lambda$ in \eqref{eq:mainProb1a} models the reset
of newly activated processes in $\Lambda$, 
which happens in $t$ with reset rate $f_\epsilon(t)$.

The Fokker-Plank equation \eqref{eq:mainProb1a} is an exemplar of
 McKean-Vlasov equations because its coefficients depend on
the density solution via \eqref{eq:fluxBound}, which was given without justification in the introduction.
Here, we establish this relation from the requirement that \eqref{eq:mainProb1a} conserves probability.
Specifically, by \eqref{eq:mainProb1c}, we have
\begin{eqnarray}
\partial_t \left( \int_{0}^\infty \! p(t,x) \, \mdd x \right) = f_\epsilon(t) - f(t) \, . \nonumber 
\end{eqnarray}
Using \eqref{eq:mainProb1a}, we can evaluate the left term above as
\begin{eqnarray}
\partial_t \left( \int_{0}^\infty \! p(t,x) \, \mdd x \right)  &=&  \int_{0}^\infty  \big( (\nu_1  +  \lambda_1 f(t) ) \partial_x p(t,x) +  ( \nu_2  +  \lambda_2 f(t) ) \partial^2_{x} p(t,x) /2 \big)  \, \mdd x  \nonumber \\
 &&  +  \int_{0}^\infty f_{\epsilon}(t) \delta_{\Lambda}(x) \, \mdd x \, .  \nonumber 
\end{eqnarray}
Performing integration by parts with absorbing boundary condition in zero then yields
\begin{eqnarray}
\partial_t \left( \int_{0}^\infty \! p(t,x) \, \mdd x \right) = -\big(\nu_2 \! + \! \lambda_2 f(t)\big) \partial_x p(t,0)/2 + f_\epsilon(t) \, ,  \nonumber 
\end{eqnarray}
which together with \eqref{eq:mainProb1c} imposes the conservation condition 
$f(t) = (\nu_2 \! + \! \lambda_2 f(t)) \partial p_x(t,0) /2$.
Solving for $f(t)$ yields \eqref{eq:fluxBound}, which we recall as:
\begin{eqnarray*}
f(t) = \frac{\nu_2 \partial_x p(t,0)}{2 - \lambda_2 \partial_x p(t,0)} \, .
\end{eqnarray*}

\begin{remark} 
The relation above shows that well-behaved dPMF dynamics blowup as soon as the first time $T$ for which $\lim_{t \to T^-} \partial_x p(t,0)/2 = 1/\lambda_2$.
This is by contrast with cMF dynamics for which a blowup occurs for the first time at $T$ if $\lim_{t \to T}\partial_x p(t,0)/2 = \infty$.
This finite behavior at blowup makes dPMF dynamics amenable to analytical treatment.
\end{remark}


Observe that so far, we have avoided the problem of immediate explosions by considering initial conditions of the form $p_0(x)=\delta_{x_0}(x)$ 
with $x_0>0$ so that $\partial_x p(0,0)/2=0<1/\lambda_2$.
More generally, to avoid immediate explosions, we will consider nonexplosive initial conditions
as specified by Definition~\ref{def:mainProb2} in the introduction.
Definition~\ref{def:mainProb2} is such that the following result holds:

\begin{proposition}\label{prop:inactProp}
Given a nonexplosive initial condition $(p_0,f_0)$ in $\mathcal{M}(\mathbbm{R}^+) \times \mathcal{M}(\mathbbm{R}^-)$,
consider a solution to the dPMF problem \eqref{eq:mainProb1} with cumulative firing rate function $F:\mathbbm{R} \mapsto \mathbbm{R}$ such that $F(t)=F_0(t)=-\int_{(t,0)} f_0(\mdd s)$, $t \leq 0$.
Then, for all $t \geq 0$, we have
\begin{eqnarray*}
1-\int_0^\infty p(t,x) \, \mdd x = \int_{[0,\infty)}  (1-P_\epsilon(s)) \, \mdd F(t-s) \, .
\end{eqnarray*}
\end{proposition}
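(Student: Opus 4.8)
The plan is to integrate the mass-conservation equation \eqref{eq:mainProb1c} in time and then recognize the resulting expression as a renewal-type decomposition of the total probability mass into an ``active'' and a ``refractory'' part. Conceptually, $1-\int_0^\infty p(t,x)\,\mdd x$ is the probability that the representative particle is refractory at time $t$, i.e.\ that it last spiked at some time $v\le t$---counted by the cumulative-rate measure $\mdd F$, which on $(-\infty,0]$ equals the prescribed history $\mdd F_0$---with a refractory delay exceeding $t-v$, an event of probability $1-P_\epsilon(t-v)$; summing over $v$ and substituting $s=t-v$ gives exactly the right-hand side. Throughout, $F$ is treated as a nondecreasing \cadlag function, so the argument is automatically insensitive to blowups, which merely contribute atoms to $\mdd F$.

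Concretely, I would first integrate \eqref{eq:mainProb1c} over $[0,t]$ to obtain
\begin{equation*}
\int_0^\infty p(t,x)\,\mdd x \;=\; \int_0^\infty p_0(x)\,\mdd x \;+\; F_\epsilon(t) \;-\; F(t),
\qquad F_\epsilon(t):=\int_0^t f_\epsilon(u)\,\mdd u ,
\end{equation*}
using $F(0)=0$. Since $f_\epsilon=f*p_\epsilon$ and $F\equiv F_0$ on $(-\infty,0]$, Tonelli gives $F_\epsilon(t)=\int_{[\epsilon,\infty)}\big(F(t-s)-F(-s)\big)\,\mdd P_\epsilon(s)$; then writing $F(t-s)-F(-s)=\int_{\mathbbm{R}}\mathbbm{1}_{\{-s<v\le t-s\}}\,\mdd F(v)$, swapping the $s$- and $v$-integrations (Tonelli again, everything nonnegative), and collapsing the $s$-range $(-v,t-v]\cap[\epsilon,\infty)$ using that $P_\epsilon$ vanishes on $(-\infty,\epsilon)$,
\begin{equation*}
F_\epsilon(t) \;=\; \int_{(-\infty,t]}\big(P_\epsilon(t-v)-P_\epsilon(-v)\big)\,\mdd F(v).
\end{equation*}
Substituting this back, splitting $\int_{(-\infty,t]}=\int_{(-\infty,0]}+\int_{(0,t]}$, and replacing $\int_0^\infty p_0(x)\,\mdd x$ by $1-\int_{(-\infty,0]}\big(1-P_\epsilon(-v)\big)\,\mdd F(v)$ via the normalization condition \eqref{eq:mainProb2c} (which is precisely the $t=0$ instance of the claim), the three $(-\infty,0]$ integrals collapse to $-\int_{(-\infty,0]}\big(1-P_\epsilon(t-v)\big)\,\mdd F(v)$ and the $(0,t]$ terms to $-\int_{(0,t]}\big(1-P_\epsilon(t-v)\big)\,\mdd F(v)$; adding these and the leftover constant $1$ yields $\int_0^\infty p(t,x)\,\mdd x = 1-\int_{(-\infty,t]}\big(1-P_\epsilon(t-v)\big)\,\mdd F(v)$, which is the assertion after $s=t-v$.

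The main obstacle will not be this algebra but the justification that every integral appearing is finite: since $F_0$ may decrease to $-\infty$ (at most linearly, by \eqref{eq:mainProb2a}), a careless split could produce an $\infty-\infty$ cancellation. The clean route is to note that $0\le P_\epsilon(t-v)-P_\epsilon(-v)\le 1-P_\epsilon(-v)$ on $(-\infty,0]$ and that $v\mapsto 1-P_\epsilon(-v)$ is $\mdd F$-integrable there---which is exactly what \eqref{eq:mainProb2c} guarantees, and where \eqref{eq:mainProb2a} together with the finiteness of the first moment of $P_\epsilon$ enters---so that the Tonelli interchanges and all recombinations are legitimate. A secondary point to flag is that $t\mapsto\int_0^\infty p(t,x)\,\mdd x$ and $F$ are \cadlag with possible downward jumps at blowup times, so \eqref{eq:mainProb1c} is to be integrated in the Stieltjes sense; this is already implicit in the framework of Section~\ref{sec:propChaos}. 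As a consistency check, one could also bypass the PDE entirely and prove the identity directly by decomposing the law of the representative particle of Section~\ref{sec:propChaos} into its active and refractory components and computing the latter by conditioning on the spike history.
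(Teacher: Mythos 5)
Your argument is correct, and it is in essence the same as the paper's: both rewrite the conservation identity \eqref{eq:mainProb1c} so that the refractory mass $1-\int_0^\infty p(t,x)\,\mdd x$ equals $F(t)-F_\epsilon(t)$, recast $F_\epsilon$ as a Stieltjes integral of $P_\epsilon$ against $\mdd F$, and anchor the constant with the normalization condition \eqref{eq:mainProb2c}. The execution differs in one technical layer. The paper establishes $\int_{[0,\infty)}(1-P_\epsilon(s))\,\mdd F(t-s)=F(t)-F_\epsilon(t)$ by Stieltjes integration by parts on a truncated interval $[0,S)$ and then sends $S\to\infty$, killing the boundary term $(1-P_\epsilon(S))F(t-S^-)$ with Cantelli's inequality (hence its explicit use of the second moment of $P_\epsilon$); it then observes that the $t$-derivative of $\int p + (F-F_\epsilon)$ vanishes by \eqref{eq:mainProb1c} and evaluates at $t=0$. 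You instead integrate \eqref{eq:mainProb1c} once, unfold $F_\epsilon$ via Tonelli through the indicator $\mathbbm{1}_{\{-s<v\le t-s\}}$, and collapse the $s$-integral to $P_\epsilon(t-v)-P_\epsilon(-v)$; finiteness is then handled by dominating $P_\epsilon(t-v)-P_\epsilon(-v)\le 1-P_\epsilon(-v)$ on $(-\infty,0]$ and invoking \eqref{eq:mainProb2c}. This Tonelli route is arguably cleaner: it never produces a boundary term, so it only needs $\mdd F$-integrability of $1-P_\epsilon(-\cdot)$ on the past, which the normalization already encodes (and which, given \eqref{eq:mainProb2a}, amounts to finiteness of the first moment of $P_\epsilon$, slightly less than Cantelli asks for). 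Both proofs are sound; the manipulations are dual versions of each other, since Stieltjes integration by parts for two monotone functions is precisely Fubini for the ordered indicator underneath.
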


\begin{proof}
Given $S>0$, performing integration by parts for Stieltjes integrals, we obtain
\begin{align*}
\int_{[0,S)}   (1-P_\epsilon(s)) \,  \, \mdd F(t-s)
&= 
- \big[ (1-P_\epsilon(s)) F(t-s) \big]^{S^-}_{0} - \int_{[\epsilon,S)}   F(t-s) p_\epsilon(s) \, \mdd s \, , \\
&= 
F(t) - (1-P_\epsilon(S)) F(t-S^-)  - \int_{[\epsilon,S)}   F(t-s) p_\epsilon(s) \, \mdd s \, .
\end{align*}
Since $P_\epsilon$ has finite second-order moment, by Cantelli's inequality, we have
\begin{align*}
1-P_\epsilon(S) = \Prob{s_\epsilon > S} = \Prob{s_\epsilon - \Exp{s_\epsilon} > S - \Exp{s_\epsilon} } = \frac{\Var{s_\epsilon}}{\Var{s_\epsilon} + (S - \Exp{s_\epsilon})^2 } \, .
\end{align*}
By the integrability condition \eqref{eq:mainProb2a}, this implies that
\begin{align*}
 (1-P_\epsilon(S)) \vert F(t-S^-) \vert \leq O \left( \frac{\vert t-S \vert \Var{s_\epsilon}}{\Var{s_\epsilon} + (S - \Exp{s_\epsilon})^2 } \right)
\xrightarrow[S \to \infty]{} 0 \, .
\end{align*}
so that we have
\begin{align*}
\int_{[0,S)}   (1-P_\epsilon(s)) \, \mdd F(t-s) 
= 
F(t)  - \int_{[\epsilon,S)}   F(t-s) p_\epsilon(s) \, \mdd s  =  F(t) -F_\epsilon(t) \, .
\end{align*}
Therefore, using \eqref{eq:mainProb1c}, we have
\begin{align*}
\mdd \left[ \int_0^\infty p(t,x) \, \mdd x + \int_{[0,\infty)}   (1-P_\epsilon(s)) \, \mdd F(t-s) \right] 
= 0 \, .
\end{align*}
The result follows from the normalization condition \eqref{eq:mainProb2c}:
\begin{align*}
\int_0^\infty p_0(x) \, \mdd x + \int_{[0,\infty)} (1-P_\epsilon(s)) \,\mdd F_0(t-s) 
= 1 \, .
\end{align*}

\end{proof}

At this point, it is important to observe that we have only justified Definition~\ref{def:mainProb1} 
for well-behaved dPMF dynamics and it remains unclear how to interpret the meaning of \eqref{eq:mainProb1} 
after a blowup, when $f$ is allowed to behave singularly.
We devote the next section to propose physical solutions for blowup dPMF dynamics.


\begin{remark} The emergence of blowups in the solutions of the above McKean-Vlasov problem has been studied in the past:

\begin{enumerate}
\item  A special case of the problem  \eqref{eq:mainProb1} was considered in~\cite{TTPW} and in~\cite{TTLS} for the restricted case of $\nu_1/\nu_2=\lambda_1/\lambda_2$, for which closed-form analysis is possible.
Such closed-form analysis reveals that in the limit of vanishing delay $\epsilon \to 0^+$, blowup will happen independent of the initial condition if $\lambda_1=\lambda_2=\lambda>\Lambda$, where $\Lambda$ is the reset value.
This is consistent with the stationary solutions of  \eqref{eq:mainProb1} being unphysical for $\lambda>\Lambda$, as these yields negative steady-state firing rate: $f=\nu/(\Lambda-\lambda)$.
\item The nondelayed versions of the problem \eqref{eq:mainProb1}  belong to the extended class of dynamics introduced by Dou \emph{et al.} in~\cite{dou2024noisy}, where these authors also considers the mean-reverting effect of modeling neuronal leakage.
Here, we do not consider neuronal leakage as our focus is on the analytical characterization of blowups, whose occurrences do not require to include leakage.
From a modeling standpoint, this corresponds to considering the class a perfect integrate-and-fire neurons, rather than the more generic leaky integrate-and-fire neurons.
\item Both~\cite{TTPW} and~\cite{dou2022dilating} proposed to define blowup solutions for the problem  \eqref{eq:mainProb1} via a time-change approach.
Although, these solutions present the same blowup trigger conditions, they have conflicting criteria for blowup termination.
A benefit of the criterion for blowup termination presented in~\cite{TTPW} is that the corresponding solutions do not suffer from eternal blowups, 
which allows one to define global solutions with an infinite number of blowups in some parameter regime~\cite{TTLS}.
However, it is not obvious why one criterion should be the physical one in the absence of a unified framework to compare them. 
Recent work using a regularization approach proposed such a framework for the case $\lambda_2 =0$~\cite{dou2024noisy}.
Consistent with the result presented here, this study suggested that the blowup termination criterion in~\cite{dou2022dilating} is not physical.
\end{enumerate}
\end{remark}


\section{Candidate physical solutions via time change}\label{sec:physTC}

In this section, we specify our candidate solutions for physical dPMF dynamics with blowups.
In Section~\ref{sec:TC}, we define the PDE problem associated to the time-changed dPMF dynamics $Y_\sigma=X_{\Psi(\sigma)}$, which is obtained via an implicitly defined change of variable $\sigma=\Phi(t) \Leftrightarrow t=\Psi(\sigma)$ in the absence of blowups.
In Section~\ref{sec:solTC}, we show that assuming $\Psi=\Phi^{-1}$ known and independent of the presence of blowups, there is a unique density function $(\sigma,y) \mapsto q(\sigma,y)$ solving the dPMF time-changed problem.
In Section~\ref{sec:fixed-point}, we define the candidate equation that the inverse time change $\Psi$ must obey for blowup dPMF dynamics to be physical.
In Section~\ref{sec:contract}, we show that our candidate fixed-point equation specifies a unique, globally defined, blowup dPMF dynamics.


\subsection{The time-changed problem}\label{sec:TC}

Following prior works~\cite{TTPW,dou2022dilating}, we look for solutions to the buffered dPMF problem via time change, i.e., under the form $X_t=Y_{\Phi(t)}$ for some increasing function $\Phi$.
The form of the problem \eqref{eq:mainProb1} suggests adopting the time change function
\begin{eqnarray}\label{eq:PhiDef}
\Phi(t) = \nu_2 t + \lambda_2 F (t) = \nu_2 t + \lambda_2 \int_0^t f (s) \, \mdd s \, .
\end{eqnarray}
This motivates defining the space of admissible time-change functions as follows:

\begin{definition}\label{def:PhiBuff}
The set of admissible time change $\mathcal{T}_0$ is the set of functions $\Phi:[0, \infty)\to[0, \infty)$, with $\Phi(0)=0$,
such that for all $y,x \leq -\epsilon$, $x \neq y$, the difference quotients $w_\Phi(y,x)$ satisfy
\begin{eqnarray*}
\nu_2 \leq w_\Phi(y,x)=\frac{\Phi(y)-\Phi(x)}{y-x}  \, .
\end{eqnarray*}
\end{definition}

All time changes $\Phi$ in $\mathcal{T}$ are increasing, one-to-one on their domain. 
We denote their uniquely defined, continuous, inverses by $\Psi=\Phi^{-1}$ owing 
to the important role that inverse time changes will play.
We will also refer to the changed time as $\sigma = \Phi(t)$.
The space of inverse time-change functions is naturally defined as:

\begin{definition}\label{def:PhiBuff}
The set of admissible inverse time change $\mIz$ is the set of functions $\Psi:[0, \infty)\to[0, \infty)$ with $\Psi(0)=0$, such that for all $y,x \geq 0$, $x \neq y$, the difference quotients $w_\Psi(y,x)$ satisfy
\begin{eqnarray*}
0 \leq  w_\Psi(y,x)=\frac{\Psi(y)-\Psi(x)}{y-x} \leq \frac{1}{\nu_2} \, .
\end{eqnarray*}
\end{definition}


The main advantage of considering inverse time change $\Psi \in \mIz$ is that by the
Arz\'ela-Ascoli theorm, these belong to a compact space when considered
on finite interval $[0,T]$.
We will denote the corresponding compact space as $\mIz([0,T])$, and more generally, for
any interval $I \subset \mathbbm{R}$, $0\in I$, we will denote by $\mIz(I)$ 
the set of function $\Psi$ such that $\Psi(0)=0$ and $w_\Psi \leq 1/\nu_2$ over $I$
with the convention that $\mIz(\mathbbm{R}^+)=\mIz$ for conciseness.
For being uniformly Lipschitz increasing function, all functions $\Psi \in \mIz(I)$ admit
positive, bounded Radon-Nikodym derivatives with respect to the Lebesgue measure.
For simplicity, we denote these generalized derivatives as $\Psi'=\mdd \Psi/\mdd \sigma$.
We will consider the inverse time change $\Psi$ 
as the main unknown parametrizing the solutions to dPMF dynamics.
The main caveat to this approach is that the  change of variable $\sigma=\Phi(t) \Leftrightarrow t=\Psi(\sigma)$
becomes degenerate in the presence of blowups.
Bearing this caveat in mind, one can see that this time-changed approach
allows one to consider the dPMF problem \eqref{eq:mainProb1} as a particular case of 
a more general PDE problem that is well-posed for any $\Psi \in \mIz$, independent of $\Psi$ satisfying 
\eqref{eq:PhiDef}, and in particular, independent of the occurrence of blowups.
This more general, time-changed PDE problem can be defined for the following notion of initial conditions:

\begin{definition}\label{def:TCinit}
\begin{subequations}\label{eq:TCinit}
Generic initial conditions are specified as $q(0,y) =  q_0(y)$, $y>0$, and $g(\sigma) = g_0(\sigma)$, $ \sigma \leq 0$, 
where $(q_0,g_0)$ are nonnegative measures in $\mathcal{M}(\mathbbm{R}^+) \times \mathcal{M}(\mathbbm{R}^-)$ satisfying:
\begin{itemize}
\item the integrability condition
\begin{eqnarray}\label{eq:TCinita}
G_0(0) = \sup_{\sigma \leq 0} \left( \frac{\sigma}{\lambda_2} + \int_{(\sigma,0]} g_0(\mdd \tau)  \right) < \infty \, ,
\end{eqnarray}
\item the boundedness condition
\begin{eqnarray}\label{eq:TCinitb}
\partial_y q_0(0)= \lim_{y \downarrow 0}\frac{q_0((0,y])}{y}   < \infty\, ,
\end{eqnarray}
\item the normalization condition
\begin{eqnarray}\label{eq:TCinitc}
\int_{0}^\infty q_0(y) \, \mdd y + \int_{-\infty}^0 (1-Q_0(\sigma)) \, \mdd G_0(\sigma) = 1 \, ,
\end{eqnarray}
\end{itemize}
where we have defined 
$G_0(\sigma)= - g_0((\sigma,0])  + G_0(0)$ and 
$Q_0 (\sigma) = P_\epsilon (-\Psi_0(\sigma))$ with $\Psi_0 \in \mIz(\mathbbm{R}^-)$ specified by:
\begin{align}\label{eq:TCinitd}
\Psi_0(\sigma) 
=  \frac{1}{\nu_2}\sup_{\tau \leq \sigma} \left( \tau - \lambda_2 G_0(\tau)  \right) \, ,\quad \sigma \leq 0 \, .
\end{align}
\end{subequations}
\end{definition}

\begin{remark}
Observe that by definition of $G_0(\sigma)$, $\sigma \leq 0$, $\Psi_0$ satisfies
\begin{align*}
\Psi_0(\sigma) 
&=
\frac{1}{\nu_2}  \left( \sup_{\tau \leq \sigma} \left( \tau + \lambda_2 g_0((\sigma,0]) \right) + \lambda_2 G_0(0)  \right)\, , \\
&=  \frac{1}{\nu_2}  \left(  \sup_{\tau \leq \sigma}\left( \tau + \lambda_2 \int_{(\sigma,0]} g_0(\mdd \tau)   \right)  - \sup_{\tau \leq 0} \left( \tau + \lambda_2 \int_{(\sigma,0]} g_0(\mdd \tau)  \right) \right)  \, ,
\end{align*}
so that we clearly have $\Psi_0(0)=0$.
\end{remark}

Any nonexplosive initial conditions $(p_0,f_0)$ for the original dPMF problem can be mapped 
onto generic initial conditions $(q_0,g_0)$ in the time-changed picture.
This mapping is straightforward when no blowup has occurred in the past, i.e., when the firing rate $f_0$ 
is continuous, so that 
\begin{eqnarray*}
\mathbbm{R}^- \ni t \mapsto \Phi_0(t) = \nu_2 t + \lambda_2 \int_0^t f_0(s) \, \mdd s = \nu_2 t + \lambda_2 F_0(t) 
\end{eqnarray*}
is a continuously differentiable, increasing function. 
As a result, the change of variable $\sigma = \Phi_0(t) \Leftrightarrow t = \Psi_0(\sigma)$ is well defined,
allowing one to specify the time-changed version of $f_0$ as  $g_0= \partial_\sigma G_0$, where $G_0(\sigma)=F_0(\Psi_0(\sigma))$.
Note that we necessarily have $G_0(0)=0$ in this case.
The spatial component of the initial conditions is always identically matched:  $q_0=p_0$.

One can then check that $(q_0,g_0)$ satisfies the properties \eqref{eq:TCinit} and therefore specifies a generic initial condition.
By the integrability condition \eqref{eq:mainProb2a} and the continuity of $F_0$, if $F_0 \neq 0$ (the case $F_0=0$ is trivial), 
there is $A>0$ and $T>0$ such that $A t \leq F_0(t) < 0 $ for all $t \leq T$.
Thus, given $\sigma = \Phi_0(t) \Leftrightarrow t = \Psi_0(\sigma)$, for $t>T$, we have
\begin{eqnarray*}
\frac{G_0(\sigma)}{\sigma} = \frac{F_0(t)}{\Phi_0(t)} = \frac{F_0(t)}{\nu_2t + \lambda_2 F_0(t)} \leq \frac{1}{\lambda_2 + \nu_2/A} < \frac{1}{\lambda_2} \, .
\end{eqnarray*}
This implies that $\sigma-\lambda_2 G(\sigma) \to -\infty$ when $\sigma \to -\infty$, and thus that \eqref{eq:TCinita} holds.
The boundedness condition \eqref{eq:TCinitb} directly follows from \eqref{eq:mainProb2b}.
Finally, \eqref{eq:TCinitc} follows from \eqref{eq:mainProb2c} with the time-changed cumulative distribution $Q_0=P_\epsilon \circ (-\Psi_0)$.

We refer to the initial conditions specified in Definition~\ref{def:TCinit} as generic because they 
generalize nonexplosive initial conditions. 
To see this, note that the spatial component $q_0$ is only required to have a bounded derivative
by \eqref{eq:TCinitb} instead of satisfying the more restrictive nonexplosive condition \eqref{eq:mainProb2b}.
This is because in the time-changed picture, it becomes possible to initialize dPMF problems
during a blowup episode, which involves time for which $\partial_x q(\sigma,0)/2>1/\lambda_2$.
Actually, the notion of generic initial conditions is precisely defined to allow for the possibility of 
blowing up at the initial time. 
Before further discussing this point,
it is instructive to define the time-changed PDE problem for dPMF dynamics as follows:

\begin{definition}\label{def:TCPDE} 
\begin{subequations}
\label{all:TCPDE}
Given a time change  $\Psi \in \mIz$ and a generic initial condition $(q_0,g_0)$ in $\mathcal{M}(\mathbbm{R}^+) \times \mathcal{M}(\mathbbm{R}^-)$, let us extend the definition of $\Psi$ in $\mIz(\mathbbm{R})$ by setting $\Psi(\sigma)=\Psi_0(\sigma)$ for $\sigma \leq 0$ and define the time-changed, refractory-delay distribution $Q_\sigma$ as 
\begin{eqnarray}\label{eq:EtaProb}
Q_\sigma (\eta) 
= 
P_\epsilon \big(\Psi(\sigma) - \Psi(\sigma-\eta)\big) \, .
\end{eqnarray}
Then, for fixed $\Psi \in \mIz$, the time-changed PDE problem for dPMF dynamics consists in finding the density function $(\sigma,y) \mapsto q(\sigma,y)= \Prob{Y_\sigma \in \mdd y} / \mdd y$ solving 
\begin{eqnarray}\label{eq:TCdyn}
\partial_\sigma q 
=
\mu(\sigma)  \partial_y q + \frac{1}{2} \partial^2_y q + \partial_\sigma G_\epsilon(\sigma)  \delta_\Lambda \,   , \quad q(\sigma,0)=0 \, ,
\end{eqnarray}
on $\mathbbm{R}^+\times \mathbbm{R}^+$, where the drift function is given by
\begin{eqnarray}\label{eq:TCdrift}
\mu(\sigma) =  \left( \nu_1 - \frac{\lambda_1}{\lambda_2} \nu_2   \right)\Psi'(\sigma)  +  \frac{\lambda_1}{\lambda_2}  \, ,
\end{eqnarray}
where the cumulative function $G$ is such that
\begin{eqnarray}\label{eq:gDef}
\partial_\sigma G(\sigma) = g(\sigma) = \partial_y q(\sigma,0)/2 \, , \quad G(0)=G_0(0) \, ,
\end{eqnarray}
and where the cumulative, delayed, reset rate is specified via 
\begin{align}\label{eq:GPsig}
G_{\epsilon}(\sigma)= \int_0^\infty G(\sigma-\eta)  \, \mdd Q_\sigma(\eta) \, .
\end{align}
\end{subequations}
\end{definition}

Remember that the generalized derivative $\Psi'$ featuring in \eqref{eq:TCdrift}  is  
the Radon-Nikodym derivative of $\Psi$ with respect to the Lebesgue measure,
 $\Psi'=\mdd \Psi /\mdd \sigma$, which is well defined for all $\Psi \in \mIz$.
The following proposition establishes the connection between the time-changed dPMF problem and the original dPMF 
problem under the assumption that $\Phi$ is differentiable,
so that the time change $\sigma=\Phi(t) \Leftrightarrow t=\Phi(\sigma)$ is well posed.

\begin{proposition}\label{prop:PhiToPsi}
Suppose that $(t,x) \mapsto p(t,x)$ solves the problem  \eqref{eq:mainProb1}  for a differentiable time change $\Phi$, 
then $(\sigma,x) \mapsto q(\sigma,x) = p(\Psi(\sigma),x)$ solves the problem \eqref{all:TCPDE}
for $\Psi=\Phi^{-1}$ with $\Phi(t)=\nu_2 t + \lambda_2 F (t)$.
\end{proposition}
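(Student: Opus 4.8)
The plan is to verify that performing the change of variable $\sigma = \Phi(t)$ directly transforms each component of the original problem \eqref{eq:mainProb1} into the corresponding component of the time-changed problem \eqref{all:TCPDE}. Since $\Phi$ is assumed differentiable with $\Phi'(t) = \nu_2 + \lambda_2 f(t) > 0$, the inverse $\Psi = \Phi^{-1}$ is continuously differentiable with $\Psi'(\sigma) = 1/\Phi'(\Psi(\sigma)) = 1/(\nu_2 + \lambda_2 f(\Psi(\sigma)))$, and $\Psi \in \mIz$ by Definition~\ref{def:PhiBuff}. First I would compute the partial derivatives of $q(\sigma,x) = p(\Psi(\sigma),x)$: we have $\partial_\sigma q(\sigma,x) = \Psi'(\sigma)\, \partial_t p(\Psi(\sigma),x)$, while $\partial_x q = \partial_x p$ and $\partial_x^2 q = \partial_x^2 p$ are unchanged since the spatial variable is untouched. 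Substituting the PDE \eqref{eq:mainProb1a} evaluated at $t = \Psi(\sigma)$ and multiplying through by $\Psi'(\sigma)$ gives
\begin{eqnarray*}
\partial_\sigma q = \Psi'(\sigma)\big(\nu_1 + \lambda_1 f(\Psi(\sigma))\big)\partial_x q + \Psi'(\sigma)\big(\nu_2 + \lambda_2 f(\Psi(\sigma))\big)\partial_x^2 q/2 + \Psi'(\sigma) f_\epsilon(\Psi(\sigma))\delta_\Lambda \, .
\end{eqnarray*}
The diffusion coefficient collapses: $\Psi'(\sigma)(\nu_2 + \lambda_2 f(\Psi(\sigma))) = 1$, recovering the $\tfrac12\partial_x^2 q$ term. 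For the drift, writing $\nu_1 + \lambda_1 f = \tfrac{\lambda_1}{\lambda_2}(\nu_2 + \lambda_2 f) + (\nu_1 - \tfrac{\lambda_1}{\lambda_2}\nu_2)$ and multiplying by $\Psi'(\sigma)$ yields exactly $\mu(\sigma)$ as in \eqref{eq:TCdrift}. The boundary condition $q(\sigma,0) = p(\Psi(\sigma),0) = 0$ is immediate.

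Next I would handle the source term and the flux relation. Define $G(\sigma) = F(\Psi(\sigma))$, so that $\partial_\sigma G(\sigma) = \Psi'(\sigma) f(\Psi(\sigma))$; combining this with the conservation relation $f(t) = (\nu_2 + \lambda_2 f(t))\partial_x p(t,0)/2$ from Section~\ref{sec:MKV} and the identity $\Psi'(\sigma)(\nu_2+\lambda_2 f) = 1$ gives $\partial_\sigma G(\sigma) = \partial_x q(\sigma,0)/2$, which is \eqref{eq:gDef}; the initial value $G(0) = F(0) = 0 = G_0(0)$ (extending $F$ by $F_0$ on $\mathbbm{R}^-$ as in the paragraph preceding the proposition) matches. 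For the reset-rate source, I would rewrite the convolution $f_\epsilon(t) = \int f(t-s)\,\mdd P_\epsilon(s)$ at the level of cumulative functions: setting $F_\epsilon(t) = \int F(t-s)\,\mdd P_\epsilon(s)$ (which integrates to $\int_0^\infty p(t,x)\,\mdd x$ complement via Proposition~\ref{prop:inactProp}), one has $f_\epsilon = \partial_t F_\epsilon$. Then $G_\epsilon(\sigma) := F_\epsilon(\Psi(\sigma))$, and the substitution $t - s = \Psi(\sigma - \eta)$, i.e. $s = \Psi(\sigma) - \Psi(\sigma-\eta)$, converts the $P_\epsilon$-integral over $s$ into an integral against $\mdd Q_\sigma(\eta)$ with $Q_\sigma(\eta) = P_\epsilon(\Psi(\sigma) - \Psi(\sigma-\eta))$ exactly as in \eqref{eq:EtaProb}, giving $G_\epsilon(\sigma) = \int_0^\infty G(\sigma-\eta)\,\mdd Q_\sigma(\eta)$, which is \eqref{eq:GPsig}. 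Finally, $\partial_\sigma G_\epsilon(\sigma) = \Psi'(\sigma) f_\epsilon(\Psi(\sigma))$ matches the coefficient of $\delta_\Lambda$ computed above, completing the identification of \eqref{eq:TCdyn}.

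The main obstacle I anticipate is the careful bookkeeping in the change of variables for the delayed reset term, particularly making sure the substitution $s = \Psi(\sigma) - \Psi(\sigma-\eta)$ is valid across the boundary $\sigma - \eta = 0$, where the extended time change switches from $\Psi$ to $\Psi_0$ — one must check that the gluing of $\Psi$ and $\Psi_0$ at $0$ is consistent (both equal $0$ there and, since $f_0$ is continuous, the derivative matches), and that the integral over $\eta \in [0,\infty)$ splits cleanly into the $\sigma - \eta \geq 0$ and $\sigma - \eta < 0$ parts. One also needs the monotonicity of $\eta \mapsto \Psi(\sigma) - \Psi(\sigma - \eta)$ (guaranteed since $\Psi$ is nondecreasing) for the pushforward of $P_\epsilon$ to be well-defined as a Stieltjes measure. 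Everything else is a routine, if somewhat lengthy, application of the chain rule together with the two algebraic identities $\Psi'(\nu_2 + \lambda_2 f) = 1$ and the affine decomposition of the drift; since $\Phi$ is assumed differentiable there are no regularity subtleties and no blowups to worry about in this proposition.
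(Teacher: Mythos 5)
Your proposal is correct and follows essentially the same route as the paper's proof: chain rule for $\partial_\sigma q$, the identity $\Psi'(\nu_2+\lambda_2 f)=1$ to collapse the diffusion coefficient and the affine decomposition of the drift, the pushforward substitution $\eta\mapsto\Psi(\sigma)-\Psi(\sigma-\eta)$ for the delayed reset rate, and finally matching $g=\partial_x q(\sigma,0)/2$. The only cosmetic difference is that you obtain \eqref{eq:gDef} by combining the time-changed definition of $g$ with the already-established flux relation $f=(\nu_2+\lambda_2 f)\partial_x p(t,0)/2$, whereas the paper re-derives it by integrating the time-changed PDE with the absorbing boundary condition and invoking conservation of probability — both are one-line manipulations of the same identity.
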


\begin{proof}
Suppose that the density function $(t,x) \mapsto p(t,x)$ solves  \eqref{eq:mainProb1}.
Under assumption of differentiability, the function $t \mapsto \Phi(t)=\nu_2 t + \lambda_2  F(t) \in \mathcal{T}_0$
admits a unique, increasing, differentiable inverse function $\Psi=\Phi^{-1} \in \mIz$.
By change of variable, the density $q(\sigma,x)=p(\Psi(\sigma),x)$ of the process $Y_\sigma=X_{\Psi(\sigma)}$ is such that 
\begin{eqnarray*}
\partial_\sigma q(\sigma,x)
=
\frac{\partial_t p(t,x)}{\Phi'(t)} \, .
\end{eqnarray*}
Given that $\Phi(t)=\nu_2 t + \lambda_2 F (t)$, we may write the original dPMF dynamics as
\begin{eqnarray*}
\partial_t p =  \left( \nu_1 +  \frac{\lambda_1}{\lambda_2} (\Phi'(t) - \nu_2 )\right) \partial_x p + \frac{\Phi'(t)}{2} \partial^2_x p + f_\epsilon(t) \delta_\Lambda \,.
\end{eqnarray*}
Therefore
\begin{eqnarray}\label{eq:pq1}
\partial_\sigma q 
=
 \left(  \left( \nu_1 - \frac{\lambda_1}{\lambda_2} \nu_2   \right) \frac{1}{\Phi'(t)}  +  \frac{\lambda_1}{\lambda_2} \right) \partial_x p + \frac{1}{2} \partial^2_x p + \frac{f_\epsilon(t)}{\Phi'(t)} \delta_\Lambda \, .
\end{eqnarray}
Let us introduce the time-changed cumulative function
\begin{eqnarray*}
G(\sigma) = F(\Psi(\sigma)) \Leftrightarrow G(\Phi(t)) = F(t) \, ,
\end{eqnarray*}
such that for all $\sigma \geq 0$, we have
\begin{eqnarray*}
g(\sigma) = \partial_\sigma G(\sigma) = \Psi'(\sigma) f(\Psi(\sigma)) = \frac{f(t)}{\Phi'(t)} \, .
\end{eqnarray*}
Similarly, the time-changed, cumulative, reset rate is given by
\begin{eqnarray*}
G_\epsilon(\sigma) = F_\epsilon(\Psi(\sigma)) =   \int_{[\epsilon, \infty)} F(\Psi(\sigma)-s) \,  \mdd P_\epsilon(s) =   \int_{[\epsilon, \infty)} G(\sigma-\eta_s(\sigma)) \,  \mdd P_\epsilon(s)  \, ,
\end{eqnarray*}
where, for all $s \geq \epsilon$, we have defined the increasing, continuous, backward delay function as
\begin{eqnarray*}
\mathbbm{R}^+ \ni \sigma \mapsto \eta_s(\sigma) = \sigma - \Phi(\Psi(\sigma)-s) \, .
\end{eqnarray*}
Performing the change of variables $\eta=\eta_\sigma(s)$ yields 
\begin{eqnarray*}
G_\epsilon(\sigma)
=
\int_0^\infty G(\sigma-\eta) \,  \mdd Q_\sigma(\eta) 
\, ,
\end{eqnarray*}
where the collection of distributions $Q_\sigma$ are defined as
\begin{eqnarray*}
Q_\sigma(\eta) =  P_\epsilon(\Psi(\sigma)-\Psi(\sigma-\eta)) \, .
\end{eqnarray*}
Moreover, it is clear that 
\begin{eqnarray*}\label{eq:pq2}
\partial_\sigma G_\epsilon(\sigma)
=
\partial_\sigma \big[ F_\epsilon(\Psi(\sigma)) \big]
=
\Psi'(\sigma) f_\epsilon(\Psi(\sigma)) 
=
\frac{f_\epsilon(t)}{\Phi'(t)} \, .
\end{eqnarray*}
Since for all $x ,t \geq 0$, $\partial_x p(t,x)=\partial_x q(\sigma,x)$, $\partial^2_x p(t,x)=\partial^2_x q(\sigma,x)$, $\Phi'(t)=\Psi'(\sigma)^{-1}$, \eqref{eq:pq1} and \eqref{eq:pq2} implies that $(\sigma,x) \mapsto q(\sigma,x)$ satisfies \eqref{eq:TCdyn} and \eqref{eq:TCdrift}, as announced.
It remains to check that \eqref{eq:gDef} holds.
This follows from observing that conservation of probability implies
\begin{eqnarray*}
\partial_\sigma \left( \int_0^\infty q(\sigma, x) \, \mdd x \right) 
= \Psi'(\sigma) \partial_t  \left(\int_0^\infty p(t,x) \, \mdd x \right) \
= \frac{f_\epsilon(t)}{\Phi'(t)} - \frac{f(t)}{\Phi'(t)} 
=  \partial_\sigma G_\epsilon(\sigma) - g(\sigma) \, ,
\end{eqnarray*}
whereas  the absorbing boundary condition in \eqref{eq:TCdyn} implies 
\begin{eqnarray*}
\int_0^\infty  \partial_\sigma q(\sigma, x) \, \mdd x 
&=&  - \frac{1}{2} \partial_x q(\sigma,0) + \partial_\sigma G_\epsilon(\sigma)  \, .
\end{eqnarray*}
Equating both the above equations shows that \eqref{eq:gDef} holds.
\end{proof}

In the general situation where full blowups occur in the past, i.e., when $f_0$ has atoms, 
the change of variable $\sigma=\Phi_0(t) \Leftrightarrow t =\Psi_0(\sigma)$ is only well defined 
when $t$ is a continuity point of $\Phi_0$, and thus of $F_0$.
As a result, given a nonexplosive initial condition, the corresponding generic initial condition is only 
unambiguously specified via $G_0=F_0 \circ \Psi_0$ for all $\sigma$ such that $t=\Psi_0(\sigma)$ is a continuity point of $F_0$.
By contrast, many generic initial conditions may be associated to nonexplosive initial conditions 
during blowups, i.e., over intervals $[\Phi_0(t^-),\Phi_0(t)]$ when $t$ is a discontinuity point of $F_0$.
Informally, this follows from the form of the time-changed problem \eqref{all:TCPDE}, which
reveals that generic initial conditions only impact dPMF dynamics via
the cumulative reset rate
\begin{eqnarray*}
G_{\epsilon,0}(\sigma)
=
\int_0^\infty G_0(\sigma-\eta) \,  \mdd Q_\sigma(\eta) 
=
\int_0^\infty G_0(\sigma-\eta) \,  \mdd P_\epsilon(\Psi_0(\sigma)-\Psi_0(\sigma-\eta)) \, . 
\end{eqnarray*}
Since we assume that  $P_\epsilon$ has a smooth density and since $\Psi_0$ is flat over discontinuity intervals $[\Phi_0(t^-),\Phi_0(t)]$, $\Phi_0(t^-)<\Phi_0(t)$, it is clear
that restrictions of $G_0$ to  $[\Phi_0(t^-),\Phi_0(t)]$ do not contribute to the integral 
expression of $G_\epsilon$ above, and thus do not impact dPMF dyanmics.
As a result, in the presence of past blowups, every nonexplosive initial condition  $(p_0,f_0)$ in $\mathcal{M}(\mathbbm{R}^+) \times \mathcal{M}(\mathbbm{R}^- )$ 
is associated to an equivalence class of generic initial conditions. 
We specify this equivalence class in Proposition~\ref{prop:initLink} below, where we adopt
the following notations.
Given a nonexplosive initial condition $(p_0,f_0)$ in $\mathcal{M}(\mathbbm{R}^+) \times \mathcal{M}(\mathbbm{R}^- )$,
we denote the discontinuity times of 
$$\mathbbm{R}^- \ni t \mapsto \Phi_0(t) = \nu_2 t + \lambda_2 F_0(t)= \nu_2 t - \lambda_2 \int_{(t,0]} f_0(s) \, \mdd s$$
by $\{ T_k \}_{k \in \mathcal{K}}$, where $\mathcal{K}$ is at most a countable set.
For conciseness, we further introduce the times $U_k=\Phi_0(T_k)$ and $S_k=\Phi_0(T_k^-)$, $k \in \mathcal{K}$, 
so that the set of discontinuity intervals is the countable union $\cup_{k \in \mathcal{K}} [S_k,U_k]$.
Observe that since $\Phi_0$ is increasing with $w_{\Phi_0} \geq \nu_2$, $[S_k,U_k]$, $k \in \mathcal{K}$,
are nonoverlapping intervals.
Given these notations, we have the following result, whose  proof is given in Section~\ref{sec:initproofs}.

\begin{proposition}\label{prop:initLink}
\begin{subequations}\label{all:initLink}
Any nonexplosive initial conditions $(p_0,f_0)$ in $\mathcal{M}(\mathbbm{R}^+) \times \mathcal{M}(\mathbbm{R}^- )$ specifies 
a class of equivalent generic initial conditions $(q_0, g_0)$ where $q_0=p_0$ and where $g_0$ is uniquely specified
on the closed set $\mathbbm{R}^- \setminus \cup_{k \in \mathcal{K}} (S_k,U_k)$
\begin{eqnarray}\label{eq:initLink1}
G_0(\sigma)= -g_0((\sigma,0))=
\left\{
\begin{array}{ccc}
F_0(\Psi_0(\sigma))  & \mathrm{if}  &   \sigma \notin \cup_{k \in \mathcal{K}} [S_k,U_k) \, , \\
  \lim_{\sigma \downarrow S_k} F_0(\Psi_0(\sigma))  &   \mathrm{if}  &   \sigma \in \{ S_k \}_{k \in \mathcal{K}} \, ,
\end{array}
\right.
\end{eqnarray}
but is only constrained on the open set $\cup_{k \in \mathcal{K}} (S_k,U_k)$ to satisfy
\begin{eqnarray}\label{eq:initLink2}
U_k = \inf \left\{ \sigma > S_k \, \bigg \vert \,  G_0(\sigma)- G_0(S_k)  < (\sigma - S_k ) / \lambda_2 \right\} \, .
\end{eqnarray}
In particular, $G_0$ must be such that $G_0(0)=0$ and must satisfy for all $k \in \mathcal{K}$
\begin{eqnarray}\label{eq:initLink3}
G_0(U_k)- G_0(S_k) = (U_k - S_k ) / \lambda_2\, .
\end{eqnarray}
\end{subequations}
\end{proposition}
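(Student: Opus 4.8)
The plan is to realize the correspondence $(p_0,f_0)\leftrightarrow(q_0,g_0)$ through the generalized inverse $\Psi_0$ of the past time change $\Phi_0(t)=\nu_2 t+\lambda_2 F_0(t)=\nu_2 t-\lambda_2\int_{(t,0]}f_0(s)\,\mdd s$, $t\leq 0$, and then to read off each part of \eqref{all:initLink} from the defining sup-formula \eqref{eq:TCinitd} for $\Psi_0$. First I would record the structure of $\Phi_0$: since $F_0$ is a nondecreasing \cadlag function, so is $\Phi_0$, and $\Phi_0(y)-\Phi_0(x)\geq\nu_2(y-x)$ for $x<y\leq 0$; hence $\Phi_0$ is strictly increasing, its jump times are exactly the atoms $\{T_k\}_{k\in\mathcal{K}}$ of $f_0$, these jumps have size $U_k-S_k=\lambda_2\bigl(F_0(T_k)-F_0(T_k^-)\bigr)$, the range of $\Phi_0$ is $\mathbbm{R}^-\setminus\bigcup_k[S_k,U_k)$, and the closed gaps $[S_k,U_k]$ are pairwise disjoint and mutually separated (no flat stretch of $\Phi_0$ is possible since $w_{\Phi_0}\geq\nu_2>0$). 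Then I would introduce $\Psi_0(\sigma)=\sup\{t\leq 0:\Phi_0(t)\leq\sigma\}$ and check that it is continuous, nondecreasing, $1/\nu_2$-Lipschitz, onto $\mathbbm{R}^-$, with $\Psi_0(0)=0$, constant equal to $T_k$ on each $[S_k,U_k]$, and obeying $\nu_2\Psi_0(\sigma)+\lambda_2 F_0(\Psi_0(\sigma))=\sigma$ on the range of $\Phi_0$ and $=\nu_2 T_k$ on the $k$-th gap (with the left limit $F_0(T_k^-)$ used there).

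Next I would pin down the forced components of the generic initial condition and verify they meet Definition~\ref{def:TCinit}. The spatial part is immediate: $Y_\sigma=X_{\Psi_0(\sigma)}$ together with $\Psi_0(0)=0$ gives $q_0=p_0$, so \eqref{eq:TCinitb} follows from \eqref{eq:mainProb2b} since $\partial_y q_0(0)=\partial_x p_0(0)<2/\lambda_2<\infty$. For the cumulative rate I would set $G_0:=F_0\circ\Psi_0$ on the good set $\mathbbm{R}^-\setminus\bigcup_k(S_k,U_k)$, the value at each $S_k$ taken to be the one-sided limit appearing in \eqref{eq:initLink1}, namely $F_0(T_k^-)$; then $G_0$ is nondecreasing and $g_0=\mdd G_0/\mdd\sigma\in\mathcal{M}(\mathbbm{R}^-)$ is nonnegative with $g_0((\sigma,0])=-F_0(\Psi_0(\sigma))<\infty$. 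From $\lambda_2 G_0(\sigma)=\sigma-\nu_2\Psi_0(\sigma)\geq\sigma$ (because $\Psi_0\leq 0$) I get $G_0(\sigma)\geq\sigma/\lambda_2$ for all $\sigma\leq 0$, so $\sup_{\sigma\leq 0}\bigl(\sigma/\lambda_2-G_0(\sigma)\bigr)=0$ with the supremum attained at $\sigma=0$; this is exactly \eqref{eq:TCinita} and it gives the stated consequence $G_0(0)=0$. Finally, the pushforward of $\mdd G_0$ under $\Psi_0$ equals $\mdd F_0$ — $\Psi_0$ is injective on the good set while the mass $F_0(T_k)-F_0(T_k^-)$ that $G_0$ accrues over $[S_k,U_k]$ is carried onto the atom of $\mdd F_0$ at $T_k$ — and $Q_0(\sigma)=P_\epsilon(-\Psi_0(\sigma))$ is the $\Psi_0$-pullback of $t\mapsto P_\epsilon(-t)$, so a change of variables turns \eqref{eq:TCinitc} into \eqref{eq:mainProb2c}. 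Reading these identities backwards shows conversely that any generic initial condition compatible with $(p_0,f_0)$ — meaning one whose time change restricts to the generalized inverse of $\Phi_0$ on $\mathbbm{R}^-$ and whose $G_0$ transports $F_0$ — must have $q_0=p_0$ and agree with \eqref{eq:initLink1} on the good set, which is the uniqueness half of the statement.

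It remains to treat the gaps and the equivalence, which is the delicate part. The cumulative rate $G_0$ influences the time-changed dynamics only through $\Psi_0$ (via \eqref{eq:TCinitd}) and through the cumulative reset rate $G_{\epsilon,0}(\sigma)=\int_0^\infty G_0(\sigma-\eta)\,\mdd Q_\sigma(\eta)$ with $Q_\sigma(\eta)=P_\epsilon\bigl(\Psi(\sigma)-\Psi_0(\sigma-\eta)\bigr)$. Writing the argument of the supremum in \eqref{eq:TCinitd} as $\tau-\lambda_2 G_0(\tau)$: on the good set it equals $\nu_2\Psi_0(\tau)$, which is nondecreasing, so the running supremum is attained at $\tau$; on a gap $[S_k,U_k]$ one has $S_k-\lambda_2 G_0(S_k)=U_k-\lambda_2 G_0(U_k)=\nu_2 T_k$, so the requirement that the running supremum stay flat at $\nu_2 T_k$ across the gap — equivalently $\tau-\lambda_2 G_0(\tau)\leq\nu_2 T_k$ for $\tau\in(S_k,U_k)$ — is exactly the condition $G_0(\tau)-G_0(S_k)\geq(\tau-S_k)/\lambda_2$, i.e. \eqref{eq:initLink2}. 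Thus \eqref{eq:initLink2} is necessary and sufficient for the sup-formula $\Psi_0$ to agree with $\Phi_0^{-1}$, and letting $\tau\uparrow U_k$ in this inequality together with $G_0(U_k)=F_0(T_k)$ fixed by \eqref{eq:initLink1} yields the equality \eqref{eq:initLink3}. For the reset rate, since $\Psi_0$ is constant on each $[S_k,U_k]$ and $P_\epsilon$ has a smooth density, the Stieltjes measure $\mdd_\eta\bigl[P_\epsilon(\Psi(\sigma)-\Psi_0(\sigma-\eta))\bigr]$ assigns zero mass to any $\eta$-range over which $\sigma-\eta$ stays inside a gap, so the values of $G_0$ on the interiors $(S_k,U_k)$ never enter $G_{\epsilon,0}$; hence two choices of $g_0$ agreeing on the good set and satisfying \eqref{eq:initLink2} produce the same $\Psi_0$ and the same $G_{\epsilon,0}$, hence the same density $q$ and the same fixed-point equation \eqref{eq:mainTCdyn4}, so they are equivalent. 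The main obstacle I anticipate is precisely this gap analysis: making the running-supremum argument airtight in the presence of the \cadlag one-sided limits at the endpoints $S_k$ and $U_k$ and of a possibly accumulating family $\{T_k\}_{k\in\mathcal{K}}$, and checking that at each $U_k$ the supremum in \eqref{eq:TCinitd} is genuinely resumed by a later part of the trajectory so that the infimum in \eqref{eq:initLink2} equals $U_k$ rather than something larger; everything else reduces to routine changes of variables once this consistency is in place.
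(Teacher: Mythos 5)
Your proposal is correct and takes essentially the same route as the paper: identify $\Psi_0$ as the generalized inverse of $\Phi_0$, read off $q_0=p_0$ and $G_0=F_0\circ\Psi_0$ on the good set, verify the three conditions of Definition~\ref{def:TCinit} via the running-sup formula and a Stieltjes change of variables, and observe that the gaps drop out of both $\Psi_0$ and $G_{\epsilon,0}$ provided the tilt bound \eqref{eq:initLink2} holds. The only cosmetic difference is that you obtain \eqref{eq:initLink3} by matching the endpoint values $G_0(S_k)=F_0(T_k^-)$, $G_0(U_k)=F_0(T_k)$ fixed by \eqref{eq:initLink1}, whereas the paper derives it from \eqref{eq:initLink2} by a short contradiction showing $U_k$ is a continuity point of $G_0$, and the paper spells out the tail estimate (via Cantelli) and the Lebesgue--Stieltjes substitution rule that your pushforward statement compresses.
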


\begin{remark}\label{rem:intiBlow}
Generic initial conditions $(q_0,g_0)$ that correspond
to nonexplosive initial conditions all satisfy $G_0(0)=F_0(0)=0$ in \eqref{eq:TCinita}.
By contrast, generic initial conditions for which $G_0(0)>0$ correspond
to time-changed dPMF dynamics that blowup at initial time.
Anticipating on future results (see Section~\ref{sec:compRes}), we will see that 
$G_0(0)$ can be interpreted as $D(0)=E(0)$, the common initial value of the excess functions $E$ and $D$
for the buffer mechanism introduced in Section~\ref{sec:bufferAnalysis}.
\end{remark}


\subsection{Solutions to the time-changed problem via first-passage time analysis} \label{sec:solTC}
In this section, we assume that $\Psi \in \mIz$ is a known function. 
In the absence of delay, i.e., for $P_\epsilon=\delta_0$, it is well-known that independent of the initial data, the time-changed problem \eqref{all:TCPDE}  
admits a unique, well-behaved, global solution  $(\sigma,x) \mapsto q(\sigma,x)$ as soon as $\Psi$ is at least H\"older continuous with exponent $\alpha>1/2$~\cite{Roz84}.
In particular, this solution is such that $\sigma \mapsto g(\sigma)=\partial_x q(\sigma,0)/2$ is a continuous function.
This follows from the fact that the problem can be formulated as an initial-boundary-value problem for the heat equation in a moving strip.
This approach allows one to obtain a representation of the solution $\sigma \mapsto g(\sigma)=\partial_x q(\sigma,0)/2$ in terms of the so-called single-layer potential functions, which are uniquely characterized as solutions of certain Volterra integral equations~\cite{Roz84}.

In our case, the inclusion of distributed delays will not impact the existence and uniqueness results discussed above.
That said, our approach to solving the problem \eqref{all:TCPDE} partially differs from other works~\cite{Carrillo:2013,Delarue:2015,dou2022dilating}, 
as it is based on a characterization of these classical solutions via delayed renewal-type equations.
For the sake of completeness, we thus establish the following result:

\begin{proposition}\label{prop:solG}
\begin{subequations}
\label{all:renew}
Given a time change $\Psi  \in \mIz$ and generic initial conditions $(q_0,g_0)$ in $\mathcal{M}(\mathbbm{R}^+) \times \mathcal{M}((-\infty,0))$,
the solution $(\sigma,x) \mapsto q(\sigma,x)$ to the problem \eqref{all:TCPDE} is parametrized by the unique solution $G$ in $C_1(\mathbbm{R}^+)$ of the renewal-type equation
\begin{eqnarray}\label{eq:Grenew}
G(\sigma) = \int_0^\infty H(\sigma,0;x)  q_0(x) \, \mdd x +  \int_0^\sigma H(\sigma,\tau,\Lambda)  \, \mdd G_\epsilon(\tau) \, , \quad G(0)=G_0(0) \, ,
\end{eqnarray}
where for all $x, \tau \geq 0$, $\sigma \in [\tau, \infty) \mapsto H(\sigma,\tau;x)$ is the cumulative distribution of the following first-passage time  problem for the standard Wiener process $W$:
\begin{eqnarray}\label{eq:firstPass}
\sigma_{\tau,x}=\inf \left\{ \sigma >\tau \, \Bigg \vert \, W_\tau=x, \, W_\sigma <  \int_\tau^\sigma \mu(\xi) \, \mdd \xi \right\} \, .
\end{eqnarray}
\end{subequations}
\end{proposition}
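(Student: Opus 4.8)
\emph{Proof proposal.} The plan is to read \eqref{eq:TCdyn} as the forward Kolmogorov equation of an absorbed diffusion with bounded, time‑dependent drift, and to exploit linearity through a superposition (Duhamel) principle. Since $\Psi \in \mIz$ is $1/\nu_2$‑Lipschitz, the coefficient $\mu$ is bounded and measurable, so the process $Y$ governed by \eqref{eq:TCdyn} is a Brownian motion with drift $-\mu(\sigma)$ absorbed at the fixed level $0$, fed by injections of mass at $\Lambda$ with intensity $\mdd G_\epsilon(\tau)$. First I would reduce the drift away: a Girsanov change of measure removes the bounded drift $-\mu$ and reduces the first‑passage problem to that of standard Brownian motion absorbed at $0$ (equivalently, to the moving‑barrier formulation \eqref{eq:firstPass}, whose barrier $\sigma\mapsto\int_\tau^\sigma\mu(\xi)\,\mdd\xi$ is Lipschitz). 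In either guise the first‑passage law admits a continuous density $\partial_\sigma H(\sigma,\tau;x)$ for $\sigma>\tau$, and since $\Lambda>0$ this density vanishes as $\sigma\downarrow\tau$, a fact used at interval endpoints below. The superposition principle then represents $q$ as the sum of one sub‑probability packet carrying the initial mass $q_0$ and a continuum of packets injected at $\Lambda$, each packet evolving as the absorbed‑diffusion density.

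Second, I would derive \eqref{eq:Grenew} from the flux balance at the absorbing boundary. The mass absorbed at $0$ up to time $\sigma$ equals $\int_0^\sigma \partial_x q(s,0)/2\,\mdd s$; by the superposition it is the mass of the initial packet absorbed by $\sigma$, namely $\int_0^\infty H(\sigma,0;x)q_0(x)\,\mdd x$, plus the mass injected before $\sigma$ and absorbed by $\sigma$, namely $\int_0^\sigma H(\sigma,\tau;\Lambda)\,\mdd G_\epsilon(\tau)$; together with \eqref{eq:gDef} this is exactly \eqref{eq:Grenew}. Conversely, given a solution $G$ of \eqref{eq:Grenew}, one builds $G_\epsilon$ from it through \eqref{eq:EtaProb}–\eqref{eq:GPsig}, reconstructs the density $q$ by superposition with injection intensity $\mdd G_\epsilon$, and then checks $\partial_x q(\sigma,0)/2=G'(\sigma)=g(\sigma)$ by differentiating \eqref{eq:Grenew} and repeating the probability‑conservation computation from the proof of Proposition~\ref{prop:PhiToPsi}. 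This makes solving \eqref{all:TCPDE} equivalent to solving the renewal equation \eqref{eq:Grenew}.

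Third — and here the distributed delay does the essential work — I would obtain existence and uniqueness of $G\in C_1(\mathbbm{R}^+)$ by a causal, step‑by‑step construction rather than a contraction argument. The key observation is that $Q_\sigma$ is supported in $[\nu_2\epsilon,\infty)$: by the Lipschitz bound $\Psi(\sigma)-\Psi(\sigma-\eta)\le\eta/\nu_2<\epsilon$ whenever $\eta<\nu_2\epsilon$, and $P_\epsilon$ vanishes on $[0,\epsilon)$, so $Q_\sigma(\eta)=0$ there by \eqref{eq:EtaProb}. Hence $G_\epsilon(\sigma)$ depends on $G$ only through its restriction to $(-\infty,\sigma-\nu_2\epsilon]$. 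Consequently, on $[0,\nu_2\epsilon]$ the term $G_\epsilon$ is fully determined by the generic initial data $(q_0,g_0)$, so \eqref{eq:Grenew} gives $G$ explicitly there; the smoothness of $p_\epsilon$ makes $G_\epsilon$ absolutely continuous (so $\mdd G_\epsilon$ is a genuine density and the Stieltjes integral is well posed), and continuity of the first‑passage densities together with the boundedness condition \eqref{eq:TCinitb} on $\partial_y q_0(0)$ yield $G\in C_1([0,\nu_2\epsilon])$. Iterating over $[k\nu_2\epsilon,(k+1)\nu_2\epsilon]$, at each stage $G_\epsilon$ on the new interval is determined by the already‑constructed $G$ on earlier intervals, so $G$ extends uniquely to a $C_1$ function on $\mathbbm{R}^+$; matching of the left/right derivatives at the endpoints again follows from the continuity of the first‑passage densities.

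The main obstacle will be the rigorous justification of the superposition representation and the identification of the boundary flux $\partial_x q(\sigma,0)/2$ with the cumulative first‑passage mass when $\mu\in L^\infty$ only — so the classical parabolic single‑layer‑potential theory of \cite{Roz84} does not apply verbatim — and when $q_0$ is merely a measure with finite boundary derivative; the Girsanov reduction to Brownian motion absorbed at the fixed boundary, together with dominated‑convergence estimates near $\sigma=0$, should bridge this gap. A secondary technical point is to confirm that $G_\epsilon$ inherits enough regularity from the smoothness of $p_\epsilon$ (via $G_\epsilon=F_\epsilon\circ\Psi$ with $F_\epsilon=F*p_\epsilon$ smooth and $\Psi$ Lipschitz) for the differentiation step and the Stieltjes integral in \eqref{eq:Grenew} to be legitimate.
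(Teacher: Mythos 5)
Your proposal follows the same architecture as the paper: a Duhamel superposition of the absorbed-diffusion density with reset injections at $\Lambda$ carrying intensity $\mdd G_\epsilon$; identification of the absorbed mass with the integrated boundary flux $\int_0^\sigma \partial_x q(\tau,0)/2\,\mdd\tau$ to obtain \eqref{eq:Grenew}; and causal continuation using the fact that $Q_\sigma$ is supported in $[\nu_2\epsilon,\infty)$, so that $G_\epsilon$ on a new interval depends on $G$ only through earlier intervals. What genuinely differs is how you establish the existence and regularity of the first-passage kernel $H$: you invoke a probabilistic argument (Girsanov, or a coordinate shift to a moving barrier), while the paper argues analytically via the single-layer-potential ansatz and the Volterra theory of \cite{Roz84}. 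Both routes work, but your closing caveat that the single-layer theory ``does not apply verbatim'' for $\mu\in L^\infty$ is a false worry: the relevant hypothesis in \cite{Roz84} bears on the regularity of $M(\sigma)=\int_0^\sigma\mu(\xi)\,\mdd\xi$, not of $\mu$, and asks only for H\"older continuity of exponent $>1/2$; since $M$ is uniformly Lipschitz for every $\Psi\in\mIz$, the classical theory applies exactly as stated---this is precisely what makes the paper's proof work. A small precision: the ``Girsanov reduction'' you describe is really the deterministic translation $y\mapsto y-M(\sigma)+M(\tau)$ that converts the drifted process absorbed at the fixed level $0$ into a standard Wiener process absorbed at the moving barrier $\int_\tau^\sigma\mu$, which is already how \eqref{eq:firstPass} is phrased; an actual Girsanov change of measure would keep the barrier fixed and introduce a likelihood ratio, which is a different (and here unnecessary) device. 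Your iteration on $[k\nu_2\epsilon,(k+1)\nu_2\epsilon]$ is a uniform refinement of the paper's adaptive subdivision $\chi_{k+1}$ defined by $\Psi(\chi_{k+1})=\Psi(\chi_k)+\epsilon$, with $\chi_{k+1}-\chi_k\geq\nu_2\epsilon$; the two are equivalent in content.
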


\begin{proof}
Given $x, \tau \geq 0$, let us consider for all $\sigma \geq \tau$ the initial-boundary-value problem
\begin{eqnarray}\label{eq:absGreen}
\partial_\sigma \kappa
=
\mu(\sigma)  \partial_y \kappa + \frac{1}{2} \partial^2_y \kappa  \, , \quad \mathrm{with} \quad \kappa(\tau,y) = \delta_x(y)  \quad \mathrm{and} \quad \kappa(\sigma,y) = 0 \, .
\end{eqnarray}
Solutions to the above problem represent the density of a timed-changed dynamics $Y_\sigma$ started in $x$ but without reset. 
Following classical analysis~\cite{Roz84}, one can show that \eqref{eq:absGreen} has a unique solution $(\sigma,y) \mapsto \kappa(\sigma,y)$, which admits a representation in terms of the so-called single-layer potential.
Specifically, denoting $M(\sigma)=\int_0^\sigma \mu(\tau) \, \mdd \tau$, one can look for solutions  \eqref{eq:absGreen} under the form
\begin{eqnarray}\label{eq:singelLayer}
\lefteqn{\kappa(\sigma,y)
=
k(y-x-M(\sigma) + M(\tau), \sigma-\tau)  } \nonumber\\
&& \hspace{50pt} - \int_0^\sigma k(y - M(\sigma) + M(\zeta), \sigma-\zeta) h(\zeta) \, \mdd \zeta   \, ,
\end{eqnarray}
where $k$ denotes the standard Gaussian kernel, i.e., $k(x,t)=e^{-x^2/(2t)}/\sqrt{2 \pi t}$ and where $h$ is some as-of-yet unknown function.
Then, one can determine the function $h$ so that the representation given in \eqref{eq:singelLayer}  satisfies the absorbing boundary condition $\kappa(\sigma,0)=0$ for all $\sigma \geq \tau$.
Taking $y \to 0^+$ in \eqref{eq:singelLayer} shows that $h$ must solve 
\begin{eqnarray}\label{eq:singelLayer}
k(-x - M(\sigma) + M(\tau), \sigma-\tau)  
= \int_0^\sigma k(-M(\sigma) + M(\zeta), \sigma-\zeta) h(\zeta) \, \mdd \zeta   \, .
\end{eqnarray}

It is known that the above Volterra equation of the first type admits a unique continuous solution if $M$ is H\"older continuous with exponent $\alpha>1/2$~\cite{Roz84}, 
a condition that holds for all $\Psi \in \mId$ as
\begin{eqnarray*}
M(\sigma) = \int_0^\sigma \mu(\tau) \, \mdd \tau =  \left( \nu_1 - \frac{\lambda_1}{\lambda_2} \nu_2   \right)\Psi(\sigma)  +  \frac{\lambda_1}{\lambda_2} \sigma 
\end{eqnarray*}
is uniformly Lipschitz.
Moreover, it is known that the unique solution to that equation can be interpreted as the density of the first-passage time $\sigma_{\tau,x}$ defined in \eqref{eq:firstPass} (see, e.g.,~\cite{Peskir:2002aa}),
In particular, we have
\begin{eqnarray}\label{eq:hDef}
h(\sigma)=h(\sigma,\tau;x) = \partial_\sigma H(\sigma,\tau; x) = \frac{1}{2}   \partial_y \kappa(\sigma,0) \, .
\end{eqnarray}

Let us now refer to the unique solution to \eqref{eq:absGreen} for given $x,\tau \geq 0$ as $(\sigma,y) \mapsto \kappa(y,x;\sigma,\tau)$.
By Duhamel's principle, solutions to the problem with reset \eqref{all:TCPDE} must have the form
\begin{eqnarray*}
q(\sigma,y)
=
\int_0^\infty \kappa(y,x;\sigma,0) q_0(x) \, \mdd x + \int_0^\sigma  \kappa(y,\Lambda;\sigma,\tau) \, \mdd G_\epsilon(\tau) \, ,
\end{eqnarray*}
where the delayed, cumulative, reset rate $G_\epsilon$ must still be specified.
By conservation of probability, we have $g(\sigma)=\partial_\sigma G(\sigma)= \partial_y q(\sigma, 0)/2$, so that upon differentiating with respect to $y$ and taking $y \to 0$, we have
\begin{eqnarray}\label{eq:renew2}
g(\sigma)
&=&
\frac{1}{2} \left( \int_0^\infty \partial_y \kappa(0,x;\sigma,0) q_0(x) \, \mdd x + \int_0^\sigma  \partial_y \kappa(0,\Lambda;\sigma,\tau) \, \mdd G_\epsilon(\tau)  \right)\, , \nonumber \\
&=&
 \int_0^\infty h(\sigma,0,x) q_0(x) \, \mdd x + \int_0^\sigma  h(\sigma,\tau,\Lambda)\, \mdd G_\epsilon(\tau)  \, .
\end{eqnarray}
Integrating  \eqref{eq:renew2} with respect to $\sigma$ yields the sought-after characterizing equation \eqref{eq:Grenew}.

It remains to justify that \eqref{eq:Grenew} admits a unique solution in $C_1([0,\infty))$.
The existence and uniqueness is obvious by sequential continuation on the intervals $[\chi_k, \chi_{k+1})$, $k \geq 0$, 
for the increasing sequence of times $\chi_k$, $k \geq 0$, which is defined by 
\begin{eqnarray*}
\chi_0=0 \quad \mathrm{and} \quad \Psi(\chi_{k+1}) = \Psi(\chi_k) + \epsilon \, .
\end{eqnarray*}
Note that this solution must  be in  $C_1([0,\infty))$ because for all $x,\sigma>0$, the functions $\sigma \mapsto H(\sigma,\tau,x)$ are in $C_1([0,\infty))$ with $0 \leq H(\sigma,\tau,x) \leq 1$.

\end{proof}

Proposition~\ref{prop:solG} reveals the key role played by the first-passage times $\sigma_{\tau,x}$ defined in \eqref{eq:firstPass}.
For all $\Psi \in \mIz$, it is well-known that these first passage times admit continuous densities $\sigma \mapsto h(\sigma,\tau,x) = \partial_\sigma H(\sigma,\tau, x)$. 
Using the fact that
\begin{eqnarray*}
\mu_m= \frac{\lambda_1}{\lambda_2} \wedge \frac{\nu_1}{\nu_2}  \leq\frac{\int_\tau^\sigma\mu(u)du}{\sigma-\tau} =\frac{M(\sigma)-M(\tau)}{\sigma-\tau}  \leq  \frac{\lambda_1}{\lambda_2} \vee \frac{\nu_1}{\nu_2}  = \mu_M
\end{eqnarray*}
for  all $\Psi \in \mIz$,
one can further show (see \eqref{eq:boundh1} in Proposition~\ref{prop:boundh}) that for all $x>0$, 
these densities are uniformly bounded over $\mIz$ in the following sense:
\begin{align}\label{eq:upperDensity}
    h(\sigma,\tau,x) \leq \left( \frac{x}{\sqrt{(\sigma-\tau)^3}}+\frac{ \mu_M - \mu_m }{\sqrt{\sigma-\tau}} \right) e^{ - \frac{\left(x- \mu_m (\sigma-\tau) \right)^2 \wedge \left(x- \mu_M (\sigma-\tau) \right)^2}{2(\sigma-\tau)} } \, .
\end{align}
In turn, this result can be used to show the \emph{a priori} boundedness of the rate function $g$ in the time-changed picture:

\begin{proposition}\label{prop:boundedness}
Given generic initial conditions, if $(\sigma,x) \mapsto q(\sigma,x)$ solves the time-changed problem \eqref{all:TCPDE} for an inverse time change $\Psi \in \mIz$,
then $\sigma \mapsto g(\sigma)=\partial_x q(\sigma,0)/2$ admits a uniform upper bound $C_S$ with respect to $\Psi \in \mIz$ on $[0,S]$, $S>0$.
Moreover, the constant $C_S$ is independent of the delay distribution $P_\epsilon$.
\end{proposition}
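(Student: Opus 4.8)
The plan is to iterate the renewal-type representation \eqref{eq:Grenew} for $g$ and bound it term by term using the uniform density bound \eqref{eq:upperDensity}. First I would fix $S>0$ and work on $[0,S]$. Differentiating \eqref{eq:Grenew} (equivalently using \eqref{eq:renew2}) gives
\begin{eqnarray*}
g(\sigma)
=
\int_0^\infty h(\sigma,0;x)\,q_0(x)\,\mdd x
+
\int_0^\sigma h(\sigma,\tau;\Lambda)\,\mdd G_\epsilon(\tau)\,.
\end{eqnarray*}
The first term is controlled directly: by the boundedness condition \eqref{eq:TCinitb}, $q_0$ has a bounded density near $0$, and combining this with the Gaussian tail in \eqref{eq:upperDensity} — which depends on $\Psi$ only through the universal constants $\mu_m,\mu_M$ — yields a finite bound $A_S$, uniform over $\Psi\in\mIz$ and independent of $P_\epsilon$. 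The second term requires relating $\mdd G_\epsilon$ back to $g$. Using the definition \eqref{eq:GPsig}, $G_\epsilon(\sigma)=\int_0^\infty G(\sigma-\eta)\,\mdd Q_\sigma(\eta)$, together with the fact that $Q_\sigma$ is a probability distribution supported on $\{\eta:\Psi(\sigma)-\Psi(\sigma-\eta)\ge\epsilon\}$, one sees that $\partial_\sigma G_\epsilon(\sigma)$ is a weighted average of values $g(\tau)$ and of the contribution from the initial data $G_0$, with total weight at most $1$; crucially, all such $\tau$ satisfy $\Psi(\sigma)-\Psi(\tau)\ge\epsilon$, hence (since $\Psi$ is $1/\nu_2$-Lipschitz) $\sigma-\tau\ge\nu_2\epsilon$ — so the kernel $h(\sigma,\tau;\Lambda)$ in the convolution is evaluated at time lags bounded away from zero.

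The key inequality I would then establish is a Gronwall-type bound. Writing $\bar g(\sigma)=\sup_{0\le\tau\le\sigma}g(\tau)$, the two displayed terms give
\begin{eqnarray*}
g(\sigma)\le A_S + B_0 + \int_0^\sigma h(\sigma,\tau;\Lambda)\,\bar g(\tau)\,\mdd\tau\,,
\end{eqnarray*}
where $B_0$ absorbs the (finite) contribution of the initial cumulative rate $G_0$ via \eqref{eq:TCinita}. Since $\int_0^\sigma h(\sigma,\tau;\Lambda)\,\mdd\tau = H(\sigma,0;\Lambda)\le 1$, a naive Gronwall does not close because the total mass can equal $1$. This is where the refractory delay enters decisively: because the effective integration variable $\tau$ in the $G_\epsilon$-term is confined to $\tau\le\sigma$ with $\sigma-\tau\ge\nu_2\epsilon$ — wait, that is the wrong direction; more carefully, the convolution kernel against which $g$ is integrated, after unwinding $\mdd G_\epsilon$, is supported on lags $\eta$ with $\Psi(\sigma)-\Psi(\sigma-\eta)\ge\epsilon$. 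I would therefore partition $[0,S]$ into the finitely many intervals $[\chi_k,\chi_{k+1})$ defined in the proof of Proposition~\ref{prop:solG} by $\Psi(\chi_{k+1})=\Psi(\chi_k)+\epsilon$ (there are at most $\lceil S/(\nu_2\epsilon)\rceil$ of them since $\Psi$ is $1/\nu_2$-Lipschitz), and argue by induction on $k$: on $[\chi_k,\chi_{k+1})$ the reset term only sees values of $g$ on $[0,\chi_k)$, which are already bounded by the inductive hypothesis, so that $g$ on $[\chi_k,\chi_{k+1})$ is bounded by $A_S+B_0+1\cdot(\text{bound from previous intervals})$. Since the number of intervals is finite and independent of $\Psi$ and of $P_\epsilon$ (it depends only on $S$, $\nu_2$, $\epsilon$), iterating this $\lceil S/(\nu_2\epsilon)\rceil$ times produces the desired finite constant $C_S$, uniform over $\mIz$ and independent of the shape of $P_\epsilon$.

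The main obstacle is handling the initial-data contribution cleanly. Unwinding $\mdd G_\epsilon$ using \eqref{eq:GPsig} mixes the unknown $g$ on $[0,\sigma)$ with the prescribed $G_0$ on the negative axis, and one must verify that the latter contributes only a finite amount uniformly in $\Psi$ — this is exactly where the integrability condition \eqref{eq:TCinita}, $G_0(0)=\sup_{\tau\le0}(\tau/\lambda_2+\int_{(\tau,0]}g_0)<\infty$, is used, together with the fact that the weights $\mdd Q_\sigma(\eta)$ restricted to $\eta>\sigma$ pull back, under $\Psi_0$, to a $P_\epsilon$-mass that decays (by Cantelli, as in the proof of Proposition~\ref{prop:inactProp}) fast enough against the at-most-linear growth of $|G_0|$. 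Once that uniform finiteness is in hand, the interval-by-interval induction is routine, and the independence from $P_\epsilon$ follows because every estimate used — the density bound \eqref{eq:upperDensity}, the Lipschitz constant of $\Psi$, the number of steps $\lceil S/(\nu_2\epsilon)\rceil$, and the total mass bound $H\le1$ — is insensitive to the precise law $P_\epsilon$ beyond its support being in $[\epsilon,\infty)$ and its having finite first and second moments.
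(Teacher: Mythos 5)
Your route has a genuine gap at the step where you unwind \(\mdd G_\epsilon\). You assert that \(\partial_\sigma G_\epsilon(\sigma)\) is a weighted average of earlier values of \(g\) with total weight at most \(1\), so that the reset term is controlled by \(\bar g\) with mass bound \(H(\sigma,0,\Lambda)\leq 1\). This identification does not hold for \(\Psi\in\mIz\): writing \(g(\tau)=\Psi'(\tau)\,f(\Psi(\tau))\) and \(\partial_\sigma G_\epsilon(\sigma)=\Psi'(\sigma)\,f_\epsilon(\Psi(\sigma))\), the Jacobian factors \(\Psi'(\sigma)\) and \(\Psi'(\tau)\) do not cancel, and on blowup flats where \(\Psi'(\tau)=0\) the original-time rate \(f(\Psi(\tau))\) is unbounded (or even Dirac) while \(g(\tau)\) stays finite; trying to compare \(\partial_\sigma G_\epsilon\) to \(g\) pointwise forces the density \(p_\epsilon\) into the bound and destroys the \(P_\epsilon\)-independence you want. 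The induction over the intervals \([\chi_k,\chi_{k+1})\) compounds this: the reset term on \([\chi_k,\chi_{k+1})\) is controlled by the \emph{cumulative} \(G(\chi_k)\), not by \(\sup_{[0,\chi_k]}g\), so the correct recursion is a discrete Gr\"onwall, not the additive one \(M_{k+1}\leq A_S+B_0+M_k\); your additive recursion yields a constant growing with the number of intervals \(\lceil S/(\nu_2\epsilon)\rceil\), which depends on \(\epsilon\) and thus fails the second assertion of the proposition.

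The paper's proof is shorter and does not use the refractory delay at all, which is exactly why the \(P_\epsilon\)-independence is free. It first observes from \eqref{eq:upperDensity} that \(h(\sigma,\tau,\Lambda)\leq C\) for a constant depending only on \(\mu_m,\mu_M,\Lambda\). This gives the Stieltjes bound \(\int_0^\sigma h(\sigma,\tau,\Lambda)\,\mdd G_\epsilon(\tau)\leq C\bigl(G_\epsilon(\sigma)-G_\epsilon(0)\bigr)\), with no unwinding of \(\mdd G_\epsilon\) needed. It then uses that \(Q_\sigma\) is a probability measure and \(G\) is nondecreasing to get \(G_\epsilon(\sigma)=\int_0^\infty G(\sigma-\eta)\,\mdd Q_\sigma(\eta)\leq G(\sigma)\). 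With \(A(\sigma)=\int_0^\infty h(\sigma,0,x)q_0(x)\,\mdd x\) shown uniformly bounded (which the bulk of the paper's proof is devoted to), this reads \(g(\sigma)\leq \sup_\tau A(\tau)+C\bigl(G_0(0)-G_\epsilon(0)\bigr)+C\int_0^\sigma g(\tau)\,\mdd\tau\), and Gr\"onwall closes directly with an \(e^{CS}\) factor; the additive constant \(G_0(0)-G_\epsilon(0)\) is bounded by \(1\) via probability conservation, again uniformly over \(P_\epsilon\). I would rework your argument along these lines: bound the kernel by a constant and stay at the level of the cumulative quantities \(G_\epsilon\) and \(G\) rather than differentiating the convolution.
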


\begin{proof}
One can check that the function
\begin{eqnarray*}
\sigma \mapsto  \left( \frac{\Lambda}{\sqrt{\sigma^3}}+\frac{ \mu_M - \mu_m }{\sqrt{\sigma}} \right)  e^{ - \frac{\left(\Lambda- \mu_m \sigma \right)^2 \wedge \left(x- \mu_M \sigma \right)^2}{2\sigma} } 
\end{eqnarray*}
is uniformly bounded above by some constant $C$.
Therefore,  \eqref{eq:upperDensity} implies that $h(\sigma,\tau,\Lambda) \leq C$ for all $\sigma,\tau$, $0\leq \sigma \leq \tau \leq S$, so that by nonnegativity of $g$, $h$, and $\eta$, we have
\begin{eqnarray*}
g(\sigma)
&\leq& 
\int_0^\infty h(\sigma,0,x) q_0(x) \, \mdd x + C\big(G_\epsilon(\sigma)-G_\epsilon(0) \big) \, , \\
&=& 
\int_0^\infty h(\sigma,0,x) q_0(x) \, \mdd x - C G_\epsilon(0) + C \int_{-\infty}^0    G(\sigma - \tau)  \, \mdd Q_\sigma(\tau) \, , \\
&\leq& 
\int_0^\infty h(\sigma,0,x) q_0(x) \, \mdd x - C G_\epsilon(0) +  C G(\sigma)  \, .
\end{eqnarray*}
The announced result follows from Gr\"onwall's inequality once we justify that the function
\begin{eqnarray*}
\sigma \mapsto \int_0^\infty  h(\sigma,0,x) q_0(x) \, \mdd x
\end{eqnarray*}
remains finite over $\mathbbm{R}^+$ for all  $\Psi \in \mIz$.
To see this, let us remember that we consider initial conditions such that there is a constant $a,A>0$, 
such that for all $x$, $0 \leq x \leq a$, we have $q_0(x)<Ax$.
Then, by \eqref{eq:upperDensity}, we have
\begin{eqnarray*}
 \int_0^a h(\sigma,0,x) q_0(x) \, \mdd x 
& \leq & 
 A \int_0^a h(\sigma,0,x) x \, \mdd x \, , \\
& \leq &
A   \int_0^a   x \left( \frac{x}{\sqrt{\sigma^3}}+\frac{ \mu_M - \mu_m }{\sqrt{\sigma}} \right) e^{ - \frac{\left(x- \mu_m \sigma \right)^2 \wedge \left(x- \mu_M \sigma \right)^2}{2 \sigma} } \, \mdd x \, .
\end{eqnarray*}
The above integral, which can be expressed in closed form via the error function, defines a uniformly bounded, continuous function of $\sigma$ on $\mathbbm{R}^+$.
Moreover, the functions of the form
\begin{eqnarray}\label{eq:typefunc}
\sigma \mapsto  h_1(\sigma) = \frac{x e^{ - \frac{\left(x- m \sigma \right)^2}{2 \sigma} }}{\sqrt{\sigma^3}}  \quad \mathrm{and} \quad \sigma \mapsto h_2(\sigma) = \frac{ e^{ - \frac{\left(x- m \sigma \right)^2}{2 \sigma} }}{\sqrt{\sigma}} 
\end{eqnarray}
both admit a unique maximizer in $\mathbbm{R}^+$, which we denote by $s_1(x)$ and $s_2(x)$, respectively.
Specifically, we have
\begin{eqnarray*}
s_1(x) = \frac{\sqrt{4m^2x^2+9}-3}{2m^2}
 \quad \mathrm{and} \quad 
s_2(x) = \frac{\sqrt{4m^2x^2+1}-1}{2m^2}
\end{eqnarray*}
It is then straightforward to check that  $x \mapsto  h_1(s_1(x))$ and $x \mapsto  h_2(s_2(x))$ are both bounded functions on $[a,\infty)$.
Thus, there exist some constant $c>0$ upper bounding all functions of the types \eqref{eq:typefunc} with $m=\mu_m$ or $m=\mu_M$.
Therefore
\begin{eqnarray*}
 \int_a^\infty h(\sigma,0,x) q_0(x) \, \mdd x 
& \leq &
 \int_a^\infty   \left( \frac{x}{\sqrt{\sigma^3}}+\frac{ \mu_M - \mu_m }{\sqrt{\sigma}} \right) e^{ - \frac{\left(x- \mu_m \sigma \right)^2 \wedge \left(x- \mu_M \sigma \right)^2}{2 \sigma} } q_0(x)\, \mdd x \, , \\
 & \leq &
 c (1+\mu_M - \mu_m) \int_a^\infty  q_0(x)\, \mdd x \, \\
& \leq &
 c (1+\mu_M - \mu_m) \, .
\end{eqnarray*}
This concludes the proof.

\end{proof}


\subsection{Candidate fixed-point equation for physical time changes with blowups}\label{sec:fixed-point}
In the previous section, we show that assuming an inverse time change $\Psi \in \mIz$, the time-changed
problem \eqref{all:TCPDE} admits a unique density solution.
To fully solve dPMF dynamics in the time-changed picture, it remains to specify $\Psi$ self-consistently.
In the absence of blowups, this can be done straightforwardly by performing the change of variable $\sigma = \Phi(t) \Leftrightarrow t=\Psi(\sigma)$, 
which is then well defined:
\begin{eqnarray*}
\sigma = \Phi(t) = \nu_2 t + \lambda_2 F(t) = \nu_2 \Psi(\sigma) + \lambda_2 F\big(\Psi(\sigma)\big) = \nu_2 \Psi(\sigma) + \lambda_2 G(\sigma)
\end{eqnarray*}
Remembering that the cumulative rate depends on $\Psi$, the above equation yields the sought-after self-consistent characterization,
which is conveniently stated as a fixed-point equation for the inverse time change:
\begin{eqnarray}\label{eq:naiveFP}
\Psi(\sigma) = \big( \sigma - \lambda_2 G[\Psi](\sigma)\big) / \nu_2 \, , \quad \sigma \geq 0 \, .
\end{eqnarray}
However, the above fixed-point equation fails in the presence of blowups since we no longer have that $\Phi \circ \Psi = \mathrm{Id}$.
Actually, one can check that when full blowups occur, solutions $\Psi$ to \eqref{eq:naiveFP} become locally decreasing after $U=\Phi(T)$, where $T$
marks the blowup original time.
This would correspond to having nonmonotonic time changes, which is unphysical.

In \cite{TTPW}, we propose to remedy the possibility of unphysical solutions, 
by imposing that the fixed-point equation to bear on the space of nondecreasing functions. 
Specifically, we specify physical time changes as follows:

\begin{proposition}\label{prop:physFP}
Given generic initial conditions, we define the physical time change as the unique solution to the fixed-point equation
\begin{eqnarray}\label{eq:physFP}
\Psi(\sigma)
=
\frac{1}{\nu_2} \left[ \sup_{0 \leq \tau \leq \sigma} \big( \tau-\lambda_2 G[\Psi](\tau) \big) \right]_+ \, , \quad \sigma \geq 0 \, .
\end{eqnarray}
where $G[\Psi]$ refers to the solution of the quasi-renewal problem~\eqref{all:renew}
and where for all $x \in \mathbbm{R}$, the positive part of $x$ is denoted as $\left[ x \right]_+= x \vee 0$.
\end{proposition}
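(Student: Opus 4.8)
The plan is to read \eqref{eq:physFP} as a genuine fixed-point equation $\Psi=\mathcal F[\Psi]$ for the operator
\[
\mathcal F[\Psi](\sigma)=\frac{1}{\nu_2}\Big[\sup_{0\le\tau\le\sigma}\big(\tau-\lambda_2 G[\Psi](\tau)\big)\Big]_+,
\]
where $G[\Psi]$ is the unique solution of the quasi-renewal problem \eqref{all:renew} furnished by Proposition~\ref{prop:solG}, and then to solve this problem by a local Banach contraction followed by continuation with a uniform step; global uniqueness will come out of the same contraction, so that \eqref{eq:physFP} indeed \emph{defines} a unique object.

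First I would check that $\mathcal F$ maps the closed convex set $\mIz([0,T])$ into itself. Since $G[\Psi]$ is nondecreasing (its derivative $g=\partial_x q(\sigma,0)/2$ is nonnegative), for $\sigma_1<\sigma_2$ one has $\sup_{\sigma_1<\tau\le\sigma_2}\big(\tau-\lambda_2 G[\Psi](\tau)\big)\le\sigma_2-\lambda_2 G[\Psi](\sigma_1)$, whereas the running supremum at $\sigma_1$ already dominates $\sigma_1-\lambda_2 G[\Psi](\sigma_1)$; subtracting shows $\sigma\mapsto\sup_{0\le\tau\le\sigma}(\tau-\lambda_2 G[\Psi](\tau))$ is nondecreasing and $1$-Lipschitz, hence $\mathcal F[\Psi]$ is nondecreasing and $\tfrac1{\nu_2}$-Lipschitz, and $\mathcal F[\Psi](0)=\tfrac1{\nu_2}[-\lambda_2 G_0(0)]_+=0$ because $G_0(0)\ge0$ by \eqref{eq:TCinita}. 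Thus $\mathcal F[\Psi]\in\mIz([0,T])$, and $\mIz([0,T])$ is a closed convex (indeed Arz\'ela--Ascoli compact) subset of $C([0,T])$. It is also worth noting here the causality of \eqref{eq:Grenew}: $G[\Psi]$ restricted to $[0,\sigma_0]$ depends only on $\Psi|_{[0,\sigma_0]}$, since $H(\sigma,\tau;\cdot)$ uses the drift profile $M$ only on $[\tau,\sigma]$ and $G_\epsilon(\tau)$ only involves $G$ and $\Psi$ at times $\le\tau$.

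The analytic heart is a quantitative stability estimate for $\Psi\mapsto G[\Psi]$: I would prove that there is a nondecreasing $L$ with $L(0^+)=0$, independent of $\Psi_1,\Psi_2$, of the starting point $\sigma_0$, and of the delay law $P_\epsilon$, such that whenever $\Psi_1\equiv\Psi_2$ on $[0,\sigma_0]$,
\[
\sup_{\sigma_0\le\sigma\le\sigma_0+\delta}\big|G[\Psi_1](\sigma)-G[\Psi_2](\sigma)\big|\;\le\;L(\delta)\,\sup_{\sigma_0\le\sigma\le\sigma_0+\delta}\big|\Psi_1(\sigma)-\Psi_2(\sigma)\big|.
\]
This rests on facts already in hand. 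In \eqref{eq:Grenew} the inverse time change enters only through the uniformly Lipschitz drift profile $M(\sigma)=(\nu_1-\tfrac{\lambda_1}{\lambda_2}\nu_2)\Psi(\sigma)+\tfrac{\lambda_1}{\lambda_2}\sigma$ and through the delay measures $Q_\sigma(\eta)=P_\epsilon(\Psi(\sigma)-\Psi(\sigma-\eta))$ with smooth density, both Lipschitz in $\Psi$ for the sup norm; the first-passage distributions $H(\sigma,\tau;x)$ depend Lipschitz-continuously on $M$ (via Girsanov, or via the single-layer potential representation used in the proof of Proposition~\ref{prop:solG}), and—crucially—the Gaussian bound \eqref{eq:upperDensity} together with the hypothesis $q_0(x)\le Ax$ near the origin makes the modulus accrued over a window of length $\delta$ of order $O(\delta)$, or at worst $O(\delta^{1/2})$. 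Feeding these into \eqref{eq:Grenew} and running a Grönwall iteration, with the a priori bound of Proposition~\ref{prop:boundedness} controlling $G[\Psi]$ uniformly in $\Psi$, yields the displayed inequality. Establishing this smallness of $L(\delta)$ is the main obstacle; the remaining steps are assembly.

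Granting the estimate, the conclusion is routine. The running-supremum operator and $x\mapsto[x]_+$ are $1$-Lipschitz for the sup norm, so $\|\mathcal F[\Psi_1]-\mathcal F[\Psi_2]\|_{[0,\delta]}\le\tfrac{\lambda_2}{\nu_2}\|G[\Psi_1]-G[\Psi_2]\|_{[0,\delta]}\le\tfrac{\lambda_2}{\nu_2}L(\delta)\|\Psi_1-\Psi_2\|_{[0,\delta]}$; picking $\delta>0$ with $\tfrac{\lambda_2}{\nu_2}L(\delta)<1$ makes $\mathcal F$ a contraction of the closed set $\mIz([0,\delta])$, and Banach's theorem gives a unique $\Psi$ on $[0,\delta]$. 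Using causality of \eqref{eq:Grenew} and the fact that, on the already-determined part, the supremum defining $\mathcal F[\Psi](\sigma)$ is unchanged, the same contraction with the same $\delta$ solves successively on each $[k\delta,(k+1)\delta]$, $k\ge1$, with matching endpoint values; this both extends the solution to all of $\mathbbm{R}^+$ and forces any global solution of \eqref{eq:physFP} to agree with it block by block, which is uniqueness. (Alternatively, existence on each $[0,T]$ follows at once from Schauder's theorem applied to the continuous self-map $\mathcal F$ of the compact convex set $\mIz([0,T])$, the continuity of $\mathcal F$ being exactly the qualitative version of the stability estimate; uniqueness is then again the local contraction.)
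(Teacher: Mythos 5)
Your plan — read \eqref{eq:physFP} as $\Psi=\mathcal{F}[\Psi]$, prove a modulus-of-continuity estimate for $\Psi\mapsto G[\Psi]$ with $L(\delta)\to0$, get a local Banach contraction on $\mIz([0,\delta])$, and then extend by continuation using causality of the quasi-renewal equation — is exactly the paper's route, encoded in Proposition~\ref{prop:contract} and Theorem~\ref{th:globSol} (the paper states Proposition~\ref{prop:physFP} without a proof environment precisely because these later results supply it). You also correctly locate the single real obstacle, namely the quantitative stability of $G[\Psi]$, which in the paper occupies the bulk of the work via the path-coupling comparison Lemma~\ref{lem:Hprop} and the Lipschitz bounds of Lemma~\ref{lem:FP} (these yield modulus $\sim\sigma^{1/6}$, not $\sigma^{1/2}$, but your qualitative picture is right).

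One point to tighten: you claim the stability modulus $L$ can be chosen independent of the starting point $\sigma_0$, which lets you reuse the same window length $\delta$ on every block. As stated that is too strong, because the constants entering Proposition~\ref{prop:contract}'s estimate ($8AC\sigma^{1/6}+48C\sqrt{\sigma}+D\Vert F'_{0,\epsilon}\Vert_{0,\epsilon}\,\sigma$) depend on the shifted initial data, specifically on the derivative bound $A$ for $q(\sigma_0,\cdot)$ near the boundary and on a local $\sup$ of the (shifted) reset rate. What is true, and what the paper's continuation argument actually uses, is that Proposition~\ref{prop:boundedness} bounds $g=\partial_x q(\cdot,0)/2$ uniformly over $\Psi\in\mIz$ on any compact $[0,T]$, so the shifted initial data remain generic with controlled constants; this gives a uniform step $\delta=\delta(T)$ on each $[0,T]$, and hence global continuation. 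You do cite Proposition~\ref{prop:boundedness}, but inside the Gr\"onwall step for $G$; the place it is indispensable is here, in legitimizing the block iteration.
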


\begin{remark}  The above proposition calls for several remarks:

\begin{enumerate}
\item In the next section, we show that the fixed-point equation  \eqref{eq:physFP} naturally defines a contraction map, which
ensures the existence and uniqueness of solution dPMF dynamics by the Banach fixed-point theorem.

\item It is natural  to enforce that $\Psi$ be a nondecreasing function via a running supremum operator. 
The need to include a positive part operator in the fixed-point equation  \eqref{eq:physFP} is perhaps less obvious.
We will later see that this inclusion is necessary to treat the case of explosive initial conditions, which are allowed
by our notion of generic initial conditions given in Definition~\ref{def:TCinit}.
As stated in Remark~\ref{rem:intiBlow}, explosive initial conditions correspond $G_0(0)>0$.
The positive part ensures that we only consider  time change for which $\Psi(0)=0$, 
which holds for all $\Psi \in \mIz$. 

\item Solutions to \eqref{eq:physFP} unfold blowups in the time changed picture. 
To see how, remember that a full blowup happens at time $T$ in the original dynamics when the cumulative rate 
$F$ has a jump discontinuity $J_T = F(T)-F(T^-)$ so that $\Phi(T)-\Phi(T^-)= \lambda_2 J_T$, where $J_T$ represents the fraction of synchronously
spiking neurons.
Correspondingly, in the time-changed picture, a full blowup is represented by the fact that $\Psi=\Phi^{-1}$ has a flat section
on $[S,U]$, where $S=\Phi(T^-)$ and $U=\Phi(T)$ are referred to as blowup trigger time and blowup exit time, respectively.
Given a blowup trigger time $U$, Proposition~\ref{prop:physFP} specifies the corresponding blowup exit time as
\begin{eqnarray*}
U = \inf \{ \sigma > S \, \vert \, \sigma -S \geq \lambda_2 \big(G(\sigma) - G(S)\big)   \} \, .
\end{eqnarray*}

\item Proposition~\ref{prop:physFP} specifies only one notion of admissible blowup solutions and other notions have been proposed.
For instance, in \cite{dou2022dilating}, it was proposed to define blowup solutions by enforcing
a blowup exit criterion on the time-changed rate $g$ rather than the cumulative one.
Specifically, it was proposed to set the blowup exit time as the first time
when the blowup trigger condition ceases to hold
\begin{eqnarray*}
U = \inf \{ \sigma > S \, \vert \, g(\sigma) \leq 1/ \lambda_2    \} \, .
\end{eqnarray*}
while imposing $g=1/\delta$ during the blowup interval $[S,U]$.
However, such solutions are prone to unphysical eternal blowups for which $U=\infty$.
\end{enumerate}

\end{remark}

Having made the above preliminary remarks, let us define blowup times  for time changes
that are solutions of \eqref{eq:physFP}.
In the time-changed picture, the set of point for which a blowup occurs is
\begin{eqnarray*}
\left\{ \sigma \geq 0 \, \Big \vert \, \lim_{\tau \uparrow \sigma} g(\tau) =1/\lambda_2 \right\}  = \left\{ \sigma \geq 0 \, \Big \vert \, \lim_{\tau \uparrow \sigma} \partial_x p(\Psi(\tau), 0)/2 =1/\lambda_2 \right\} \, ,
\end{eqnarray*}
which includes nonempty intervals when the time change $\Psi$ has flat sections, i.e., when $\Phi$ has jump discontinuities.
Correspondingly, we define the set of full-blowup times $\mathcal{B}_\sigma$, which will play a central role,
as the interior of the set of blowup times.
As will be clear later, it will be more convenient to define $\mathcal{B}_\sigma$ in terms of the so-called excess function 
\begin{eqnarray}\label{eq:firstD}
\mathbbm{R}^+ \ni \sigma \mapsto D(\sigma) =  G[\Psi](\sigma)+  \big( \nu_2 \Psi(\sigma) -\sigma \big) /\lambda_2  \, .
\end{eqnarray}
which by \eqref{eq:physFP}, is necessarily a nonnegative, continuous function.
For nonexplosive initial conditions, by continuity of $D$, the
open set of full-blowup times can be written
\begin{eqnarray*}
\mathcal{B}_\sigma = \{ \sigma > 0 \, \vert \, D(\sigma) >0 \} = \bigcup_{k \in \mathcal{K}} (S_k,U_k) \, ,
\end{eqnarray*}
where  $\mathcal{K}$ is a countable index set,  $S_k$, $k \in \mathcal{K}$, are the full-blowup trigger times,
and $V_k$, $k \in \mathcal{K}$, are the corresponding full-blowup exit times.
Note that  for all $k \in \mathcal{K}$, this implies 
\begin{eqnarray*}
U_k = \inf \{ \sigma > S_k \, \vert \, \sigma -S_k \geq \lambda_2 (G(\sigma) - G(S_k))   \} \, .
\end{eqnarray*}

Given these definitions, the following proposition elucidates the nature of 
the time-changed dPMF dynamics during full blowups. 
Specifically, the proposition shows that during blowups, 
the dynamics of the time-changed representative process $Y_\sigma$ become noninteractive
in sense that it loses its McKean-Vlasov nature and follows a regular killed Brownian motion dynamics
with constant drift.

\begin{proposition}\label{prop:killedBrown}
Given a full-blowup interval $(S,U)$, $S<U$, and spatial initial condition $q_0(x)=q(S,x)$, the dPMF dynamics specified by \eqref{eq:physFP} solves 
\begin{eqnarray}\label{eq:buDyn}
\sigma \in (S,U):\quad
\partial_\sigma q 
=
\frac{\lambda_1}{\lambda_2}  \partial_x q + \frac{1}{2} \partial^2_x q \,   , \quad q(\sigma,0)=0 \, .
\end{eqnarray}
 
\end{proposition}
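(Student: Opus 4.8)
The plan is to verify, term by term, that the general time-changed equation \eqref{eq:TCdyn} collapses on a full-blowup interval $(S,U)\subset\mathcal{B}_\sigma$ to the constant-drift, sourceless heat equation \eqref{eq:buDyn}. Two inputs are needed: that $\Psi$ is flat on $[S,U]$, and that $P_\epsilon$ is supported in $[\epsilon,\infty)$ with $\epsilon>0$.

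I would first recall, or briefly re-derive, that $\Psi$ is constant on $[S,U]$. By \eqref{eq:firstD} and \eqref{eq:physFP}, for $\sigma\in(S,U)$ the running supremum $\nu_2\Psi(\sigma)=\sup_{0\le\tau\le\sigma}(\tau-\lambda_2 G(\tau))$ is strictly larger than $\sigma-\lambda_2 G(\sigma)$, hence is not attained at $\tau=\sigma$; a one-line argument then rules out that it is attained anywhere in $(S,\sigma]$ (at such a point $\tau^{*}$ one would get $\nu_2\Psi(\tau^{*})=\tau^{*}-\lambda_2 G(\tau^{*})$, i.e.\ $D(\tau^{*})=0$, contradicting $\tau^{*}\in\mathcal{B}_\sigma$), so the supremum, and with it $\Psi$, is constant on $(S,U)$ and, by continuity, on $[S,U]$. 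Write $T:=\Psi(S)=\Psi(U)$ for this value. Then $\Psi'=0$ a.e.\ on $(S,U)$, and \eqref{eq:TCdrift} yields $\mu(\sigma)=\lambda_1/\lambda_2$ for a.e.\ $\sigma\in(S,U)$; this is the advection coefficient appearing in \eqref{eq:buDyn}.

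The crux is that the reset source vanishes, i.e.\ that $\sigma\mapsto G_\epsilon(\sigma)$ is constant on $[S,U]$. Set $\widetilde S:=\sup\{\zeta\le S:\Psi(\zeta)\le T-\epsilon\}$; since $\Psi$ is continuous, $\Psi(\zeta)\to-\infty$ as $\zeta\to-\infty$, and $\Psi(S)=T>T-\epsilon$, this $\widetilde S$ is finite with $\widetilde S<S$, and $\Psi(\zeta)>T-\epsilon$ for every $\zeta\in(\widetilde S,U]$. Rewriting \eqref{eq:GPsig} in the variable $\zeta=\sigma-\eta$, the Stieltjes measure $\mdd Q_\sigma(\eta)$ is the image of $\mdd P_\epsilon$ under the increment map $\zeta\mapsto\Psi(\sigma)-\Psi(\zeta)$; since $P_\epsilon$ charges only $[\epsilon,\infty)$ while $\Psi(\sigma)-\Psi(\zeta)=T-\Psi(\zeta)<\epsilon$ for all $\zeta\in(\widetilde S,\sigma]$ and $\sigma\in[S,U]$, this measure is carried by $(-\infty,\widetilde S]$, on which it is a fixed measure $\mdd\mu$ determined by $P_\epsilon$ and $\Psi|_{(-\infty,\widetilde S]}$ alone, independent of $\sigma\in[S,U]$. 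Hence $G_\epsilon(\sigma)=\int_{(-\infty,\widetilde S]}G\,\mdd\mu$ is the same for every $\sigma\in[S,U]$, so $\partial_\sigma G_\epsilon\equiv 0$ on $(S,U)$. Equivalently, $G_\epsilon=F_\epsilon\circ\Psi$ with $F_\epsilon$ the $\epsilon$-delayed reset rate of the original problem, which depends only on the cumulative rate strictly before the blowup instant; since $\Psi$ is frozen at $T$ throughout $[S,U]$, so is $G_\epsilon$.

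Putting the pieces together, \eqref{eq:TCdyn} restricted to $(S,U)$ reads $\partial_\sigma q=\tfrac{\lambda_1}{\lambda_2}\partial_x q+\tfrac12\partial_x^2 q$, with the absorbing condition $q(\sigma,0)=0$ kept verbatim and the spatial datum $q(S,\cdot)=q_0$ at the left endpoint, i.e.\ exactly the killed-Brownian-motion problem \eqref{eq:buDyn}. I expect the source-vanishing step to be the only genuinely delicate point: one must transport the Stieltjes measure $\mdd Q_\sigma$ correctly through the substitution $\eta\mapsto\sigma-\eta$, which is not one-to-one because of the flat section of $\Psi$, and use the $\epsilon$-gap to localize its support strictly to the left of $S$. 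The flatness of $\Psi$ and the reduction $\mu\equiv\lambda_1/\lambda_2$ are routine once that is in hand. It is worth noting that this computation also makes transparent why $\epsilon>0$ is needed: with $\epsilon=0$ the reset rate would not freeze during a blowup and $Y_\sigma$ would fail to decouple into a plain drifted Brownian motion.
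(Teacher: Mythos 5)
Your proposal is correct and follows essentially the same route as the paper: the flatness of $\Psi$ on $(S,U)$ collapses the drift in \eqref{eq:TCdrift} to the constant $\lambda_1/\lambda_2$, and the $\epsilon$-gap in the support of $P_\epsilon$ localizes the reset integral \eqref{eq:GPsig} to times strictly before $S$ where $\Psi(\sigma)-\Psi(\zeta)=T-\Psi(\zeta)$ is frozen, so $G_\epsilon$ is constant on $[S,U]$; your $\widetilde S$ is exactly the paper's $\xi^\star(\sigma)$. The only addition is that you re-derive from \eqref{eq:physFP} that $\Psi$ is flat on the blowup interval, a fact the paper takes as definitional since $\mathcal{B}_\sigma=\{D>0\}$ is open and $\Psi$ is a running supremum.
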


\begin{remark}
Note that there is no reset term in \eqref{eq:buDyn} so that there is no need to specify
the nonspatial component of the initial conditions.
\end{remark}

\begin{proof}
Given Definition~\ref{def:TCPDE}, it is enough to show that the  delayed rate $G_\epsilon$  
and the drift $\mu$ are constant over full-blowup intervals, with $\mu=-\lambda_1/\lambda_2$.

Suppose that $\sigma \in \mathcal{B}_\sigma$ is a full-blowup time
so that there is $k \in \mathcal{K}$ such that $S_k(\sigma)< \sigma < U_k(\sigma)$.
For all $\tau \in (S_k(\sigma),U_k(\sigma))$, we have $\Psi(\tau)=\Psi(\sigma)$.
Then, the fact that $\mu=-\lambda_1/\lambda_2$ just follows from \eqref{eq:TCdrift}
with $\Psi'=0$ on $\mathcal{B}_\sigma$.
Observe moreover that setting $\xi=\sigma-\eta$, we have
\begin{align*}
G_\epsilon(\sigma)
&=
\int_0^\infty G(\sigma-\eta) \, \mdd Q_\sigma(\eta) \, , \\
&=
\int_0^\infty G(\sigma-\eta) \, \mdd P_\epsilon(\Psi(\sigma)-\Psi(\sigma-\eta)) \, , \\
&=
\int_{-\infty}^\sigma G(\xi) \, \mdd P_\epsilon(\Psi(\sigma)-\Psi(\xi)) \, .
\end{align*}
Since by assumption, we only consider refractory-delay distributions $P_\epsilon$ 
with support in $[\epsilon,+\infty)$, introducing
\begin{eqnarray*}
\xi^\star(\sigma) = \sup \{ \xi \geq 0 \, \vert \, \Psi(\xi) \leq \Psi(\sigma) - \epsilon\} \, 
\end{eqnarray*}
allows us to further write
\begin{align}\label{eq:starGeps}
G_\epsilon(\sigma)
&=
\int_{(-\infty,\xi^\star(\sigma)]} G(\xi) \, \mdd P_\epsilon(\Psi(\sigma)-\Psi(\xi)) \, . 
\end{align}
Since  for all $\tau \in (S_k(\sigma),U_k(\sigma))$, we have $\Psi(\tau)=\Psi(\sigma)$, 
it must be that we also have $\xi^\star(\tau)=\xi^\star(\sigma)$.
so that $G_\epsilon(\sigma)=G_\epsilon(\tau)$ by  \eqref{eq:starGeps}.
This concludes the proof.
\end{proof}

As a corollary of the above proposition, we show that solutions to \eqref{eq:physFP} 
can only experience physical full blowups. 
By physical, we mean that these full blowups can be shown to represent 
the fraction of synchronously spiking neurons.
In particular, there is no eternal-blowup phenomenon.

\begin{corollary}\label{cor:buTime}
The duration of a full-blowup interval $(S_k,U_k) \in \mathcal{B}_\sigma$, for some $k \in \mathcal{K}$,
is such that $U_k -S_k= \lambda_2 J_k$,  where $J_k$, $0 < J_k \leq 1$, denotes the blowup size, i.e., 
the probability that a representative process $X_t$ spikes at $T_k=\Psi(S_k)=\Psi(U_k)$.
Moreover, given the spatial initial condition $q(S_k,x)$, the blowup size $J_k$ is uniquely defined as 
\begin{eqnarray}\label{eq:Pi}
J_k = \inf \left\{ P > 0 \, \bigg \vert \, P \geq \int_{0}^\infty H_\mathcal{B}(\lambda_2 P, x )  q(S_k,x ) \, \dd x \right\} \, .
\end{eqnarray}
with
\begin{eqnarray*}\label{eq:HFPT}
H_\mathcal{B}(\sigma,x)  
= \frac{1}{2} 
\left( 
\mathrm{Erfc} \left( \frac{x-\lambda_1\sigma/\lambda_2}{\sqrt{2 \sigma}}\right) + e^{2 \lambda_1 x/\lambda_2} \mathrm{Erfc} \left( \frac{x + \lambda_1 \sigma/\lambda_2}{\sqrt{2 \sigma}}\right) 
\right) \, .
\end{eqnarray*}
\end{corollary}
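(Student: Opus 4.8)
The plan is to combine Proposition~\ref{prop:killedBrown}, which on a full-blowup interval reduces the time-changed dynamics to a killed Brownian motion with constant drift, with a mass-balance identity and the exit-time characterisation recorded just above the statement. Fix $k\in\mathcal K$ and write $T_k=\Psi(S_k)=\Psi(U_k)$ and $L_k=U_k-S_k$. By Proposition~\ref{prop:killedBrown}, for $\sigma\in(S_k,U_k)$ the density $q$ solves \eqref{eq:buDyn}, which is the Fokker--Planck equation of a Brownian motion with constant negative drift $-\lambda_1/\lambda_2$ absorbed at $0$, started at ``time'' $S_k$ from the sub-probability density $q(S_k,\cdot)$. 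Let $\sigma_{0,x}$ be the first hitting time of $0$ of this process started from $x>0$; since the drift points towards the boundary, $\sigma_{0,x}<\infty$ a.s., and $\tau\mapsto\mathbb{P}[\sigma_{0,x}\le\tau]$ is the first-passage CDF $H$ of \eqref{eq:firstPass} specialised to the constant drift $\mu\equiv\lambda_1/\lambda_2$, which by time-homogeneity depends on $\sigma$ only through $\sigma-S_k$. A classical reflection-principle (Cameron--Martin) computation then gives its closed form $\mathbb{P}[\sigma_{0,x}\le\tau]=H_{\mathcal B}(\tau,x)$; I would treat the conversion to the $\mathrm{Erfc}$ expression in the statement as a routine verification. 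Hence, for every $\sigma\in[S_k,U_k]$,
\begin{eqnarray*}
\int_0^\infty q(\sigma,x)\,\mdd x=\int_0^\infty\bigl(1-H_{\mathcal B}(\sigma-S_k,x)\bigr)\,q(S_k,x)\,\mdd x\,.
\end{eqnarray*}

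Next I would produce a renewal-type identity for $G$ by mass balance. Integrating \eqref{eq:buDyn} over $x\in(0,\infty)$, using $q(\sigma,0)=0$, the decay at infinity, and the absence of a reset term, yields $\partial_\sigma\!\int_0^\infty q(\sigma,x)\,\mdd x=-\tfrac12\partial_x q(\sigma,0)=-g(\sigma)$, so by \eqref{eq:gDef}, $\int_0^\infty q(\sigma,x)\,\mdd x=\int_0^\infty q(S_k,x)\,\mdd x-(G(\sigma)-G(S_k))$ on $[S_k,U_k]$. Comparing with the previous display gives, for $\sigma\in[S_k,U_k]$,
\begin{eqnarray*}
G(\sigma)-G(S_k)=\int_0^\infty H_{\mathcal B}(\sigma-S_k,x)\,q(S_k,x)\,\mdd x=:\Gamma\bigl((\sigma-S_k)/\lambda_2\bigr)\,,
\end{eqnarray*}
where $\Gamma(P):=\int_0^\infty H_{\mathcal B}(\lambda_2 P,x)\,q(S_k,x)\,\mdd x$ is continuous (dominated convergence, $H_{\mathcal B}\le1$, $q(S_k,\cdot)\in L^1$ by Proposition~\ref{prop:solG}) with $\Gamma\le\int_0^\infty q(S_k,x)\,\mdd x\le1$.

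Finally I would close the argument using the exit-time formula $U_k=\inf\{\sigma>S_k:\sigma-S_k\ge\lambda_2(G(\sigma)-G(S_k))\}$, which follows from \eqref{eq:physFP} and the excess function \eqref{eq:firstD} and is recorded right before the statement. Substituting the $G$-identity with $\sigma=S_k+\lambda_2P$ turns this into $L_k=\lambda_2\inf\{P>0:P\ge\Gamma(P)\}$; since $\Gamma\le1$, the set contains $P=1$ and is nonempty, so $J_k:=L_k/\lambda_2=\inf\{P>0:P\ge\Gamma(P)\}$ satisfies $0<J_k\le1$ — positivity because $(S_k,U_k)$ is a genuine nonempty component of $\mathcal B_\sigma$ — which is exactly \eqref{eq:Pi}. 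Continuity of $\Gamma$ forces the infimum to be attained with equality, $J_k=\Gamma(J_k)$, so evaluating the $G$-identity at $\sigma=U_k$ gives $G(U_k)-G(S_k)=\Gamma(J_k)=J_k$ and $U_k-S_k=\lambda_2 J_k$. Since $G=F\circ\Psi$ with $\Psi$ flat on $[S_k,U_k]$ and $\Psi(S_k)=\Psi(U_k)=T_k$, the endpoints pick out the left and right values of $F$ at $T_k$, so $J_k=F(T_k)-F(T_k^-)$ is the jump of the cumulative rate at $T_k$, i.e. the fraction of the population spiking at $T_k$; this identifies $J_k$ as the blowup size and completes the proof.

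The only delicate bookkeeping is the explicit evaluation of $H_{\mathcal B}$ (routine but sign-sensitive, particularly the $\mathrm{Erfc}$/Gaussian-CDF dictionary and the direction of the drift) and the short continuity argument forcing the infimum in \eqref{eq:Pi} to be a genuine fixed point of $\Gamma$; every other step is a direct consequence of Proposition~\ref{prop:killedBrown} and conservation of probability.
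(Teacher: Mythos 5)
Your proposal is correct and reaches exactly the paper's conclusion; the only substantive divergence is in how you obtain the key identity $G(\sigma)-G(S_k)=\int_0^\infty H_{\mathcal B}(\sigma-S_k,x)\,q(S_k,x)\,\mdd x$ on the blowup interval. The paper simply notes that, in light of Proposition~\ref{prop:killedBrown}, the quasi-renewal equation \eqref{eq:Grenew} specialized to the interval $[S_k,U_k]$ has $G_\epsilon$ constant (no reset events), so the renewal term vanishes and one reads the identity directly from Proposition~\ref{prop:solG} with initial data $q(S_k,\cdot)$. You instead rederive it from scratch by integrating the drift-diffusion equation \eqref{eq:buDyn} over $x\in(0,\infty)$ with the absorbing boundary to get $\partial_\sigma\!\int_0^\infty q\,\mdd x=-g(\sigma)$, and then pairing this with the killed-Brownian survival probability. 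Both routes are valid and essentially the same at heart --- your mass balance is the conservation-of-probability argument that underlies \eqref{eq:gDef}, and your survival-probability step is the first-passage interpretation of $H$ proved in Proposition~\ref{prop:solG} --- so your proof is a somewhat more explicit, first-principles version of the paper's one-line appeal to the quasi-renewal characterization. Your additional observations that $J_k=\Gamma(J_k)$ by continuity (so the exit condition holds with equality) and that $J_k$ is precisely the jump $F(T_k)-F(T_k^-)$ of the cumulative rate are correct and consistent with the discussion preceding the corollary, though the paper does not spell them out inside the proof. One small note: the paper's final inequality proving $J_k\le1$ is phrased a little loosely; your version (the set $\{P>0:P\ge\Gamma(P)\}$ contains $P=1$ because $H_{\mathcal B}\le1$ and $q(S_k,\cdot)$ has total mass at most one) is the cleaner way to state the same bound.
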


\begin{proof}
If $\sigma \in \mathcal{B}_\sigma$, there is $k \in \mathcal{K}$ such that $S_k< \sigma < U_k$.
Then, by Proposition~\ref{prop:killedBrown}, the quasi-renewal equation \eqref{eq:Grenew} loses 
its renewal character to read
\begin{align*}
G(\sigma) 
&=
G(S_k)+ \int_0^\infty  H(\sigma,S_k,x)   q(S_0(\sigma), x)  \, \mdd x   \, .
\end{align*}
Moreover, still by Proposition~\ref{prop:killedBrown},  $\sigma \mapsto  H(\sigma,S_0(\sigma),x)=\Prob{\tau_{x,S_0(\sigma)} \leq \sigma}$ 
is the cumulative distribution of  
\begin{eqnarray*}
\tau_{x,S_0(\sigma)} = \inf \{ \sigma \geq S_0(\sigma) \, \vert \, W_{S_k}=x , \,W_\sigma \leq (\lambda_1/\lambda_2) (\sigma-S_k)  \} \, ,
\end{eqnarray*}
where $W$ denotes the canonical Wiener process.
This cumulative distribution is known analytically to have the form
\begin{align*}
\Prob{\tau_{x,S_k} \leq \sigma} =  H_\mathcal{B}(\sigma-S_k,x)   \, .
\end{align*}
This shows that
\begin{align}\label{eq:noRenew}
G(\sigma) 
-
G(S_k) =  \int_0^\infty  H_\mathcal{B}(\sigma - S_k,x)   q(S_k, x)  \, \mdd x   \, .
\end{align}

Since for all $k \in \mathcal{K}$, the blowup exit time is defined as
\begin{align*}
U_k
&= \inf \left\{  \tau > S_k \, \big \vert \, \sigma-S_k \geq \lambda_2 (G(\sigma)-G(S_k))   \right\} \, ,
\end{align*}
setting $P=(\sigma-S_k)/\lambda_2 > 0$, we have
\begin{align*}
J_k 
&=(U_k-S_k)/\lambda_2 \, , \\
&=  \inf \left\{  (\sigma - S_k)/\lambda_2 > 0 \, \big \vert \,   (\sigma-S_k)/\lambda_2 \geq  G(\sigma)-G(S_k) \right\} \, , \\
&=  \inf \left\{  P > 0 \, \big \vert \, P  \geq  G(\lambda_2P + S_k)-G(S_k)     \right\} \, , \\
&=  \inf \left\{  P > 0 \, \big \vert \,   P \geq   \int_0^\infty  H_\mathcal{B}(\lambda_2P,x)   q(S_k, x)  \, \mdd x \right\} \, , 
\end{align*}
where the last equality follows from \eqref{eq:noRenew}.
Then, observing that $ H(\cdot,S_k,x) \geq 0$, we have
$$J_k \leq  \Vert H(\cdot,S_k,x) \Vert_{S_k, \infty} \int_0^\infty    q(S_k, x)  \, \mdd x \leq 1 \, ,$$
which concludes the proof.
\end{proof}

\subsection{Contraction argument}\label{sec:contract}

Given $\sigma>0$, remember that we denote by $\mIz([0,\sigma])$ the set a function $\Psi \in \mIz$ restricted to the interval $[0,\sigma]$.
The core result allowing to show the existence and uniqueness of buffered dPMF dynamics is the following:

\begin{proposition}\label{prop:contract}
Given generic initial conditions $(q_0,g_0)$ in $\mathcal{M}(\mathbbm{R}^+) \times \mathcal{M}(\mathbbm{R}^-)$,
the map $\mathcal{G} : \mIz \to \mIz$ specified by
\begin{eqnarray}\label{eq:mFz}
\Psi \mapsto \mathcal{G}[\Psi] = \left\{ \sigma \mapsto   \frac{1}{\nu_2} \left[ \sup_{0 \leq \tau \leq \sigma} \left( \tau-\lambda_2 G[\Psi](\tau) \right) \right]_+ \right\}
\end{eqnarray}
is a contraction on $\mIz([0,\sigma])$ for small enough $\sigma>0$.
\end{proposition}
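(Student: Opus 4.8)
The plan is to realise $\mathcal{G}$, restricted to a finite interval $[0,\sigma]$, as a self-map of the complete metric space $\bigl(\mIz([0,\sigma]),\|\cdot\|_{\infty}\bigr)$ and to bound its Lipschitz constant, showing it drops below $1$ once $\sigma$ is small, so that the Banach fixed-point theorem will apply. The first point is that $\mathcal{G}$ is well defined and descends to such a self-map: by Proposition~\ref{prop:solG} the restriction $G[\Psi]|_{[0,\sigma]}$ depends only on $\Psi|_{[0,\sigma]}$ (and on the fixed data $\Psi_0,q_0,g_0$); and writing $u_\Psi(\tau)=\tau-\lambda_2 G[\Psi](\tau)$, the fact that $G[\Psi]$ is nondecreasing gives $u_\Psi(\tau_2)-u_\Psi(\tau_1)\le\tau_2-\tau_1$ for $\tau_1<\tau_2$, so $\tau\mapsto\sup_{0\le s\le\tau}u_\Psi(s)$ is nondecreasing with difference quotients in $[0,1]$; composing with the nondecreasing $1$-Lipschitz map $x\mapsto[x]_+$ and dividing by $\nu_2$ yields a nondecreasing function with difference quotients in $[0,1/\nu_2]$, while $\mathcal{G}[\Psi](0)=\tfrac1{\nu_2}[-\lambda_2 G_0(0)]_+=0$ since $G_0(0)\ge 0$. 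Hence $\mathcal{G}[\Psi]\in\mIz$.

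Next I would reduce the contraction estimate to a stability estimate for the renewal solution $G[\Psi]$. Since $x\mapsto[x]_+$ is $1$-Lipschitz and $|\sup_\tau a(\tau)-\sup_\tau b(\tau)|\le\sup_\tau|a(\tau)-b(\tau)|$,
\[
\|\mathcal{G}[\Psi_1]-\mathcal{G}[\Psi_2]\|_{\infty,[0,\sigma]}\ \le\ \frac{\lambda_2}{\nu_2}\,\bigl\|G[\Psi_1]-G[\Psi_2]\bigr\|_{\infty,[0,\sigma]}\,,
\]
so it suffices to prove that $\bigl\|G[\Psi_1]-G[\Psi_2]\bigr\|_{\infty,[0,\sigma]}\le K(\sigma)\|\Psi_1-\Psi_2\|_{\infty,[0,\sigma]}$ for all $\Psi_1,\Psi_2\in\mIz$ with $K(\sigma)\to 0$ as $\sigma\downarrow 0$; then any $\sigma$ with $\lambda_2 K(\sigma)/\nu_2<1$ gives the claim (and $\mIz([0,\sigma])$ being a closed subset of $C([0,\sigma])$ is complete).

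For the stability estimate I would subtract the two renewal equations~\eqref{eq:Grenew}. The data $\Psi$ enters only through (i) the first-passage kernel $H[\Psi](\sigma,\tau;x)$, which depends on $\Psi$ solely via $M[\Psi](\sigma)=(\nu_1-\tfrac{\lambda_1}{\lambda_2}\nu_2)\Psi(\sigma)+\tfrac{\lambda_1}{\lambda_2}\sigma$, so the moving boundary in~\eqref{eq:firstPass} is shifted by at most $\delta:=2\,|\nu_1-\tfrac{\lambda_1}{\lambda_2}\nu_2|\,\|\Psi_1-\Psi_2\|_{\infty,[0,\sigma]}$ (recall $\Psi_1,\Psi_2$ agree with $\Psi_0$ on $\mathbbm{R}^-$); and (ii) the delayed reset $G_\epsilon[\Psi]$, which additionally depends on $\Psi$ through $Q_\sigma(\eta)=P_\epsilon(\Psi(\sigma)-\Psi(\sigma-\eta))$. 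For (i) I would use the monotone sandwich $H[\Psi_2](\sigma,\tau;x+\delta)\le H[\Psi_1](\sigma,\tau;x)\le H[\Psi_2](\sigma,\tau;x-\delta)$, obtained by translating the starting point of the killed Wiener process, so that $|H[\Psi_1]-H[\Psi_2]|(\sigma,\tau;x)\le 2\delta\,\sup_{|y-x|\le\delta}|\partial_y H[\Psi_2](\sigma,\tau;y)|$; the right-hand side is controlled uniformly over $\mIz$ by the first-passage density estimate~\eqref{eq:upperDensity} of Proposition~\ref{prop:boundh}, which for $x=\Lambda$ carries a factor of Gaussian type $e^{-c/(\sigma-\tau)}$ (hence $\to 0$ as $\sigma\downarrow 0$, and uniformly small on short intervals), and which for $x$ near $0$ is integrable against $q_0(x)\,\mdd x$ because $q_0(x)\le Ax$ there by~\eqref{eq:TCinitb}. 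By dominated convergence, both $\int_0^\infty H[\Psi](\sigma,0;x)q_0(x)\,\mdd x$ and its $\Psi$-Lipschitz constant tend to $0$ as $\sigma\downarrow 0$, and $\rho(\sigma):=\sup\{H[\Psi](\sigma,\tau;\Lambda):0\le\tau\le\sigma,\ \Psi\in\mIz\}\to 0$. For (ii), adding and subtracting and using that $P_\epsilon$ has bounded density gives $\bigl\|G_\epsilon[\Psi_1]-G_\epsilon[\Psi_2]\bigr\|_{\infty,[0,\sigma]}\le\bigl\|G[\Psi_1]-G[\Psi_2]\bigr\|_{\infty,[0,\sigma]}+2\|p_\epsilon\|_\infty(\sup_{[0,\sigma]}|G|)\|\Psi_1-\Psi_2\|_{\infty,[0,\sigma]}$; moreover $G_\epsilon[\Psi_1](0)=G_\epsilon[\Psi_2](0)$, as this value involves only $\Psi_0$, so after integration by parts against $H$ the difference $G_\epsilon[\Psi_1]-G_\epsilon[\Psi_2]$ enters the renewal identity only multiplied by $\rho(\sigma)$, plus a term multiplied by $\sup_{\tau}|H[\Psi_1]-H[\Psi_2]|(\sigma,\tau;\Lambda)$ times the uniformly bounded increment $G_\epsilon[\Psi_1](\sigma)-G_\epsilon[\Psi_1](0)$ (boundedness of this increment following from the a priori estimate of Proposition~\ref{prop:boundedness} and the support condition on $p_\epsilon$). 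Collecting these bounds yields, for small $\sigma$,
\[
\bigl\|G[\Psi_1]-G[\Psi_2]\bigr\|_{\infty,[0,\sigma]}\ \le\ \bigl(o(1)+\rho(\sigma)\bigr)\|\Psi_1-\Psi_2\|_{\infty,[0,\sigma]}\ +\ \rho(\sigma)\,\bigl\|G[\Psi_1]-G[\Psi_2]\bigr\|_{\infty,[0,\sigma]}\,,
\]
hence the desired bound with $K(\sigma)=\bigl(o(1)+\rho(\sigma)\bigr)/\bigl(1-\rho(\sigma)\bigr)\to 0$.

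I expect the main obstacle to be step (i): making the boundary-shift sandwich precise and, above all, controlling $\partial_y H[\Psi]$ uniformly over $\mIz$ near the absorbing boundary $y=0$ so that the kernel sensitivity integrates against $q_0$ — this is exactly where the explicit first-passage density estimate~\eqref{eq:upperDensity} and the linear vanishing of $q_0$ at the origin are indispensable. A secondary technical point is the uniform-in-$\Psi$ bound on the short-interval increment of the delayed reset $G_\epsilon$, which requires combining the a priori rate bound of Proposition~\ref{prop:boundedness} with the smoothness and the $[\epsilon,\infty)$-support of $p_\epsilon$.
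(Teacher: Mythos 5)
Your overall strategy is the same as the paper's: invoke Banach's fixed-point theorem on the complete space $\mIz([0,\sigma])$, peel off the $1$-Lipschitz operations $\sup$ and $[\cdot]_+$, and reduce the contraction to a Lipschitz estimate $\|G[\Psi_1]-G[\Psi_2]\|_{\infty,[0,\sigma]}\le K(\sigma)\|\Psi_1-\Psi_2\|_{\infty,[0,\sigma]}$ with $K(\sigma)\to0$. Your kernel sandwich $H[\Psi_2](\sigma,\tau;x+C\beta)\le H[\Psi_1](\sigma,\tau;x)\le H[\Psi_2](\sigma,\tau;x-C\beta)$ is literally Lemma~\ref{lem:Hprop}, and your appeal to the explicit density bound~\eqref{eq:upperDensity} together with the linear vanishing of $q_0$ near the origin and the boundedness of $\partial_\tau H$ tracks the roles of Lemma~\ref{lem:FP}, Proposition~\ref{prop:boundh}, and Lemma~\ref{lem:HLipsch}.

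Where you diverge from the paper is in the treatment of the delayed reset rate $G_\epsilon$. You treat $G_\epsilon[\Psi]$ as genuinely coupled to $G[\Psi]$ on $[0,\sigma]$, write a bound of the form $\|G_\epsilon[\Psi_1]-G_\epsilon[\Psi_2]\|\le\|G[\Psi_1]-G[\Psi_2]\|+2\|p_\epsilon\|_\infty(\sup|G|)\|\Psi_1-\Psi_2\|$, and then absorb the $\|G[\Psi_1]-G[\Psi_2]\|$ term that this feeds back into the renewal identity via a Gr\"onwall-style closure. That works, but it misses the structural simplification the paper exploits: because $P_\epsilon$ is supported on $[\epsilon,\infty)$ and $w_\Psi\le1/\nu_2$, only delays $\eta\ge\nu_2\epsilon$ contribute to $Q_\sigma$, so for $\sigma\le\nu_2\epsilon$ every argument $\sigma-\eta$ lands in $\mathbbm{R}^-$ and $G_\epsilon(\sigma)$ coincides with $F_{0,\epsilon}\circ\Psi$, \emph{i.e.}\ it depends on $\Psi$ and on the initial data alone and not on $G[\Psi]$ at all. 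The renewal equation then ``loses its renewal character'' on $[0,\nu_2\epsilon]$ (equation~\eqref{eq:GNotRenew}), there is no feedback loop to close, and the three integral terms $I_1,I_2,I_3$ can each be bounded directly in $\Vert\Psi_1-\Psi_2\Vert$. Your route buys a marginally more general statement (it does not visibly use $\epsilon>0$), but it obscures the role of the refractory delay in making the contraction work and requires additional bookkeeping. Two small imprecisions in your closure step: the factor that multiplies $\|G_\epsilon[\Psi_1]-G_\epsilon[\Psi_2]\|$ after integration by parts against $H$ should be $D\sigma$ (with $D$ the uniform bound on $\partial_\tau H$), not $\rho(\sigma)=\sup H$ — both tend to zero, so the conclusion stands, but the identification is wrong; and bounding $\int_0^\sigma H\,\mdd(G_\epsilon[\Psi_1]-G_\epsilon[\Psi_2])$ by $\rho(\sigma)$ times a difference of increments is not licit without integration by parts, since that difference is not of constant sign. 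Tightening those points would make the argument rigorous, at which point it is a valid if somewhat heavier alternative to the paper's proof.
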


To prove Proposition~\ref{prop:contract}, we will need two technical lemmas.
Both lemmas provide useful bounds on the cumulative distribution $H(\sigma,t,x)=\Prob{\sigma_{x,t}\leq \sigma}$ 
of the first-passage time $\sigma_{x,t}$ defined in \eqref{eq:firstPass}.
Recall that the first-passage time $\sigma_{x,t}$ depends on $\Psi$ via $\mu$, the deterministic drift of
the time-changed Wiener process $Y_\sigma$, whose relation to $\Psi$ is given in \eqref{eq:TCdrift}.
Both lemmas are obtained via elementary probabilistic proofs, which are given in Section~\ref{sec:buffcontract}.

The first lemma, Lemma~\ref{lem:Hprop}, allows to compare cumulative distribution $H(\sigma,t,x)$ obtained for distinct choice
of time change, e.g., $\Psi_a$, $\Psi_b \in \mId$. 
For ease of notation, indexation by $a$ and $b$ will indicate throughout the proof dependence on $\Psi_a$ and $\Psi_b$, respectively.
For instance, we write $H_a=H[\Psi_a]$ and $H_b=H[\Psi_b]$.

\begin{lemma}\label{lem:Hprop}
\begin{subequations}
\label{all:lemHprop}
Set $C=2|\nu_1-\lambda_1\nu_2/\lambda_1|$ and consider $0<\tau_a, \, \tau_b\leq\sigma$  and $\Psi_a,\Psi_b \in \mIz([0,\infty))$ with $\Vert \Psi_a-\Psi_b \Vert_{0,\sigma} \leq\beta$. 
Then for all $x \geq 0$:
    \begin{align}\label{eq:lemHprop1}
    \vert H_a(\sigma,\tau,x)-H_b(\sigma,\tau,x) \vert
    \leq 
    &  \big[ H_a(\sigma,\tau;x)-H_a(\sigma,\tau,x+C\beta) \big] \\& \vee  \big[ H_a(\sigma,\tau,x-C\beta) - H_a(\sigma,\tau,x)  \big] \nonumber
\end{align}
\begin{align}\label{eq:lemHprop2}
        \vert H_a(\sigma,\tau_a,x)-H_a(\sigma,\tau_b,x) \vert \leq  
        & \big(  \big[ H_a(\sigma,\tau_b,x)-H_a(\sigma,\tau_b, x+C\vert \tau_b-\tau_a \vert /\nu_2 )  \big] \\
        & \vee   \big[ H_a(\sigma,\tau_b, x-C \vert \tau_b-\tau_a \vert /\nu_2 ) - H_a(\sigma,\tau_b,x)  \big] \big) \nonumber\\
        &+ \Vert h_a(\cdot,\tau_a,x)\Vert_{\tau_a, \sigma} \vert \tau_b-\tau_a \vert  \, , \nonumber
    \end{align} 
    with $h_a(\cdot,\tau,x) = \partial_\sigma H_a(\cdot,\tau,x)$.
 \end{subequations}    
\end{lemma}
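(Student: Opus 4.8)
The plan is to derive both inequalities from a single elementary coupling fact about first-passage times of drifted Brownian motion, treating the two statements as instances of the same mechanism and handling the mismatch of starting times in \eqref{eq:lemHprop2} by a soft, separate argument. Write $M(\xi)=\int_0^\xi \mu(u)\,\mdd u=\big(\nu_1-\lambda_1\nu_2/\lambda_2\big)\Psi(\xi)+(\lambda_1/\lambda_2)\xi$, so that the first-passage time $\sigma_{\tau,x}$ in \eqref{eq:firstPass} is the first time on $[\tau,\sigma]$ that $Y_s=x+(W_s-W_\tau)-\big(M(s)-M(\tau)\big)$ reaches $0$, whence $H(\sigma,\tau,x)=\Prob{\inf_{\tau\le s\le\sigma}Y_s\le 0}$; here $x\mapsto H(\sigma,\tau,x)$ is nonincreasing, and path continuity makes the strict inequality in \eqref{eq:firstPass} immaterial for this law. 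The key point is that $Y$ depends on $\Psi$ only through the deterministic curve $M$. Hence, if two such processes $Y^{(1)},Y^{(2)}$ are driven by the same Wiener path, the gap $s\mapsto Y^{(1)}_s-Y^{(2)}_s=:-\rho(s)$ is deterministic; whenever $|\rho|\le c$ on the relevant interval and $\rho$ vanishes at the common starting point, the bounds $Y^{(2)}_s-c\le Y^{(1)}_s\le Y^{(2)}_s+c$ sandwich the hitting event, giving $\Prob{\inf Y^{(2)}\le-c}\le H^{(1)}\le\Prob{\inf Y^{(2)}\le c}$, i.e. $H^{(2)}(\sigma,\tau,x+c)\le H^{(1)}(\sigma,\tau,x)\le H^{(2)}(\sigma,\tau,x-c)$, since shifting the absorbing level by $\mp c$ is the same as shifting the start by $\pm c$. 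Combined with the same statement with $(1)$ and $(2)$ exchanged, a triangle inequality yields $|H^{(1)}-H^{(2)}|\le[H^{(1)}(\sigma,\tau,x)-H^{(1)}(\sigma,\tau,x+c)]\vee[H^{(1)}(\sigma,\tau,x-c)-H^{(1)}(\sigma,\tau,x)]$.

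For \eqref{eq:lemHprop1} I would apply this with $\Psi^{(1)}=\Psi_a$, $\Psi^{(2)}=\Psi_b$ and a common $\tau$: since $M_a-M_b=(\nu_1-\lambda_1\nu_2/\lambda_2)(\Psi_a-\Psi_b)$, the gap $\rho(s)=(M_a-M_b)(s)-(M_a-M_b)(\tau)$ satisfies $\rho(\tau)=0$ and $|\rho(s)|\le 2|\nu_1-\lambda_1\nu_2/\lambda_2|\,\Vert\Psi_a-\Psi_b\Vert_{0,\sigma}\le C\beta$ on $[\tau,\sigma]$, so the displayed coupling bound with $c=C\beta$ is exactly \eqref{eq:lemHprop1}.

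For \eqref{eq:lemHprop2} the time change is fixed but the starting times differ; assume without loss of generality $\tau_a\le\tau_b\le\sigma$ (the other case is symmetric). I would first trim the longer time horizon: by monotonicity of $H_a(\cdot,\tau_a,x)$ in its first argument, $0\le H_a(\sigma,\tau_a,x)-H_a(\sigma-(\tau_b-\tau_a),\tau_a,x)=\int_{\sigma-(\tau_b-\tau_a)}^{\sigma}h_a(s,\tau_a,x)\,\mdd s\le\Vert h_a(\cdot,\tau_a,x)\Vert_{\tau_a,\sigma}\,|\tau_b-\tau_a|$, which is the additive remainder in \eqref{eq:lemHprop2}. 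It then remains to compare $H_a(\sigma-(\tau_b-\tau_a),\tau_a,x)$ with $H_a(\sigma,\tau_b,x)$: both are first-passage probabilities over the common duration $\sigma-\tau_b$, started at $x$, with drift curves $u\mapsto M_a(\tau_a+u)-M_a(\tau_a)$ and $u\mapsto M_a(\tau_b+u)-M_a(\tau_b)$ in the local time $u$. Their difference is $\rho(u)=(\nu_1-\lambda_1\nu_2/\lambda_2)\big[(\Psi_a(\tau_a+u)-\Psi_a(\tau_b+u))-(\Psi_a(\tau_a)-\Psi_a(\tau_b))\big]$, where the $(\lambda_1/\lambda_2)$-contributions cancel, so the $1/\nu_2$-Lipschitz property of $\Psi_a$ gives $\rho(0)=0$ and $|\rho(u)|\le 2|\nu_1-\lambda_1\nu_2/\lambda_2|\,|\tau_b-\tau_a|/\nu_2=C|\tau_b-\tau_a|/\nu_2$. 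The coupling bound above, now with $c=C|\tau_b-\tau_a|/\nu_2$, controls $|H_a(\sigma-(\tau_b-\tau_a),\tau_a,x)-H_a(\sigma,\tau_b,x)|$ by the max term in \eqref{eq:lemHprop2}, and a final triangle inequality, using $\max\{A+B,D\}\le\max\{A,D\}+B$ for $B\ge 0$, assembles the claim.

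The main obstacle is precisely the step specific to \eqref{eq:lemHprop2}: one cannot couple $H_a(\sigma,\tau_a,x)$ and $H_a(\sigma,\tau_b,x)$ directly, since the two candidate processes live on different time windows and their pathwise gap is genuinely random. The right move — trimming the longer window first, at the controlled cost $\Vert h_a(\cdot,\tau_a,x)\Vert_{\tau_a,\sigma}\,|\tau_b-\tau_a|$, so that the residual comparison is again between two drift curves differing by a deterministic shift — is where the argument needs care, as is the verification that this shift is $O(|\tau_b-\tau_a|/\nu_2)$ because the $\lambda_1/\lambda_2$ drift terms cancel. The remaining ingredients (monotonicity of $H$ in $x$ and in the time horizon, and path continuity making the open/closed threshold in \eqref{eq:firstPass} irrelevant for the hitting-time law) are standard and would be dispatched in a line.
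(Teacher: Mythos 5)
Your proof is correct and follows essentially the same route as the paper: both rely on the same pathwise-coupling observation that the first-passage time depends on $\Psi$ only through the deterministic drift curve $M$, and that a uniform bound on $|M_a-M_b|$ (on the relevant window, after anchoring at the start) translates into a sandwiching $H_a(\sigma,\tau,x+c)\le H_b(\sigma,\tau,x)\le H_a(\sigma,\tau,x-c)$. For \eqref{eq:lemHprop2}, the paper introduces the shifted time change $\Psi_c(\cdot)=\Psi_a(\cdot+\tau_a-\tau_b)$ and reduces to \eqref{eq:lemHprop1}, which is the same decomposition you reach by trimming the time window and comparing drift curves in local time; the remainder term $\int_{\sigma-(\tau_b-\tau_a)}^{\sigma}h_a(\xi,\tau_a,x)\,\mdd\xi$ appears identically in both.
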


The above lemma allows one to substitute the comparison of $H_a(\sigma,\tau,x)$ and $H_b(\sigma,\tau,x)$ with a
comparison of the same function, say,  $x \mapsto H_a(\sigma,\tau,x)$ evaluated for distinct value of $x$.
The second lemma, Lemma~\ref{lem:FP}, provides crude yet useful Lipschitz bounds on the function $x \mapsto H(\sigma,\tau,x)$.
 
\begin{lemma}\label{lem:FP}
\begin{subequations}
\label{all:lemFP}
For all $\sigma > \tau>0$ and all $x,y>0$, we have
\begin{align}\label{eq:lemFP1}
0 \leq \sigma-\tau<1/(8 \mu_M^2) \quad \Rightarrow \quad |H(\sigma,\tau, x)-H(\sigma,\tau, y)|  \leq \frac{4 |x-y|}{\sqrt{\sigma-\tau}}
\end{align}
 \begin{align}\label{eq:lemFP2}
(\sigma-\tau)^{1/3}+\int_\tau^\sigma \mu(\xi)\, \mdd \xi \leq x \leq y \quad \Rightarrow \quad |H(\sigma,\tau,x)-H(\sigma,\tau,y)|\leq 24\sqrt{\sigma-t}|x-y|
\end{align}
\end{subequations}
\end{lemma}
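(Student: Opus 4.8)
The plan is to reduce both estimates to one pointwise bound on the density of the running supremum of a drifted Brownian motion, to establish that bound by an explicit reflection‑principle computation in the constant‑drift model case, and then to transfer it to a general bounded, Lipschitz drift by means of the single‑layer potential representation of $H$ already obtained in the proof of Proposition~\ref{prop:solG}.

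First I would rewrite the first‑passage law probabilistically. With $g(s)=\int_\tau^s\mu(\xi)\,\mdd\xi$ (nondecreasing, uniformly Lipschitz, $g(\tau)=0$) and $\beta_s=-(W_s-W_\tau)$, the event defining $\sigma_{\tau,x}$ in \eqref{eq:firstPass} reads $\{\beta_s+g(s)>x\}$, so $H(\sigma,\tau,x)=\Prob{\sup_{\tau\le s\le\sigma}(\beta_s+g(s))>x}$. Hence $x\mapsto H(\sigma,\tau,x)$ is nonincreasing and, with $f$ the density of the running supremum $M_{\tau,\sigma}=\sup_{[\tau,\sigma]}(\beta_s+g(s))$, one has $H(\sigma,\tau,x)-H(\sigma,\tau,y)=\int_x^y f(m)\,\mdd m$ for $x\le y$. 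Both \eqref{eq:lemFP1} and \eqref{eq:lemFP2} then follow from bounding $f$ pointwise, respectively by $4/\sqrt{\sigma-\tau}$ everywhere and by $24\sqrt{\sigma-\tau}$ on $[(\sigma-\tau)^{1/3}+g(\sigma),\infty)$.

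For the model case $\mu\equiv c\in[\mu_m,\mu_M]$ I would use the reflection principle: with $t=\sigma-\tau$, the running maximum of a drift‑$c$ Brownian motion has $\Prob{M_{\tau,\sigma}\le m}=\Phi\!\big(\tfrac{m-ct}{\sqrt t}\big)-e^{2cm}\Phi\!\big(\tfrac{-m-ct}{\sqrt t}\big)$; after differentiating and using the identity $2cm-\tfrac{(m+ct)^2}{2t}=-\tfrac{(m-ct)^2}{2t}$, the two Gaussian‑density contributions to $f$ coincide, and dropping the remaining nonpositive term leaves $f(m)\le\tfrac{2}{\sqrt{2\pi t}}\,e^{-(m-ct)^2/(2t)}$. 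This is at most $2(2\pi t)^{-1/2}<4t^{-1/2}$ for all $m\ge0$, and once $m\ge t^{1/3}+ct$ (so $(m-ct)^2\ge t^{2/3}$) it is at most $\tfrac{2}{\sqrt{2\pi t}}\,e^{-t^{-1/3}/2}$, which an elementary calculus estimate shows is $\le24\sqrt t$ for every $t>0$; this settles \eqref{eq:lemFP2} in this case.

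The remaining step, a general bounded, Lipschitz drift, is where I expect the real work. A pathwise comparison with constant‑drift processes is not enough: it only pins $M_{\tau,\sigma}$ down to an additive $O((\mu_M-\mu_m)(\sigma-\tau))$ error, which ruins the linear‑in‑$|x-y|$ bound as $|x-y|\downarrow0$. Instead I would start from the representation obtained in the proof of Proposition~\ref{prop:solG},
\[
H(\sigma,\tau,x)=1-\Phi\!\left(\frac{x+M(\sigma)-M(\tau)}{\sqrt{\sigma-\tau}}\right)+\int_\tau^\sigma\Phi\!\left(\frac{M(\sigma)-M(\zeta)}{\sqrt{\sigma-\zeta}}\right)h(\zeta,\tau,x)\,\mdd\zeta,
\]
where $M(\sigma)=\int_0^\sigma\mu$ and $h(\cdot,\tau,x)$ is the first‑passage density solving the weakly singular Volterra equation of that proof. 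Differentiating in $x$, the Gaussian term contributes at most $(2\pi(\sigma-\tau))^{-1/2}$ (carrying, in the regime of \eqref{eq:lemFP2}, the same moderate‑deviation factor), while $\partial_x h$ satisfies the same Volterra equation with source $-\tfrac{x+M(\zeta)-M(\tau)}{\zeta-\tau}\,k\big(x+M(\zeta)-M(\tau),\zeta-\tau\big)$; a standard Abel‑type resolvent estimate for the $(\zeta-\xi)^{-1/2}$‑singular kernel then bounds $\partial_x h$ by this source times a constant depending only on $\mu_M$. The hypothesis $\sigma-\tau<1/(8\mu_M^2)$ keeps $M(\sigma)-M(\tau)\le\mu_M(\sigma-\tau)$ below $\tfrac{1}{2\sqrt2}\sqrt{\sigma-\tau}$, which is what prevents the resolvent correction from degrading the model constant past $4$; in the regime of \eqref{eq:lemFP2}, $x\ge(\sigma-\tau)^{1/3}+(M(\sigma)-M(\tau))$ forces $e^{-(x+M(\zeta)-M(\tau))^2/(2(\zeta-\tau))}\le e^{-(\sigma-\tau)^{-1/3}/2}\vee e^{-\mu_m^2(\sigma-\tau)/2}$ in the source, and the main difficulty is to carry this exponentially small factor through the resolvent so that the final bound stays linear in $|x-y|$ and dominated by $24\sqrt{\sigma-\tau}$, uniformly in $\sigma-\tau$.
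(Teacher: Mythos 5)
Your constant-drift computation via the reflection principle is correct and does give the advertised pointwise bounds on the density $f$ of the running supremum; and reducing \eqref{eq:lemFP1}--\eqref{eq:lemFP2} to $|H(\sigma,\tau,x)-H(\sigma,\tau,y)|=\int_x^y f(m)\,\mdd m$ is the right idea. (There is a sign slip in the representation you quote — the Gaussian argument should be $x-M(\sigma)+M(\tau)$, not $x+M(\sigma)-M(\tau)$, and the kernel under the integral should be the \emph{complementary} cumulative $C$, not $\Phi$ — but that is cosmetic.)

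The genuine gap is in the transfer from constant to variable drift, exactly where you say the ``real work'' lies, and the mechanism you propose does not close it. You want to differentiate the single-layer Volterra equation for $h$ in $x$ and then invoke ``a standard Abel-type resolvent estimate'' for the resulting $(\zeta-\xi)^{-1/2}$-singular kernel. But the equation that $h$ (and hence $\partial_x h$) satisfies, \eqref{eq:singelLayer}, is of the \emph{first kind}: these do not have bounded resolvents — their solution operators are unbounded, and there is no generic $\sup$-norm resolvent bound to cite. To make that route work you would first have to convert to a second-kind equation (e.g.\ Abel's differentiation trick), and that is precisely what is missing from your sketch. The paper sidesteps this entirely: it integrates the master identity \eqref{eq:master} by parts in the \emph{time} variable, which yields a second-kind Volterra equation for $H(\cdot,\tau,x)$ itself whose kernel is manifestly $x$-independent; subtracting the $x$ and $y$ equations then gives a closed weakly singular Volterra \emph{inequality} for $|H(\cdot,\tau,x)-H(\cdot,\tau,y)|$, with source controlled by the mean value theorem, and Lemma~\ref{ineqgrownwall} (a Gronwall lemma for the $(\sigma-s)^{-1/2}$ kernel) finishes it. No differentiation in $x$, no density bound, no resolvent of a first-kind operator. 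The restriction $\sigma-\tau<1/(8\mu_M^2)$ and the moderate-deviation factor for \eqref{eq:lemFP2} both fall out of tracking $M_a$ through that Gronwall estimate, which is the content of your final sentence — the part you acknowledge you have not carried through. In short: correct reduction and correct model computation, but the variable-drift step needs to be rerouted through a second-kind equation for $H$ (or for $\partial_x H$, not $\partial_x h$) before a resolvent/Gronwall bound applies.
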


\begin{proof}[Proof of Proposition~\ref{prop:contract}]
We proceed in several steps: 

$(i)$
The fact that $\mathcal{G}$ induces a mapping $\mIz \rightarrow \mIz$ directly follows from its definition in \eqref{eq:mFz}.
Let us justify that this mapping loses its renewal character for small enough $\sigma>0$, i.e., for $0<\sigma<\Psi^{-1}(\epsilon))$.
To see this, remember that for all $\Psi \in \mIz$, $0 \leq w_\Psi \leq 1/\nu_2$ and  $\Psi(0) =0$, so that  $ \nu_2 \epsilon \leq \Psi^{-1}(\epsilon)$.
Furthermore, observe that $\Psi^{-1}(\epsilon)$ is a lower bound to the time-changed refractory periods after zero.
This implies that a dPMF representative process may not fire more than once on $[0,\Psi^{-1}(\epsilon))\supset  [0,\nu_2 \epsilon]$.
As a result, the renewal-type equation \eqref{eq:Grenew} characterizing the cumulative function $G$ 
for a fixed choice of $\Psi$ loses its renewal character on $[0,\nu_2 \epsilon]$ and reads
\begin{eqnarray}
G(\sigma)
&=&  \int_{0}^\infty  H(\sigma,0,x)   q_0(x) \, \dd x + \int_0^\sigma H(\sigma,\tau,\Lambda) \, \dd G_{0,\epsilon}(\tau) \, , \nonumber\\
&=&  \int_{0}^\infty  H(\sigma,0,x)   q_0(x) \, \dd x + \int_0^\sigma H(\sigma,\tau,\Lambda) \, \dd F_{0,\epsilon}(\Psi(\tau))  \label{eq:GNotRenew} \, . 
\end{eqnarray}
In the equation above, the loss of renewal character is indicated by the fact that the second integral term depends on $F_{0,\epsilon}$,
which is given as an initial condition.
As $H(\sigma,\tau,x)$ only depends on $\Psi$ via $\{ \Psi(\xi) \}_{\tau \leq \xi \leq \sigma}$,  \eqref{eq:GNotRenew} 
shows that specifying $\mathcal{G} [\Psi]$ on $[0, \sigma]$, $0<\sigma<\nu_2 \epsilon$, only requires knowledge of $\Psi$ on $[0,\sigma]$.
Therefore, $\mathcal{G}$ is a mapping $\mIz([0,\sigma]) \rightarrow \mIz([0,\sigma])$ for all $\sigma$, $0 < \sigma \leq \nu_2 \epsilon$.

$(ii)$
Let us now consider two functions $\Psi_a$ and $\Psi_b$ in  $\mIz$.
By $(i)$, both cumulative functions $G_{a}$ and  $G_{b}$ satisfy the nonrenewal 
equation \eqref{eq:GNotRenew} on the interval $ [0, \nu_2 \epsilon ]$ with identical initial conditions $(q_0,F_0)$.
Our goal is to show that for small enough $\sigma$, we have  $\big \vert \mathcal{G}[\Psi_a](\sigma) - \mathcal{G}[\Psi_b](\sigma) \big \vert \leq \Vert \Psi_a - \Psi_b \Vert_{[0,\sigma]}/2$.
For all $0 \leq \sigma \leq \nu_2 \epsilon$, we have
\begin{align*}
     G_{a}(\sigma)-G_{b}(\sigma) = I_1(\sigma)+I_2(\sigma) +I_3(\sigma)
\end{align*}
where the three integral terms $I_1$, $I_2$, and $I_3$ are defined as
\begin{align*}
I_1(\sigma) &=  \int_0^\infty \big[H_a(\sigma,0,x)-H_b(\sigma,0,x) \big] q_0(x) \, \mdd x \, , \\
I_2(\sigma) &=  \int_0^\sigma  \big[ H_a(\sigma,\tau,\Lambda)-H_b(\sigma,\tau,\Lambda) \big]  \, \mdd F_{0, \epsilon}(\Psi_a(\tau)) \, , \\
I_3(\sigma) &= \int_0^\sigma H_b(\sigma,\tau,\Lambda) \mdd \big[ F_{0, \epsilon} (\Psi_a(\tau))- F_{0, \epsilon} (\Psi_b(\tau)) \big]  \, .
\end{align*}
In the following, we upper bound each of these integral terms.

\emph{Integral term $I_1(\sigma)$.}
Set $\beta(\sigma)= \Vert \Psi_a - \Psi_b \Vert_{[0,\sigma]}$ for notation convenience.
Observe that for all $\Psi_a, \Psi_b \in \mIz$, and for all $\sigma>0$, we have
\begin{eqnarray*}
 \beta(\sigma) = \Vert \Psi_a - \Psi_b \Vert_{[0,\sigma]} \leq \sigma/\nu_2  \, , 
\end{eqnarray*}
so that $ \beta(\sigma)$ can be made arbitrarily small when $\sigma \to 0$.
With that in mind, we split $I_1(\sigma)=I_{11}(\sigma)+I_{12}(\sigma)$ on two domains: $[0,2\sigma^{1/3}]$ and $[2\sigma^{1/3},\infty)$.
Furthermore, we assume $\sigma \leq \sigma_0=\nu_2 \epsilon \wedge (\alpha/2)^3 \Rightarrow 2\sigma^{1/3}\leq \alpha$, where
$\alpha>0$ is such that for every $x$ on $[0,\alpha]$ implies $q_0(x)\leq Ax$ for some constant $A$.
In order to bound $I_{11}(\sigma)$ and  $I_{12}(\sigma)$, we invoke \eqref{eq:lemHprop1}  from Lemma~\ref{lem:Hprop}  
with $C=2 \vert \nu_1-\lambda_1\nu_2/\lambda_1 \vert$ and write
\begin{align}\label{eq:ab123}
    \left \vert H_a(\sigma,0,x)-H_b(\sigma,0,x) \right \vert \nonumber
    \leq &
    \big[ H_a(\sigma,0,x)-H_a(\sigma,0,x+C \beta(\sigma)) \big]\\
   & \vee  \big[ H_a(\sigma,0,x-C \beta(\sigma)) - H_a(\sigma,0,x) \big]  \, .
\end{align}

For $I_{11}(\sigma)$, set $\sigma_{11}= \sigma_0 \wedge1/(8 \mu_M^2)$, so that
we can apply \eqref{eq:lemFP1} from Lemma~\ref{lem:FP} to get for  $\sigma \leq \sigma_{11}$,
\begin{align*}
    \left \vert H_a(\sigma,0,x)-H_a(\sigma,0,x \pm C \beta(\sigma)) \right \vert    \leq \frac{4 C \beta(\sigma)}{\sqrt{\sigma}} \, ,
\end{align*}
Combined with \eqref{eq:ab123}, this implies that
\begin{align*}
\vert I_{11}(\sigma) \vert
& \leq \int_0^{2\sigma^{1/3}} \vert H_a(\sigma,0;x)-H_b(\sigma,0;x) \vert  q_0(x) \, \mdd x \, , \\
&\leq \int_0^{2\sigma^{1/3}} \frac{4 C \beta(\sigma)}{\sqrt{\sigma}}  (Ax)\, \mdd x \, ,\\
&\leq 8 A C \beta(\sigma) \sigma^{1/6} \, .
\end{align*}

For $I_{12}(\sigma)$, setting $\sigma_{12}=(\mu_M+C/\nu_2)^{-3/2}$, observe that for all $\sigma$, $0<\sigma\leq\sigma_{12}$,
 we have 
 \begin{eqnarray*}
2\sigma^{1/3}>\sigma^{1/3}+(\mu_M+C/\nu_2) \sigma > \sigma^{1/3}+\int_0^\sigma \mu(\xi) \, \mdd \xi + C \beta(\sigma) \, .
\end{eqnarray*}
Then, set $\sigma_1=\sigma_{11} \wedge \sigma_{12}$ and for all $\sigma$, $0 \leq \sigma \leq \sigma_1$, and for $x\geq 2\sigma^{1/3}$,
apply \eqref{eq:lemFP2} from Lemma~\ref{lem:FP}  to get 
\begin{align*}
   &x - C\beta(\sigma) \geq 2 \sigma^{1/3}- C\beta(\sigma) >\sigma^{1/3}+\int_0^\sigma \mu(\xi) \, \mdd \xi \quad \Rightarrow  \\
   &\hspace{40pt}
    \left \vert H_a(\sigma,0,x)-H_a(\sigma,0,x \pm C \beta(\sigma)) \right \vert    \leq 24 \sqrt{\sigma}\beta(\sigma) \, .
\end{align*}
Combined with \eqref{eq:ab123}, this implies that
\begin{align*}
\vert I_{12}(\sigma) \vert
    & \leq \int_{2\sigma^{1/3}}^{\infty}   \left \vert H_a(\sigma,0,x)-H_b(\sigma,0,x) \right \vert  q_0(x) \, \mdd x \, , \\
    &\leq 24 C \sqrt{\sigma} \beta(\sigma) \int_{2\sigma^{1/3}}^{\infty}q_0(x)\mdd x \, , \\
    & \leq 24 C \sqrt{\sigma} \beta(\sigma) \, .
\end{align*}

Altogether, this shows that for all $\sigma$, $0 \leq \sigma \leq \sigma_1$, we have
\begin{align*}
\vert I_1(\sigma) \vert
= \vert I_{11}(\sigma) \vert + \vert I_{12}(\sigma) \vert 
\leq 
\left(  8 C   \sigma^{1/6} + 24 C \sqrt{\sigma}  \right) \beta(\sigma) \, .
\end{align*}

\emph{Integral term $I_2(\sigma)$.}
Invoking \eqref{eq:lemHprop1}  from Lemma~\ref{lem:Hprop} again
with $C=2 \vert \nu_1-\lambda_1\nu_2/\lambda_1 \vert$, we have
\begin{align}\label{eq:ab123b}
    \left \vert H_a(\sigma,\tau,\Lambda)-H_b(\sigma,\tau,\Lambda) \right \vert \nonumber
    \leq &
    \big[ H_a(\sigma,\tau,\Lambda)-H_a(\sigma,\tau, \Lambda+C \beta(\sigma)) \big]\\
   & \vee  \big[ H_a(\sigma,\tau,\Lambda-C \beta(\sigma)) - H_a(\sigma,\tau,\Lambda) \big]  \, .
\end{align}
There exists $\sigma_\Lambda>0$ such that for every $\sigma$, $0<\sigma\leq \sigma_\Lambda$, we have:
$$\Lambda>  \sigma^{1/3}+ (\mu_M +C/ \nu_2) \sigma > \sigma^{1/3}+\int_0^\sigma \mu(\xi) \, \mdd \xi +C \beta(\sigma) \, .$$
Set $\sigma_2=\sigma_\Lambda \wedge \sigma_1$.
By \eqref{eq:lemFP2} from Lemma \eqref{lem:FP}, for all $\sigma>0$ such that  $\sigma\leq\sigma_2$, we have 
\begin{align*}
 \left \vert H_a(\sigma,\tau,\Lambda)-H_a(\sigma,\tau,\Lambda \pm C \beta(\sigma)) \right \vert
\leq
24 C \beta(\sigma)\sqrt{\sigma-\tau}
\leq 
24 C \beta(\sigma) \sqrt{\sigma}  \, .
\end{align*}
Combined with \eqref{eq:ab123b}, this implies that
\begin{align*}
\vert I_2(\sigma) \vert
&=
\int_0^\sigma \vert H_a(\sigma,\tau,\Lambda)-H_b(\sigma,\tau,\Lambda) \vert \, \mdd F_0(\Psi_a(\tau)-\epsilon) \, , \\
&\leq 24 C  \sqrt{\sigma}  \beta(\sigma) (F_0(\Psi_a(\sigma)-\epsilon) - F_0(-\epsilon)) \, , \\
&\leq 24 C  \sqrt{\sigma}  \beta(\sigma)  \, ,
\end{align*}
where we have used the fact that for all $\sigma \leq \sigma_2 \leq \nu_2 \epsilon$, we necessarily have $\Psi_a(\sigma) \leq \epsilon$ so that
\begin{eqnarray*}
F_0(\Psi_a(\sigma)-\epsilon) - F_0(-\epsilon) \leq -F_0(-\epsilon) = \int_{-\epsilon}^0 f(s) \, \mdd s \leq 1 \, .
\end{eqnarray*}

\emph{Integral term $I_3(\sigma)$.}
Pick $\sigma$, $0 \leq \sigma \leq \sigma_2$. 
Since $ H_b(\sigma,\sigma,\Lambda)=0$ and $\Psi_a(0)=\Psi_b(0)$, performing integration by parts,
which is justified by Lemma~\ref{lem:HLipsch}, yields
\begin{align*}
I_3(\sigma) 
&= \int_0^\sigma H_b(\sigma,\tau,\Lambda) \mdd \big[ F_{0, \epsilon} (\Psi_a(\tau) )- F_{0, \epsilon} (\Psi_b(\tau)) \big]  \, , \\
&= \int_0^\sigma \partial_\tau H_b(\sigma,\tau,\Lambda)  \big[ F_{0, \epsilon} (\Psi_a(\tau) )- F_{0, \epsilon} (\Psi_b(\tau)) \big] \mdd \tau  \, . 
\end{align*}
Moreover, assuming that the delay distribution $P_\epsilon$ has a smooth density denoted by $p_\epsilon \in C_\infty(\mathbbm{R}^+)$, 
as a convolution, $F_{0, \epsilon} = F_0 * p_\epsilon \in C_\infty(\mathbbm{R}^+)$ is also a smooth function.
Therefore, we have
\begin{align*}
\vert F_{0, \epsilon} (\Psi_a(\tau) )- F_{0, \epsilon} (\Psi_b(\tau)) \vert 
\leq \Vert F'_{0, \epsilon}  \Vert_{0, \Psi_a(\tau) \vee \Psi_b(\tau)}  \vert \Psi_b-\Psi_a \vert  
 \leq \Vert F'_{0, \epsilon}  \Vert_{0,  \epsilon}  \beta(\sigma) \, ,\\
\end{align*}
so that 
\begin{align*}
\vert I_3 \vert 
\leq 
- \int_0^\sigma \partial_\tau H_b(\sigma,\tau,\Lambda)\dd\tau\; \Vert F'_{0, \epsilon}  \Vert_{0,  \epsilon} \beta(\sigma) 
\leq 
D  \Vert F'_{0, \epsilon}  \Vert_{0,  \epsilon}  \, \sigma \beta(\sigma)  \, ,
\end{align*}
where we have used the fact that by Lemma~\ref{lem:HLipsch}, $\partial_\tau H$ is uniformly bounded 
over $\{ (\sigma,\tau) \, \vert \, 0 \leq \tau \leq \sigma \}$ by a constant $D$ 
that depends only on $\mu_m$, $\mu_M$, and $\Lambda$,

\emph{Conclusion.}
Altogether, for all $\sigma$, $0\leq \sigma \leq \sigma_2$, we have 
\begin{align*}
\vert G_{a}(\sigma)-G_{b}(\sigma) \vert
&\leq 
 \vert I_1(\sigma) \vert + \vert I_2(\sigma) \vert + \vert I_3(\sigma) \vert  \leq K^\star(\sigma) \beta(\sigma) \, , 
 \end{align*}
 with
 \begin{align*}
 K^\star(\sigma) =  8 C   \sigma^{1/6} + 48  C  \sqrt{\sigma} + D  \Vert F'_{0, \epsilon}  \Vert_{0,  \epsilon} \, \sigma  
 \xrightarrow[\sigma \downarrow 0]{}  
0 \, . 
\end{align*}
In particular, there is $\sigma_{1/2}>0$ such that
$$ 0 \leq \sigma \leq \sigma_{1/2} \quad \Rightarrow \quad  K^\star(\sigma) \leq \nu_2/(2\lambda_2) \, , $$
so that for all $\sigma$, $0 \leq \sigma \leq\sigma_{1/2}$, we have
\begin{eqnarray*}
\big \vert \mathcal{G}[\Psi_a](\sigma) - \mathcal{G}[\Psi_b](\sigma) \big \vert 
&=& 
\left \vert \left[ \sup_{0 \leq \tau \leq \sigma} \left( \frac{\tau-\lambda_2 G_a(\tau)}{\nu_2}  \right) \right]_+ - \left[\sup_{0 \leq \tau \leq \sigma} \left( \frac{\tau-\lambda_2 G_b(\tau)}{\nu_2}  \right) \right]_+ \right \vert \, , \\
&\leq& 
\left \vert  \sup_{0 \leq \tau \leq \sigma} \left( \frac{\tau-\lambda_2 G_a(\tau)}{\nu_2}  \right)  - \sup_{0 \leq \tau \leq \sigma} \left( \frac{\tau-\lambda_2 G_b(\tau)}{\nu_2}  \right)  \right \vert \, , \\
&\leq& 
\frac{\lambda_2}{\nu_2} \sup_{0 \leq \tau \leq \sigma} \Big \vert   G_a(\tau) -  G_b(\tau)  \Big \vert \, , \\
&\leq& 
\frac{\lambda_2}{\nu_2}  \left( \frac{\nu_2}{2 \lambda_2}  \beta(\sigma) \right) \, , \\
&\leq& 
 \frac{1}{2}  \Vert \Psi_a - \Psi_b \Vert_{0,\sigma} \, ,
\end{eqnarray*}
which directly implies 
\begin{eqnarray*}
\Vert\mathcal{G}[\Psi_a](\sigma) - \mathcal{G}[\Psi_b](\sigma)\Vert_{0,\sigma}
\leq
 \frac{1}{2}  \Vert \Psi_a - \Psi_b \Vert_{0,\sigma} \, .
\end{eqnarray*}
This shows that there is $\sigma_{1/2}>0$ such that $\mathcal{G}$ is a contraction on $\mIz([0,\sigma_{1/2}])$. 
\end{proof}

\begin{remark}
The contraction argument presented above is classical and closely related 
to PDE techniques used in~\cite{Carrillo:2013} and probabilistic arguments developed in~\cite{Delarue:2015}.
Our approach differs from~\cite{Carrillo:2013} as it considers a fixed-point equation that 
can be interpreted probabilistically in terms of a quasi-renewal process, 
whereas \cite{Carrillo:2013} considers an equivalent fixed-point equation derived in the context of a free-boundary problem.
Adopting a quasi-renewal probabilistic interpretation is necessary to formulate our proposed 
physical fixed-point  equation \eqref{eq:physFP} by means of a running supremum operator.
Moreover, our arguments differ from~\cite{Delarue:2015} for being specifically
developed to account for the physical occurrence of full blowups in mean-field models, which also includes
random refractory delays (as opposed to delayed interactions).
\end{remark}

Proposition~\ref{prop:contract} directly implies the following existence and uniqueness theorem:

\begin{theorem}\label{th:globSol}
For all  $\nu_1, \nu_2, \lambda_1, \lambda_2, \Lambda, \epsilon>0$, given generic initial conditions, there is a unique global solution $\Psi$ to the dPMF fixed-point problem \eqref{eq:physFP} in $\mIz=\mIz([0,\infty))$.
Furthermore, this solution specifies a solution to the dPMF problem \eqref{eq:mainProb1} on as $\mathbbm{R}^+ \times \mathbbm{R}^+ \ni (t,x) \mapsto p(x,t)=q(\Phi(t),x)$, where $\Phi$ is the right-continuous inverse of $\Psi$ and where $(\sigma,x) \mapsto q(\sigma,x)$ is unique solution to the time-changed problem \eqref{all:TCPDE} for $\Psi$.
\end{theorem}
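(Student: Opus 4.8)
\textit{Proof proposal.} The plan is to upgrade the local contraction of Proposition~\ref{prop:contract} to a global statement by a continuation argument, exploiting that the map $\mathcal{G}$ is causal. First I would record that, for any $T>0$, the set $\mIz([0,T])$ is a closed subset of $C([0,T])$ equipped with the sup-norm: both the constraint $0\le w_\Psi\le 1/\nu_2$ and the normalization $\Psi(0)=0$ pass to uniform limits. Hence $\mIz([0,T])$ is a complete metric space, and by step $(i)$ of the proof of Proposition~\ref{prop:contract}, $\mathcal{G}$ maps $\mIz([0,\sigma])$ into itself for $0<\sigma\le\nu_2\epsilon$. Proposition~\ref{prop:contract} then yields, via the Banach fixed-point theorem, a unique $\Psi\in\mIz([0,\sigma_{1/2}])$ with $\mathcal{G}[\Psi]=\Psi$, and this $\Psi$ solves~\eqref{eq:physFP} on $[0,\sigma_{1/2}]$.

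Next I would observe that $\mathcal{G}$ is causal, in the sense that the restriction of $\mathcal{G}[\Psi]$ to $[0,\sigma]$ depends only on the restriction of $\Psi$ to $[0,\sigma]$ and on the fixed generic initial condition. Indeed, $\mu$ on $[\tau,\sigma]$ is determined by $\Psi'$ on $[\tau,\sigma]$, so $H(\sigma,\tau,x)$ depends on $\Psi$ only through $\{\Psi(\xi)\}_{\tau\le\xi\le\sigma}$; and by~\eqref{eq:starGeps} the delayed cumulative reset rate $G_\epsilon(\sigma)$ is built from $G$ and $\Psi$ at changed times $\le\sigma$ only. Thus the renewal equation~\eqref{eq:Grenew} determines $G[\Psi]$ on $[0,\sigma]$ by forward continuation from $\Psi|_{[0,\sigma]}$. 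Causality allows me to recast the extension of an existing solution past a time $\sigma_0$ as a fixed-point problem for $\Psi$ on $[\sigma_0,\sigma_0+h]$, with the already-constructed part of $\Psi$ and the inherited reset rate playing the role of frozen data, and with the running supremum in~\eqref{eq:physFP} split into its fixed past part and its unknown future part.

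The heart of the argument — and the step I expect to be the main obstacle — is that this local time step can be taken uniform over compact time horizons. Fix $S>0$ and suppose a solution has been built on $[0,\sigma_0]$ with $\sigma_0+h\le S$ and $h\le\nu_2\epsilon$. On $[\sigma_0,\sigma_0+h]$ one has $\Psi(\sigma)-\Psi(\sigma_0)\le(\sigma-\sigma_0)/\nu_2<\epsilon$, so the support assumption on $P_\epsilon$ forces $\xi^\star(\sigma)\le\sigma_0$ there; hence, exactly as in step $(i)$ of the proof of Proposition~\ref{prop:contract}, the renewal equation for $G[\Psi]$ on $[\sigma_0,\sigma_0+h]$ loses its renewal character relative to $\sigma_0$, its source term being the known reset rate from the history. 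The contraction estimate of Proposition~\ref{prop:contract} then applies verbatim on $[\sigma_0,\sigma_0+h]$, producing a bound $K^\star(h)\to 0$ as $h\downarrow 0$ whose constants depend only on $\mu_m,\mu_M,\Lambda$ and on $\Vert g\Vert_{0,S}$, the latter being finite and uniform over $\mIz$ by Proposition~\ref{prop:boundedness}. Choosing $h_S>0$ with $K^\star(h_S)\le\nu_2/(2\lambda_2)$ gives a unique extension to $[0,\sigma_0+h_S]$. I would then set $T^\star=\sup\{T>0:\eqref{eq:physFP}\text{ has a unique solution in }\mIz([0,T])\}$; uniqueness makes all local solutions consistent, so there is a unique $\Psi$ on $[0,T^\star)$, which (being $1/\nu_2$-Lipschitz) extends continuously to $[0,T^\star]$ and still solves~\eqref{eq:physFP} there by continuity of $G[\Psi]$ (Proposition~\ref{prop:solG}). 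If $T^\star<\infty$, the uniform-step argument with $S=T^\star+1$ extends past $T^\star$, contradicting maximality; hence $T^\star=\infty$. The indispensability of $\epsilon>0$ here is precisely that it keeps the equation locally decoupled from its own future over changed-time windows of length $\nu_2\epsilon$, while the $\Psi$-uniform a priori rate bound of Proposition~\ref{prop:boundedness} is what controls the only history-dependent constant in the estimate on compact intervals.

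Finally, to recover the dPMF solution I would take $\Phi$ to be the right-continuous inverse of the global $\Psi$, let $q=q[\Psi]$ be the solution of~\eqref{all:TCPDE} from Proposition~\ref{prop:solG}, and define $p(t,x)=q(\Phi(t),x)$. Off the (at most countable) family of flat intervals of $\Psi$ — equivalently, off the blowup times of $t\mapsto\Phi(t)$ — the change of variables $\sigma=\Phi(t)$ is a $C^1$ increasing bijection and, by the computation in the proof of Proposition~\ref{prop:PhiToPsi} read in reverse, transforms~\eqref{all:TCPDE} back into~\eqref{eq:mainProb1}; moreover, where $\Psi$ is strictly increasing,~\eqref{eq:physFP} collapses to $\Psi(\sigma)=(\sigma-\lambda_2 G(\sigma))/\nu_2$, i.e. $\Phi(t)=\nu_2 t+\lambda_2 F(t)$ with $F(t)=G(\Phi(t))$, which is~\eqref{eq:PhiDef}, and the conservation law~\eqref{eq:mainProb1c} and the boundary condition follow from their time-changed counterparts. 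On the blowup intervals — Lebesgue-null in $t$ — Proposition~\ref{prop:killedBrown} and Corollary~\ref{cor:buTime} identify $p(t,\cdot)=q(\Phi(t),\cdot)$ as the killed-Brownian continuation of the density with mass jump equal to the physical synchronous fraction $J_k\in(0,1]$, so that $p$ solves~\eqref{eq:mainProb1} in exactly the sense that defines a physical blowup solution here. A secondary, more routine difficulty is the bookkeeping needed for this last paragraph to check that $p=q\circ\Phi$ is consistent across blowup jumps rather than merely away from them.
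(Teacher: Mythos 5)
Your first two steps are the paper's: local existence/uniqueness by Banach in $\mathcal{I}_0([0,\sigma_{1/2}])$ via Proposition~\ref{prop:contract}, then continuation of the fixed point over all changed-time horizons using that the operator is causal and that the \emph{a priori} bound on $g=\partial_x q(\cdot,0)/2$ from Proposition~\ref{prop:boundedness} is $\Psi$-uniform on compacts. That part is sound and essentially identical to the paper's argument.

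However, there is a genuine gap in the final step. You produce a global $\Psi\in\mathcal{I}_0([0,\infty))$, observe it is $1/\nu_2$-Lipschitz, and then pass directly to $\Phi=\Psi^{-1}$ and $p(t,x)=q(\Phi(t),x)$. A Lipschitz bound on $\Psi'$ controls the rate of growth of $\Psi$ but does \emph{not} rule out $\Psi$ being bounded: if $\Psi'(\sigma)\to 0$ fast enough (equivalently, $g(\sigma)\to 1/\lambda_2$ and stays near it), then $T_\infty:=\lim_{\sigma\to\infty}\Psi(\sigma)<\infty$, the right-continuous inverse $\Phi$ is only defined on $[0,T_\infty)$, and $p$ is only a solution to~\eqref{eq:mainProb1} on $[0,T_\infty)\times\mathbb{R}^+$ — not on $\mathbb{R}^+\times\mathbb{R}^+$ as the theorem claims. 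This is exactly the eternal-blowup phenomenon the paper has been at pains to exclude, and it does not come for free from the contraction argument. The paper closes this hole with a separate probabilistic estimate based on the refractory period: from conservation of probability, $F(t)-F_\epsilon(t)\le 1$, and since $P_\epsilon$ is supported on $[\epsilon,\infty)$ one has $F_\epsilon(t+\epsilon)\le F(t)$, whence $F(t+\epsilon)-F(t)\le 1$ for all $t\ge 0$; summing over windows of length $\epsilon$ gives $\Phi(t)\le\nu_2 t+\lambda_2(t/\epsilon+1)$ and therefore $\Psi(\sigma)\ge(\sigma-\lambda_2)/(\nu_2+\lambda_2/\epsilon)\to\infty$. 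You need this (or an equivalent) estimate; without it the passage from "global $\Psi$ in changed time" to "global $p$ in original time" is unjustified.

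A secondary remark: your closing paragraph about Proposition~\ref{prop:killedBrown} and Corollary~\ref{cor:buTime} is more than the theorem requires. Once $\Psi(\sigma)\to\infty$ is established, the identification $p=q\circ\Phi$ with $\Phi$ the right-continuous inverse is exactly what the statement asks for, and the consistency across jump intervals is already encoded in the fixed-point equation~\eqref{eq:physFP} and in Proposition~\ref{prop:solG}; you do not need to redo the blowup-size analysis here.
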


\begin{proof}[Proof of Theorem~\ref{th:globSol}] 
This is a direct consequence of Banach's fixed-point theorem. For completeness, we recapitulate the arguments in the following.
%

By Proposition~\ref{prop:contract}, the fixed-point equation \eqref{eq:fixedPoint} can be written as $\Psi = \mathcal{G}[\Psi]$, 
where the operator  $\mathcal{G}$ is such that for generic initial conditions,
its restriction to $\mIz([0,\sigma])$ induces a contraction $\mathcal{G}: \mIz([0,\sigma]) \to  \mIz([0,\sigma])$ for small enough $\sigma>0$.
By Banach's fixed-point theorem, the fixed-point equation \eqref{eq:fixedPoint} admits a unique local solution $\Psi$ in $\mIz([0,\sigma])$.

A global solution is obtained by continuation, which by Proposition~\ref{prop:contract}, is possible for all $\sigma$ with initial conditions
given by $g_0=\{ g(\tau) \}_{ \tau \leq \sigma}$ and $q_0=q(\sigma,\cdot)$.
Indeed, these specify generic initial conditions as long as the boundedness condition \eqref{eq:TCinitb} holds.
It turns out that continuation is possible for all $\sigma>0$ as by Proposition~\ref{prop:boundedness}, $\sigma \mapsto \partial_\sigma G(\sigma)=g(\sigma)=\partial_x q(\sigma,0)/2$ 
must be uniformly bounded with respect to $\Psi \in \mIz$ over all compact interval $[0,T]$, $T>0$.
This shows that there is a unique global solution $\Psi \in \mIz$ to the fixed-point equation \eqref{eq:fixedPoint}.
and therefore, a unique time change $\Psi=\Phi^{-1}: \mathbbm{R}^+ \to  \mathbbm{R}^+$
specifying the dPMF dynamics solving problem \eqref{eq:mainProb1}.

Finally, we show that the resulting dPMF dynamics is globally defined, which only requires to show that  $\lim_{\sigma \to \infty} \Psi(\sigma) = \infty$.
By conservation of probability, observe that for all $t \geq 0$, we have
\begin{align*}
F(t) - F_\epsilon(t) \leq \int_0^\infty p(t,x) \, \mdd x + F(t) -F_\epsilon(t) = 1 \, .
\end{align*}
Moreover, we also have for all $t \geq 0$
\begin{align*}
F(t+\epsilon)-F(t)
&= \int_0^\infty H(\Phi(t+\epsilon), \Phi(t), x) p(t,x) \, \mdd x + \int_t^{t+\epsilon} H(\Phi(t+\epsilon), \Phi(s), x) \,  \mdd F_\epsilon(s) \, , \\
& \leq  \int_0^\infty  p(t,x) \, \mdd x + F_\epsilon(t+\epsilon) - F_\epsilon(t)\, , \\
& = \left(  \int_0^\infty  p(t,x) \, \mdd x + F(t) - F_\epsilon(t) \right) + F_\epsilon(t+\epsilon) - F(t) \, , \\
& = 1 + F_\epsilon(t+\epsilon) - F(t) \, ,
\end{align*}
Since the refractory-delay distribution $P_\epsilon$ has support in $[\epsilon, \infty)$, we have
\begin{eqnarray*}
F_\epsilon(t+\epsilon)  = \int_{[\epsilon,\infty)} F(t+\epsilon-s) \, \mdd P_\epsilon(s) \leq F(t) \, ,
\end{eqnarray*}
where we use the fact that $F$ is nondecreasing. 
This shows that for all $t \geq0$, we must have $F(t+\epsilon)-F(t) \leq 1$.
Thus, we have
\begin{align*}
\Phi(t)  
&= \nu_2 t + \lambda_2 F(t)  \, , \\
&=  \nu_2 t + \lambda_2  \left( F(t)-F(\epsilon \lfloor t/\epsilon \rfloor) + \sum_{n =0}^{ \lfloor t/\epsilon \rfloor}\big( F((n+1)\epsilon)-F(n\epsilon) \big) \right) \, , \\
&\leq  \nu_2 t + \lambda_2 \big( t/\epsilon +1 \big)  \, , 
\end{align*}
which upon setting $\sigma=\Phi(t) \Leftrightarrow t = \Psi(\sigma)$, directly implies
\begin{align*}
 \Psi(\sigma) \geq \frac{\sigma - \lambda_2}{\nu_2 + \lambda_2/\epsilon} \xrightarrow[\sigma \to \infty]{} \infty \, . 
\end{align*}

\end{proof}

\begin{remark} \label{rem:globSol}
It is worth making a few remarks about Theorem~\ref{th:globSol}:
\begin{enumerate}
\item
Observe that Theorem~\ref{th:globSol} only specifies one of many admissible blowup solutions to the dPMF problem  \eqref{eq:mainProb1}
and that this solution is uniquely defined as the solution to the fixed-point problem~\eqref{eq:physFP}.
The purpose of the remaining part of this manuscript is to show that the solution given by Theorem~\ref{th:globSol} is the unique physical one.
To that end, we will show that these solutions can alternatively be obtained via a limiting process acting 
on certain regularized dynamics, the so-called buffered dPMF dynamics, which admits a well-defined, physical notion of blowups.
\item Theorem~\ref{th:globSol} is valid for generic initial conditions, which means that it covers
the case of initial blowups for which $G_0(0)>0$ in \eqref{eq:TCinita}.
This is a crucial point as in the time-changed picture, global solutions are obtained by continuation during blowup intervals $(S,U)$, where we have 
$$\Psi(\sigma)=\frac{1}{\nu_2} \sup_{0 \leq \tau \leq \sigma} \big( \tau-\lambda_2 G(\sigma) \big)> \frac{\sigma-\lambda_2 G(\sigma) }{\nu_2} \, , \quad S<\sigma<U \, .$$
Observe that the above condition implies that the $\sigma$-shifted version of the integrability condition \eqref{eq:TCinita} reads
$$G_\sigma(0) = \sup_{\tau \leq \sigma} \left( \frac{\tau-\sigma}{\lambda_2} - (G(\tau)-G(\sigma)) \right) = \frac{\nu_2}{\lambda_2} \left(\Psi(\sigma)-\frac{\sigma-\lambda_2 G(\sigma)}{\nu_2} \right) > 0 \, ,$$
confirming the need to consider generic initial conditions.
\item
Recall that the reason to define $\Phi=\Psi^{-1}$ as the right-continuous inverse of $\Phi$ in the above theorem
is for the cumulative function $F$ to be \emph{c\`adl\`ag}, a property that we only require to state 
the stochastic equation \eqref{eq:stochEqMF} governing a representative dPMF dynamics.
\item Observe that the benefit of considering distributed delays with a smooth density $p_\epsilon$ 
supported on $[\epsilon, \infty)$, $\epsilon>0$, is two-fold.
On one hand, it regularizes the reset processes via the reset rate $f_{\epsilon}$, which is
key in establishing the contraction result from Proposition ~\ref{prop:contract}.
On the other hand, the fact that a representative neuron cannot spike more than once
over a refractory period $\epsilon$ provides a simple bound to show the global
nature of the resulting dPMF solution in Theorem~\ref{th:globSol}.
We will later show that refractory delays also play an important role in avoiding eternal blowups in Section~\ref{sec:compRes}.
\end{enumerate}
\end{remark}


\section{Regularization via buffer mechanism}\label{sec:bufferAnalysis}

In this section, we analyze the buffer mechanism given in Definition~\ref{def:mainBuffer2} that produces $\delta$-buffered dynamics, which will always be well defined and will exhibit regularized, buffered blowups.
In Section~\ref{sec:buffDyn}, we justify the form of the $\delta$-buffered PDE problem for dPMF dynamics given in Definition~\ref{def:mainBuffer1}.
In Section~\ref{sec:buffMech}, we formalize the buffer mechanism as a mapping between input and output functions, independent of the PDE context of dPMF dynamics.
In Section~\ref{sec:runExtr}, we give useful characterizations of this mapping in terms of $\delta$-tilted, running extrema.


\subsection{The buffered regularized dynamics}\label{sec:buffDyn}

Here, we rigorously state the buffered version of the dPMF problem \eqref{eq:mainProb1}, 
which amalgamates Definitions~\ref{def:mainBuffer1} and~\ref{def:mainBuffer2} given in the introduction.
To understand the resulting definition, it is helpful to have a mental picture in mind. 
This mental picture is inspired by analogy with the physical system graphically depicted in Fig.~\ref{fig:buffMech}.
In this physical analogy, we think of interactions as a process by which particles exchange water 
and we impose that this exchange of water goes through a reservoir.
There is no limit to the reservoir intake which proceeds at rate $f$,
 as if water was poured in an open container from the top.
 By contrast, the reservoir outflow rate is capped, as
if water was pumped at the bottom of the reservoir by
a pump with finite capacity. 
If the intake is larger than the pump capacity, i.e.,  $f>1/\delta$ 
or if the reservoir holds excess water, i.e.,  $E>0$,
 the pump operates  at the maximum outflow $\tf=1/\delta$ and 
 excess water accumulates in the reservoir with rate $\partial_tE = f-1/\delta$.
 Otherwise, the pump operates  at the same rate as the intake rate, i.e., $\tf=f$,
 and the reservoir does not accumulate excess water. 
Blowups are defined as these periods of time
when the reservoir transiently holds water, i.e., when $E>0$.
The following definition formalizes the buffered dynamics informally discussed above:

\begin{definition}\label{def:bufferPDE} 
\begin{subequations}
\label{all:bufferPDE}
Given nonexplosive initial conditions $(p_0,f_0)$ in $\mathcal{M}(\mathbbm{R}^+) \times \mathcal{M}((-\infty,0))$ and $\delta>0$, the $\delta$-buffered PDE problem associated to a dPMF dynamics consists in finding the density function $(t,x) \mapsto p(t,x)= \mdd \Prob{0<X_t \leq x} / \mdd x$ solving 
\begin{eqnarray}\label{eq:bufferPDE}
\partial_t p  &=& \big(\nu_1 + \lambda_1 \tf (t) \big) \partial_x p + \big(\nu_2 + \lambda_2 \tf (t)  \big) \partial^2_{x} p /2  + f_\epsilon(t) \delta_{\Lambda}  \, ,  \quad \quad p(t,0) = 0 \, ,
\end{eqnarray}
on $[0,T) \times [0, \infty)$ for some (possibly infinite) $T>0$ where $ \tf $ denotes the $\delta$-buffered rate 
\begin{eqnarray}\label{eq:buffer}
\tf (t)= (1-B(t)) f(t) + B(t) /\delta \, .
\end{eqnarray}
The blowup indicator function $B$ is specified as a $\{0,1\}$-valued function by
\begin{eqnarray}\label{eq:indicator}
B(t)  = \mathbbm{1}_{\{ f(t) > 1/\delta \} \cup \{E(t) > 0\}}    \, ,
\end{eqnarray}
where the interaction excess function $E$ satisfies
\begin{eqnarray}\label{eq:excess}
\partial_t E(t)  &=&  B(t) (f(t) -1/\delta)  \, , \quad E(0) = 0\, .
\end{eqnarray}
\end{subequations}
\end{definition}


\begin{remark}
A few remarks about our buffered dPMF dynamics are in order:
\begin{enumerate} 
\item 
The assumption that the reservoir is initially empty $E(0)=0$ follows from the fact that we consider nonexplosive initial conditions (see Definition~\ref{def:mainProb2}).
It is important to note, however, that $\delta$-buffered dynamics are well-defined independent of the nonexplosive character of the initial conditions.
In fact, all the results derived for the $\delta$-buffered mechanism will be valid only assuming
$$0\leq E(0)<\infty \quad \mathrm{and} \quad \limsup_{x \to 0^+} \frac{p_0((0,x])}{x} < \infty \, .$$
The positive initial value $E(0)>0$ can be interpreted in the time-changed picture via the notion of generic initial conditions $(q_0,g_0)$,
which allows for initialization during a blowup episode (see Definition~\ref{def:TCinit}).
Specifically, we will see that considering generic initial conditions $(q_0,g_0)$ imposes to set $E(0)=G_0(0)$.
We give complementary information about the meaning of $E(0)>0$ in Remark~\ref{rem:E0}.

\item
The finite-size interacting particle systems inspiring cMF and dPMF naturally satisfy a conservation principle about interactions.
This principle can be stated as follows: if a spiking avalanche triggers at time $t$ with $n$ simultaneously spiking particles, 
then every particle that has survived the avalanche without spiking must receive impulses from all the spiking particles.
This amounts to receiving jump updates of size $-\lambda n/N$.
In other words, there is no loss of interactions. 
The buffer mechanism is intended to enforce this conservation principle in the regularized system.
\item
One can  imagine interacting particle systems whose dynamics will converge to the proposed buffered dPMF dynamics under hypothesis of propagation of chaos.
This just requires modeling a reservoir state for interactions, whereby interactions may be thought of as particles as well.
\item
The alternative notion of blowup solutions  considered by Dou \emph{et al.} in~\cite{dou2022dilating} can also be obtained
as some limit of certain regularized dynamics.
Specifically, one can check that these solutions can be recovered when choosing the blowup indicator function $B(t)  = \mathbbm{1}_{\{ f(t) > 1/\delta \}}$ and taking $\delta \downarrow 0$.
However, this notion of blowup solutions  neglects the buffer mechanism enacted by the excess function $E$.
As a result, these solutions do not conserve the interaction rate and may exhibit eternal blowups.
\item
Recently, Dou \emph{et al.} also considered the idea of defining blowup solutions by examining regularized version of dPMF dynamics~\cite{dou2024noisy}.
The regularization process considered there is however different for replacing the hard-threshold condition of integrate-and-fire neurons with a soft-threshold condition,
which allowed for the consideration of solutions defined past blowups.
The proposed solutions are also interaction conserving but emerge as limit of solutions for dynamics that do not satisfy an absorbing boundary condition.
As a result, these solutions involves the introduction of a boundary measure, which we will not need here.
\end{enumerate}
\end{remark}

The regularization of the buffering mechanism follows from the fact that \eqref{eq:buffer} and \eqref{eq:indicator} imply that the interaction rate $\tf $ is upper bounded by $1/\delta$ so that the drift and diffusion coefficients are both uniformly upper bounded.
Note, however, that the firing rate $f$ can transiently exceed $1/\delta$, as expected during blowups.
Actually, one can see that by conservation of probability, we have:

\begin{proposition}\label{prop:fpx} 
Solutions to \eqref{all:bufferPDE} are such that
\begin{eqnarray}\label{eq:fpx}
f(t)= (1-B(t))\frac{\nu_2 z(t)}{1-\lambda_2 z(t)} + B(t) \left( \nu_2 + \frac{\lambda_2}{\delta} \right)z(t) \quad \mathrm{with} \quad z(t)=\frac{\partial_x p(t,0)}{2} \, .
\end{eqnarray}
\end{proposition}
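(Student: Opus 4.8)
The plan is to replay the computation that produced the unbuffered flux relation \eqref{eq:fluxBound} in Section~\ref{sec:MKV}, now keeping the capped rate $\tf$ in the drift and diffusion coefficients. First I would integrate the buffered Fokker--Planck equation \eqref{eq:bufferPDE} over $x \in (0,\infty)$. Using the absorbing boundary condition $p(t,0)=0$ together with the decay of $p$ and $\partial_x p$ at infinity (or, if one prefers to avoid such an assumption, by integrating over $(0,R)$ and letting $R\to\infty$ with the same total-mass control used for \eqref{eq:fluxBound}), integration by parts gives $\int_0^\infty \partial_x p\,\mdd x = 0$ and $\int_0^\infty \partial_x^2 p\,\mdd x = -\partial_x p(t,0)$, while the source term contributes $\int_0^\infty f_\epsilon(t)\delta_\Lambda(x)\,\mdd x = f_\epsilon(t)$ because $\Lambda>0$. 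This yields
\[
\partial_t\!\left(\int_0^\infty p(t,x)\,\mdd x\right) = -\big(\nu_2 + \lambda_2 \tf(t)\big)\,\frac{\partial_x p(t,0)}{2} + f_\epsilon(t) = -\big(\nu_2 + \lambda_2 \tf(t)\big)z(t) + f_\epsilon(t).
\]

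Next I would equate this with the probability-conservation requirement that is part of the $\delta$-buffered problem, namely $\partial_t\big(\int_0^\infty p\,\mdd x\big) = f_\epsilon(t) - f(t)$ (see Definition~\ref{def:mainBuffer1}). The reset rates $f_\epsilon(t)$ cancel, leaving the intermediate identity $f(t) = \big(\nu_2 + \lambda_2 \tf(t)\big)z(t)$. Substituting the definition \eqref{eq:buffer} of the buffered rate, $\tf(t) = (1-B(t))f(t) + B(t)/\delta$, this becomes
\[
f(t) = \Big(\nu_2 + \lambda_2(1-B(t))f(t) + \tfrac{\lambda_2}{\delta}B(t)\Big)z(t).
\]

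Finally I would split on the value of the blowup indicator $B$, which by \eqref{eq:indicator} is genuinely $\{0,1\}$-valued. If $B(t)=0$, the identity reads $f(t)\big(1-\lambda_2 z(t)\big) = \nu_2 z(t)$; since the right-hand side equals $\nu_2 z(t)$ and $z(t)>0$ whenever $\partial_x p(t,0)>0$, the factor $1-\lambda_2 z(t)$ cannot vanish, so dividing gives $f(t) = \nu_2 z(t)/(1-\lambda_2 z(t))$ --- in particular $B(t)=0$ forces $z(t)<1/\lambda_2$. If $B(t)=1$, the term $(1-B(t))f(t)$ drops out and one reads off directly $f(t) = (\nu_2 + \lambda_2/\delta)z(t)$. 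Weighting the two cases by $1-B(t)$ and $B(t)$ respectively recovers the announced formula \eqref{eq:fpx}. The derivation is routine; the only step that deserves attention is the case split, which genuinely uses that $B$ takes only the two values $0$ and $1$, so that no interpolation between the two regimes can occur.
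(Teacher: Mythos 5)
Your proposal is correct and follows essentially the same route as the paper: integrate the buffered Fokker--Planck equation with the absorbing boundary condition, equate with the conservation identity $\partial_t\int_0^\infty p\,\mdd x = f_\epsilon - f$, and then case-split on the $\{0,1\}$-valued indicator $B$. The only cosmetic difference is that you substitute the definition of $\tf$ before splitting on $B$, whereas the paper splits first and then substitutes, which leads to the same two cases.
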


\begin{proof}
Assume that a solution to \eqref{all:bufferPDE} exists.
Then, as before, the conservation of probability in the delayed dynamics imposes on one hand that for all $t \geq 0$
\begin{eqnarray*}
\partial_t \left( \int_{0}^\infty \! p(t,x) \, \mdd x \right) = f_\epsilon(t) - f(t) \, ,
\end{eqnarray*}
and direct integration of \eqref{eq:bufferPDE} with absorbing boundary condition imposes on the other hand that
\begin{eqnarray}
\partial_t \left( \int_{0}^\infty \! p(t,x) \, \mdd x \right) = -\big(\nu_2 \! + \! \lambda_2 \tf (t)\big) \partial_x p(t,0)/2 + f_\epsilon(t) \, .  \nonumber 
\end{eqnarray}
From the definition of the buffered rate of interaction in \eqref{eq:buffer}, if $B(t)=0$, we have $\tf (t)=f$ so that \eqref{eq:fluxBound} holds as for the nonbuffered dPMF dynamics.
By contrast, if $B(t)=1$, we have $\tf (t)=1/\delta$, so that $f(t)= (\nu_2  +  \lambda_2/\delta) \partial_x p(t,0)/2$.
This shows that \eqref{eq:fpx} holds.
\end{proof}

In Proposition \eqref{prop:fpx}, we denote the function $t \mapsto \partial_x p(t,0)/2$ as $z$ because this function will play a key role in understanding buffered blowups.
This key role follows from the following corollary to Proposition~\ref{prop:fpx}:

\begin{corollary}\label{corr:fpx} Solutions to \eqref{all:bufferPDE} are such that
\begin{eqnarray*}
\left( f(t) > \frac{1}{\delta}  \Leftrightarrow z(t) >z_\delta=\frac{1}{\lambda_2+\nu_2 \delta} \right) \quad \mathrm{and} \quad f(t) \leq  \left( \nu_2 + \frac{\lambda_2}{\delta}\right) z(t) \, .
\end{eqnarray*}
\end{corollary}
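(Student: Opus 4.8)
The plan is to deduce Corollary~\ref{corr:fpx} directly from the piecewise identity \eqref{eq:fpx} of Proposition~\ref{prop:fpx}, splitting according to the value of the $\{0,1\}$-valued indicator $B(t)$. Throughout, I would keep in mind two elementary facts: first, $z(t)=\partial_x p(t,0)/2\geq 0$, since $p\geq 0$ vanishes at the absorbing boundary and is therefore nondecreasing near $x=0$; and second, by \eqref{eq:indicator} the event $\{B(t)=0\}$ forces $f(t)\leq 1/\delta$.

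First I would establish the equivalence $f(t)>1/\delta \Leftrightarrow z(t)>z_\delta$. On $\{B(t)=1\}$, \eqref{eq:fpx} reads $f(t)=(\nu_2+\lambda_2/\delta)z(t)$, and solving $f(t)>1/\delta$ is a one-line computation: it is equivalent to $(\nu_2\delta+\lambda_2)z(t)>1$, i.e., to $z(t)>1/(\lambda_2+\nu_2\delta)=z_\delta$. On $\{B(t)=0\}$, \eqref{eq:fpx} reads $f(t)=\nu_2 z(t)/(1-\lambda_2 z(t))$; since $f(t)\geq 0$ and $z(t)\geq 0$ this forces $1-\lambda_2 z(t)>0$ (unless $z(t)=0$), and the same algebra gives $f(t)\leq 1/\delta \Leftrightarrow z(t)\leq z_\delta$. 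But on $\{B(t)=0\}$ we already know from \eqref{eq:indicator} that $f(t)\leq 1/\delta$, so both statements $f(t)>1/\delta$ and $z(t)>z_\delta$ fail and the equivalence holds vacuously. Hence the equivalence holds for every $t$.

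Next I would prove the bound $f(t)\leq(\nu_2+\lambda_2/\delta)z(t)$, again by cases. On $\{B(t)=1\}$ this is an equality by \eqref{eq:fpx}. On $\{B(t)=0\}$ we have $f(t)=\nu_2 z(t)/(1-\lambda_2 z(t))$ with $z(t)\leq z_\delta$ (established in the previous step); if $z(t)=0$ both sides vanish, and otherwise $z(t)\leq z_\delta=1/(\lambda_2+\nu_2\delta)$ yields $1-\lambda_2 z(t)\geq \nu_2\delta/(\lambda_2+\nu_2\delta)$, so that $\nu_2/(1-\lambda_2 z(t))\leq \nu_2(\lambda_2+\nu_2\delta)/(\nu_2\delta)=\nu_2+\lambda_2/\delta$, and multiplying by $z(t)>0$ gives the claim.

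There is no genuine obstacle: the whole argument is elementary once Proposition~\ref{prop:fpx} is available. The only point requiring a little care is the case $\{B(t)=0\}$, where one must use the positivity $1-\lambda_2 z(t)>0$ coming from nonnegativity of $f$ and $z$ before manipulating the fraction, and must invoke \eqref{eq:indicator} to conclude $f(t)\leq 1/\delta$ there so that the equivalence is not left dangling.
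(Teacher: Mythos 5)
Your argument is correct and follows essentially the same route as the paper's proof of Corollary~\ref{corr:fpx}: both case-split on $B(t)$ and use the algebra of $K(z)=\nu_2 z/(1-\lambda_2 z)$ and $L(z)=(\nu_2+\lambda_2/\delta)z$ around the crossing point $z_\delta$. The paper merely packages the equivalence more compactly by observing that $K$ and $L$ are both increasing and agree at $z_\delta$ with value $1/\delta$, whereas you derive the same conclusion on $\{B=0\}$ by showing the equivalence holds vacuously; the content is the same.
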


\begin{proof}
Note that both
\begin{eqnarray*}
z \mapsto  \frac{\nu_2 z}{1-\lambda_2 z}   \quad \mathrm{and} \quad z \mapsto  \left( \nu_2 + \frac{\lambda_2}{\nu_2}\right) z \, ,
\end{eqnarray*}
are increasing functions that take the identical value $1/\delta$ when $z=z_\delta$.
Therefore, independent of the value of $B$, we have $f(t) > 1/\delta$ if and only if $z(t) > z_\delta$.
The equality $f(t) =  ( \nu_2 + \lambda_2/\delta) z(t)$ holds when $B(t)=1$.
One can check that for all $0 \leq z <z_\delta$, we have
\begin{equation*}
 \frac{\nu_2 z}{1-\lambda_2 z} <  \left( \nu_2 + \frac{\lambda_2}{\delta} \right) \, , 
\end{equation*}
which proves the inequality $f(t) \leq   ( \nu_2 + \lambda_2/\delta) z(t)$ when $B(t)=0$.
\end{proof}

Corollary~\ref{corr:fpx} indicates that buffered blowups occur if $z$ exceeds the threshold 
value $z_\delta=1/(\lambda_2+\nu_2 \delta)$, similar to standard dPMF dynamics.
However, by contrast with dPMF dynamics, Corollary~\ref{corr:fpx} also indicates that the buffering 
mechanism precludes $f$ to explode when $z$ exceeds that threshold, at least as long as $z$ remains finite.
In that respect, we will later see that $z$ is a continuous function over all $\mathbbm{R}^+$.
Assuming that $z$ is continuous allows us to consider  the buffer mechanism on its own merit  in the next section, 
independent of the  PDE setting associated to dPMF problem.


\subsection{The buffer mechanism as a multivalued mapping}\label{sec:buffMech}

To characterize the properties of buffered blowups, it is convenient to consider the buffer mechanism as
a multivalued function between a time-dependent input $z(t)$ and an output  $\theta(t)$.
In the case of buffered dPMF dynamics, the input value is given by $z(t)=\partial_x p(t,0)/2$,
whereas the output value is the firing rate, i.e., $\theta(t)=f(t)$.
Informally, the occurrence of blowups in nonbuffered dPMF dynamics follows from the fact that the 
 function 
\begin{eqnarray*}
z \mapsto K(z) =   \frac{ \nu_2 z}{1 - \lambda_2 z} 
\end{eqnarray*}
which maps $z(t)$ onto $\theta(t)=K(z(t))$ by \eqref{eq:fluxBound}
explodes at a finite value of $1/\lambda_2$.
Mathematically, the idea of the buffer mechanism is to switch from the explosive functional dependence $\theta=K \circ z$
to a constitutively stable one, e.g.,  $\theta=L \circ z$ with
\begin{eqnarray*}
z \mapsto L(z) = \left( \nu_2 + \frac{\lambda_2}{\delta}\right) z \, ,
\end{eqnarray*}
whenever $z$ transiently exceeds a threshold value $z_\delta$ (see Fig.~\ref{fig:MathMech}a).
To ensure conservation of interaction rates in the buffered dynamics, one must also allow that $\theta=K \circ z$ can hold
below the threshold $z_\delta$, as long as there is excess interaction in the fictitious reservoir, i.e., $E>0$.
As a result, the buffer mechanism can be viewed as a bivalued function when $z<z_\delta$, with two branches
given by $K$ if $E=0$ or $L$ if $E>0$ (see Fig.~\ref{fig:MathMech}a).
The mathematical formulation of the ideas stated above are given in the following definition of the buffer mechanism:

\begin{figure}[htbp]
\begin{center}
\includegraphics[width=\textwidth]{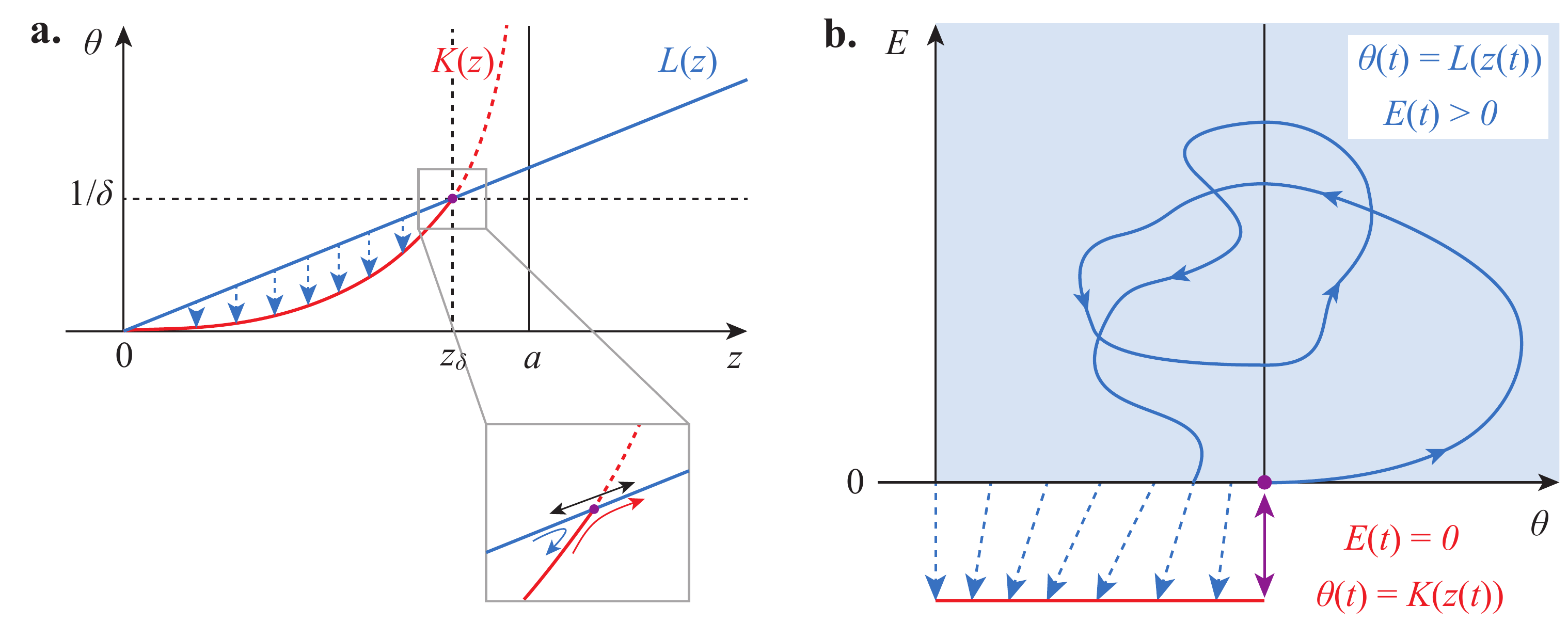}
\caption{{\bf Mathematical formulation of the buffer mechanism.}
The buffer mechanism switches from the explosive functional dependence $\theta=K \circ z$ (red curve in {\bf a.}) to a constitutively stable one $\theta=L \circ z$ (blue curve in {\bf a.}), whenever $z$ transiently exceeds a threshold value $z_\delta$ (red arrow in the inset figure in {\bf a}).
The variable $E$ keeps track of the excess interaction that needs to be buffered to maintain conservation of the interaction rate when switching from $K$ to $L$.
As a result of introducing this excess variable, buffered dynamics unfolds in the $(\theta,E)$-plane (blue curves in {\bf b.}) until it reaches an exit time $t$ such that $\theta(t) \leq 0$ and $E(t^-)=0$ and reverts to regular dynamics (red line in {\bf{b}}).
The solution $\theta$ generally exhibits a jump discontinuity at those times $t$ when $\theta(t)<0$ (blue dashed arrows n {\bf a.} and {\bf b.}).
Transitions back to the regular dynamics without jump discontinuity are possible but correspond to marginal cases (blue arrow in the inset figure in {\bf a}).
}
\label{fig:MathMech}
\end{center}
\end{figure}

\begin{definition}\label{def:bufferMech} 
\begin{subequations}
\label{all:bufferMech}
Given $\delta,a>0$, consider two continuous increasing functions $K: [0,a) \to \mathbbm{R^+}$ and 
$L: \mathbbm{R^+} \to \mathbbm{R^+}$  with $\lim_{z\to a^-} K(z)=\infty$ and
such that:
\begin{eqnarray*}
0<K(z) < L(z) \quad \Leftrightarrow \quad 0<z<z_\delta =L^{-1}(1/\delta)=K^{-1}(1/\delta) \, .
\end{eqnarray*}
Given a function $z: \mathbbm{R}^+ \to  \mathbbm{R}^+$, solving the $\delta$-buffer problem associated with $K$ and $L$ consists in finding 
a function $\theta: \mathbbm{R}^+ \to  \mathbbm{R}^+$ that solves the system of equations: 
\begin{eqnarray}
&\theta(t) = B(t) L(z(t))+ (1-B(t)) K(z(t))  \quad \mathrm{with} \quad B(t) = \mathbbm{1}_{\{\theta(t)>1/\delta\} \cup \{E(t)>0\}} \, , &\label{eq:buff1} \\
&\partial_t E(t) =   B(t)(\theta(t)-1/\delta)  =    B(t) (L(z(t))-1/\delta) \quad \mathrm{with} \quad  E(0) \geq 0   \, . &  \label{eq:buff3}
\end{eqnarray}
\end{subequations}
\end{definition}


The following lemma, which is nothing but a reformulation of Property~\ref{prop:fpx}, implies that the blowup indicator 
function featured in \eqref{all:bufferMech} can be written as
\begin{eqnarray}
B(t) = \mathbbm{1}_{\{\theta(t)>1/\delta\} \cup \{E(t)>0\}} = \mathbbm{1}_{\{z(t)>z_\delta\} \cup \{E(t)>0\}} \, ,
\end{eqnarray}
showing that $B$ only depends implicitly on $\theta$ via $E$.

\begin{lemma}\label{lem:buffcond}
Independent of the function $E$, any function $\theta$ satisfying \eqref{eq:buff1}  is such that
\begin{eqnarray}
&\big(\theta(t) > 1/\delta \quad \Leftrightarrow \quad z(t) >  z_\delta = K^{-1}(1/\delta)=L^{-1}(1/\delta) \big)  \, ,  & \nonumber\\
&\forall t \geq 0 \, , \quad \theta(t) \leq L(z(t)) \, .&  \nonumber
\end{eqnarray}
\end{lemma}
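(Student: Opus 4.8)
The plan is to prove both assertions by a direct case distinction on the value of the $\{0,1\}$-valued indicator $B(t)$ appearing in \eqref{eq:buff1}. The key observation is that, whatever $B(t)$ equals, \eqref{eq:buff1} pins $\theta(t)$ to exactly one of the two values $K(z(t))$ or $L(z(t))$, and both $K$ and $L$ are strictly increasing functions that agree at $z_\delta$ with common value $1/\delta$.

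First I would dispatch the equivalence $\theta(t) > 1/\delta \Leftrightarrow z(t) > z_\delta$. If $B(t)=1$, then $\theta(t)=L(z(t))$; since $L$ is strictly increasing with $L(z_\delta)=1/\delta$, we get $\theta(t)>1/\delta \Leftrightarrow L(z(t))>L(z_\delta) \Leftrightarrow z(t)>z_\delta$. If instead $B(t)=0$, then $\theta(t)=K(z(t))$, and the identical monotonicity argument applied to $K$, using $K(z_\delta)=1/\delta$, again yields $\theta(t)>1/\delta \Leftrightarrow z(t)>z_\delta$. Since these two cases are exhaustive and each argument invokes only the monotonicity of $K$ or of $L$ (never the value of $E$), the equivalence holds for every $t\geq 0$ independently of $E$.

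Then I would deduce $\theta(t)\leq L(z(t))$. When $B(t)=1$ this is an equality, so there is nothing to prove. When $B(t)=0$ we have $\theta(t)=K(z(t))$, and the equivalence just established rules out $z(t)>z_\delta$ — otherwise $\theta(t)=K(z(t))>1/\delta$ would force $B(t)=1$, a contradiction. Hence $z(t)\leq z_\delta$, and the defining relation $0<K(z)<L(z)$ for $0<z<z_\delta$, together with $K(z_\delta)=1/\delta=L(z_\delta)$ and continuity of $K$ and $L$ at $0$, gives $K(z(t))\leq L(z(t))$, i.e.\ $\theta(t)\leq L(z(t))$.

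I do not expect any substantive obstacle: the lemma is pure bookkeeping built into Definition~\ref{def:bufferMech} and the monotonicity hypotheses on $K$ and $L$, and it is the abstract counterpart of the probability-conservation computation already performed in Proposition~\ref{prop:fpx} (with $K(z)=\nu_2 z/(1-\lambda_2 z)$ and $L(z)=(\nu_2+\lambda_2/\delta)z$). The only points needing a line of care are the boundary value $z(t)=z_\delta$, where $K$ and $L$ coincide, and the endpoint $z(t)=0$, where the strict inequality in the defining biconditional degenerates; both are handled by continuity of $K$ and $L$.
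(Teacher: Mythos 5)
Your proof is correct and takes essentially the same approach as the paper's; both hinge on the shared facts that $K$ and $L$ are increasing with $K(z_\delta)=L(z_\delta)=1/\delta$ and $K<L$ below $z_\delta$, differing only cosmetically in whether the case split is on $B(t)\in\{0,1\}$ (yours) or on $z(t)\lessgtr z_\delta$ (the paper's).
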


\begin{proof}
Independent of the function $E$, for any function $\theta$ defined by \eqref{eq:buff1}, we have then have the following:
If $z \leq z_\delta =K^{-1}(1/\delta)=L^{-1}(1/\delta)$, we have
\begin{eqnarray*}
\theta(t) = B(t) L(z(t))+ (1-B(t)) K(z(t)) \leq L(z(t)) \leq L(L^{-1}(1/\delta)) =\frac{1}{\delta} \, .
\end{eqnarray*}
If $z(t) >  z_\delta =K^{-1}(1/\delta)=L^{-1}(1/\delta)$, then $B(t)=1$ and we have
\begin{eqnarray*}
\theta(t) = B(t) L(z(t))+ (1-B(t)) K(z(t)) = L(z(t)) > L(L^{-1}(1/\delta)) =\frac{1}{\delta} \, .
\end{eqnarray*}
This shows that any function $\theta$ defined by \eqref{eq:buff1}  satisfies Lemma~\ref{lem:buffcond}.
\end{proof}

Observe that given Definition~\ref{def:bufferMech}, one can think of the buffered mechanism
as alternating two forms of mapping in the  $(\theta,E)$-plane (see Fig.~\ref{fig:MathMech}b). 
Outside blowups, i.e., when $B=0$, $(\theta,E)=(K\circ z, 0)$ is confined to the half line $\{ \theta<1/\delta, E=0\}$, 
which may only be left at the point $(1/\delta,0)$, when a blowup triggers. 
During blowups, i.e, when $B=1$, $(\theta,E)=(L \circ z,E)$ evolves in the open two-dimensional half-plane $E>0$, 
until it approaches the boundary $\{\theta \leq 1/\delta, E=0\}$ at some point $(L(z_b),0)$.
After experiencing a jump discontinuity $K(z_b)-L(z_b)<0$,  the blowup episode ends so that $B=0$ and 
$(\theta,E)=(K\circ z, 0)$ on the half line $\{\theta<1/\delta, E=0\}$, starting from the point $(K(z_b),0)$.

The picture presented in Fig.~\ref{fig:MathMech} suggests that there is a unique solution $\theta$ to  
$\delta$-buffer problems \eqref{all:bufferMech} with continuous input function $z$, at least if $z$ 
has a finite number of crossings with $z_\delta$ over all finite intervals. In Section~\ref{sec:buffproofs}, we show via
monotone-sequence arguments that $\delta$-buffer problems have a unique solution
for all continuous input functions, irrespective the finiteness of $z_\delta$-crossings.

\begin{proposition}\label{prop:buffExist}
Given a continuous function $z: \mathbbm{R}^+ \to  \mathbbm{R}^+$, the $\delta$-buffered problem \eqref{all:bufferMech} admits a unique solution $\theta: \mathbbm{R}^+ \to \mathbbm{R}^+$.
\end{proposition}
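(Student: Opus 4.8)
The plan is to construct the unique solution by piecing together intervals on which the blowup indicator $B$ is constant, using the continuity of $z$ to control where these intervals begin and end, and then to handle the possibility of infinitely many $z_\delta$-crossings by a monotone-sequence (successive approximation) argument.

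First I would set up the elementary building blocks. Outside a blowup (i.e.\ where $B=0$) the equations force $\theta=K\circ z$ and $E$ constant, consistent with $E=0$; inside a blowup ($B=1$) they force $\theta=L\circ z$ and $E(t)=E(t_0)+\int_{t_0}^t(L(z(s))-1/\delta)\,\mathrm{d}s$ for the blowup's start time $t_0$. So the only genuine unknowns are the switching times. By Lemma~\ref{lem:buffcond}, the condition $\theta>1/\delta$ is equivalent to $z>z_\delta$, so $B(t)=\mathbbm{1}_{\{z(t)>z_\delta\}\cup\{E(t)>0\}}$ depends on $\theta$ only through the (continuous, nondecreasing while positive) function $E$. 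A blowup must trigger at any time $t$ with $z(t)>z_\delta$ and $E(t)=0$; once triggered it persists as long as $E>0$; and since $z$ is continuous, $E$ is $C^1$ on each blowup interval with $E'=L(z)-1/\delta$, which is bounded above on compacts, so $E$ cannot jump and the blowup terminates at the first later time $t_1$ where $E(t_1)=0$ (if such a time exists and $z(t_1)\le z_\delta$; if $z(t_1)>z_\delta$ the blowup simply continues). This gives a well-defined ``forward continuation'' rule: starting from any time with $E=0$, either $z\le z_\delta$ on a right-neighbourhood and $\theta=K\circ z$ there, or a blowup is in force and we integrate $E$ until it returns to zero.

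Next I would prove existence and uniqueness on an arbitrary finite interval $[0,T]$, then let $T\to\infty$. Uniqueness is the easier direction: given two solutions $(\theta_1,E_1)$ and $(\theta_2,E_2)$, let $t^\star$ be the supremum of times up to which they agree; if $t^\star<T$, a short-time case analysis at $t^\star$ (using whether $E(t^\star)=0$ or $>0$, and continuity of $z$) shows they must still agree just past $t^\star$, a contradiction. For existence, the subtlety is that $z$ may oscillate across $z_\delta$ infinitely often, so the naive ``solve on successive blowup/regular intervals'' procedure may not exhaust $[0,T]$. Here I would use a monotone approximation: define $E^{(0)}\equiv 0$ and iterate $E^{(n+1)}(t)=\int_0^t \mathbbm{1}_{\{z(s)>z_\delta\}\cup\{E^{(n)}(s)>0\}}\,(L(z(s))-1/\delta)\,\mathrm{d}s$, or rather the version that also resets the integral to restart from the last zero of $E^{(n)}$; one checks the sequence is monotone (the set $\{E^{(n)}>0\}$ is nondecreasing in $n$) and bounded by $E^{(1)}$ plus a Gr\"onwall-type estimate, hence converges pointwise to a limit $E^{(\infty)}$, which by dominated convergence and the closedness properties of the indicator sets is a fixed point; setting $\theta=B\,L(z)+(1-B)K(z)$ with $B=\mathbbm{1}_{\{z>z_\delta\}\cup\{E^{(\infty)}>0\}}$ then solves \eqref{all:bufferMech}. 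One must verify the limiting $B$ indeed satisfies the self-consistency at times where $E^{(\infty)}$ hits zero, which follows because $E^{(\infty)}$ is continuous.

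The main obstacle I expect is precisely the pathological case of infinitely many crossings of $z_\delta$ accumulating at a point: there one must rule out ``chattering'' that would prevent a well-defined $B$, and one must ensure the monotone iteration actually converges to the \emph{minimal} admissible $E$ and that this minimal solution is also the unique one (no larger spurious solution with extra blowup intervals). The key facts making this work are that $L(z)-1/\delta$ is bounded on compacts (so $E$ is Lipschitz, hence cannot create a blowup of zero length that nonetheless shifts $B$), that $E$ is nondecreasing throughout any blowup and strictly below its entry value cannot occur, and that $\{z>z_\delta\}$ is open by continuity of $z$; together these force the blowup intervals to be an at most countable disjoint union of open intervals whose structure is pinned down by the forward continuation rule, leaving no room for a second solution.
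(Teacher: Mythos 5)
Your overall strategy---reduce to finitely many $z_\delta$-crossings and handle accumulations by a monotone-approximation argument---is the right intuition, and the paper's proof is indeed a monotone approximation. However, the specific iteration you propose does not work. Take your first (un-``reset'') iteration, $E^{(0)}\equiv 0$ and
\begin{equation*}
E^{(n+1)}(t)=\int_0^t \mathbbm{1}_{\{z(s)>z_\delta\}\cup\{E^{(n)}(s)>0\}}\bigl(L(z(s))-1/\delta\bigr)\,\mathrm{d}s\,.
\end{equation*}
Since $L(z)-1/\delta>0$ exactly where $z>z_\delta$, one gets $E^{(1)}\ge 0=E^{(0)}$; but $E^{(1)}$ is then nondecreasing and strictly positive past the first $z_\delta$-crossing, which forces $B^{(1)}\equiv 1$ for all later times. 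In computing $E^{(2)}$ you therefore integrate $L(z)-1/\delta$ over all such times, including those where $z<z_\delta$ and the integrand is negative, so $E^{(2)}$ can fall below $E^{(1)}$ and even become negative. The sequence is neither monotone nor admissible, and your assertion that the set $\{E^{(n)}>0\}$ is nondecreasing in $n$ is false for this iteration. The ``reset to the last zero of $E^{(n)}$'' variant you gesture at is not fully specified, and its monotonicity in $n$ is not self-evident, because the reset times themselves move with the iterate and the fixed-point map is not monotone in any obvious pointwise sense.

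The paper avoids this by placing the monotonicity at the level of the \emph{input} rather than the Picard iterate. Writing $\{z>z_\delta\}=\bigcup_i(A_i,B_i)$ with intervals ordered by decreasing length and flattening $z$ down to the threshold $z_\delta$ on all but the $I_n$ largest seeds, one obtains a nondecreasing sequence of continuous inputs $z_n\uparrow z$, each of which has only finitely many crossings and therefore an explicit buffered solution by sequential continuation. The nontrivial step is then the comparison principle $z_n\le z_{n+1}\Rightarrow E_n\le E_{n+1}$, proved via the last-blowup-trigger times $T_{n,0}$---this is precisely the replacement for the pointwise monotonicity that fails in your construction. The limit $E=\lim_n E_n$ solves the problem by monotone convergence, and uniqueness is established by sandwiching any putative solution $E$ between $E_n$ and $E_n$ plus an error controlled by the total length of the discarded small seeds and the uniform continuity of $z$ on compacts. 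Your ``supremum of agreement times'' sketch for uniqueness also has a gap: if discontinuities of $\theta$ accumulate from the right at the supremum time $t^\star$---with $E(t^\star)=0$ and $z(t^\star)=z_\delta$---there is no right-neighbourhood on which the case analysis is simple, and continuing past $t^\star$ is the same existence problem you have not yet solved, which is circular.
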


For illustration, we plot in Fig.~\ref{fig:Buffer} the numerical solution 
to a $\delta$-buffered problem \eqref{all:bufferMech} for an example of input function $z$.
The picture presented in Fig.~\ref{fig:MathMech}, which is supported by  Fig.~\ref{fig:Buffer},
suggests that $\delta$-buffered solutions have at most a countable set of discontinuities.
We rigorously establish this fact in Section~\ref{sec:buffproofs} by proving the following result:

\begin{proposition}\label{prop:buffReg}
Suppose that $\theta$ solves  the $\delta$-buffer problem \eqref{all:bufferMech} for a continuous function $z:  \mathbbm{R}^+ \to  \mathbbm{R}^+$,
then $\theta$ is a \cadlag function with an at most countable set of negative jump discontinuities such that
\begin{eqnarray}
1/\delta > \lim_{s \to t^-} \theta(s) > \theta(t)=\lim_{t \to t^+} \theta(t) \, .
\end{eqnarray}
\end{proposition}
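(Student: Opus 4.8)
The plan is to exploit the structural dichotomy built into the buffer mechanism. By Lemma~\ref{lem:buffcond}, the solution $\theta$ furnished by Proposition~\ref{prop:buffExist} equals the regular response $K\circ z$ on the set $\mathcal{B}^c$ where $B=0$ and the buffered response $L\circ z$ on the set $\mathcal{B}=\{t:B(t)=1\}=\{z>z_\delta\}\cup\{E>0\}$ where $B=1$; moreover $0\le\theta\le L\circ z$, and since $B\theta=BL(z)$, the excess function $E$ is an antiderivative of the locally bounded map $B(L\circ z-1/\delta)$, hence locally Lipschitz and in particular continuous. Consequently $\mathcal{B}$ is open as a union of two open sets, and $K\circ z$ and $L\circ z$ are continuous on the interior of $\mathcal{B}^c$ (using $z\le z_\delta<a$ there and continuity of $K$ on $[0,a)$) and on $\mathcal{B}$, respectively. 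All discontinuities of $\theta$ are therefore confined to $\partial\mathcal{B}$.

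The first genuine step is the elementary inequality $E\ge 0$: if $E(t_1)<0$, set $t_0=\sup\{t<t_1:E(t)=0\}$, which is well defined since $E(0)\ge 0$ and $E$ is continuous; on $(t_0,t_1]$ one has $E<0$, hence $B=\mathbbm{1}_{\{z>z_\delta\}}$ and $E'=\mathbbm{1}_{\{z>z_\delta\}}(L(z)-1/\delta)\ge 0$, contradicting $E(t_0)=0>E(t_1)$. In particular every $t\in\mathcal{B}^c$ satisfies both $z(t)\le z_\delta$ and $E(t)=0$, so that $\theta(t)=K(z(t))$, while $\theta(t)=L(z(t))$ for $t\in\mathcal{B}$.

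The second step is the endpoint analysis. Write $\mathcal{B}=\bigsqcup_k I_k$ as an at most countable disjoint union of relatively open intervals of $[0,\infty)$. At a left endpoint $S_k$ of some $I_k$ we have $S_k\in\mathcal{B}^c$, hence $z(S_k)\le z_\delta$; on the other hand, $E\ge 0$ together with the fact that $E$ is nonincreasing on any interval on which $z<z_\delta$ (since $E'=B(L(z)-1/\delta)\le 0$ there) forces $z(S_k)=z_\delta$, for otherwise $E$ would vanish on a whole right neighborhood of $S_k$, making that neighborhood disjoint from $\mathcal{B}$ and contradicting that it meets $I_k$; thus $K(z(S_k))=L(z(S_k))=1/\delta$ and $\theta$ is continuous at $S_k$. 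The same monotonicity argument shows $z=z_\delta$ at any boundary point approached by $\mathcal{B}$ from both sides, giving continuity there as well. Hence the only possible discontinuities are the finite right endpoints $U_k$ of the $I_k$, of which there are at most countably many. At such a $U_k$ one has $U_k\in\mathcal{B}^c$ so $\theta(U_k)=K(z(U_k))$; a left neighborhood of $U_k$ lies in $I_k$, so $\lim_{s\uparrow U_k}\theta(s)=L(z(U_k))$ exists; and either a right neighborhood of $U_k$ lies in $\mathcal{B}^c$, giving $\lim_{s\downarrow U_k}\theta(s)=K(z(U_k))=\theta(U_k)$, or the same $E\ge 0$ argument forces the marginal case $z(U_k)=z_\delta$, in which $K(z_\delta)=L(z_\delta)=1/\delta$ and $\theta$ is in fact continuous at $U_k$. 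This establishes that $\theta$ is a \cadlag function. Finally, at an actual discontinuity $U_k$ the marginal case is excluded, so $0<z(U_k)<z_\delta$, and the defining relation $0<K(z)<L(z)$ on $(0,z_\delta)$ with $L(z_\delta)=1/\delta$ yields $1/\delta>L(z(U_k))=\lim_{s\uparrow U_k}\theta(s)>K(z(U_k))=\theta(U_k)=\lim_{s\downarrow U_k}\theta(s)$, which is the asserted estimate.

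The step I expect to be the main obstacle is ruling out discontinuities ``hidden'' at accumulation points of infinitely many vanishingly small blowup intervals, and more generally verifying that the one-sided limits genuinely exist at every boundary point (rather than merely on a piecewise-continuous skeleton). As indicated above, this is handled uniformly by the single observation that $E\ge 0$ together with $E'=L(z)-1/\delta<0$ on $\{z<z_\delta\}\cap\mathcal{B}$ prevents $\{z<z_\delta\}$ from being adjacent to, or accumulated by, $\mathcal{B}$; this pins $z=z_\delta$ at every delicate boundary point and collapses $K$ and $L$ there. A secondary, purely bookkeeping point is to check that a leading component of $\mathcal{B}$ of the form $[0,U_0)$ (possible when $E(0)>0$) introduces no spurious discontinuity at $0$, which is immediate since the \cadlag condition at $0$ only constrains the right limit.
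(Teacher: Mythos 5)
Your argument takes essentially the same route as the paper's: the key fact---that $E$ cannot leave $\{E=0\}$ on an interval where $z<z_\delta$, because there $E'=B(L(z)-1/\delta)\le 0$ while $E\ge 0$---localizes any discontinuity to a countable set where $E=0$, $z<z_\delta$, and $\{E>0\}$ accumulates. You package the boundary analysis via the connected components of the open set $\{B=1\}$, whereas the paper works directly with $\partial\{E=0\}\cap\{z<z_\delta\}$, proves right-continuity there, and handles the left limit by a three-way case split; these are the same argument in different clothing. Your explicit derivation of $E\ge 0$ from $E(0)\ge 0$ is a useful addition that the paper leaves implicit.

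One step is under-argued. You establish $z=z_\delta$ at left endpoints $S_k$ and at boundary points approached by $\{B=1\}$ from both sides, and then conclude ``Hence the only possible discontinuities are the finite right endpoints $U_k$.'' This leaves out boundary points $t$ approached by $\{B=1\}$ from the left only, but not through an open interval, i.e., with points of $\{B=1\}$ and of $\{B=0\}$ interleaving arbitrarily close to $t$ on the left and a right neighborhood in $\{B=0\}$. This is exactly the paper's case $(c)$, and it is precisely where the existence of the left limit is genuinely in doubt (not merely the size of a jump). Your own machinery disposes of it in one more line: if $z(t)<z_\delta$, take a window $(t-\eta,t+\eta)$ on which $z<z_\delta$; any $\{B=0\}$ point $s\in(t-\eta,t)$ then forces $E\equiv 0$, hence $B\equiv 0$, on all of $(s,t+\eta)$, so $\{B=1\}$ cannot accumulate at $t$ from either side---contradicting $t\in\partial\{B=1\}$; therefore $z(t)=z_\delta$ and $K$, $L$ collapse. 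Your closing summary (``prevents $\{z<z_\delta\}$ from being adjacent to, or accumulated by, $\mathcal B$'') is too strong as literally stated, since right endpoints $U_k$ with $z(U_k)<z_\delta$ are adjacent to $\{B=1\}$ from the left, and so does not by itself license the ``hence.'' Spelling out the extra line at these one-sided accumulation points closes the gap.
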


\begin{figure}[htbp]
\begin{center}
\includegraphics[width=\textwidth]{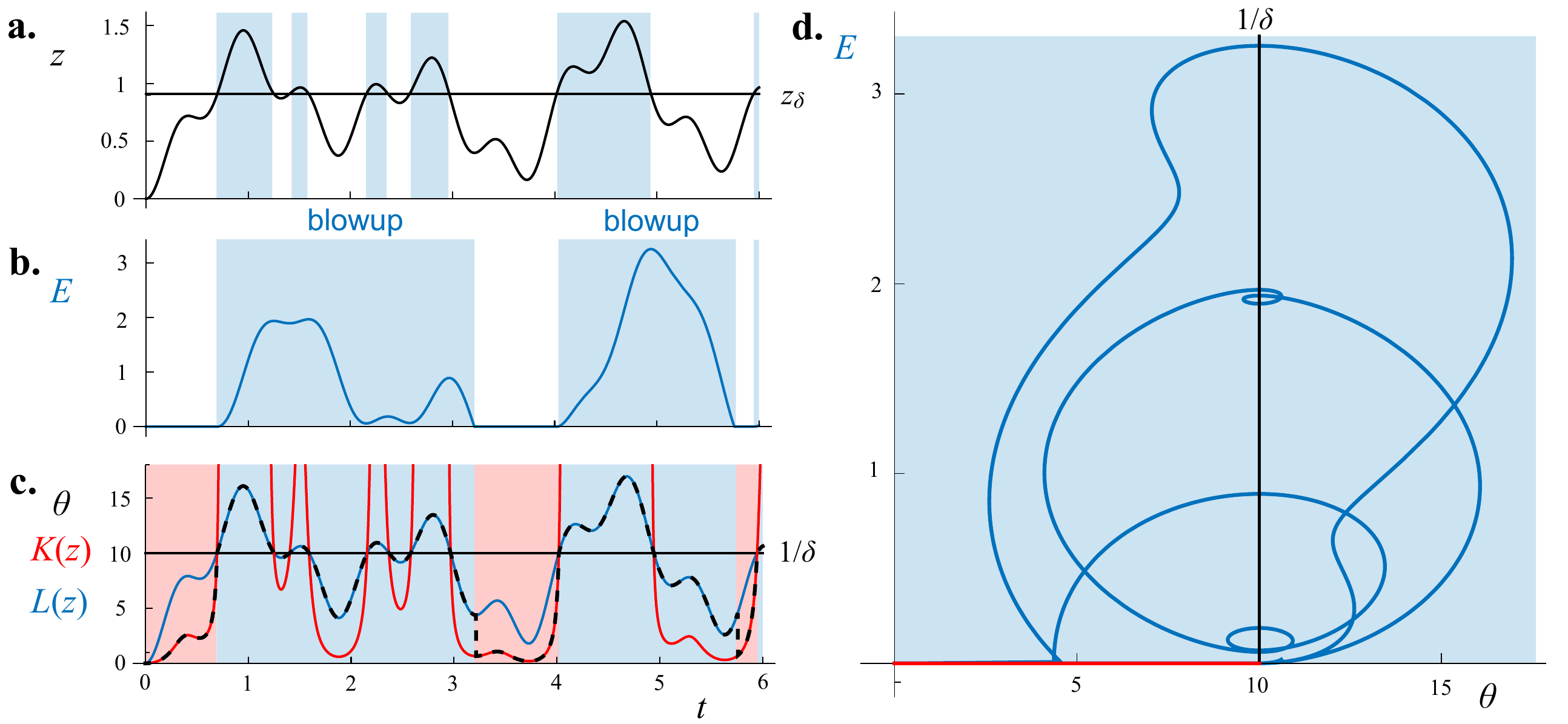}
\caption{{\bf Example of a $\delta$-buffered solution.}
Given a continuous input function $t \mapsto z(t)$, there is a unique \cadlag solution the $\delta$-buffered problem for functions $K$ and $L$ satisfying the conditions of~\ref{def:bufferMech}.
Blowups are triggered in region of times where $z>z_\delta$ (blue regions in {\bf a.}).
Triggered blowup extend past these time regions as long as the excess function remains positive, i.e., $E>0$ (blue regions in {\bf b.}).
Solutions $\theta$ to the $\delta$-buffered problem (dashed black line in {\bf c.}) are obtained via a switching mechanism between $L$ and $K$ that is controlled by the blowup
indicator function $B$.
These solutions exhibit a jump discontinuities at the blowup exit time when they exit the region $E>0$ (see {\bf d.}).
  }
\label{fig:Buffer}
\end{center}
\end{figure}


\subsection{Characterization in terms of running extrema}\label{sec:runExtr}

We conclude our analysis of the buffer mechanism by giving a useful characterization 
of its action in terms of $\delta$-tilted, running extrema.
Such a characterization will be required to formulate the fixed-point problem satisfied by buffered dPMF dynamics
in the time-changed picture.
Before stating this characterization, observe that given a continuous input function $z$,
Proposition~\ref{prop:buffReg} directly implies that $E$ is continuous and $B$ is lower semicontinuous.
This observation allows one to define  the open set of full-blowup times for 
$\delta$-buffered dPMF dynamics as $\mathcal{B}_t=\{ t>0 \, \vert \, E(t) >0 \}$.
Following the same argument as in Section~\ref{sec:fixed-point}, there is a countable index set $\mathcal{K}$ such that 
\begin{eqnarray*}
\mathcal{B}_t=\{ t>0 \, \vert \, E(t) >0 \} = \bigcup_{k \in \mathcal{K}} (T_k,V_k) \, ,
\end{eqnarray*}
where the full-blowup intervals $(T_k,V_k)$  are nonoverlapping.
As  in Section~\ref{sec:fixed-point}, we refer to the countable set  $T_k$, $k \in \mathcal{K}$, as the full-blowup trigger times and to $V_k$, $k \in \mathcal{K}$, as the full-blowup exit times.
We also introduce the following useful time definitions:

\begin{definition}\label{def:blowupTimes}
For all $t \geq 0$, we define the last-blowup-trigger time $T_0$ and the next-blowup-exit time $V_1$ as
\begin{eqnarray*}
T_0(t) = \left[ \sup \{ s \leq t \, \vert \, E(s)=0 \} \right]_+ \quad \mathrm{and} \quad V_1(t) = \inf \{ s \geq t \, \vert \, E(s)=0 \}  \, .
\end{eqnarray*}
where for all $x \in \mathbbm{R}$, the positive part of $x$ is denoted as $\left[ x \right]_+= x \vee 0$.
\end{definition}

The times defined above are useful to represent the excess function as an integral of the interaction rate alone.

\begin{proposition}\label{prop:E}
For all $t \geq 0$, we have 
\begin{eqnarray}\label{eq:E}
E(t) = \int_{T_0(t)}^t (\theta(s)-1/\delta) \, \mdd s + \mathbbm{1}_{\{T_0(t)=0 \}} E(0)\, ,
\end{eqnarray}
so that
\begin{eqnarray*}
V_1(t) =  \inf \left\{ v > t  \, \bigg \vert \, \int_{T_0(t)}^v (\theta(s) - 1/\delta)\, \mdd s  \leq 0 \right\} \, .
\end{eqnarray*}
\end{proposition}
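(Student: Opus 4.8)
The plan is to obtain \eqref{eq:E} directly from the integrated form of the defining relation \eqref{eq:buff3}, using only the continuity of $E$ (a consequence of Proposition~\ref{prop:buffReg}), and then to read off the expression for $V_1(t)$ from \eqref{eq:E} together with the nonnegativity of $E$. As a preliminary I would write \eqref{eq:buff3} as $E(t)=E(0)+\int_0^t B(s)\big(\theta(s)-1/\delta\big)\,\mdd s$, which is licit since $B$ is $\{0,1\}$-valued and $\theta$ is locally bounded (by Lemma~\ref{lem:buffcond}, $\theta\le L(z)$ with $z$ continuous).

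Now fix $t\ge0$ and put $T_0=T_0(t)$. If $E(t)=0$, then $t$ lies in the zero set of $E$ on $[0,t]$, so $T_0=t$, and \eqref{eq:E} reduces to $0=\mathbbm{1}_{\{t=0\}}E(0)$, which holds because $E(0)=0$ whenever $t=0$ and $E(t)=0$. If $E(t)>0$, the set $Z=\{s\in[0,t]\,:\,E(s)=0\}$ is closed by continuity of $E$; when $Z\ne\emptyset$ we have $T_0=\max Z\in Z$, so $E(T_0)=0$, and when $Z=\emptyset$ the convention $\left[\sup\emptyset\right]_+=0$ gives $T_0=0$ with necessarily $E(0)>0$. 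In either case $E>0$ on $(T_0,t]$, since a zero there would belong to $Z$ and exceed $\max Z$; hence, by the form of $B$ in \eqref{eq:buff1}, $B\equiv1$ on $(T_0,t]$. Splitting the integral above at $T_0$ and using $E(T_0)=E(0)+\int_0^{T_0}B(s)(\theta(s)-1/\delta)\,\mdd s$ then yields $E(t)=E(T_0)+\int_{T_0}^t\big(\theta(s)-1/\delta\big)\,\mdd s$, and since $E(T_0)=0$ when $T_0>0$ while $E(T_0)=E(0)$ when $T_0=0$, this is exactly \eqref{eq:E}.

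For the expression for $V_1(t)$, I would observe that for every $v$ with $t\le v<V_1(t)$ one has $E(v)>0$, and the argument of the previous paragraph shows that the last zero of $E$ before $v$ coincides with that before $t$, i.e. $T_0(v)=T_0(t)$; moreover $E(V_1(t))=0$ by continuity when $V_1(t)<\infty$. Applying \eqref{eq:E} at such a $v$ gives $\int_{T_0(t)}^v\big(\theta(s)-1/\delta\big)\,\mdd s=E(v)-\mathbbm{1}_{\{T_0(t)=0\}}E(0)$, which equals $E(v)$ once the dynamics has genuinely left its initial state, i.e. when $T_0(t)>0$ or $E(0)=0$. Since $E\ge0$, the first $v>t$ at which this integral is $\le0$ is the first $v>t$ at which $E(v)=0$, namely $V_1(t)$, which is the claimed identity.

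The argument is essentially bookkeeping once the continuity of $E$ furnished by Proposition~\ref{prop:buffReg} is in hand; the only points requiring care are the treatment of the $\left[\,\cdot\,\right]_+$ convention in $T_0$ when the zero set of $E$ on $[0,t]$ is empty, and the boundary term $\mathbbm{1}_{\{T_0(t)=0\}}E(0)$, which is relevant only when the buffered dynamics is initialised inside a blowup episode ($E(0)>0$).
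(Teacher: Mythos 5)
Your proof is correct and takes essentially the same route as the paper's: integrate the defining ODE for $E$, use continuity of $E$ to get $E(T_0(t))=0$ when $T_0(t)>0$, observe $B\equiv 1$ on $(T_0(t),t]$, and read off $V_1(t)$ from the resulting identity. You are, if anything, more careful than the paper about the $Z=\emptyset$ case and about the edge case $T_0(t)=0$, $E(0)>0$, where the stated formula for $V_1(t)$ requires the caveat you note; the paper's one-line justification of the $V_1$ identity glosses over precisely that.
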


\begin{proof}
If $t \notin \mathcal{B}_t$, then $T_0(t)=t$, so that as expected, \eqref{eq:E} yields $E(t) = 0$.
If $t \in \mathcal{B}_t$, we have $B(t)=1$, and
\begin{eqnarray*}
E(t) = E(t)-E(T_0(t)) + E(T_0(t)) = \int_{T_0(t)}^t (\theta(s)-1/\delta) \, \mdd s + E(T_0(t))   \, .
\end{eqnarray*}
If $T_0(t)>0$, then it must be that $E(T_0(t))=0$ by continuity of $E$,
so that \eqref{eq:E} generally holds.
Finally, the characterization of $V_1(t)$ in terms of $T_0(t)$ 
directly follows from its definition as $V_1(t) = \inf \{ s \geq t \, \vert \, E(s)=0 \}$.
\end{proof}

\begin{remark}\label{rem:E0}
We will consider the buffer mechanism when it acts on dPMF dynamics for initial conditions considered at a running time $t>0$.
In that context, it is instructive to specify the initial value of the $t$-shifted excess
function $E_t=E(\cdot - t)$  consistently with the $t$-shifted initial conditions $(p_t,f_t)$, where $p_t=p(t,\cdot)$ and $f_t=\{f(s-t)\}_{s \leq t}$.
Since we always assume that $(p_0,g_0)$ are nonexplosive initial conditions and since the role of $\theta$ is played by $f$ in $\delta$-buffered dPMF dynamics, by Proposition~\ref{prop:E} we have
$$T_0(t)>0  \quad \Rightarrow \quad E_t(0)=E(t)=\int_{T_0(t)}^t (f(s)-1/\delta) \, \mdd s \, .$$
Therefore, if $t$ is not a full-blowup time, i.e., $t \notin \mathcal{B}_t$, $T_0(t)=t$ so that $E_t(0)=0$, but  if $t$ is  a full-blowup time, i.e. $t \in \mathcal{B}_t$,  $0<T_0(t)<t$ so that $E_t(0)> 0$.
\end{remark}

We are now in a position to give a characterization of the buffer mechanism 
in terms of the $\delta$-titled running infimum of the cumulative function $t \ni \mathbbm{R}^+ \mapsto \Theta(t)=\int_0^t \theta(s) \, \mdd s$, which is
directly related to the last-blowup-trigger time $T_0$.

\begin{proposition}\label{prop:PhiTheta}
Suppose that $\theta$ solves  the $\delta$-buffer problem \eqref{all:bufferMech} for a continuous function $z:  \mathbbm{R}^+ \to  \mathbbm{R}^+$ and an initial excess value $E(0)\geq 0$.
Given the cumulative function $\mathbbm{R}^+  \ni t \mapsto \Theta(t)=\int_0^t \theta(s) \, \mdd s + \Theta(0)$ with $\Theta(0)=E(0)$,
consider the function $\tT = \Theta-E: \mathbbm{R}^+ \to  \mathbbm{R}$, where $E$ is the excess function associated to $\theta$.
Then,  $ \tT: \mathbbm{R}^+ \to  \mathbbm{R}^+$ is such that
\begin{eqnarray}
\tT(t) 
= \int_0^t \widetilde{\theta}(s) \, \mdd s 
=  \frac{t}{\delta} +  \left[ \inf_{0 \leq s \leq t}  \left( \Theta(s) - \frac{s}{\delta} \right)  \right]_-
=  \Theta(T_0(t)) + \frac{t-T_0(t)}{\delta} \, ,\label{eq:Phi} 
\end{eqnarray}
where $\widetilde{\theta}$ denotes the $\delta$-buffered version of $\delta$
and where for all $x \in \mathbbm{R}$, we denote the negative part of $x$ as $\left[ x \right]_-= x \wedge 0$.
\end{proposition}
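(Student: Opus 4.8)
The plan is to reduce the whole statement to one monotonicity observation. Integrating \eqref{eq:buff3} gives $E(t)=E(0)+\int_0^t B(s)(\theta(s)-1/\delta)\,\mdd s$; subtracting this from $\Theta(t)=\Theta(0)+\int_0^t\theta(s)\,\mdd s$ and using $\Theta(0)=E(0)$ yields $\tT(t)=\Theta(t)-E(t)=\int_0^t\big((1-B(s))\theta(s)+B(s)/\delta\big)\,\mdd s=\int_0^t\widetilde\theta(s)\,\mdd s$, which is the first equality, and in particular $\tT(0)=0$. Now set $\phi(t)=\tT(t)-t/\delta=\int_0^t(\widetilde\theta(s)-1/\delta)\,\mdd s$ and note $\widetilde\theta(s)-1/\delta=(1-B(s))(\theta(s)-1/\delta)$. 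Since $B(s)=0$ forces $\theta(s)\le1/\delta$ by the very definition of $B$ in \eqref{eq:buff1}, the integrand is everywhere $\le 0$, so $\phi$ is nonincreasing with $\phi(0)=0$; hence $\phi(t)\le 0$ and $\phi(t)=\inf_{0\le s\le t}\phi(s)$.

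Next I would locate $\phi(t)$ at the last-blowup-trigger time $T_0(t)$ of Definition~\ref{def:blowupTimes}. If $E(t)=0$ then $T_0(t)=t$ and there is nothing to prove; otherwise $t$ lies in one of the open full-blowup intervals $(T_k,V_k)$ that make up $\{E>0\}$ (continuity of $E$, from Proposition~\ref{prop:buffReg}), so $T_0(t)=T_k$ and $B\equiv 1$ on $(T_k,t]$, giving $\phi'\equiv 0$ there and $\phi(t)=\phi(T_0(t))$. When $T_0(t)>0$ continuity of $E$ gives $E(T_0(t))=0$, hence $\tT(T_0(t))=\Theta(T_0(t))$ and $\phi(t)=\Theta(T_0(t))-T_0(t)/\delta$, which is the third equality after rearranging; in the degenerate case $T_0(t)=0$ one reads $\Theta(T_0(t))$ as $\tT(0)=0$, the two coinciding as soon as $E(0)=0$. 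For the running-infimum form I would write $\psi(s)=\Theta(s)-s/\delta$, so that $\phi=\psi-E\le\psi$; from $\phi$ nonincreasing, $\phi(t)=\inf_{0\le s\le t}\phi(s)\le\inf_{0\le s\le t}\psi(s)$, and together with $\phi(t)\le 0$ this gives $\phi(t)\le\big[\inf_{0\le s\le t}\psi(s)\big]_-$. For the reverse inequality, Proposition~\ref{prop:E} yields $E(t)=\psi(t)-\psi(T_0(t))+\mathbbm{1}_{\{T_0(t)=0\}}E(0)$, hence $\phi(t)=\psi(T_0(t))$ when $T_0(t)>0$ and $\phi(t)=0$ (with $\inf_{0\le s\le t}\psi\ge 0$) when $T_0(t)=0$; in the first case $\inf_{0\le s\le t}\psi\le\psi(T_0(t))=\phi(t)=\inf_{0\le s\le t}\phi\le\inf_{0\le s\le t}\psi$ forces $\inf_{0\le s\le t}\psi=\phi(t)\le 0$, and in both cases $\phi(t)=\big[\inf_{0\le s\le t}\psi(s)\big]_-$. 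Finally $\tT\ge 0$ comes for free, since $\Theta\ge 0$ gives $\psi(s)\ge -s/\delta$ and thus $\phi(t)\ge -t/\delta$.

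The genuinely substantive step, and the one I expect to require the most care, is the reverse inequality in the running-infimum form: monotonicity by itself only places $\phi(t)$ below every $\psi(s)$, and to identify it with the \emph{running infimum} one must exploit that $\phi$ can decrease only on the complement of $\{E>0\}$, so that its terminal value is carried forward from the last instant at which $E$ vanished. This is exactly where the fine structure of the buffer enters — the form of $B$ in \eqref{eq:buff1}, the continuity and lower semicontinuity of $E$ and $B$ from Proposition~\ref{prop:buffReg}, and the explicit representation of $E$ in Proposition~\ref{prop:E}. A secondary, purely cosmetic wrinkle is the initial-blowup configuration $E(0)>0$ with $T_0(t)=0$: there the symbol $\Theta(T_0(t))$ in the last expression has to be read as $\tT(T_0(t))=0$; with that convention all three expressions agree on $\mathbbm{R}^+$, and whenever $E(0)=0$ the statement holds verbatim.
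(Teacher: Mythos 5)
Your proposal is correct and follows essentially the same route as the paper's proof: the same integral identity for $\widetilde{\Theta}$, the same monotonicity of $t\mapsto\widetilde{\Theta}(t)-t/\delta$, and the same use of Proposition~\ref{prop:E} together with the decomposition into blowup intervals to pin down the running-infimum form. Your treatment of the degenerate configuration $E(0)>0$, $T_0(t)=0$ is slightly more explicit than the paper's, and you correctly observe that the last expression $\Theta(T_0(t))+(t-T_0(t))/\delta$ must then be read with $\widetilde{\Theta}(T_0(t))$ in place of $\Theta(T_0(t))$ — a genuine (if minor) oversight in the paper's statement that its own case~(ii) argument silently glosses over.
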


\begin{proof}


To show the first equality, observe that by definition of the excess function~\eqref{eq:buff3}, we have
\begin{eqnarray*}
\tT (t) 
&=& 
\Theta(t)-E(t) \, , \\
&=& 
\int_0^t \theta(s) \, \mdd s +E(0) - \int_0^t B(s) (\theta(s) -1/\delta ) \, \mdd s  - E(0)\, , \\
&=& 
\int_0^t  \big((1-B(s))\theta(s) \, \mdd s  +  B(s)/\delta \big) \, \mdd s \, , 
\end{eqnarray*}
which shows the first equality in \eqref{eq:Phi} upon recognizing that $ \widetilde{\theta} = \big( (1-B)\theta +  B/\delta\big)$. 

To show the second equality in  \eqref{eq:Phi}, observe then that by construction $\widetilde{\theta} \leq 1/\delta$, so that we have
\begin{eqnarray*}
v \geq t  \geq 0 \quad \Rightarrow \quad \tT (v)-\tT (t) 
= 
\int_t^v \widetilde{\theta}(s) \, \mdd s  
\leq (v-t)/\delta \, .
\end{eqnarray*}
Therefore $t \mapsto \tT (t) - t/\delta$ is a nonincreasing function $\mathbbm{R}^+ \to \mathbbm{R}$
and for all $t>0$, we have
\begin{eqnarray*}
\tT(t) - t/\delta 
=
 \inf_{0 \leq s \leq t} ( \tT (s) - s/\delta) 
 \leq 
 \inf_{0 \leq s \leq t}  ( \Theta(s) - s/\delta) \, , 
\end{eqnarray*}
where the last inequality follows from the fact that $\tT = \Theta- E\leq \Theta$.
We must distinguish two cases:

$(i)$ Consider the case for which $0$ is not a blowup time, i.e., when  $E(0)=0$.
For this case, one can disregard the negative part operator in \eqref{eq:Phi} as
\begin{eqnarray*}
\inf_{0 \leq s \leq t}  \left( \Theta(s) - \frac{s}{\delta} \right)   \leq \Theta(0)=E(0)=0 \, .
\end{eqnarray*}
Thus the second inequality in \eqref{eq:Phi} will follow from establishing that 
\begin{eqnarray*}
\tT(t) - t/\delta 
 \geq 
 \inf_{0 \leq s \leq t}  ( \Theta(s) - s/\delta) \, .
\end{eqnarray*}
If $\tT(t) = \Theta(t)$, there is nothing to show.
Suppose then that $\tT(t) < \Theta(t)$, which means that  $E(t)>0$, so that $t$ is in a full-blowup interval.
Consider the last-blowup-trigger time $T_0(t)$, which satisfies $0 \leq T_0(t) <t$ with $E(T_0(t))=0$.
By Proposition~\ref{prop:E}, we then have
\begin{align}\label{eq:ThetaT}
\Theta(t) - \Theta(T_0(t))  
&= \int_{T_0(t)}^t (\theta(s) \, \mdd s \, , \nonumber \\
&= \int_{T_0(t)}^t (\theta(s)-1/\delta) \, \mdd s + (t-T_0(t))/\delta \, , \nonumber \\
&= E(t) + (t-T_0(t))/\delta \, .
\end{align}
This implies that
\begin{eqnarray*}
\tT(t) - t/\delta = \Theta(t)  - E(t) - t/\delta = \Theta(T_0(t))  -T_0(t)/\delta \geq   \inf_{0 \leq s \leq t}  ( \Theta(s) - s/\delta)  \, .
\end{eqnarray*}
which is the desired inequality.

$(ii)$ Next, consider the case for which $0$ is a blowup time, i.e., when  $E(0)>0$, and pick $t \geq 0$.
If $T_0(t)>0$, the same reasoning as in $(i)$ applies to show that the second equality  in \eqref{eq:Phi} holds.
Otherwise $T_0(t)=0$ and it must be that for all $s$, $0 \leq s \leq t$, $E(s)=\Theta(s)-\tT(s)>0$,
which implies that
\begin{eqnarray*}
\inf_{0 \leq s \leq t}  ( \Theta(s) - s/\delta) > \inf_{0 \leq s \leq t}  ( \tT(s) - s/\delta) = \tT(t) - t/\delta \, ,
\end{eqnarray*}
 where the last equality follows from the nonincreasing character of $t \mapsto \tT (t) - t/\delta$.
 But by Proposition~\ref{prop:E}, we  have
 \begin{align}\label{eq:ThetaT2}
\tT(t)  
&= \Theta(t) - E(t) \, , \nonumber \\
&= \int_0^t \theta(s) \, \mdd s + E(0) - \left( \int_0^t (\theta(s)-1/\delta) \, \mdd s + E(0) \right) \, , \nonumber \\
&= t/\delta\, .
\end{align}
This shows that 
\begin{equation*}
\inf_{0 \leq s \leq t}  ( \Theta(s) - s/\delta) >\tT(t) - t/\delta = 0 \, .
\end{equation*}
Therefore, when $E(0)>0$ and $T_0(t)=0$, we have
\begin{equation*}
\tT (t) = t/\delta = t/\delta + \frac{t}{\delta} +  \left[ \inf_{0 \leq s \leq t}  \left( \Theta(s) - \frac{s}{\delta} \right)  \right]_- \, .
\end{equation*}
showing that the second equality  in \eqref{eq:Phi} always holds.

The last equality in \eqref{eq:Phi} is a direct consequence of the definition of $T_0(t)$ as the last-blowup-trigger time since
\begin{eqnarray*}
T_0(t) = \left[ \sup \{ s \leq t \, \vert \, E(s)=0 \} \right]_+ = \left[ \sup \{ s \leq t \, \vert \, \tT(s)=\Theta(s) \}  \right]_+ \, .
\end{eqnarray*}
This concludes the proof.
\end{proof}

The above characterization of the $\delta$-buffer mechanism in terms of $\delta$-tilted running infimum will provide us
with the starting point of our arguments justify the physicality of our fixed-point equation \eqref{eq:physFP}.
Such a justification will require the time-changed version of Proposition~\ref{prop:PhiTheta}
under the assumptions that the initial conditions are nonexplosive, i.e., for $E(0)=0$ and that $\Theta$ is an increasing functon.
We give this time-changed version as a corollary bearing on the inverses
of the functions $\Theta$ and $\tT$, which we denote $\Gamma = \Theta^{-1}$ and  $\tGamma=\tT^{-1}$, respectively.
The time change considered  is $\sigma=\tT(t) \Leftrightarrow t=\tGamma(\sigma)$,
which allows us to introduce the time-changed, last-blowup-trigger time as $S_0(\sigma)=\tT(T_0(t))=\tT(T_0(\tGamma(\sigma))) \geq 0$.

\begin{corollary}\label{cor:PsiGamma}
If $\Theta(0)=E(0)=0$ and $\Theta$ is an increasing function, the inverse functions $\Gamma = \Theta^{-1}:\mathbbm{R}^+ \to  \mathbbm{R}^+$ and $\tGamma=\tT^{-1}: \mathbbm{R}^+ \to  \mathbbm{R}^+$ are  such that
\begin{eqnarray}
\tGamma(\sigma) 
=
\delta \sigma + \left[ \sup_{0 \leq \tau \leq \sigma} ( \Gamma(\tau) - \delta \tau ) \right]_+
=
\Gamma(S_0(\sigma)) + \delta (\sigma-S_0(\sigma))  \, .
\end{eqnarray}
\end{corollary}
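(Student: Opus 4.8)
The plan is to obtain both identities of the corollary by inverting the two corresponding identities of Proposition~\ref{prop:PhiTheta}, which are already known to hold for $\Theta$ and $\tT$. Fix $\sigma\ge0$ and set $t=\tGamma(\sigma)$, i.e.\ $\sigma=\tT(t)$. First I would observe that, by definition of the last-blowup-trigger time $T_0$ together with the continuity of $E$ (Proposition~\ref{prop:buffReg}; see also Proposition~\ref{prop:E}), we have $E(T_0(t))=0$, hence $\tT(T_0(t))=\Theta(T_0(t))-E(T_0(t))=\Theta(T_0(t))$. Consequently $S_0(\sigma)=\tT(T_0(t))=\Theta(T_0(t))$ and $\Gamma(S_0(\sigma))=T_0(t)$. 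Feeding these into the third identity of Proposition~\ref{prop:PhiTheta}, $\tT(t)=\Theta(T_0(t))+(t-T_0(t))/\delta$, and solving for $t$ gives $t=\Gamma(S_0(\sigma))+\delta\big(\sigma-S_0(\sigma)\big)$, which is the last equality of the corollary. This computation also delivers $S_0(\sigma)=\Theta(T_0(t))\le\tT(t)=\sigma$, so that $\Gamma(S_0(\sigma))=T_0(t)\le\Gamma(\sigma)$; combined with the elementary bound $\tGamma\ge\Gamma$ (because $E\ge0$ forces $\tT\le\Theta$), this places $T_0(t)$ inside $[0,\Gamma(\sigma)]\subseteq[0,t]$, a fact I will use below.

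For the middle equality I would start from the second identity of Proposition~\ref{prop:PhiTheta}, $\tT(t)-t/\delta=\big[\inf_{0\le s\le t}(\Theta(s)-s/\delta)\big]_-$. Multiplying by $-\delta$ and using $-\delta[x]_-=[-\delta x]_+$ turns this into $t-\delta\tT(t)=\big[\sup_{0\le s\le t}(s-\delta\Theta(s))\big]_+$, i.e.\ $\tGamma(\sigma)=\delta\sigma+\big[\sup_{0\le s\le\tGamma(\sigma)}(s-\delta\Theta(s))\big]_+$; moreover, by the third identity again this supremum equals $T_0(t)-\delta\Theta(T_0(t))$, so the supremum of $s\mapsto s-\delta\Theta(s)$ over $[0,t]$ is nonnegative (it is $0$ at $s=0$, since $\Theta(0)=E(0)=0$) and is attained at $s=T_0(t)$ (or, in a degenerate subcase, at $s=0$). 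It therefore remains to identify $\big[\sup_{0\le s\le\tGamma(\sigma)}(s-\delta\Theta(s))\big]_+$ with $\big[\sup_{0\le\tau\le\sigma}(\Gamma(\tau)-\delta\tau)\big]_+$: performing the change of variable $\tau=\Theta(r)$ rewrites the latter as $\big[\sup_{0\le r\le\Gamma(\sigma)}(r-\delta\Theta(r))\big]_+$, and since $[0,\Gamma(\sigma)]\subseteq[0,\tGamma(\sigma)]$ already contains the maximizer, the two suprema coincide and each is nonnegative. Putting the pieces together, $\delta\sigma+\big[\sup_{0\le\tau\le\sigma}(\Gamma(\tau)-\delta\tau)\big]_+=\delta\sigma+T_0(t)-\delta\Theta(T_0(t))=\Gamma(S_0(\sigma))+\delta\big(\sigma-S_0(\sigma)\big)=\tGamma(\sigma)$, which is the middle equality.

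The hard part will be the domain-reduction step in the second paragraph: \emph{a priori} the running supremum of $r\mapsto r-\delta\Theta(r)$ over $[0,\tGamma(\sigma)]$ could be strictly larger than over $[0,\Gamma(\sigma)]$, and excluding this requires knowing precisely where that supremum is attained, which is exactly the content of the $T_0$-representation inherited from Proposition~\ref{prop:PhiTheta}. Everything else is routine: the change of variable $\tau=\Theta(r)$ (using that $\Theta$ is a continuous increasing bijection with $\Theta(0)=0$), the sign identity $-\delta[x]_-=[-\delta x]_+$, and the use of $E(0)=0$ to render the positive parts inert. No estimates beyond those already assembled in Propositions~\ref{prop:E} and~\ref{prop:PhiTheta} are needed; in particular the degenerate subcase where the supremum is $0$ and attained at $s=0$ rather than at $s=T_0(t)$ is handled by the same inclusion $[0,\Gamma(\sigma)]\subseteq[0,\tGamma(\sigma)]$.
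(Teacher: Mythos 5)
Your argument is correct. It shares with the paper the essential ingredient --- the identity $\tT(t) = \Theta(T_0(t)) + (t-T_0(t))/\delta$ from Proposition~\ref{prop:PhiTheta}, which at the end identifies $\Gamma(S_0(\sigma))$ with $T_0(t)$ and pins down where the running supremum is attained --- but the route to the middle equality is organized differently. You obtain the running-supremum formula by formally inverting the running-infimum identity of Proposition~\ref{prop:PhiTheta} (multiplying by $-\delta$, swapping $\inf$ for $\sup$ and $[\,\cdot\,]_-$ for $[\,\cdot\,]_+$), which lands you with a supremum over $[0,\tGamma(\sigma)]$, and you then reconcile the mismatched domains $[0,\tGamma(\sigma)]$ versus $[0,\Gamma(\sigma)]$ by noting that the maximizer $T_0(t)$ already lies in the smaller interval. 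The paper instead does not invert the infimum formula at all: it derives a fresh monotonicity fact (from $\tT(v)-\tT(t)\le(v-t)/\delta$ it deduces that $\sigma\mapsto\tGamma(\sigma)-\delta\sigma$ is nondecreasing) and then proves the equality as a two-sided inequality, the lower bound $\tGamma(\sigma)-\delta\sigma\ge\sup_\tau(\Gamma(\tau)-\delta\tau)$ following from $\Gamma\le\tGamma$ and that monotonicity, and the upper bound from the $S_0(\sigma)$ computation. Your version has the advantage of making the middle and right-hand equalities fall out of the same inversion in one sweep, at the cost of the domain-reduction step you flag as ``the hard part''; the paper's version avoids that step entirely by never passing through a supremum over the wrong interval, but in exchange has to prove the monotonicity of $\tGamma-\delta\,\mathrm{Id}$ separately. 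Both are sound; yours is a legitimate alternative.
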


The proof is given in Section~\ref{sec:RSfunction}. 


\section{Validation of candidate physical solutions as limit buffered solutions}\label{sec:limBuff}

In this section, we confirm that the blowup dPMF dynamics specified by Theorem~\ref{th:mainRes} represent physical solutions.
In Section~\ref{sec:buffTC}, we leverage the characterization of the buffer mechanism in terms of running extrema 
to formulate the fixed-point problem satisfied by  $\delta$-buffered dPMF dynamics.
In Section~\ref{sec:buffSol}, we show that, independent of the occurrence of buffered blowups, 
this fixed-point problem uniquely specifies a $\delta$-buffered dPMF dynamics, 
which is always physical.
ln Section~\ref{sec:recov}, we prove Theorem~\ref{th:mainRes2} showing that our candidate dPMF dynamics are physical 
as these correspond to $\delta$-buffered dPMF dynamics in the limit $\delta \downarrow 0$.


\subsection{Buffered fixed-point equation in the time-changed picture}\label{sec:buffTC}
Just like nonregularized dynamics, buffered dPMF dynamics are amenable to analysis 
in a time-changed approach.
In fact, following the same reasoning as in Section~\ref{sec:TC}, the form of 
the buffered problem \eqref{all:bufferPDE} suggests to adopt the time change function
\begin{eqnarray}\label{eq:deltaPhiDef}
\Phi(t) = \nu_2 t + \lambda_2 \tF (t) = \nu_2 t + \lambda_2 \int_0^t \tf (s) \, \mdd s \, .
\end{eqnarray}
where the rate of interaction $\tf$ is the $\delta$-buffered version of the firing rate $f$.
This motivates defining the space of admissible $\delta$-buffered time-change functions as follows:

\begin{definition}\label{def:deltaPhiBuff}
The set of admissible $\delta$-buffered time changes $\mathcal{T}_\delta$ is the set of functions $\Phi:[0, \infty)\to[0, \infty)$, with $\Phi(0)=0$,
such that for all $y,x \leq -\epsilon$, $x \neq y$, the difference quotients $w_\Phi(y,x)$ satisfy
\begin{eqnarray*}
\nu_2 \leq w_\Phi(y,x)=\frac{\Phi(y)-\Phi(x)}{y-x} \leq \nu_2+\frac{\lambda_2}{\delta} = \frac{1}{\delta_2} \, .
\end{eqnarray*}
\end{definition}

As a result of buffer regularization, all time changes $\Phi$ in $\mathcal{T}_\delta: \mathbbm{R}^+ \to \mathbbm{R}^+$ 
are continuous, increasing, one-to-one functions,
and so are their inverse time changes $\Psi=\Phi^{-1}$:

\begin{definition}\label{def:deltaPhiBuff}
The set of admissible inverse time change $\mId$ is the set of functions $\Psi:[0, \infty)\to[0, \infty)$ with $\Psi(0)=0$, such that for all $y,x \leq \chi_0$, $x \neq y$, the difference quotients $w_\Psi(y,x)$ satisfy
\begin{eqnarray*}
\delta_2= \frac{\delta}{\lambda_2+\nu_2 \delta} \leq w_\Psi(y,x)=\frac{\Psi(y)-\Psi(x)}{y-x} \leq \frac{1}{\nu_2} \, .
\end{eqnarray*}
\end{definition}

A key feature of $\delta$-buffered dPMF dynamics is that the change of variable $\sigma = \Phi(t) \Leftrightarrow t = \Psi(\sigma)$ is always well-defined for these dynamics.
As a result, there is a complete equivalence between original-time, $\delta$-buffered dPMF dynamics solving \eqref{all:bufferPDE} in Definition~\ref{def:bufferPDE} 
and the corresponding time-changed dynamics.
Assuming $\Psi$ known, the latter time-changed dynamics are still specified as solutions to 
the time-changed problem  \eqref{all:TCPDE} in Definition~\ref{def:TCPDE} but for $\Psi \in \mId \subsetneq \mIz$.
In particular, Proposition~\ref{prop:solG} applies to $\delta$-buffered dynamics and
given any $\Psi \in \mId$, there is  a density function $(\sigma,x) \mapsto q(\sigma, x)$ solving 
the time-changed problem \eqref{all:TCPDE}. 
Therefore, finding a $\delta$-buffered solution consists in self-consistently specifying
an inverse time change $\Psi \in \mId$, so that the time change $\Phi=\Psi^{-1}$ satisfies \eqref{eq:deltaPhiDef}.

In the following, we give such a  specification by leveraging the characterization 
of the $\delta$-buffered solution in terms of $\delta$-tilted running extrema 
functions given by Proposition~\ref{prop:PhiTheta} in the original time picture 
and by Corollary ~\ref{cor:PsiGamma} in the time-changed picture.
Specifically, we show that functions in $\Psi \in \mId$ that represent inverse time changes 
for buffered dPMF dynamics are characterized as solution to the following fixed-point problem:

\begin{proposition}\label{prop:PsiToPhi}
Given generic initial conditions, if $(\sigma,x) \mapsto q(\sigma,x)$ solves the time-changed problem \eqref{all:TCPDE} for an inverse time change $\Psi \in \mId$,
then  $(t,x) \mapsto p(t,x)=q(\Phi(t),x)$, $\Phi=\Psi^{-1} \in  \mathcal{T}_\delta$, solves the $\delta$-buffered problem \eqref{all:bufferPDE} if and only if
\begin{eqnarray}\label{eq:fixedPoint}
\Psi(\sigma) = \delta_2 \sigma + \left[ \sup_{0 \leq \tau \leq \sigma} \left( \frac{\tau-\lambda_2 G[\Psi]\tau)}{\nu_2} - \delta_2 \tau \right) \right]_+ \, \quad \mathrm{with} \quad \delta_2 = \frac{\delta}{\lambda_2 + \nu_2\delta}  \, .
\end{eqnarray}
\end{proposition}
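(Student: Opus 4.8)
The plan is to split the requirement that $p=q[\Psi]\circ\Phi$ solve \eqref{all:bufferPDE} into a ``PDE part'', which is automatic once $\Psi\in\mId$ is fixed, and a ``buffer part'', and then to recognize the buffer part as precisely the fixed-point relation \eqref{eq:fixedPoint} via the running-extrema identities of Proposition~\ref{prop:PhiTheta} and Corollary~\ref{cor:PsiGamma}. Concretely, I would fix $\Psi\in\mId$, let $q=q[\Psi]$ be the solution of \eqref{all:TCPDE} given by Proposition~\ref{prop:solG}, set $g=\partial_xq(\cdot,0)/2$, $G=G[\Psi]$, $\Phi=\Psi^{-1}\in\mathcal T_\delta$, $p(t,x)=q(\Phi(t),x)$; since $\Phi'\in[\nu_2,1/\delta_2]$ a.e.\ I would introduce $\tf:=(\Phi'-\nu_2)/\lambda_2$ and $f(t):=\Phi'(t)\,g(\Phi(t))$ with primitives $\tF$, $F$, so that $\Phi=\nu_2\,\mathrm{id}+\lambda_2\tF$ and (for data with $G_0(0)=0$) $F=G\circ\Phi$. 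Running the change of variables of Proposition~\ref{prop:PhiToPsi} backwards shows that $p$ satisfies the PDE and boundary condition of \eqref{all:bufferPDE} with interaction rate $\tf$, that $f$ is the firing rate of $p$, and that $f_\epsilon=f*p_\epsilon$; hence $p$ solves \eqref{all:bufferPDE} \emph{if and only if} $\tf$ is the $\delta$-buffered version of $f$ in the sense of \eqref{eq:buffer}--\eqref{eq:excess}. I would also record the ``unbuffered'' time change $\widehat\Phi:=\nu_2\,\mathrm{id}+\lambda_2F$, continuous and strictly increasing with $\widehat\Phi(0)=0$, its inverse $\widehat\Psi$, and $A(\sigma):=(\sigma-\lambda_2G(\sigma))/\nu_2$; since $\tF\le F$ one has $\Phi\le\widehat\Phi$, so $\Psi\ge\widehat\Psi$, and $G=F\circ\Psi$ then gives $A\le\widehat\Psi$ pointwise, with equality at every $\sigma$ where $\Psi(\sigma)=\widehat\Psi(\sigma)$.

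For the direction ``$p$ solves \eqref{all:bufferPDE} $\Rightarrow$ \eqref{eq:fixedPoint}'', I would apply Proposition~\ref{prop:PhiTheta} with $(\Theta,\widetilde\Theta)=(F,\tF)$ to get $\tF(t)=t/\delta+\big[\inf_{0\le s\le t}(F(s)-s/\delta)\big]_-$, substitute into $\Phi=\nu_2\,\mathrm{id}+\lambda_2\tF$, and use the elementary identity $F(s)-s/\delta=\lambda_2^{-1}(\widehat\Phi(s)-s/\delta_2)$ to obtain $\Phi(t)=t/\delta_2+\big[\inf_{0\le s\le t}(\widehat\Phi(s)-s/\delta_2)\big]_-$, which is the middle identity of Proposition~\ref{prop:PhiTheta} with $(\Theta,\widetilde\Theta,\delta)$ in the roles of $(\widehat\Phi,\Phi,\delta_2)$. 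Inverting it exactly as in Corollary~\ref{cor:PsiGamma} yields $\Psi(\sigma)=\delta_2\sigma+\big[\sup_{0\le\tau\le\sigma}(\widehat\Psi(\tau)-\delta_2\tau)\big]_+$, with the supremum attained at $S_0(\sigma):=\Phi(T_0(\Psi(\sigma)))$. Because the excess function vanishes at $T_0(\Psi(\sigma))$, the time changes $\Phi$ and $\widehat\Phi$ agree there, so $\Psi$ and $\widehat\Psi$ agree at $S_0(\sigma)$ and $A(S_0(\sigma))=\widehat\Psi(S_0(\sigma))$; together with $A\le\widehat\Psi$ this forces $\sup_{0\le\tau\le\sigma}(A(\tau)-\delta_2\tau)=\sup_{0\le\tau\le\sigma}(\widehat\Psi(\tau)-\delta_2\tau)$, and \eqref{eq:fixedPoint} follows.

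For the converse, assume \eqref{eq:fixedPoint} and write $\Psi=\delta_2\,\mathrm{id}+R$ with $R(\sigma)=\big[\sup_{0\le\tau\le\sigma}(A(\tau)-\delta_2\tau)\big]_+$. On the open set where $R$ is locally constant one gets $\Psi'=\delta_2$, hence $\Phi'=1/\delta_2=\nu_2+\lambda_2/\delta$ and $\tf=1/\delta$, while $f=\Phi'\,g\circ\Phi=(\nu_2+\lambda_2/\delta)\,z$ with $z:=g\circ\Phi$; on the complement $\Psi=A$, i.e.\ $\Phi=\widehat\Phi$, hence $\Phi'=\widehat\Phi'=\nu_2+\lambda_2f$, so $\tf=f$ and, solving $f=\Phi'z$, $f=\nu_2z/(1-\lambda_2z)$; moreover where $R$ increases $R'=A'-\delta_2\ge0$ forces $g\le z_\delta$ there, and in either regime $f>1/\delta\Leftrightarrow z>z_\delta$. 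Setting $E:=(\widehat\Phi-\Phi)/\lambda_2\ge0$ and $B:=\mathbbm 1_{\{f>1/\delta\}\cup\{E>0\}}$, I would check $E(0)=0$, $\partial_tE=f-\tf$, that $\{E>0\}$ coincides with the locally-constant set of $R$, and that $\tf=(1-B)f+B/\delta$; these give $\partial_tE=B(f-1/\delta)$, so $(f,\tf,E,B)$ solve the buffer equations and $p$ solves \eqref{all:bufferPDE}. The case $G_0(0)>0$ is identical, using the general ($E(0)\ge0$) form of Proposition~\ref{prop:PhiTheta} and noting that the positive part in \eqref{eq:fixedPoint} encodes $E(0)=G_0(0)$.

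The hard part, in both directions, is the structural fact that the two candidate time changes $\Phi$ and $\widehat\Phi$ coincide \emph{precisely outside full-blowup intervals}: in the forward direction this is what upgrades the $\widehat\Psi$-supremum produced by Corollary~\ref{cor:PsiGamma} to the $A$-supremum in \eqref{eq:fixedPoint}, and in the converse it is what identifies the flat set of $R$ with $\{E>0\}$ and decides which constitutive law ($f=\nu_2z/(1-\lambda_2z)$ or $f=(\nu_2+\lambda_2/\delta)z$) is in force at each time. Getting this right hinges on handling the behaviour at blowup trigger and exit times, where $z=z_\delta$, with some care.
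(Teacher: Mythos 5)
Your proposal is correct and takes essentially the same route as the paper: both reduce the claim to showing that $f$ solves the $\delta$-buffer problem if and only if \eqref{eq:fixedPoint} holds, and both deploy Proposition~\ref{prop:PhiTheta}, Corollary~\ref{cor:PsiGamma}, and Lemma~\ref{lem:Sup} in the corresponding places. The only real variation is the forward-direction step that upgrades the running supremum of the inverse of $t\mapsto\nu_2 t+\lambda_2F(t)$ (produced by Corollary~\ref{cor:PsiGamma}) to the running supremum of $\sigma\mapsto(\sigma-\lambda_2G(\sigma))/\nu_2$ appearing in \eqref{eq:fixedPoint}, where you argue via pointwise dominance of the former over the latter with equality at $S_0(\sigma)$ (because the excess vanishes there), whereas the paper argues by contradiction from the Lipschitz bound $w_\Psi\geq\delta_2$; both are sound and of similar effort.
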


It is instructive to illustrate the content of the above proposition.
This is done in Fig.~\ref{fig:PsiPhi} which emphasizes that the fixed-point equation \eqref{eq:fixedPoint} 
is more than a mere time-changed formulation of the buffer mechanism.
In the original picture (Fig.~\ref{fig:PsiPhi}a), the time change $\Phi \in \mathcal{T}_\delta$ (black curve) is obtained 
as a $(1/\delta_2)$-tilded running infimum of the function $t \mapsto \nu_2 +\lambda_2 F(t)$ (blue curve).
By construction of the buffer mechanism, this function is always an increasing function
and presents slope discontinuities at the end of buffered-blowup episodes.
In the time-changed picture (Fig.~\ref{fig:PsiPhi}b), the time change $\Psi \in \mId$ (black curve) is obtained 
as a $\delta_2$-tilded running supremum of the function $\sigma \mapsto (\sigma-\lambda_2 G(\sigma))/\nu_2$ (green curve).
By contrast with the original picture, the latter function is $C_1$ but not necessarily monotonic during buffered blowups.
In particular, this function differs from the inverse of  $t \mapsto \nu_2 +\lambda_2 F(t)$ obtained by mere time change (blue dashed curve).

\begin{figure}[htbp]
\begin{center}
\includegraphics[width=\textwidth]{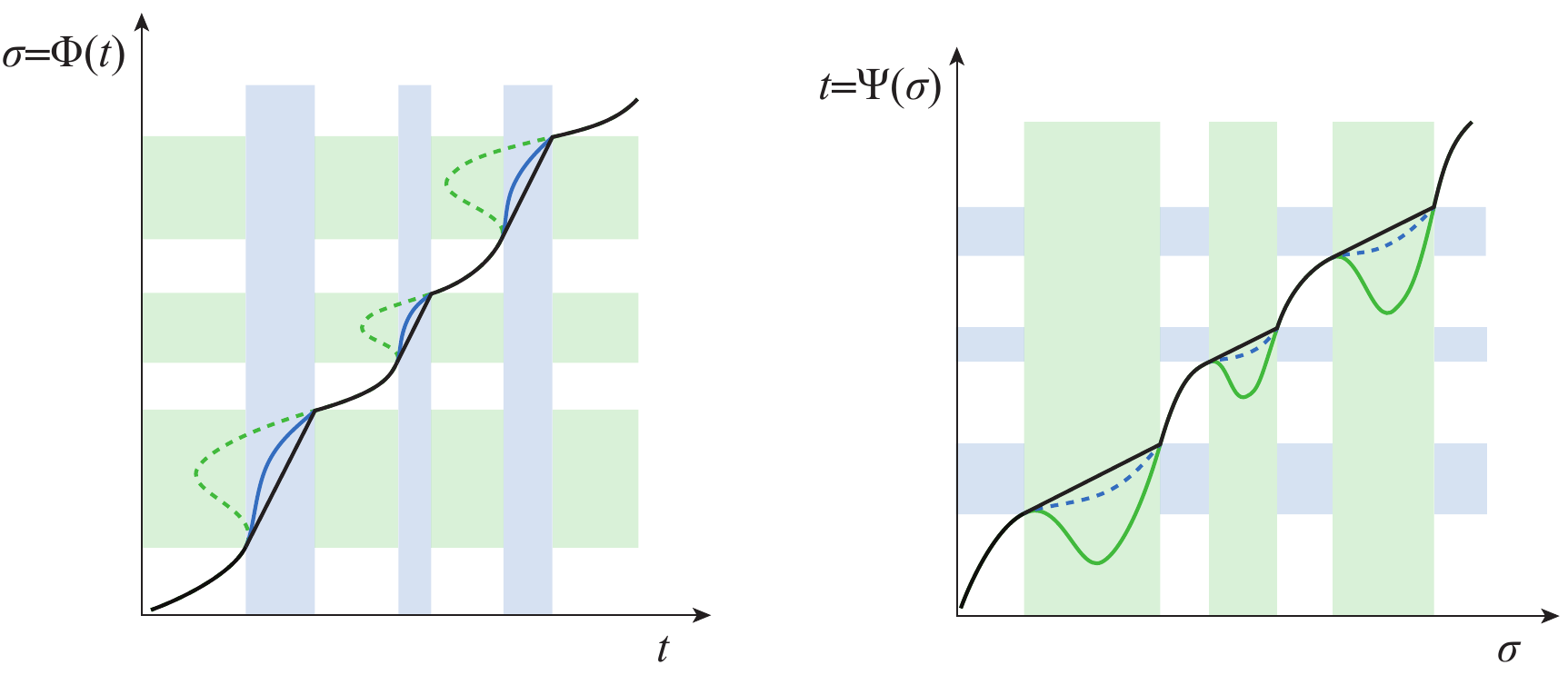}
\caption{{\bf The time-change picture.}
Left: The time change $t \mapsto \Phi(t)=\nu_2 t + \lambda_2 F(t)$ (black curve) reveals three blowup episodes, when $\Phi$ has maximum slope $1/\delta_2$ (blue regions).
During these blowups, the function $t \mapsto \nu_2 t + \lambda_2 F(t)$ (blue curves) transiently exceeds $\Phi$ until the excess function $E=(F-\tF)/\lambda_2$ becomes zero again.
The function $t \mapsto \nu_2 t + \lambda_2 F(t)$ is a monotonic $C_1$ interpolation of the regular dynamics (outside of blowups) except at the blowup exit time where it has a slope discontinuity.
Right: The matching inverse time-change function $t \mapsto \Psi(t)=\Phi^{-1}(t)$ (black curve)  exhibits three blowup episodes, when $\Psi$ has minimum slope $\delta_2$ (green regions).
These blowup episodes correspond to regions of time when $\Psi$ transiently exceeds the inverse function of $t \mapsto \nu_2 t + \lambda_2 F(t)$ (blue dashed curves) as well as the
function $\sigma \mapsto \big(\sigma-\lambda_2 G(\sigma)\big)/\nu_2$ (green curve). 
The latter function represents a nonmonotonic $C_1$ interpolation of the regular dynamics in the time-changed picture (outside of blowups) but does not specify a single-valued function in direct time $t$ (green dashed line).
}
\label{fig:PsiPhi}
\end{center}
\end{figure}

Before proving Proposition~\ref{prop:PsiToPhi}, observe that as $G$ is always an increasing $C_1$ function
and that all functions $\Psi$ satisfying \eqref{eq:fixedPoint} belong to $\mId$.
In fact, recognizing that $G$ is $C_1$ also implies that solutions $\Psi$ to \eqref{eq:fixedPoint} admit \cadlag generalized derivatives.
This fact, which will be extensively used in the proof of Proposition~\ref{prop:PsiToPhi}, is a direct consequence of the following lemma, 
whose proof is included in Section~\ref{sec:RSfunction}  for completeness.

\begin{lemma}\label{lem:Sup}
For all $\Gamma \in C_1(\mathbbm{R}^+)$, there is an at most countable set of points $\mathcal{D} \subset \mathbbm{R}^+$ such that the running supremum function
\begin{eqnarray}\label{eq:supGamma}
 \mathbbm{R}^+ \ni \sigma  \mapsto \tGamma(\sigma) =  \sup_{0 \leq \tau \leq \sigma} \Gamma(\tau)
\end{eqnarray}
is continuously differentiable on  $\mathbbm{R}^+ \setminus \mathcal{D}$.
Moreover,  for all $\sigma \in \mathcal{D}$, $\tGamma$ admits a left-derivative $\tGamma'_-(\sigma)$ and a right-derivative $\tGamma'_+(\sigma)$ with $0 = \tGamma'_-(\sigma) < \tGamma'_+(\sigma) = \Gamma'(\sigma)$.
\end{lemma}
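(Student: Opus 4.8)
The plan is to prove Lemma~\ref{lem:Sup} by exploiting the monotonicity of $\widetilde\Gamma$ together with the $C_1$ regularity of $\Gamma$. First I would record the elementary structural fact that underpins everything: for each $\sigma>0$ either $\Gamma(\sigma)=\widetilde\Gamma(\sigma)$ (the running supremum is currently ``attained'') or $\Gamma(\sigma)<\widetilde\Gamma(\sigma)$, in which case, by continuity of $\Gamma$, there is a whole open neighbourhood of $\sigma$ on which $\Gamma<\widetilde\Gamma$ and hence $\widetilde\Gamma$ is locally constant, so $\widetilde\Gamma'(\sigma)=0$ and trivially $\widetilde\Gamma$ is $C_1$ near $\sigma$. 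Thus all the action is confined to the closed ``contact set'' $\mathcal{C}=\{\sigma\ge 0 : \Gamma(\sigma)=\widetilde\Gamma(\sigma)\}$, and on the complement of $\mathcal{C}$ there is nothing to do.

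Next I would analyse $\widetilde\Gamma$ on $\mathcal{C}$. The natural dichotomy is between points of $\mathcal{C}$ that are left-accumulation points of $\mathcal{C}$ and those that are left-isolated from $\mathcal{C}$ (i.e.\ there is an interval $(\sigma-\eta,\sigma)$ disjoint from $\mathcal{C}$). For a left-accumulation point $\sigma$ of $\mathcal{C}$ I would show $\widetilde\Gamma$ is differentiable at $\sigma$ from the left with $\widetilde\Gamma'_-(\sigma)=\Gamma'(\sigma)$: indeed $\widetilde\Gamma\ge\Gamma$ everywhere while $\widetilde\Gamma=\Gamma$ along a sequence $\sigma_n\uparrow\sigma$ in $\mathcal{C}$, so squeezing the difference quotients of $\widetilde\Gamma$ between those of $\Gamma$ and those of the secant through the contact points, and using $\Gamma\in C_1$, forces $\widetilde\Gamma'_-(\sigma)=\Gamma'(\sigma)$. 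Combined with the obvious fact that $\widetilde\Gamma'_+(\sigma)=\Gamma'(\sigma)$ whenever $\Gamma'(\sigma)>0$ at a contact point (because then $\Gamma$, and hence $\widetilde\Gamma$, is locally strictly increasing to the right) and $\widetilde\Gamma'_+(\sigma)=0$ when $\Gamma'(\sigma)\le 0$, one sees $\widetilde\Gamma$ is $C_1$ at such $\sigma$ precisely when we do not have $\Gamma'(\sigma)\le 0$; the exceptional points have $\Gamma'(\sigma)=0$ at a contact point, which I would absorb into the set $\mathcal{D}$ below.

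The heart of the argument is then to bound the size of the bad set $\mathcal{D}$, which I would take to be the set of $\sigma$ at which $\widetilde\Gamma$ fails to be $C_1$; by the discussion above every such point is a \emph{left-isolated} point of $\mathcal{C}$, i.e.\ immediately preceded by an open interval on which $\widetilde\Gamma$ is constant. Here is where the countability comes from: to each $\sigma\in\mathcal{D}$ associate the nonempty open ``plateau'' interval $(a_\sigma,\sigma)$ on which $\widetilde\Gamma$ equals the constant value $\widetilde\Gamma(\sigma)$; these plateau intervals are pairwise disjoint (two distinct plateaus abutting distinct right-endpoints cannot overlap, since $\widetilde\Gamma$ is nondecreasing and takes a strictly larger value to the right of each of its own plateaus), and a disjoint family of nonempty open intervals in $\mathbbm{R}^+$ is necessarily countable. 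Hence $\mathcal{D}$ is at most countable. Finally I would verify the stated left/right derivative values at each $\sigma\in\mathcal{D}$: the plateau to the left gives $\widetilde\Gamma'_-(\sigma)=0$, while at the right, since $\sigma$ is a contact point, $\widetilde\Gamma$ coincides with $\Gamma$ on a right-neighbourhood intersected appropriately and is nondecreasing, yielding $\widetilde\Gamma'_+(\sigma)=\Gamma'(\sigma)$; the strict inequality $0<\widetilde\Gamma'_+(\sigma)$ holds because if $\Gamma'(\sigma)$ were $\le 0$ the supremum could not increase and $\sigma$ would lie in the interior of a plateau, contradicting $\sigma\in\mathcal{D}$. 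The main obstacle is the bookkeeping in this last step — carefully justifying that failure of $C_1$-ness at $\sigma$ forces a genuine left plateau (rather than, say, an accumulation of shorter plateaus or an oscillatory contact set), and that the associated intervals are truly disjoint; once that is pinned down, the countability and the derivative formulas follow immediately.
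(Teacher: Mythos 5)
Your proof is correct and follows essentially the same strategy as the paper's: identify the exceptional set as those contact points that are immediately preceded by a flat plateau, and deduce countability from the disjointness of those plateau intervals. The paper organizes the case analysis around the sign of $\Gamma'(\sigma)$ and introduces the first and last maximizers $s(\sigma)$ and $S(\sigma)$ explicitly, whereas you phrase things in terms of the closed contact set $\mathcal{C}=\{\Gamma=\tGamma\}$ and its left-isolated versus left-accumulation points, but the core mechanism --- a left plateau forcing $\tGamma'_-(\sigma)=0<\Gamma'(\sigma)=\tGamma'_+(\sigma)$ at a bad point, a squeeze of difference quotients against nearby contact points at good points, and disjointness of plateaus giving countability --- is the same.
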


We are now able to prove Proposition~\ref{prop:PsiToPhi}:

\begin{proof}[Proof of Proposition~\ref{prop:PsiToPhi}]
Suppose that $(\sigma,x) \mapsto q(\sigma,x)$ solves the time-changed problem \eqref{all:TCPDE} for 
some generic initial conditions $(q_0,g_0)$ in $\mathcal{M}(\mathbbm{R}^+) \times \mathcal{M}((-\infty,0))$ and some inverse time change function $\Psi \in \mId$.
Set $p(t,x)=q(\Phi(t),x)$ with $\Phi=\Psi^{-1}$ and observe that for all $\Psi \in \mId$, both $\Psi$ and $\Phi$ admits bounded Radon-Nikodym derivatives.
This observation allows one to reverse the change of variables from the proof of Proposition~\ref{prop:PhiToPsi} to get
\begin{eqnarray*}
\partial_t p 
=
 \left(   \nu_1 - \frac{\lambda_1}{\lambda_2} \nu_2     +  \frac{\lambda_1}{\lambda_2} \Phi'(t) \right) \partial_x p + \frac{\Phi'(t)}{ 2 } \, \partial^2_x p  + f_\epsilon(t) \delta_\Lambda \, ,
\end{eqnarray*}
where $f$ is the density of the cumulative functions $F=G \circ \Phi$.
Observe that generic initial conditions then specifies $F(0)=G(0)=G_0(0) \geq 0$.
We thus have
\begin{eqnarray*}
f(t) = \Phi'(t)g(\Phi(t)) =  \Phi'(t)\partial_x q(\Phi(t),0)/2 = \Phi'(t)\partial_x p(t,0)/2  \, .
\end{eqnarray*}
Term-by-term comparison with the  problem \eqref{all:bufferPDE} 
shows that $(t,x) \mapsto p(t,x)=q(\Phi(t),x)$ is a solution if and only if $\Phi'(t) = \nu_2 + \lambda_2 \tf(t)$,
where $\tf$ denotes the $\delta$-buffered rate of $f$ for
the functions
\begin{eqnarray}\label{eq:deltaBuffDef}
K(z) =   \frac{ \nu_2 z}{1 - \lambda_2 z} \, , \quad L(z) =  \frac{z}{\delta_2} \, ,  \quad \mathrm{and} \quad  z(t) = g(\Phi(t))  \, .
\end{eqnarray}
To establishing Proposition~\ref{prop:PsiToPhi}, we prove in the following that  given fixed generic initial conditions, the fixed-point condition \eqref{eq:fixedPoint} holds if and only if $f$ solves the $\delta$-buffer problem stated above with initial excess value $E(0)=G_0(0)$.

\emph{Necessary condition.}
 Suppose $f$ solves the $\delta$-buffer problem for the function $K$, $L$, $z$ given in \eqref{eq:deltaBuffDef} with $E(0)=G_0(0)$. 
 Then:

 \emph{Case of explosive initial condition: $E(0)=G_0(0)>0$.}
 If $E(0)=G_0(0)>0$, the next-blowup-exit time is such that
\begin{align*}
V_1(0)
=  \inf \left\{ t > 0  \, \big \vert \, E(t) \leq 0 \right\}  
=  \inf \left\{ t > 0  \, \bigg \vert \, \int_{0}^t (f(s) - 1/\delta)\, \mdd s + E(0) \leq 0 \right\} > 0 \, .
\end{align*}
Let us denote the corresponding time-changed blowup exit time by $U_1(0) = \Phi(V_1(0))>\nu_2 V_1(0)>0$.
For all $t$, $0 \leq t <V_1(0)$, since $ F(0)=G_0(0)=E(0)$, we have
\begin{align*}
 \int_{0}^t (f(s) - 1/\delta)\, \mdd s 
 = F(t) - F(0) - t/\delta + E(0)
  =F(t)-t/\delta >0 \, .
\end{align*}
As $F=G \circ \Phi \Leftrightarrow G = F \circ \Psi$ for all $\Psi \in \mId$, this implies that
for all $\sigma$, $0 \leq \sigma < U_1(0)$, we have
$G(\sigma) = F ( \Psi (\sigma)) > \Psi(\sigma)/
\delta$. Moreover, for all blowup times, we have
\begin{align*}
\Phi(t) = \nu_2 t + \lambda_2 t/\delta = t/\delta_2 \, , \quad  0 \leq t \leq V_1(0) 
\quad \Leftrightarrow \quad
\Psi(\sigma) = \delta_2 \sigma  \, , \quad  0 \leq \sigma \leq U_1(0) \, .
\end{align*}
Therefore, $G(\sigma) > (\delta_2/\delta) \sigma$ for all $\sigma$, $0 \leq \sigma \leq U_1(0)$, 
 so that
\begin{align*}
\frac{\sigma-\lambda_2 G(\sigma)}{\nu_2} \leq  \frac{\sigma-\lambda_2 (\delta_2/\delta) \sigma}{\nu_2} = \delta_2 \sigma \, .
\end{align*}
with equality if and only if $\sigma=U_1(0)$.
In other words,
\begin{align*}
U_1(0)
=  \inf \left\{ \sigma > 0  \, \big \vert \,(\sigma-\lambda_2 G(\sigma))/\nu_2 - \delta_2 \sigma \geq 0 \right\}  \, ,
\end{align*}
and for all $\sigma$, $0 \leq \sigma \leq U_1(0)$, we have
\begin{align*}
 \left[ \sup_{0 \leq \tau \leq \sigma} \left( \frac{\tau-\lambda_2 G[\Psi]\tau)}{\nu_2} - \delta_2 \tau \right) \right]_+ = 0 \, ,
\end{align*}
which shows that the fixed-point condition \eqref{eq:fixedPoint} holds up to $U_1(0)$.

There is nothing else to show if $U_1(0)=\infty$ (which can only happen if $V_1(0)=\infty$).
If $U_1(0)<\infty$, note that we necessarily have $D(U_1(0))=E(V_1(0))=0$.
As a result, showing that the fixed-point condition \eqref{eq:fixedPoint} holds 
for $\sigma > U_1(0)$, is equivalent to showing that 
\eqref{eq:fixedPoint} holds for nonexplosive initial conditions, which we establish next.

\emph{Case of nonexplosive initial condition: $G_0(0)=0$.}

 $(i)$ If $E(0)=G_0(0)=0$, by Proposition~\ref{prop:PhiTheta},  the cumulative function $F(t) =   \int_0^t f(s) \, \mdd s $ 
 and the excess function $E: \mathbbm{R}^+ \to \mathbbm{R}^+$ solving the  $\delta$-buffer mechanism are such that 
\begin{eqnarray*}
 F(t) - E(t) =  \tF(t) = \int_0^t \tf(s) \, \mdd s  \, ,
\end{eqnarray*}
which allows one to relate $\Phi$ to the cumulative function $F$ via
\begin{eqnarray*}
\Phi(t) = \nu_2 t + \lambda_2 \tF(t) = \nu_2 t + \lambda_2 (F(t) - E(t)) \, .
\end{eqnarray*}
Then, setting $\sigma=\Phi(t)$, $(t,x) \mapsto p(t,x)=q(\Phi(t),x)$ is a solution of the problem \eqref{all:bufferPDE} if
\begin{eqnarray*}
\sigma = \nu_2 \Psi(\sigma)+ \lambda_2 (F(\Psi(\sigma)) - E(\Psi(\sigma))) = \nu_2 \Psi(\sigma)+ \lambda_2 (G(\sigma) - D(\sigma))  \, .
\end{eqnarray*}
where we have used the time-changed excess function $D = E \circ \Psi$.
Consequently
\begin{eqnarray}\label{eq:PsiG}
\Psi(\sigma) = (\sigma - \lambda_2 (G(\sigma) - D(\sigma)))/\nu_2 \, .
\end{eqnarray}


$(ii)$ If $E(0)=G_0(0)=0$, by Proposition~\ref{prop:PhiTheta}, we  have
\begin{eqnarray*}
\tF(t) 
=  \frac{t}{\delta} + \inf_{0 \leq s \leq t}  \left( F(s) - \frac{s}{\delta} \right)  \, ,
\end{eqnarray*}
which implies that 
\begin{eqnarray*}
\Phi(t) 
&=&  \left( \nu_2+ \frac{\lambda_2}{\delta} \right) t + \inf_{0 \leq s \leq t}  \left( \nu_2 s + \lambda_2 F(s) -  \left( \nu_2+ \frac{\lambda_2}{\delta} \right) s \right) \, , \\
&=&  \frac{t}{\delta_2}   + \inf_{0 \leq s \leq t}  \left( \nu_2 s + \lambda_2 F(s) -  \frac{s}{\delta_2} \right) \, .
\end{eqnarray*}
Let us denote by $\Gamma: \mathbbm{R}^+ \to \mathbbm{R}^+$ the inverse of the increasing function $t \mapsto \nu_2 t + \lambda_2 F(t)$.
As $E(0)=G_0(0)=0$, we can apply Corollary~\ref{cor:PsiGamma}, by which there is a last-blowup-trigger time $S_0(\sigma) \leq \sigma$
 such that for all $\tau$, $S_0(\sigma) \leq \tau \leq \sigma$, we have
\begin{eqnarray}\label{eq:PsiG2}
\Psi(\tau) = \Gamma(S_0(\sigma)) + \delta_2 (\tau - S_0(\sigma)) = \Psi(S_0(\sigma)) + \delta_2 (\tau - S_0(\sigma)) \, ,
\end{eqnarray}
where the fact that $\Gamma(S_0(\sigma))=\Psi(S_0(\sigma))$ follows from  $D(S_0(\sigma))=E(T_0(t))=0$.
Indeed, this implies that
\begin{align*}
\Gamma(S_0(\sigma))
&=
\Gamma \big[ \Phi(T_0(t)) \big]\, , \\
 &= \Gamma \big[ \nu_2 T_0(t) +\lambda_2 \tF(T_0(t)) \big] \, , \\
  &= \Gamma \big[ \nu_2 T_0(t) +\lambda_2 (F(T_0(t))-E(T_0(t))) \big] \, , \\
   &=\Gamma \big[  \nu_2 T_0(t) +\lambda_2 F(T_0(t)) \big] \, , \\
 &= T_0(t) \, , \\
  &= \Psi(S_0(t)) \, , 
\end{align*}
where we used the definition of $\Gamma$ as the inverse function of $t \mapsto \nu_2 t + \lambda_2 F(t)$.

$(iii)$  Equating \eqref{eq:PsiG2} and \eqref{eq:PsiG} evaluated for $\tau$ yields
\begin{eqnarray*}
(\tau  - \lambda_2 G(\tau))/\nu_2 - \Psi(S_0(\sigma)) 
=
\delta_2 (\tau - S_0(\sigma)) - \lambda_2 D(\tau) / \nu_2 
<
\delta_2 (\tau - S_0(\sigma))   \, .
\end{eqnarray*}
This shows that
\begin{eqnarray*}
\sup_{S_0(\sigma) \leq \tau \leq \sigma} \left( \frac{\tau  - \lambda_2 G(\tau)}{\nu_2} - \Psi(S_0(\sigma)) - \delta_2 (\tau - S_0(\sigma)) \right) = 0 \, ,
\end{eqnarray*}
so that adding the above zero quantity to \eqref{eq:PsiG2} evaluated at $\sigma$ yields
\begin{eqnarray*}
\Psi(\sigma) 
&=&   \Psi(S_0(\sigma)) + \delta_2 (\sigma - S_0(\sigma)) + \\
&&\hspace{10pt}  \sup_{S_0(\sigma) \leq \tau \leq \sigma} \left( \frac{\tau  - \lambda_2 G(\tau)}{\nu_2}  - \Psi(S_0(\sigma)) - \delta_2 (\tau - S_0(\sigma)) \right)  \, , \\
&=&   \delta_2 \sigma + \sup_{S_0(\sigma) \leq \tau \leq \sigma} \left( \frac{\tau  - \lambda_2 G(\tau)}{\nu_2} - \delta_2 \tau  \right)  \, . \\
\end{eqnarray*}

$(iv)$ To conclude and show that the fixed-point condition \eqref{eq:fixedPoint} must hold, there remains to show that
\begin{eqnarray*}
\sup_{S_0(\sigma) \leq \tau \leq \sigma} \left( \frac{\tau  - \lambda_2 G(\tau)}{\nu_2} - \delta_2 \tau  \right) 
=
\sup_{0 \leq \tau \leq \sigma} \left( \frac{\tau  - \lambda_2 G(\tau)}{\nu_2} - \delta_2 \tau  \right) \, .
\end{eqnarray*}
Let us proceed by contradiction and suppose there is $\tau$, $0 \leq \tau <S_0(\sigma)$ such that
\begin{eqnarray*}
 \frac{\tau  - \lambda_2 G(\tau)}{\nu_2} - \delta_2 \tau > \sup_{S_0(\sigma) \leq \xi \leq \sigma} \left( \frac{\xi  - \lambda_2 G(\xi)}{\nu_2} - \delta_2 \xi  \right) \geq \frac{S_0(\sigma) - \lambda_2 G(S_0(\sigma))}{\nu_2} - \delta_2 S_0(\sigma) \, . 
\end{eqnarray*}
By \eqref{eq:PsiG}, we have
\begin{eqnarray*}
 \frac{\tau  - \lambda_2 G(\tau)}{\nu_2} = \Psi(\tau) - \frac{\lambda_2 D(\tau)}{\nu_2}  
 \quad
 \mathrm{and}
 \quad
 \frac{S_0(\sigma) - \lambda_2 G(S_0(\sigma))}{\nu_2} = \Psi(S_0(\sigma))  \, .
\end{eqnarray*}
Thus, we must have
\begin{eqnarray*}
\Psi(S_0(\sigma)) - \Psi(\tau) < \delta_2 (S_0(\sigma) - \tau) - \lambda_2 D(\tau)/\nu_2 \, ,
\end{eqnarray*}
with $D= E \circ \Psi$ nonnegative.
This implies that $w_\Psi(S_0(\sigma),\tau)<\delta_2$, which contradicts the fact the $\Psi \in \mId$.
This establishes the necessary condition.

 \emph{Sufficient condition.}
Suppose that the fixed-point condition \eqref{eq:fixedPoint} holds for some function $\Psi \in \mId$ and some $C_1$ increasing function $G$.
Define $\Phi=\Psi^{-1} \in \mathcal{T}_\delta$, $F=G \circ \Phi$, and $\tF = (\Phi-\nu_2 \mathrm{Id})/\lambda_2$, which are all  increasing functions.
As $G$ is assumed $C_1$, applying Lemma~\ref{lem:Sup}  to \eqref{eq:fixedPoint} with
$$\sigma \mapsto \Gamma(\sigma) =(\sigma-\lambda_2 G(\sigma))/\nu_2-\delta_2 \sigma  $$
shows that there is a countable set of points $\mathcal{D}$ such that $\Psi$ is continuously differentiable on $\mathbbm{R}^+ \setminus \mathcal{D}$  with
\begin{eqnarray}
&\displaystyle \Psi'(\sigma) = \frac{1-\lambda_2 g(\sigma)}{\nu_2} \quad \mathrm{or} \quad \delta_2 \quad \mathrm{if} \quad \sigma \notin \mathcal{D} \, ,& \nonumber\\
& \displaystyle \Psi_-(\sigma)=\delta_2<\Psi'_+(\sigma)= \frac{1-\lambda_2 g(\sigma)}{\nu_2}   \quad \mathrm{if} \quad  \sigma \in \mathcal{D} \, .& \nonumber
\end{eqnarray}
This allows us to specify with no lack of generality  $\Psi'$ and $\Phi'$ as \cadlag versions of the generalized derivatives of $\Psi$ and $\Phi$, respectively.
It also allows us to define $f = F'$ and $\tf = \tF'$  as \cadlag functions as well.
Since $\Psi \in \mId$, for all $t>0$, there is $\sigma$ such that $t=\Psi(\sigma)>0$.
By definition of $F$, and $\tF$, let us define
\begin{eqnarray}\label{eq:Et}
E(t) 
=
F(t) - \tF(t) \
=
G(\Phi(t)) - \frac{\Phi(t) - \nu_2 t }{ \lambda_2}
=
\frac{\nu_2}{\lambda_2} \left( \Psi(\sigma) - \frac{\sigma - \lambda_2 G(\sigma)}{\nu_2} \right)  \, .
\end{eqnarray}
Note that by the fixed-point condition \eqref{eq:fixedPoint}, we have
\begin{align*}
\Psi(\sigma)- \frac{\sigma - \lambda_2 G(\sigma)}{\nu_2} 
&= 
\left[ \sup_{0 \leq \tau \leq \sigma} \left( \frac{\sigma - \lambda_2 G(\sigma)}{\nu_2} - \delta_2 \sigma \right) \right]_+ - \left(\frac{\sigma - \lambda_2 G(\sigma)}{\nu_2} - \delta_2 \sigma \right) \, , \\
&\geq 
 \sup_{0 \leq \tau \leq \sigma} \left( \frac{\sigma - \lambda_2 G(\sigma)}{\nu_2} - \delta_2 \sigma \right)  - \left(\frac{\sigma - \lambda_2 G(\sigma)}{\nu_2} - \delta_2 \sigma \right) \geq 0 \,  ,
\end{align*}
and the initial condition 
\begin{align*}
E(0) = \frac{\nu_2}{\lambda_2} \left( \Psi(0) + \frac{\lambda_2 G(0)}{\nu_2} \right) = G_0(0) \, .
\end{align*}
We need to show that  $f$ solves the $\delta$-buffer problem associated to the functions $K$, $L$, and $z$ given in \eqref{eq:deltaBuffDef} 
with excess function $E=F-\tF$ and $\delta$-buffered rate $\tf$.

$(i)$ If $E(t)>0$, by \eqref{eq:Et}, we have
\begin{eqnarray*}
 \Psi(\sigma) - \delta_2 \sigma  =  \frac{\sigma - \lambda_2 G(\sigma)}{\nu_2 } - \delta_2 \sigma + \frac{\lambda_2}{\nu_2} E(t)   > \frac{\sigma - \lambda_2 G(\sigma)}{\nu_2 } - \delta_2 \sigma \, .
\end{eqnarray*}
By continuity of $G$, there is $d>0$ such that for all $\tau \in (\sigma-d,\sigma+d)$
\begin{eqnarray*}
\left \vert  \frac{\tau - \lambda_2 G(\tau)}{\nu_2 } - \delta_2 \tau -  \left( \frac{ \sigma - \lambda_2 G(\sigma)}{\nu_2 } - \delta_2 \sigma \right) \right \vert <  \frac{\lambda_2}{\nu_2} E(t) \, .
\end{eqnarray*}
This implies that for all $\tau \in (\sigma-d,\sigma+d)$, the quantity
\begin{eqnarray*}
\Psi(\tau) - \delta_2 \tau 
&=&
 \sup_{0 \leq \xi \leq \tau} \left( \frac{\xi  - \lambda_2 G(\xi)}{\nu_2} - \delta_2 \xi  \right) \, , \\
 &=&  \sup_{0 \leq \xi \leq \sigma-d} \left( \frac{\xi  - \lambda_2 G(\xi)}{\nu_2} - \delta_2 \xi  \right) \, ,
\end{eqnarray*}
is independent of $\tau$, so that $\Psi'(\tau)=\delta_2$. In particular, we have
\begin{eqnarray*}
f(t) = \Phi'(t) g(\Phi(t)) = \frac{g(\Phi(t))}{\Psi'(\sigma)} = \frac{z(t)}{\delta_2} = L(z(t))\, .
\end{eqnarray*}

$(ii)$ Suppose  that $E(t)=0$ so that by \eqref{eq:Et}, we must have
\begin{eqnarray*}
 \Psi(\sigma) = \frac{\sigma - \lambda_2 G(\sigma)}{\nu_2} \, ,
\end{eqnarray*}
where $G$ is  $C_1$.
Observe then that the condition
\begin{eqnarray*}
\frac{1 - \lambda_2 g(\sigma)}{\nu_2}> \delta_2 = \frac{\delta}{\lambda_2 + \nu_2 \delta}
\end{eqnarray*}
is equivalent to
\begin{eqnarray*}
g(\sigma)=g(\Phi(t))=z(t)<z_\delta = \frac{1}{\lambda_2 + \nu_2 \delta} \, .
\end{eqnarray*}

\emph{(a)} If $z(t)<z_\delta$, there is an open interval $(\sigma-d,\sigma+d)$, $d>0$, over which 
\begin{eqnarray*}
\sigma \mapsto \frac{\sigma - \lambda_2 G(\sigma)}{\nu_2} - \delta_2 \sigma
\end{eqnarray*}
is increasing.  
Therefore, for all $\tau$, $\sigma \leq \tau < \sigma+d$,
\begin{eqnarray*}
\Psi(\tau) - \delta_2 \tau 
=
 \sup_{0 \leq \xi \leq \tau} \left( \frac{\xi  - \lambda_2 G(\xi)}{\nu_2} - \delta_2 \xi  \right) 
=  
 \frac{\tau  - \lambda_2 G(\tau)}{\nu_2} - \delta_2 \tau \, ,
\end{eqnarray*}
which implies that for all $s=\Psi(\tau)$, we have 
\begin{eqnarray*}
\Psi'(\tau)  
=
 \frac{1 - \lambda_2 g(\tau)}{\nu_2}
 =
 \frac{1 - \lambda_2 g(\Phi(s))}{\nu_2}
  =
 \frac{1 - \lambda_2 z(s)}{\nu_2}
  \, ,
\end{eqnarray*}
Therefore
\begin{eqnarray*}
f(s) = \Phi'(s) g(\Phi(s)) = \frac{g(\Phi(s))}{\Psi'(\tau)} = \frac{\nu_2 z(s)}{1 - \lambda_2 z(s)} = K(z(s))\, ,
\end{eqnarray*}
so that by right-continuity, we have $f(t) = \lim_{s \to t^+} f(s) = K(z(t))$.

\emph{(b)} If  $z(t)>z_\delta$, there is an open interval $(\sigma-d,\sigma+d)$, $d>0$, over which 
\begin{eqnarray*}
\sigma \mapsto \frac{\sigma - \lambda_2 G(\sigma)}{\nu_2} - \delta_2 \sigma
\end{eqnarray*}
is decreasing.  
Therefore, for all $\tau$, $\sigma \leq \tau < \sigma+d$,
\begin{eqnarray*}
\Psi(\tau) - \delta_2 \tau 
=  
 \frac{\sigma  - \lambda_2 G(\sigma)}{\nu_2} - \delta_2 \sigma \, ,
\end{eqnarray*}
is independent of $\tau$.
This implies that for all $s=\Psi(\tau)$, we have $\Psi'(\tau) = \delta_2$ and
\begin{eqnarray*}
f(s) = \Phi'(s) g(\Phi(s)) = \frac{g(\Phi(s))}{\Psi'(\tau)} = \frac{z(s)}{\delta_2} = L(z(s))\, ,
\end{eqnarray*}
so that by right-continuity, we have $f(t) = \lim_{s \to t^+} f(s) = L(z(t))$.

\emph{(iii)}  Setting $B(t)=\mathbbm{1}_{\{E(t)>0 \} \cup \{ z(t)>z_\delta\}}$, steps \emph{(i)} and  \emph{(ii)} above show that
\begin{eqnarray*}
f(t) = (1-B(t)) K(z(t)) + B(t)L(z(t)) \, ,
\end{eqnarray*}
where we have noticed that the case $z(t)=z_\delta$ is irrelevant as $L(z_\delta)=K(z_\delta)$.
Moreover, we have
\begin{eqnarray*}
\tf(t)
=
\tF'(t)
=
 \left(\lim_{s \downarrow t}\Phi'(s)-\nu_2 \right)/\lambda_2
\end{eqnarray*}
so that 
\begin{eqnarray*}
\tf(t)
=
\left\{
\begin{array}{ccc}
1 / \delta  & \mathrm{if}  & \Phi'(t)=1/\delta_2  \, ,  \\
K(z(t))  &  \mathrm{if}  &  \displaystyle \Phi'(t)= \nu_2 /(1 - \lambda_2 z(t)) \, .
\end{array}
\right.
\end{eqnarray*}
This means that steps \emph{(i)}, \emph{(ii)}, and \emph{(iii)} also show that
\begin{eqnarray*}
\tf(t) = (1-B(t)) K(z(t)) + B(t)/\delta \, .
\end{eqnarray*}
Finally, we have
\begin{eqnarray*}
E(t) 
=
F(t) - \tF(t) 
=
\int_0^t (f(s)-\tf(s)) \, \mdd s 
=
\int_0^t B(s)\big(L(z(s))-1/\delta \big) \, \mdd s  \, ,
\end{eqnarray*}
which shows that $f$ solves  \eqref{eq:deltaBuffDef} with $\delta$-buffered rate $\tf$ and excess function $E$ with $E(0)=G_0(0)$.
This concludes the proof.
\end{proof}


\subsection{Existence and uniqueness of buffered solutions}\label{sec:buffSol}
In this section, following the same arguments as for the nonbuffered dynamics,
we leverage the fixed-point problem \eqref{eq:fixedPoint} to justify that there is a 
unique solution to the $\delta$-buffered dPMF problem \eqref{all:bufferPDE}.

\begin{proposition}\label{prop:contract2}
Given generic initial conditions $(q_0,g_0)$ in $\mathcal{M}(\mathbbm{R}^+) \times \mathcal{M}(\mathbbm{R}^-)$, the map $\mathcal{G}_\delta : \mId \to \mId$ specified by
\begin{eqnarray}\label{eq:mF}
\Psi \mapsto \mathcal{G}_\delta[\Psi] = \left\{ \sigma \mapsto \delta_2 \sigma +  \left[ \sup_{0 \leq \tau \leq \sigma} \left( \frac{\tau-\lambda_2 G[\Psi](\tau)}{\nu_2} - \delta_2 \tau \right) \right]_+ \right\}
\end{eqnarray}
is a contraction on $\mId([0,\sigma])$ for small enough $\sigma>0$.
\end{proposition}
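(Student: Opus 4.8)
The plan is to mirror the contraction argument already carried out for the unbuffered operator $\mathcal{G}$ in Proposition~\ref{prop:contract}, exploiting the fact that the only structural difference between $\mathcal{G}$ and $\mathcal{G}_\delta$ is the replacement of $\tfrac{1}{\nu_2}\big[\sup_{0\leq\tau\leq\sigma}(\tau-\lambda_2 G[\Psi](\tau))\big]_+$ by $\delta_2\sigma+\big[\sup_{0\leq\tau\leq\sigma}(\tfrac{\tau-\lambda_2 G[\Psi](\tau)}{\nu_2}-\delta_2\tau)\big]_+$. First I would check that $\mathcal{G}_\delta$ maps $\mId$ into itself: since $G[\Psi]$ is nondecreasing and $C_1$ for any $\Psi\in\mId$ (Proposition~\ref{prop:solG}), the function $\sigma\mapsto\tfrac{\sigma-\lambda_2 G[\Psi](\sigma)}{\nu_2}-\delta_2\sigma$ has generalized derivative $\tfrac{1-\lambda_2 g(\sigma)}{\nu_2}-\delta_2$, which is bounded above by $\tfrac{1}{\nu_2}-\delta_2$, so its running supremum has difference quotients in $[0,\tfrac{1}{\nu_2}-\delta_2]$, and adding back the term $\delta_2\sigma$ yields difference quotients in $[\delta_2,\tfrac{1}{\nu_2}]$, which is exactly the defining bound of $\mId$. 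The same localization argument as in step $(i)$ of Proposition~\ref{prop:contract} shows that for $0<\sigma<\nu_2\epsilon\leq\Psi^{-1}(\epsilon)$ the renewal equation \eqref{eq:Grenew} loses its renewal character, so that $\mathcal{G}_\delta$ restricts to a self-map of $\mId([0,\sigma])$.

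Next I would estimate $|\mathcal{G}_\delta[\Psi_a](\sigma)-\mathcal{G}_\delta[\Psi_b](\sigma)|$. The term $\delta_2\sigma$ is common to both and cancels, so
\begin{eqnarray*}
\big|\mathcal{G}_\delta[\Psi_a](\sigma)-\mathcal{G}_\delta[\Psi_b](\sigma)\big|
&\leq&
\left|\sup_{0\leq\tau\leq\sigma}\Big(\tfrac{\tau-\lambda_2 G_a(\tau)}{\nu_2}-\delta_2\tau\Big)-\sup_{0\leq\tau\leq\sigma}\Big(\tfrac{\tau-\lambda_2 G_b(\tau)}{\nu_2}-\delta_2\tau\Big)\right| \\
&\leq&
\tfrac{\lambda_2}{\nu_2}\sup_{0\leq\tau\leq\sigma}\big|G_a(\tau)-G_b(\tau)\big|,
\end{eqnarray*}
using that $|[x]_+-[y]_+|\leq|x-y|$, that $\sup$ is $1$-Lipschitz, and that the tilting term $-\delta_2\tau$ again cancels inside. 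This is precisely the bound obtained in the "Conclusion" step of the proof of Proposition~\ref{prop:contract}. The crucial point is that the bound $\sup_{0\leq\tau\leq\sigma}|G_a(\tau)-G_b(\tau)|\leq K^\star(\sigma)\|\Psi_a-\Psi_b\|_{0,\sigma}$ with $K^\star(\sigma)\to 0$ as $\sigma\downarrow 0$ is established in that proof purely in terms of properties of the first-passage cumulative distributions $H(\sigma,\tau,x)$ and of the smooth reset kernel $F_{0,\epsilon}$ — properties (Lemma~\ref{lem:Hprop}, Lemma~\ref{lem:FP}, Lemma~\ref{lem:HLipsch}) that hold uniformly over $\mIz\supsetneq\mId$ and make no use of whether the fixed-point operator has the tilted form. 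Hence that entire chain of integral estimates applies verbatim with $\Psi_a,\Psi_b\in\mId$.

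Combining, for $0\leq\sigma\leq\sigma_2$ (the same threshold as in Proposition~\ref{prop:contract}) we get $|\mathcal{G}_\delta[\Psi_a](\sigma)-\mathcal{G}_\delta[\Psi_b](\sigma)|\leq\tfrac{\lambda_2}{\nu_2}K^\star(\sigma)\|\Psi_a-\Psi_b\|_{0,\sigma}$, and choosing $\sigma$ small enough that $\tfrac{\lambda_2}{\nu_2}K^\star(\sigma)\leq\tfrac12$ yields $\|\mathcal{G}_\delta[\Psi_a]-\mathcal{G}_\delta[\Psi_b]\|_{0,\sigma}\leq\tfrac12\|\Psi_a-\Psi_b\|_{0,\sigma}$, i.e.\ $\mathcal{G}_\delta$ is a contraction on $\mId([0,\sigma])$. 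I do not anticipate a genuine obstacle here; the main thing to be careful about is verifying that the self-map property lands in $\mId$ rather than merely $\mIz$ — i.e.\ checking the \emph{lower} bound $w_\Psi\geq\delta_2$, which comes from the $\delta_2\sigma$ offset together with the monotonicity of the running supremum — and noting that the localization step now uses $\nu_2\epsilon\leq\Psi^{-1}(\epsilon)$ for $\Psi\in\mId$, which holds since $w_\Psi\leq 1/\nu_2$ still forces $\Psi(\sigma)\leq\sigma/\nu_2$. Otherwise the proof is a direct transcription of Proposition~\ref{prop:contract} with the trivial algebraic observation that the common tilt term $\delta_2\sigma$ (and $-\delta_2\tau$ inside the supremum) drops out of the difference.
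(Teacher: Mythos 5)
Your proposal is correct and follows the same route the paper takes: the paper's proof of Proposition~\ref{prop:contract2} is a single sentence invoking the argument of Proposition~\ref{prop:contract} verbatim, with the sole proviso that $\mathcal{G}_\delta$ maps $\mId\to\mId$ rather than $\mIz\to\mIz$. Your spelling out of the self-map verification (in particular the lower bound $w_\Psi\geq\delta_2$ coming from the $\delta_2\sigma$ offset) and of the cancellation of the tilt terms in the contraction estimate is simply a more explicit rendering of what the paper leaves implicit.
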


\begin{proof}
The same exact arguments as for the proof of Proposition~\ref{prop:contract2} apply, except for
the map $\mathcal{G}_\delta$ being defined $\mId \to \mId$.
\end{proof}

\begin{theorem}\label{th:deltaGlobSol}
Given generic initial conditions $(q_0,g_0)$ in $\mathcal{M}(\mathbbm{R}^+) \times \mathcal{M}(\mathbbm{R}^-)$, for all buffer parameter $\delta>0$, there is a unique global solution $\Psi$ to the dPMF fixed-point problem \eqref{eq:fixedPoint} in $\mId=\mId([0,\infty))$.
Moreover, this solution specifies the unique solution to the $\delta$-buffered dPMF problem \eqref{all:bufferPDE} as 
 $\mathbbm{R}^+ \times \mathbbm{R}^+ \ni (t,x) \mapsto p(x,t)=q(\Phi(t),x)$, where $\Phi=\Psi^{-1}$ and where
$(\sigma,x) \mapsto q(\sigma,x)$ is unique solution to the time-changed problem \eqref{all:TCPDE} for $\Psi$.
\end{theorem}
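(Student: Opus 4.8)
The plan is to mirror the proof of Theorem~\ref{th:globSol}, replacing the operator $\mathcal{G}$ by its buffered counterpart $\mathcal{G}_\delta$ from Proposition~\ref{prop:contract2}. First I would invoke Proposition~\ref{prop:contract2}: for generic initial conditions the restriction of $\mathcal{G}_\delta$ to $\mId([0,\sigma])$, equipped with the uniform norm, is a contraction of this complete metric space into itself once $\sigma>0$ is small enough. Banach's fixed-point theorem then produces a unique local solution $\Psi\in\mId([0,\sigma])$ of $\Psi=\mathcal{G}_\delta[\Psi]$, i.e.\ of the fixed-point equation~\eqref{eq:fixedPoint} on $[0,\sigma]$.

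Next I would obtain a global solution by continuation. Restarting the fixed-point problem at the changed time $\sigma$ from the shifted data $q_0=q(\sigma,\cdot)$ and $g_0=\{g(\tau)\}_{\tau\le\sigma}$ (with $q=q[\Psi]$ the density furnished by Proposition~\ref{prop:solG}), the only thing to verify is that these shifted data are again generic initial conditions in the sense of Definition~\ref{def:TCinit}. The normalization condition~\eqref{eq:TCinitc} is inherited from conservation of probability; the integrability condition~\eqref{eq:TCinita} holds because $g$ is bounded on compacts; and the boundedness condition~\eqref{eq:TCinitb} reduces, via the identity $\partial_y q(\sigma,0)/2=g(\sigma)$ coming from conservation of probability, to the a priori bound on $g$ supplied by Proposition~\ref{prop:boundedness} --- a bound that is uniform over $\Psi\in\mIz\supset\mId$ and independent of $P_\epsilon$ on every compact interval. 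Hence continuation is available for all $\sigma>0$, yielding a unique $\Psi\in\mId=\mId([0,\infty))$.

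The third step, checking that the resulting dynamics is globally defined, is immediate in the buffered setting, in contrast with Theorem~\ref{th:globSol}: every $\Psi\in\mId$ has difference quotients bounded below by $\delta_2>0$, so $\Psi(\sigma)\ge\delta_2\sigma\to\infty$ and $\Phi=\Psi^{-1}:\mathbbm{R}^+\to\mathbbm{R}^+$ is a genuine time change. Finally, for the equivalence with the PDE problem~\eqref{all:bufferPDE} I would use Proposition~\ref{prop:PsiToPhi}: its ``sufficient condition'' direction shows that $(t,x)\mapsto p(t,x)=q(\Phi(t),x)$ solves~\eqref{all:bufferPDE}, while for uniqueness, given any solution $\widetilde p$ of~\eqref{all:bufferPDE}, Corollary~\ref{corr:fpx} together with the buffer mechanism gives $0\le\tf\le1/\delta$, so $\Phi(t)=\nu_2 t+\lambda_2\int_0^t\tf(s)\,\mdd s\in\mathcal{T}_\delta$ and $\Psi=\Phi^{-1}\in\mId$; reversing the change of variables as in the proof of Proposition~\ref{prop:PsiToPhi} shows $q=\widetilde p\circ\Psi$ solves~\eqref{all:TCPDE}, and the ``necessary condition'' direction of Proposition~\ref{prop:PsiToPhi} forces $\Psi$ to satisfy~\eqref{eq:fixedPoint}, so by the uniqueness already established $\widetilde p=p$.

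I expect the main obstacle to be the continuation step: one must ensure that the shifted data stay in the class of generic initial conditions at every stage, which is precisely where the uniform-in-$\Psi$, $P_\epsilon$-independent a priori bound of Proposition~\ref{prop:boundedness} is essential. The remaining steps are a routine transcription of the corresponding arguments for the unbuffered dynamics, with the added simplification that the global extent of $\Psi$ requires no separate bound on the cumulative rate.
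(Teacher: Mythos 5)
Your proof is correct and follows essentially the same route as the paper's: invoke Proposition~\ref{prop:contract2} and Banach's fixed-point theorem for the local solution, continue globally using the $\Psi$-uniform a~priori bound on $g$ from Proposition~\ref{prop:boundedness} to keep the shifted data in the generic class, and tie the fixed point to the PDE via Proposition~\ref{prop:PsiToPhi}. The paper's own proof is terse (``the same exact Banach fixed-point arguments as for Theorem~\ref{th:globSol} apply'') and emphasizes that the uniqueness of the PDE solution follows because the $\delta$-buffered time change $\Phi(t)=\nu_2 t+\lambda_2\tF(t)$ is always continuous and increasing, so the Prop.~\ref{prop:PhiToPsi}-type change of variables is unconditionally valid; you make the same point, only more explicitly, when you route a putative second solution $\widetilde p$ through Corollary~\ref{corr:fpx} to get $\Psi\in\mId$ and then back through Proposition~\ref{prop:PsiToPhi}. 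One small labeling slip: the direction ``$\widetilde p$ solving the PDE implies $q=\widetilde p\circ\Psi$ solves \eqref{all:TCPDE}'' is a Prop.~\ref{prop:PhiToPsi}-style change of variables, not the proof of Prop.~\ref{prop:PsiToPhi}, though the content you use (bi-Lipschitz $\Phi$ so the substitution goes through) is right. Your additional observation that $\Psi(\sigma)\ge\delta_2\sigma\to\infty$ makes the global-definedness step trivial in the buffered case is accurate and complementary to the paper's remark.
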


\begin{proof} 

The same exact Banach fixed-point arguments as for the proof of Theorem~\ref{th:globSol} apply
to the contraction map $\mathcal{G}_\delta:\mId \to \mId$ from Proposition~\ref{prop:contract2}.
These arguments show that  the solution $\Psi \in \mId$ to the fixed-point problem \eqref{eq:fixedPoint} specifies
a solution to the $\delta$-buffered dPMF problem \eqref{all:bufferPDE}.

By contrast with the proof of Theorem~\ref{th:globSol}, however, the $\delta$-buffered time change 
$\Phi(t)=\nu_2 t + \lambda_2 \tF (t)$ is always a continuous, increasing function so that
the change of variable $\sigma=\Phi(t) \Leftrightarrow t = \Psi(\sigma)$ is always
well defined. 
This implies that Proposition~\ref{prop:PhiToPsi} holds for $\delta$-buffered dPMF dynamics independent of the occurrence of (regularized) blowups,
which establishes the uniqueness of the solution to the $\delta$-buffered dPMF problem \eqref{all:bufferPDE}.
\end{proof}

We claim that the $\delta$-buffered solutions from Theorem~\ref{th:deltaGlobSol} represent physical
solutions as they correspond to a regularized mean-field particle system that obeys the same conservation
principle as the original dPMF dynamics.
That conservation principle states that all spiking particles must impact the evolution of non-spiking particles.
This amounts to a form of interaction-rate conservation, which is enacted by the buffer mechanism.


\subsection{Recovering physical candidate solutions when $\delta \downarrow 0$}\label{sec:recov}
In this section, we show that one can define our candidate physical dPMF dynamics by considering $\delta$-buffered dPMF solutions
in the limit $\delta \downarrow 0$. 
To do so, we will proceed by joint subsequence extractions of converging 
cumulative rate functions $G_n \to G$ and converging inverse time changes $\Psi_n \to \Psi$.
We then show that all the thus obtained pairs $(G,\Psi)$ must satisfy the same fixed-point problem 
as the one defining our candidate physical solutions in Proposition~\ref{prop:physFP}.
This result  will validate the physicality of the latter solutions.

To set up the extraction framework,
let us consider the set of  $\delta$-buffered dPMF dynamics, $\delta>0$, with the same refractory delay 
distribution $P_\epsilon$ in $\mathcal{M}([\epsilon,\infty))$  and  for the same generic initial conditions $(q_0,g_0)$ in $\mathcal{M}(\mathbbm{R}^+) \times \mathcal{M}((-\infty,0))$.
Each of these dynamics is equivalently specified by a unique cumulative function $G_\delta \in C_1(\mathbbm{R}^+)$ or a unique inverse time change 
$\Psi_\delta \in \mId$ such that $G_\delta=G[\Psi_\delta] \Leftrightarrow \Psi_\delta =\Psi[G_\delta]$.
By Proposition~\ref{prop:boundedness}, for all $\Psi \in \mIz= \cup_{\delta \geq 0} \mId $, the functions $g = \partial_\sigma G = \partial_\sigma G[\Psi]$ 
are  bounded over $[0,\sigma]$, $\sigma>0$, by a constant that depends neither on $\Psi_\delta \in \mIz$ 
nor on $P_\epsilon$ in $\mathcal{M}([\epsilon,\infty))$.
Therefore,  by Arz\`ela-Ascoli theorem, for all $\sigma>0$, the set
\begin{eqnarray*}
\{ G[\Psi] \in  C_1([0,\sigma]) \,  \vert \,  \Psi \in \mIz = \cup_{\delta \geq 0} \mId \} \, , 
\end{eqnarray*}
is relatively compact.
This allows one to extract a compactly converging subsequence $G_n=G_{\delta_n}$, $n \in \mathbbm{N}$ with $\delta_n \downarrow 0$.
Let us denote the necessarily continuous, nondecreasing limit function as $G=\lim_{n \to \infty} G_n$.

The inverse time changes $\Psi_n=\Psi_{\delta_n}\in \mIn \subset \mIz$ associated to $G_n$ are given via the fixed-point equation \eqref{eq:physFP} as
\begin{eqnarray*}
\Psi_n(\sigma) = \Psi[G_n](\sigma) = \delta_n \sigma + \left[ \sup_{0 \leq \tau \leq \sigma} \left( \frac{\sigma - \lambda_2 G_n(\sigma)}{\nu_2} - \delta_ n  \sigma \right) \right]_+ \, .
\end{eqnarray*}
Given any real functions $G_a$ and $G_b$, we have 
\begin{align*}
& 
\hspace{-10pt} \big \vert \Psi[G_a](\sigma) -  \Psi[G_b](\sigma)  \big \vert  \\
&\leq
\sigma \vert \delta_a-\delta_b \vert + 
\left \vert  \left[ \sup_{0 \leq \tau \leq \sigma} \left( \frac{\tau-\lambda_2 G_a(\tau)}{\nu_2} - \delta_a \tau \right) \right]_+ - \left[ \sup_{0 \leq \tau \leq \sigma} \left( \frac{\tau-\lambda_2 G_b(\tau)}{\nu_2} - \delta_b \tau \right)\right]_+ \right \vert \, , \\
&\leq
\sigma \vert \delta_a-\delta_b \vert + 
\left \vert \sup_{0 \leq \tau \leq \sigma} \left( \frac{\tau-\lambda_2 G_a(\tau)}{\nu_2} - \delta_a \tau \right) - \sup_{0 \leq \tau \leq \sigma} \left( \frac{\tau-\lambda_2 G_b(\tau)}{\nu_2} - \delta_b \tau \right) \right \vert \, , \\
&\leq
\sigma \vert \delta_a-\delta_b \vert + 
\sup_{0 \leq \tau \leq \sigma} \left \vert \frac{\lambda_2 (G_b(\tau) - G_a(\tau))}{\nu_2} + (\delta_b - \delta_a) \tau  \right \vert \, , \\
&\leq
2 \sigma \vert \delta_a-\delta_b \vert  
+\frac{\lambda_2}{\nu_2} \sup_{0 \leq \tau \leq \sigma}\left \vert   G_a(\tau) -  G_b(\tau)  \right \vert \, , \\
& =
2 \sigma \vert \delta_a-\delta_b \vert  + \frac{\lambda_2}{\nu_2}  \Vert G_a - G_b \Vert_{0,\sigma} \, .
\end{align*}
This shows that the sequence $\Psi_n$ also compactly converges toward a continuous, nondecreasing function $\Psi$
satisfying
\begin{eqnarray} \label{eq:PsiFromG}
\Psi(\sigma) =  \frac{1}{\nu_2} \left[ \sup_{0 \leq \tau \leq \sigma} \big(\sigma - \lambda_2 G(\sigma)  \big) \right]_+ \quad \mathrm{for \: all} \quad \sigma \geq 0 \, .
\end{eqnarray}
Thus the limit cumulative function $G$ and the limit inverse time change $\Psi$ obtained when $\delta_n \downarrow 0 $ satisfy the 
same relation as in the fixed-point equation \eqref{eq:physFP} defining our physical dPMF candidate solutions.

To show that these limit functions are actually identical to the physical dPMF candidate solutions,
it remains to show that $G$ satisfy the renewal-type equation \eqref{eq:Grenew} for the inverse time change $\Psi$, i.e., $G=G[\Psi]$
or $ \lim_{n\to \infty} G[\Psi_n] = G[\lim_{n \to \infty} \Psi_n]$.

\begin{proposition}\label{prop:GHEta}
Consider a compactly converging sequence of cumulative rate functions $G_n \to G$ for $\delta_n \downarrow 0$ and its associated compactly converging sequence of inverse time change $\Psi_n=\Psi[G_n] \to \Psi[G]=\Psi$.
Then, the limit functions $G$ and $\Psi$ satisfy  \eqref{eq:Grenew}, i.e., 
\begin{align}\label{eq:GrenewLim}
G(\sigma)
&=
 \int_0^\infty H(\sigma,0,x) \, q_0(x) \, \mdd x + \int_\tau^\sigma H(\sigma,\tau,\Lambda) \, \mdd G_\epsilon(\tau)  \, , \quad G(0)=G_0(0) \, ,
\end{align}
where the cumulative first-passage time kernel $H=H[\Psi]$ and the cumulative reset rate $G_\epsilon=G_\epsilon[\Psi]$ 
are defined in terms of  $\Psi$ as in Proposition~\ref{prop:solG}.
\end{proposition}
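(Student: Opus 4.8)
The plan is to pass to the limit $n\to\infty$ in the renewal-type equation of Proposition~\ref{prop:solG} satisfied by each buffered cumulative rate $G_n=G[\Psi_n]$, namely
\begin{equation*}
G_n(\sigma) = \int_0^\infty H_n(\sigma,0,x)\,q_0(x)\,\mdd x + \int_0^\sigma H_n(\sigma,\tau,\Lambda)\,\mdd G_{\epsilon,n}(\tau)\,, \qquad G_n(0)=G_0(0)\,,
\end{equation*}
where $H_n=H[\Psi_n]$ and $G_{\epsilon,n}=G_\epsilon[\Psi_n]$ are the first-passage kernel and the cumulative reset rate built from $\Psi_n$. The boundary condition passes to the limit for free, since $G_n(0)=G_0(0)$ for all $n$, whence $G(0)=G_0(0)$. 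It then suffices to show that each of the two integral terms converges, for every fixed $\sigma$, to its counterpart built from the limits $\Psi=\lim_n\Psi_n$ and $G=\lim_n G_n$; I would obtain this from the continuity, for uniform convergence on compact sets, of the maps $\Psi\mapsto H[\Psi]$ and $(\Psi,G)\mapsto G_\epsilon[\Psi,G]$, after which the passage to the limit in the Stieltjes integral is routine.

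First I would show that $\Psi\mapsto H[\Psi]$ is continuous. The kernel $H[\Psi]$ depends on $\Psi$ only through the integrated drift $M(\sigma)=\int_0^\sigma\mu(\xi)\,\mdd\xi=(\nu_1-\lambda_1\nu_2/\lambda_2)\,\Psi(\sigma)+(\lambda_1/\lambda_2)\,\sigma$, i.e.\ through the moving first-passage boundary $\sigma\mapsto M(\sigma)-M(\tau)$ in \eqref{eq:firstPass}; as $\Psi_n\to\Psi$ uniformly on compacts, so does $M_n\to M$. The modulus-of-continuity bound \eqref{eq:lemHprop1} of Lemma~\ref{lem:Hprop}, combined with the uniform density bound \eqref{eq:upperDensity} (which makes $x\mapsto H_n(\sigma,\tau,x)$ uniformly Lipschitz once $\sigma-\tau$ is bounded away from $0$), gives $H_n(\sigma,\tau,x)\to H(\sigma,\tau,x)$; adding the uniform bound on $\partial_\tau H_n$ from Lemma~\ref{lem:HLipsch} and the Arz\'ela--Ascoli theorem upgrades this to uniform convergence in $(\tau,x)$ on compacts, and shows that the $H_n(\sigma,\cdot,\Lambda)$ have uniformly bounded variation on $[0,\sigma]$.

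Second I would show that $(\Psi,G)\mapsto G_\epsilon[\Psi,G]$ is continuous. After the substitution $\xi=\sigma-\eta$, one has $G_{\epsilon,n}(\sigma)=\int_{(-\infty,\sigma]}G_n(\xi)\,\mdd_{\xi}\!\big[-P_\epsilon(\Psi_n(\sigma)-\Psi_n(\xi))\big]$, where $G_n$ is extended by the fixed datum $G_0$ on $(-\infty,0)$, where on the negative half-line all the $\Psi_n$ coincide with the fixed $\Psi_0$, and where each integrator is a positive measure of total mass one. Since $P_\epsilon$ has a bounded continuous density and $\{\Psi_n\}\cup\{\Psi\}$ is uniformly $(1/\nu_2)$-Lipschitz and converges uniformly on compacts, the cumulative distributions $\xi\mapsto P_\epsilon(\Psi_n(\sigma)-\Psi_n(\xi))$ converge to $\xi\mapsto P_\epsilon(\Psi(\sigma)-\Psi(\xi))$ uniformly in $\xi\le\sigma$; after integration by parts, the only nontrivial contribution is the tail $\xi\to-\infty$, which one controls using the linear growth $|G_0(\xi)|=O(|\xi|)$ and the finite second moment of $P_\epsilon$ through a Cantelli-type estimate, exactly as in the proof of Proposition~\ref{prop:inactProp}. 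Together with $\Vert G_n-G\Vert_{[0,\sigma]}\to0$ and the a priori bound of Proposition~\ref{prop:boundedness}, this gives $G_{\epsilon,n}\to G_\epsilon$ uniformly on compacts, with the $G_{\epsilon,n}$ of uniformly bounded variation.

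With these facts in hand, the first integral term converges by dominated convergence, since $0\le H_n\le1$ and $q_0\in L^1(\mathbbm{R}^+)$. For the second term I would use the splitting
\begin{equation*}
\int_0^\sigma H_n\,\mdd G_{\epsilon,n} - \int_0^\sigma H\,\mdd G_\epsilon = \int_0^\sigma(H_n-H)\,\mdd G_{\epsilon,n} + \int_0^\sigma H\,\mdd(G_{\epsilon,n}-G_\epsilon)\,;
\end{equation*}
the first summand is bounded by $\Vert H_n-H\Vert_\infty$ times the uniform total-variation bound on $G_{\epsilon,n}$, and the second, after integration by parts, by $\Vert G_{\epsilon,n}-G_\epsilon\Vert_\infty$ times the total variation of $H(\sigma,\cdot,\Lambda)$ (finite by Lemma~\ref{lem:HLipsch}); both tend to $0$. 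Letting $n\to\infty$ in the renewal equation for $G_n$ and using $G_n(\sigma)\to G(\sigma)$ then produces \eqref{eq:GrenewLim}. The step I expect to be the main obstacle is the second one: the uniform continuity of the delayed reset rate $G_\epsilon$ in $\Psi$ is delicate precisely because the lower Lipschitz constant $\delta_{2,n}$ of $\Psi_n$ degenerates to $0$ as $\delta_n\downarrow0$---so the limit $\Psi$ may develop genuine flat sections, the very signature of blowups---and because the unbounded far-tail contributions force one to combine the uniform continuity of $P_\epsilon$ with the a priori estimate of Proposition~\ref{prop:boundedness} and a Cantelli-type tail bound.
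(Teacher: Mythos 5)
Your proposal is correct and follows essentially the same route as the paper: pass to the limit term by term in the renewal equation of Proposition~\ref{prop:solG}, using Lemma~\ref{lem:Hconv} for the convergence of the first-passage kernels $H_n\to H$, Lemma~\ref{lem:HLipsch} to justify the integration by parts on the reset term, and convergence of the time-changed delay distributions $Q_{\sigma,n}\to Q_\sigma$ to obtain $G_{\epsilon,n}\to G_\epsilon$. You are slightly more careful than the paper on one point---the far-tail control of $\int G(\sigma-\eta)\,\mdd\big[Q_{\sigma,n}(\eta)-Q_\sigma(\eta)\big]$ via a Cantelli-type estimate in the spirit of Proposition~\ref{prop:inactProp}, which the paper dispatches with a terse appeal to weak convergence without addressing the linear growth of $G$ on the negative half-line---but the underlying argument and its decomposition are the same.
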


To prove the above Proposition, we will use the two following technical lemmas,
whose proves rely on Lemma~\ref{lem:Hprop} and Lemma~\ref{lem:FP}  and are given in Section~\ref{sec:buffextract}.

\begin{lemma}\label{lem:Hconv}
Assume $\Psi_n\rightarrow\Psi$ compactly on $\mathbbm{R}^+$. 
Then given $x>0$, the functions $(\sigma,\tau) \mapsto H_n(\sigma,\tau,x)=H[\Psi_n](\sigma,\tau,x)$ converge uniformly toward $H[\Psi](\sigma,\tau,x)=H(\sigma,\tau,x)$ on the subsets of the form $A_\xi=\{(\sigma,\tau)\;|\; \xi \geq \sigma\geq\tau\geq0\}$, $\xi>0$.
\end{lemma}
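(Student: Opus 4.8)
The plan is to reduce the assertion to a uniform modulus-of-continuity estimate for the space variable of the first-passage CDF over the compact triangle $A_\xi$, and then to prove that estimate by treating separately a neighborhood of the diagonal $\{\sigma=\tau\}$ and the region where $\sigma-\tau$ is bounded below. First I would fix $\xi>0$, $x>0$ and set $\beta_n:=\Vert\Psi_n-\Psi\Vert_{0,\xi}$, which tends to $0$ by compact convergence. Applying \eqref{eq:lemHprop1} from Lemma~\ref{lem:Hprop} with $(\Psi_a,\Psi_b)=(\Psi,\Psi_n)$ and $\beta=\beta_n$, and using that $x'\mapsto H(\sigma,\tau,x')$ is nonincreasing, yields for every $(\sigma,\tau)\in A_\xi$
\[
\big|H_n(\sigma,\tau,x)-H(\sigma,\tau,x)\big|\;\le\;H(\sigma,\tau,x-C\beta_n)-H(\sigma,\tau,x+C\beta_n)\;=:\;\Delta_n(\sigma,\tau),
\]
with $C$ the constant of Lemma~\ref{lem:Hprop}. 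Hence it suffices to show $\sup_{(\sigma,\tau)\in A_\xi}\Delta_n(\sigma,\tau)\to0$.

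Near the diagonal I would invoke the uniform first-passage density bound \eqref{eq:upperDensity} of Proposition~\ref{prop:boundh}, valid with the universal drift bounds $\mu_m,\mu_M$ and therefore uniformly over $\Psi\in\mIz$. Taking $n$ large enough that $C\beta_n\le x/2$, so that $x\pm C\beta_n\in[x/2,2x]$, and $\rho>0$ small enough that $\mu_M\rho\le x/4$ (and $\rho<1/(8\mu_M^2)$), the bound \eqref{eq:upperDensity} gives $h(\tau+u,\tau,x')\le\big(2x\,u^{-3/2}+(\mu_M-\mu_m)u^{-1/2}\big)e^{-x^2/(32u)}$ for all $x'\in[x/2,2x]$ and $u\in(0,\rho]$; integrating in time then yields $H(\sigma,\tau,x')\le\int_0^\rho\big(2x\,u^{-3/2}+(\mu_M-\mu_m)u^{-1/2}\big)e^{-x^2/(32u)}\,\mdd u=:\gamma(\rho)$, a quantity independent of $(\sigma,\tau)$ and of $x'$ with $\gamma(\rho)\to0$ as $\rho\downarrow0$ (the Gaussian factor absorbs the $u^{-3/2}$ singularity). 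So, given $\eta>0$, one fixes $\rho$ with $\gamma(\rho)<\eta/2$, and then $\Delta_n(\sigma,\tau)\le H(\sigma,\tau,x-C\beta_n)\le\gamma(\rho)<\eta/2$ whenever $\sigma-\tau<\rho$.

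On the complementary region $\{\rho\le\sigma-\tau\le\xi\}\cap A_\xi$ I would use the Lipschitz estimates of Lemma~\ref{lem:FP}. When $\sigma-\tau<1/(8\mu_M^2)$, \eqref{eq:lemFP1} gives $\Delta_n(\sigma,\tau)\le 8C\beta_n/\sqrt\rho$. When $\sigma-\tau\ge 1/(8\mu_M^2)$ (relevant only for large $\xi$), one reduces to the short-interval case by the Markov property of the killed Wiener process of \eqref{eq:firstPass}: conditioning on the (sub-probability) position surviving up to the intermediate time $\tau+1/(16\mu_M^2)$ and applying \eqref{eq:lemFP1} on the short initial leg gives $\Delta_n(\sigma,\tau)\le K(\rho,x,\xi)\,\beta_n$ for a finite constant $K$, uniformly on the region. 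Choosing $n$ so large that $8C\beta_n/\sqrt\rho<\eta/2$ and $K(\rho,x,\xi)\beta_n<\eta/2$, we obtain $\sup_{A_\xi}\Delta_n<\eta$; since $\eta>0$ was arbitrary, $H_n\to H$ uniformly on $A_\xi$.

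The main obstacle is precisely the regime where $\sigma-\tau$ is large: Lemma~\ref{lem:FP}\eqref{eq:lemFP2} requires the starting point $x$ to exceed the drift line $\int_\tau^\sigma\mu(u)\,\mdd u$, which fails for a fixed $x$ once $\sigma-\tau$ is large, so one cannot extract a $\sigma$-independent Lipschitz constant directly from it. The clean remedy is the Markov-property reduction sketched above; equivalently, one can establish a bound at the point $x$ on the density of the running supremum $\max_{\tau\le s\le\sigma}\big(\int_\tau^s\mu(u)\,\mdd u-W_s\big)$ that is uniform over the time horizon $\sigma-\tau\in(0,\xi]$ and over admissible drifts — obtainable either from the single-layer-potential representation used in Proposition~\ref{prop:solG} or from a reflection argument — since $\Prob{-Z\ge x}=H(\sigma,\tau,x)$ with $-Z$ that supremum. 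Once this uniform Lipschitz bound near $x$ is in place, the remaining steps are routine bookkeeping.
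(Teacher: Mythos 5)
Your proposal is correct in its overall structure, and it shares the paper's reduction: apply Lemma~\ref{lem:Hprop}\eqref{eq:lemHprop1} to replace the comparison of $H[\Psi_n]$ and $H[\Psi]$ by a comparison of $x\mapsto H[\Psi](\sigma,\tau,x)$ at two nearby space points, then deduce a modulus of continuity uniform on $A_\xi$. After that the two arguments diverge. Near the diagonal, you bound $\Delta_n$ by $H(\sigma,\tau,x-C\beta_n)\le\gamma(\rho)$ using the density bound of Proposition~\ref{prop:boundh}; the paper instead invokes \eqref{eq:lemFP2} directly (which is applicable since $x$ dominates $(\sigma-\tau)^{1/3}+\int_\tau^\sigma\mu$ for $\sigma-\tau$ small), giving an $O(\beta_n\sqrt{\sigma-\tau})$ Lipschitz bound. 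Your version is a touch more robust (no size condition on $x$), and slightly less sharp (an $\eta/2$-$\rho$ argument rather than an explicit rate in $\beta_n$); both work. Away from the diagonal the difference is more substantial: you introduce a Markov-property reduction — splitting at an intermediate time and applying the short-interval Lipschitz bound \eqref{eq:lemFP1} on the first leg, then propagating through the semigroup — which is feasible but needs a total-variation-in-$x$ estimate on the sub-probability kernel that you do not carry out. The paper avoids this entirely by observing that the Gronwall inequality derived in the proof of Lemma~\ref{lem:FP} (via the Volterra equation \eqref{eq:master} and Lemma~\ref{ineqgrownwall}, before specializing $M_a$ to the large-$x$ regime) already provides a spatial Lipschitz bound for $H$ of the form
\[
\vert H(\sigma,\tau,x)-H(\sigma,\tau,y)\vert \le \vert x-y\vert\,\sqrt{2/\pi}\,\bigl(1/\sqrt{\sigma-\tau}+\sqrt{2\pi}\mu_M\bigr)\bigl(4\mu_M^2(\sigma-\tau)e^{2\mu_M^2(\sigma-\tau)}+1\bigr),
\]
valid for \emph{all} $0\le\tau\le\sigma$ and all $\Psi\in\mIz$, since $M_a\le\sqrt{2/\pi}$ always. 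On $\{C_1\le\sigma-\tau\le\xi\}$ this gives a uniform Lipschitz constant immediately and finishes the proof without any splitting. You in fact gesture at exactly this alternative ("obtainable... from the single-layer-potential representation used in Proposition~\ref{prop:solG}"); pursuing it rather than the Markov decomposition would collapse your away-from-diagonal case to a one-line citation and avoid the one step of the proposal that is left unfinished.
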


\begin{lemma}\label{lem:HLipsch}
Given $x,\sigma>0$, $\tau \mapsto  H(\sigma,\tau,x)$ is Lipchitz continuous on $[0,\xi]$, $\xi>0$ for a constant that is independent of $\Psi \in \mIz$.
\end{lemma}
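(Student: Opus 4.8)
Lemma~\ref{lem:HLipsch} — the statement to prove.

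The plan is to derive the Lipschitz estimate by combining the two technical lemmas already in hand, Lemma~\ref{lem:Hprop} and Lemma~\ref{lem:FP}, with the uniform first-passage density bound \eqref{eq:upperDensity}. I would first dispose of the trivial regimes. Since $\sigma_{\tau,x}>\tau$ and $x>0$, one has $H(\sigma,\tau,x)=0$ for every $\tau\ge\sigma$ (and $H(\sigma,\tau,x)=\int_\tau^\sigma h(s,\tau,x)\,\mdd s\to 0$ as $\tau\uparrow\sigma$), so it suffices to bound $\lvert H(\sigma,\tau,x)-H(\sigma,\tau',x)\rvert$ for $\tau,\tau'\in[0,\sigma]$. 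Moreover, writing $C$ for the constant of Lemma~\ref{lem:Hprop}, if $\lvert\tau-\tau'\rvert\ge x\nu_2/(2C)$ then $\lvert H(\sigma,\tau,x)-H(\sigma,\tau',x)\rvert\le 1\le \tfrac{2C}{x\nu_2}\lvert\tau-\tau'\rvert$; hence one may assume $C\lvert\tau-\tau'\rvert/\nu_2<x/2$ throughout.

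Next, taking without loss of generality $\tau<\tau'$, I would apply \eqref{eq:lemHprop2} with $\tau_a=\tau$, $\tau_b=\tau'$ for a single fixed $\Psi\in\mIz$, which bounds $\lvert H(\sigma,\tau,x)-H(\sigma,\tau',x)\rvert$ by the sum of a spatial increment of $z\mapsto H(\sigma,\tau',z)$ over a window of width $C\lvert\tau'-\tau\rvert/\nu_2$ and the term $\lVert h(\cdot,\tau,x)\rVert_{[\tau,\sigma]}\,\lvert\tau'-\tau\rvert$. The density factor is controlled exactly as in the proof of Proposition~\ref{prop:boundedness}: by \eqref{eq:upperDensity}, $h(s,\tau,x)$ is dominated by a function of $s-\tau$ alone whose supremum over $(0,\infty)$ is an explicit constant $C_1=C_1(x,\mu_m,\mu_M)$ (realized at the maximizers $s_1(x),s_2(x)$), so $\lVert h(\cdot,\tau,x)\rVert_{[\tau,\sigma]}\le C_1$ uniformly in $\tau\le\sigma$ and in $\Psi$. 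It then remains to bound the spatial increment $\lvert H(\sigma,\tau',x)-H(\sigma,\tau',y)\rvert$, with $\lvert x-y\rvert=C\lvert\tau'-\tau\rvert/\nu_2$ and $y\in[x/2,3x/2]$, by $C_2\lvert x-y\rvert$ uniformly in $\Psi$ and $\tau'\in[0,\sigma]$.

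For this spatial increment I would split into three subregimes in $\sigma-\tau'$, using that $0<\mu_m\le\mu(\cdot)\le\mu_M$ so that $\int_{\tau'}^\sigma\mu\le\mu_M(\sigma-\tau')$. There is an explicit $\sigma_0=\sigma_0(x,\mu_M)>0$ with $\sigma-\tau'\le\sigma_0\implies(\sigma-\tau')^{1/3}+\int_{\tau'}^\sigma\mu\le x/2\le x\wedge y$, whence \eqref{eq:lemFP2} gives $\lvert H(\sigma,\tau',x)-H(\sigma,\tau',y)\rvert\le 24\sqrt{\sigma_0}\,\lvert x-y\rvert$. For $\sigma_0<\sigma-\tau'<1/(8\mu_M^2)$, \eqref{eq:lemFP1} gives $\lvert H(\sigma,\tau',x)-H(\sigma,\tau',y)\rvert\le 4\lvert x-y\rvert/\sqrt{\sigma_0}$. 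The only remaining case, $\sigma-\tau'\ge\max(\sigma_0,1/(8\mu_M^2))$ — which is vacuous for small final times $\sigma$ — is where neither part of Lemma~\ref{lem:FP} applies, and is the expected main obstacle. Here I would exploit that $\sigma-\tau'$ is bounded below by the fixed constant $c_0:=1/(8\mu_M^2)$ and use smoothing of the killed semigroup over a macroscopic time: with $\tau''=\tau'+c_0/2<\sigma$, decompose via the Markov property at $\tau''$ as $H(\sigma,\tau',z)=H(\tau'',\tau',z)+\int_0^\infty H(\sigma,\tau'',w)\,\kappa(\tau'',w;\tau',z)\,\mdd w$, where $\kappa$ is the killed transition density appearing in the proof of Proposition~\ref{prop:solG}. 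The first term is Lipschitz in $z$ with constant $16\mu_M$ by \eqref{eq:lemFP1} (applied with time gap $c_0/2<1/(8\mu_M^2)$), while the second is Lipschitz in $z$ with constant $O(\mu_M)$ because $0\le H(\sigma,\tau'',\cdot)\le 1$ and $\int_0^\infty\lvert\partial_z\kappa(\tau'',w;\tau',z)\rvert\,\mdd w=O((c_0)^{-1/2})$ by standard Gaussian heat-kernel estimates depending only on $\lVert\mu\rVert_\infty\le\mu_M$, hence uniform in $\Psi$. Collecting the three subregimes produces $C_2=C_2(x,\mu_m,\mu_M,\sigma)$, and then $\lvert H(\sigma,\tau,x)-H(\sigma,\tau',x)\rvert\le(C_1+C\,C_2/\nu_2)\lvert\tau-\tau'\rvert$ for all $\tau,\tau'\in[0,\xi]$, with a constant independent of $\Psi\in\mIz$, as claimed. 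The load-bearing points are the uniformity in $\Psi$ of the density bound \eqref{eq:upperDensity} and of the heat-kernel estimates in the last subregime; the rest is bookkeeping.
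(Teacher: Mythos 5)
Your proof is correct in substance and starts exactly as the paper does: apply \eqref{eq:lemHprop2} to trade the time translation of $H$ for a spatial increment plus the first-passage density term, invoke Proposition~\ref{prop:boundh} to bound $\Vert h(\cdot,\tau,x)\Vert_{\tau,\sigma}$ uniformly in $\Psi$, and reduce to $|\tau-\tau'|\le \nu_2 x/(2C)$ so that the spatial window $[x-C|\tau-\tau'|/\nu_2,\,x+C|\tau-\tau'|/\nu_2]$ stays in $[x/2,3x/2]$. You also use \eqref{eq:lemFP2} for small time gaps, as the paper does. Where you diverge is the regime where neither half of Lemma~\ref{lem:FP} applies ($\sigma-\tau'$ large and $x$ below the threshold $(\sigma-\tau')^{1/3}+\int\mu$): you split the interval at $\tau''=\tau'+c_0/2$ via the Markov property, control the near contribution with \eqref{eq:lemFP1}, and control the far contribution through a gradient estimate $\int_0^\infty|\partial_z\kappa|\,\mdd w=O((\tau''-\tau')^{-1/2})$ for the killed, time-inhomogeneous transition kernel. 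That is a valid route, and the uniformity in $\Psi$ does follow from $\mu_m\le\mu\le\mu_M$, but it imports a heat-kernel derivative estimate that the paper nowhere states or proves and that would itself need a short parabolic-regularity argument. The paper sidesteps this entirely by reusing the \emph{pre-specialization} form of the Gr\"onwall bound obtained inside the proof of Lemma~\ref{lem:FP}, namely
\begin{equation*}
|H(\sigma,\tau,x)-H(\sigma,\tau,y)|\le |x-y|\,M\Big(\frac{1}{\sqrt{\sigma-\tau}}+\sqrt{2\pi}\,\mu_M\Big)\big(4\mu_M^2(\sigma-\tau)e^{2\mu_M^2(\sigma-\tau)}+1\big),
\end{equation*}
which holds with no restriction on $\sigma-\tau$; for $\sigma-\tau>C_1$ this gives the Lipschitz constant directly after bounding $1/\sqrt{\sigma-\tau}\le 1/\sqrt{C_1}$ and $\sigma-\tau\le\xi$. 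Your middle regime ($\sigma_0<\sigma-\tau'<1/(8\mu_M^2)$, handled by \eqref{eq:lemFP1}) is then superfluous under the paper's treatment but harmless. Net comparison: your version is sound but introduces an auxiliary Markov-decomposition and an unproved (though standard) kernel-gradient bound; the paper's version is more economical because the needed inequality is already sitting inside the proof of Lemma~\ref{lem:FP} before the $\sigma-\tau<1/(8\mu_M^2)$ restriction is imposed.
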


\begin{proof}[Proof of Proposition~\ref{prop:GHEta}]

Consider the refractory-delay distributions 
\begin{align*}
Q_{\sigma,n}(\eta) = P_\epsilon(\Psi_n(\sigma)-\Psi_n(\sigma-\eta)) 
\quad \mathrm{and} \quad
Q_\sigma(\eta) = P_\epsilon(\Psi(\sigma)-\Psi(\sigma-\eta)) \, ,
\end{align*}
and let us denote for conciseness
\begin{align*}
G_{\epsilon,n}(\sigma)= \int_0^\infty G_n(\sigma-\eta) \, \mdd Q_{\sigma,n}(\eta)  \, .
\end{align*}
By Proposition~\ref{prop:solG}, for all $n \in \mathbbm{N}$, we have 
\begin{align*}
G_n(\sigma)
&=
 \int_0^\infty H_n(\sigma,0,x) \, q_0(x) \, \mdd x +  \int_\tau^\sigma H_n(\sigma,\tau,\Lambda) \, \mdd  G_{\epsilon,n}(\tau)  \, ,
\end{align*}
where the cumulative first-passage time kernel $H_n$ also depends on $\Psi_n$.
We will prove the announced result by showing that for all $\sigma>0$, the quantities 
\begin{align*}
\Delta I_{1,n}(\sigma)
&=  \int_0^\infty H_n(\sigma,0,x) \, q_0(x) \, \mdd x -   \int_0^\infty H(\sigma,0,x) \, q_0(x) \, \mdd x   \, , \\
\Delta I_{2,n}(\sigma)
&=  \int_0^\sigma H_n(\sigma,\tau,\Lambda) \, \mdd G_{\epsilon,n}(\tau-\eta)  - \int_0^\sigma H(\sigma,\tau,\Lambda) \, \mdd G_\epsilon(\tau-\eta)   \, , 
\end{align*}
both converge to zero when $n \to \infty$.

Lemma ~\ref{lem:Hconv} implies that for all $x>0$, $x \mapsto H_n(\sigma,0,x)$ converges toward $x \mapsto H_n(\sigma,0,x)$ pointwise.
Moreover, given generic initial conditions, the same arguments as in the proof of Proposition~\ref{prop:boundedness} shows that the functions
$x \mapsto H_n(\sigma,0,x) q_0(x)$ are uniformly bounded by an integrable function.
Thus, the convergence of the first term, $\Delta I_{1,n}(\sigma) \to 0$ when $n \to \infty$, directly follows from dominated convergence.

For the second term, let us  write
\begin{align*}
\Delta I_{2,n}(\sigma)
&= \int_0^\sigma \big [ H_n(\sigma,\tau,\Lambda)-  H(\sigma,\tau,x) \big ]  \, \mdd G_{\epsilon,n}(\tau)  + \int_0^\sigma H(\sigma,\tau,\Lambda) \, \mdd \left[ G_{\epsilon,n}(\tau) - G_\epsilon(\tau)\right]  \, , \\
&=\int_0^\sigma \big [ H_n(\sigma,\tau,\Lambda)-  H(\sigma,\tau,x) \big ]  \, \mdd G_{\epsilon,n}(\tau)  + H(\sigma,\tau,\Lambda) \left[G_{\epsilon,n}(\sigma)- G_\epsilon(\sigma)\right]  \\
& \hspace{20pt}   -\int_0^\sigma \partial_\tau H(\sigma,\tau,\Lambda) \left[G_{\epsilon,n}(\tau) - G_\epsilon(\tau) \right]  \, \mdd \tau  \, ,
\end{align*}
where the last equality follows from integration by parts.
This is justified because $\tau \mapsto  H(\sigma,\tau,x)$ is Lipchitz continuous by Lemma~\ref{lem:HLipsch}. 
As $H \geq 0$ and for all $n \in \mathbbm{N}$, $G_n$ is increasing, we have
\begin{align*}
 \big \vert \Delta I_{2,n}(\sigma) \big \vert
& \leq \big \Vert H_n(\sigma,\cdot,\Lambda)-  H(\sigma,\cdot,\Lambda) \big \Vert_{0,\sigma}  \big( G_{\epsilon,n}(\sigma)-G_{\epsilon,n}(0) \big)     \\
& \hspace{20pt} + H(\sigma,\tau,\Lambda)   \big \vert G_{\epsilon,n}(\sigma)  -G_\epsilon(\sigma)  \big \vert  \\
& \hspace{20pt} -\int_0^\sigma \big \vert  \partial_\tau H(\sigma,\tau,\Lambda) \big \vert \big \vert G_{\epsilon,n}(\tau) -G_\epsilon(\tau-\eta) \big \vert \, \mdd \tau  \, . 
\end{align*}
For all $\sigma>0$, we know by Proposition~\ref{lem:Hconv} that $\tau \mapsto H_n(\sigma,\tau,\Lambda)$ converges compactly toward 
$\tau \mapsto H_n(\sigma,\tau,\Lambda)$, whereas $G_{\epsilon,n}$ and $G_\epsilon$ are all uniformly bounded on the set $[0,\sigma]$.
Therefore
\begin{eqnarray*}
\big \Vert H_n(\sigma,\cdot,\Lambda)-  H(\sigma,\cdot,\Lambda) \big \Vert_{0,\sigma}  \big( G_{\epsilon,n}(\sigma)-G_{\epsilon,n}(0) \big) 
  \xrightarrow[n \to \infty]{} 0 \, .
\end{eqnarray*}
It remains to show that
\begin{align*}
H(\sigma,\tau,\Lambda)   \big \vert G_{\epsilon,n}(\sigma)  -G_\epsilon(\sigma)  \big \vert  -\int_0^\sigma \big \vert  \partial_\tau H(\sigma,\tau,\Lambda) \big \vert \big \vert G_{\epsilon,n}(\tau) -G_\epsilon(\tau-\eta) \big \vert \, \mdd \tau   \xrightarrow[n \to \infty]{} 0 \, .
\end{align*}
To that end, remember that $H \leq 1$ and that by Lemma~\ref{lem:HLipsch}, $\partial_\tau H$ is bounded over $\{ (\sigma,\tau) \, \vert \, 0 \leq \tau \leq \sigma \}$
uniformly in $n$ and $\tau$.
Thus, by dominated convergence, the result will follow from establishing the point-wise convergence
of $G_{\epsilon,n}$ toward $G_\epsilon$.
This follows from observing that for all $\sigma>0$
\begin{align*}
\vert G_{\epsilon,n}(\sigma)  -G_\epsilon(\sigma)  \big \vert 
&\leq 
\int_0^\infty \big \vert  G_n(\sigma-\eta)  -G(\sigma-\eta) \big \vert \, \mdd Q_{\sigma,n}(\eta)  \\
& \hspace{40pt}
+\left \vert \int_0^\infty G(\sigma-\eta) \, \mdd \big[ Q_{\sigma,n}(\eta) -Q_\sigma(\eta)  \big]\right \vert \, , \\
&\leq
 \Vert  G_n  -G \Vert_{0,\sigma}  
+\left \vert \int_0^\infty G(\sigma-\eta) \, \mdd \big[ Q_{\sigma,n}(\eta) -Q_\sigma(\eta)  \big]\right \vert \, .
\end{align*}
The integral term above
converges to zero  by continuity of $G$ since  $Q_{\sigma,n}$
weakly converges toward $Q_\sigma$ as $n \to \infty$. 
This is justified by our assumption that  $P_\epsilon$ has a (smooth) density so that, in particular, $P_\epsilon$ is continuous,
and we have the point-wise convergence
\begin{eqnarray*}
Q_{\sigma,n}(\eta) = P_\epsilon(\Psi_n(\sigma)-\Psi_n(\sigma-\eta)) \xrightarrow[n \to \infty]{} P_\epsilon(\Psi(\sigma)-\Psi(\sigma-\eta)) = Q_\sigma(\eta) \, .
\end{eqnarray*}
Since  for all $\sigma>0$, we also have  $\Vert  G_n  -G \Vert_{0,\sigma}  \to 0$ as $n \to \infty$, this shows 
the pointwise convergence $G_{\epsilon,n} \to G_\epsilon$ as $n \to \infty$ and establishes the limit quasi-renewal equation \eqref{eq:GrenewLim}.
\end{proof}

Since we know that \eqref{eq:PsiFromG} holds, Proposition~\ref{prop:GHEta} shows that for any converging
sequence $G_n(=G_{\delta_n}) \to G$ with $\delta_n \downarrow 0$, the limit time change $\Psi[G] \in \mIz$
parametrizes the same time-changed dPMF dynamics as in Proposition~\ref{prop:physFP}.
Given the relative compactness of $\mIz([0,\sigma])$, $\sigma>0$, this shows that the physical dPMF solutions stated in Theorem~\ref{th:mainRes} 
is the limit of the corresponding $\delta$-buffered dynamics when $\delta \downarrow 0$, 
thereby establishing Theorem~\ref{th:mainRes2}.


\section{Complementary results}\label{sec:compRes}

In this section, we give some complementary results that while not central
to our core results, explain the interplay of the time change approach, the buffer mechanism, 
and the delayed nature of the dynamics in establishing these core results.
In Section~\ref{sec:buffMTC}, we exhibit how the time change $\sigma=\Phi(t) \Leftrightarrow t=\Psi(\sigma)$ regularizes the buffer mechanism when $\delta>0$.
In Section~\ref{sec:buffPers}, we show that the action of the buffered mechanism persists in physical dPMF dynamics when $\delta \downarrow 0$.
In Section~\ref{sec:delayBlowup}, we justify that considering refractory delays bounded away from zero (with $\epsilon>0$) allows one to avoid unphysical eternal blowups.


\subsection{Time change regularizes the buffer mechanism}\label{sec:buffMTC}
In Section~\ref{sec:recov}, we have demonstrated that our notion of physical solutions for 
blowup dPMF dynamics can be recovered as limit $\delta$-buffered solutions when $\delta \downarrow 0$.
This demonstration hinges on the fact that the limiting process $\delta \downarrow 0$ is well-behaved
when $\delta$-buffered solutions are considered in the time-changed picture.
To understand how the time change  $\sigma=\Phi(t) \Leftrightarrow t=\Psi(\sigma)$ regularizes the buffer mechanism, 
it is  instructive to transform the fixed-point equation \eqref{eq:fixedPoint} from Proposition~\ref{prop:PsiToPhi} into 
a statement about the time-changed, buffered dPMF dynamics.


\begin{proposition}\label{prop:bufferTCdPMF}
\begin{subequations}\label{all:bufferTCDdPMF}
Given $\delta>0$ and generic initial conditions $(q_0,g_0)$ in $\mathcal{M}(\mathbbm{R}^+) \times \mathcal{M}(\mathbbm{R}^-)$, the density function $(\sigma,x) \mapsto q(\sigma,x)$ of the time-changed, $\delta$-buffered dPMF dynamics satisfying  \eqref{eq:fixedPoint} solves the PDE problem
\begin{align}\label{eq:bufferTCPDE}
\partial_\sigma q = \left( \left( \lambda_1-\frac{\nu_1}{\nu_2} \lambda_2 \right) \tg(\sigma) + \frac{\nu_1}{\nu_2} \right) \partial_x q+\frac{1}{2} \partial_x^2 q+ \partial_\sigma G_\epsilon(\sigma)   \delta_\Lambda \, , \quad q(\sigma,0)=0 \, ,
\end{align}
on $\mathbbm{R}^+) \times \mathbbm{R}^+$  with cumulative function $G$ and cumulative reset rate $G_\epsilon$ such that
\begin{eqnarray}\label{eq:TCgeta1}
\partial_\sigma G(\sigma) = g(\sigma) = \partial_x q(\sigma,0)/2  \, , \quad G(0)=G_0(0) \, , 
\end{eqnarray}
\begin{eqnarray}\label{eq:TCgeta2}
G_\epsilon(\sigma) = \int G(\sigma - \eta) \, \mdd P_\epsilon(\Psi(\sigma) - \Psi(\sigma-\eta) ) \, , 
\end{eqnarray}
and where the inverse time change $\Psi=\Phi^{-1}$ is specified via 
\begin{eqnarray}\label{eq:tG}
\Psi(\sigma) = (\sigma - \lambda_2 \tG(\sigma))/\nu_2 =  \left(\sigma - \lambda_2 \int_0^\sigma \tg(\tau) \, \mdd \tau \right)/\nu_2 \, .
\end{eqnarray}
In turn, $\tg$  is defined as the $(\delta_2/\delta)$-buffered version of $g$
\begin{eqnarray}\label{eq:TCbuffer}
\tg (\sigma)= (1-A(\sigma)) g(\sigma) + A(\sigma)\, \delta_2/\delta \, ,
\end{eqnarray}
where the blowup indicator function $A$ is specified as a $\{0,1\}$-valued function by
\begin{eqnarray}\label{eq:TCindicator}
A(\sigma)  = \mathbbm{1}_{\{ g(\sigma) > \delta_2/\delta \} \cup \{D(\sigma) > 0\}}    \, .
\end{eqnarray}
and where the excess function $D$ satisfies
\begin{eqnarray}\label{eq:TCexcess}
\partial_\sigma D(\sigma)  &=&  A(\sigma) (g(\sigma) - \delta_2/ \delta)  \quad \mathrm{with} \quad D(0)=G_0(0)\, .
\end{eqnarray}
\end{subequations}
\end{proposition}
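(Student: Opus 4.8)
The plan is to read the statement off the equivalence already established in Proposition~\ref{prop:PsiToPhi}, by pushing the original-time buffer mechanism through the change of variables $\sigma=\Phi(t)\Leftrightarrow t=\Psi(\sigma)$. By Theorem~\ref{th:deltaGlobSol}, the unique $\Psi\in\mId$ solving \eqref{eq:fixedPoint} comes, via Proposition~\ref{prop:solG} (since $\mId\subset\mIz$), with a unique density $q$ solving the time-changed problem \eqref{all:TCPDE} and with $G=G[\Psi]\in C_1(\mathbbm{R}^+)$; thus \eqref{eq:TCgeta1} is exactly \eqref{eq:gDef} and \eqref{eq:TCgeta2} is \eqref{eq:GPsig} together with \eqref{eq:EtaProb}. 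To obtain the boundary-value equation \eqref{eq:bufferTCPDE} it then only remains to rewrite the drift $\mu(\sigma)=\big(\nu_1-\tfrac{\lambda_1}{\lambda_2}\nu_2\big)\Psi'(\sigma)+\tfrac{\lambda_1}{\lambda_2}$ of \eqref{eq:TCdrift}. I will set $\tG(\sigma)=(\sigma-\nu_2\Psi(\sigma))/\lambda_2$ and $\tg=\tG'$ (a \cadlag generalized derivative: $G\in C_1$ and, by \eqref{eq:fixedPoint}, $\Psi$ is a running supremum of a $C_1$ function apart from the affine initial stretch where the positive part is active, so Lemma~\ref{lem:Sup} applies), so that $\Psi'(\sigma)=(1-\lambda_2\tg(\sigma))/\nu_2$; a one-line substitution gives $\mu(\sigma)=\tfrac{\nu_1}{\nu_2}+\big(\lambda_1-\tfrac{\nu_1}{\nu_2}\lambda_2\big)\tg(\sigma)$, which is \eqref{eq:bufferTCPDE}, while integrating the definition of $\tG$ with $\tG(0)=0$ yields \eqref{eq:tG}.

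The heart of the proof is to show $\tg$ is the $(\delta_2/\delta)$-buffered version of $g$. By Proposition~\ref{prop:PsiToPhi}, \eqref{eq:fixedPoint} forces $p=q\circ\Phi$ to solve the original-time problem \eqref{all:bufferPDE}, so (by conservation of probability as in Proposition~\ref{prop:fpx}, with $E(0)=G_0(0)$ forced by the generic initial conditions) the firing rate $f=F'$, $F=G\circ\Phi$, solves the $\delta$-buffer problem \eqref{all:bufferMech} for the data $K,L,z$ of \eqref{eq:deltaBuffDef}, with excess $E$ and, by Proposition~\ref{prop:PhiTheta}, $\tF=(\Phi-\nu_2\,\mathrm{Id})/\lambda_2=F-E$; note $\tG=\tF\circ\Psi$ and $G=F\circ\Psi$. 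Now set $D=E\circ\Psi=G-\tG$ and $A=B\circ\Psi$ with $B$ the original indicator. I will then transcribe each defining relation under the change of variables, using that $\Psi$ is a bi-Lipschitz homeomorphism of $\mathbbm{R}^+$ so $\{D>0\}=\Psi^{-1}(\{E>0\})$, that on this set $\Phi'(t)=\nu_2+\lambda_2\tf(t)=\nu_2+\lambda_2/\delta=1/\delta_2$ hence $\Psi'(\sigma)=\delta_2$, that $g(\sigma)=\Psi'(\sigma)f(\Psi(\sigma))$ and $\tg(\sigma)=\Psi'(\sigma)\tf(\Psi(\sigma))$ a.e., and, crucially, Corollary~\ref{corr:fpx}, by which $f(\Psi(\sigma))>1/\delta\Leftrightarrow g(\sigma)=\partial_xq(\sigma,0)/2>z_\delta=1/(\lambda_2+\nu_2\delta)=\delta_2/\delta$. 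These turn $B=\mathbbm{1}_{\{f>1/\delta\}\cup\{E>0\}}$ into $A=\mathbbm{1}_{\{g>\delta_2/\delta\}\cup\{D>0\}}$ (i.e.\ \eqref{eq:TCindicator}), $\tf=(1-B)f+B/\delta$ into $\tg=(1-A)g+A\,\delta_2/\delta$ (i.e.\ \eqref{eq:TCbuffer}), and $\partial_tE=B(f-1/\delta)$ into $\partial_\sigma D=A(g-\delta_2/\delta)$ with $D(0)=E(0)=G_0(0)$ (i.e.\ \eqref{eq:TCexcess}).

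An equivalent, more self-contained route would work directly from \eqref{eq:fixedPoint}: subtracting $\nu_2\Psi(\sigma)$ from $\sigma$ and using $\nu_2\delta_2=1-\lambda_2\delta_2/\delta$ gives $\tG(\sigma)=\tfrac{\delta_2}{\delta}\sigma+\big[\inf_{0\le\tau\le\sigma}\!\big(G(\tau)-\tfrac{\delta_2}{\delta}\tau\big)\big]_-$, which is precisely the $(\delta_2/\delta)$-tilted running-infimum formula \eqref{eq:Phi} of Proposition~\ref{prop:PhiTheta} for the pair $(G,D)$ with $D=G-\tG$; one then extracts \eqref{eq:TCbuffer}--\eqref{eq:TCexcess} from a converse of Proposition~\ref{prop:PhiTheta} obtained by the case analysis of Lemma~\ref{lem:Sup}. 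Either way, the only delicate step is bookkeeping rather than analysis: justifying the a.e.\ differentiation identities behind the change of variables and keeping the interlocking constants straight, $\delta_2=\delta/(\lambda_2+\nu_2\delta)$, $z_\delta=\delta_2/\delta=1/(\lambda_2+\nu_2\delta)$, $1/\delta_2=\nu_2+\lambda_2/\delta$; the exceptional countable set of blowup-exit points (where $\tg$ jumps) needs only the usual care, the marginal case $g=\delta_2/\delta$ being irrelevant since $K$ and $L$ agree there. No estimate beyond Corollary~\ref{corr:fpx}, Lemma~\ref{lem:Sup}, Proposition~\ref{prop:PhiTheta}, and Proposition~\ref{prop:PsiToPhi} is needed.
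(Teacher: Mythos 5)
Your proposal is correct and follows essentially the same route as the paper: invoke Proposition~\ref{prop:PsiToPhi} to transfer the original-time buffer structure through the bi-Lipschitz change of variables $\sigma=\Phi(t)\Leftrightarrow t=\Psi(\sigma)$, then transcribe the three defining relations of the $\delta$-buffer mechanism using Corollary~\ref{corr:fpx} to translate the threshold condition and $G-\tG=D$ to identify the excess. The one minor stylistic difference is that you obtain the drift by the single substitution $\Psi'=(1-\lambda_2\tg)/\nu_2$, whereas the paper's proof splits into $A=0$ and $A=1$ before recombining; the two are equivalent since on $\{A=1\}$ one has $\tg=\delta_2/\delta$ and hence $(1-\lambda_2\tg)/\nu_2=\delta_2$.
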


\begin{proof}
We proceed in several steps.

$(i)$
Assume that $\Psi$ solves the fixed-point equation \eqref{eq:fixedPoint} with $C_1$ cumulative function denoted as $G=G[\Psi]$.
Let us set $\Phi=\Psi^{-1}$ and $F=G \circ \Phi$, so that the excess function $E$ 
for the original $\delta$-buffered dPMF dynamics can be specified by  
\begin{eqnarray*}
\Phi(t) = \nu_2 t + \lambda_2 \tF(t) =  \nu_2 t + \lambda_2 (F(t) - E(t)) \quad \Rightarrow \quad E(t) = F(t) - (\Phi(t)-\nu_2 t)/ \lambda_2 \, .
\end{eqnarray*}
Observe that as solution to  \eqref{eq:fixedPoint}, $\Psi$, and therefore $\Phi$, $F$, and $E$, are all continuously differentiable on 
$\mathbbm{R}^+$ except on a countable number of points $\mathcal{D}$.
Denoting by $f$ the right-continuous version of the generalized derivative $\mdd F/\mdd t$,
the blowup indicator function is specified as  $B(t) = \mathbbm{1}_{\{f(t)>1/\delta\} \cup \{ E(t)>0\}}$.

$(ii)$
By definition of the buffer mechanism, we have
\begin{eqnarray*}
f 
&=&  (1-B ) (K \circ z) + B (L \circ z) \, ,  
\end{eqnarray*}
with buffer mechanism functions
\begin{eqnarray*}
K(z) =   \frac{ \nu_2 z}{1 - \lambda_2 z} \, , \quad L(z) =  \frac{z}{\delta_2} \, ,  \quad \mathrm{and} \quad  z(t) = g(\Phi(t))  \, .
\end{eqnarray*}
In particular, we have
\begin{eqnarray*}
f > 1/\delta 
\quad \Leftrightarrow \quad
z>z_\delta = \frac{1}{\lambda_2+\nu_2 \delta} = \frac{\delta}{\delta_2}
\quad \Leftrightarrow \quad
g \circ \Phi >  \frac{\delta}{\delta_2} \, .
\end{eqnarray*}
Defining the time-changed excess function and blowup indicator function as $D(\sigma)=E(\Psi(\sigma)) \geq 0$
and $A(\sigma)=B(\Psi(\sigma))$, respectively, this implies that
\begin{eqnarray*}
A(\sigma)=B(\Psi(\sigma)) = \mathbbm{1}_{\{ f(\Psi(\sigma)) >1/\delta\} \cup \{ E(\Psi(\sigma)) > 0\}} = \mathbbm{1}_{\{ g(\sigma) >\delta_2/\delta\} \cup \{ D(\sigma) > 0\}} \, .
\end{eqnarray*}
This shows that \eqref{eq:TCindicator} holds.

$(iii)$
By \eqref{eq:TCdrift}, the drift function $\mu$ featured in the time-changed PDE problem reads
\begin{align*}
\mu 
&=  \left( \nu_1 - \frac{\lambda_1}{\lambda_2} \nu_2   \right)\Psi'  +  \frac{\lambda_1}{\lambda_2}  \, .
\end{align*}
As solution to  the fixed-point equation \eqref{eq:fixedPoint}, outside of $\mathcal{D}$, $\Psi$ satisfies
\begin{eqnarray*}
\Psi' = (1-A) (1-\lambda_2 g)/\nu_2 + A \delta_2 \, ,
\end{eqnarray*}
so that if $A=0$, we have
\begin{align*}
\mu 
=  \left( \nu_1 - \frac{\lambda_1}{\lambda_2} \nu_2   \right) \frac{1-\lambda_2 g}{\nu_2}  +  \frac{\lambda_1}{\lambda_2}  
= \left( \lambda_1-\frac{\nu_1}{\nu_2} \lambda_2 \right) g + \frac{\nu_1}{\nu_2} \, ,
\end{align*}
and if $A=1$, we have
\begin{align*}
 \mu 
=  \left( \nu_1 - \frac{\lambda_1}{\lambda_2} \nu_2   \right) \delta_2  +  \frac{\lambda_1}{\lambda_2} 
= \left( \lambda_1-\frac{\nu_1}{\nu_2} \lambda_2 \right) \frac{\delta_2}{\delta} + \frac{\nu_1}{\nu_2}  \, .
\end{align*}
Since $\mathcal{D}$ is countable, we can always choose to define $\Psi'$ as the right-continuous 
generalized derivative of $\Psi$. Taking this convention implies that on $\mathbbm{R}^+$, we have
\begin{eqnarray*}
 \mu = \left( \lambda_1-\frac{\nu_1}{\nu_2} \lambda_2 \right) \left((1-A)g + A\frac{\delta_2}{\delta} \right) + \frac{\nu_1}{\nu_2} \, ,
\end{eqnarray*}
where we recognize $\tg$, the $(\delta/\delta_2)$-buffered version of $g$ defined in \eqref{eq:TCbuffer}.
This shows that \eqref{eq:bufferTCPDE}, \eqref{eq:TCgeta1},  \eqref{eq:TCgeta2}, and \eqref{eq:TCbuffer} hold.

$(iv)$
By definition, we have
\begin{eqnarray*}
D(\sigma)
&=& E(\Psi(\sigma)) \, , \\
&=& \int_0^{\Psi(\sigma)} B(s) \big( f(s) - 1/\delta \big) \, \mdd s \, , \\
&=& \int_0^{\sigma}  B(\Psi(\tau)) \big( f(\Psi(\tau)) - 1/\delta \big) \, \mdd \Psi (\tau) \, , \\
&=& \int_0^{\sigma} A(\tau) \big(g(\tau) -  \Psi' (\tau)/\delta \big) \, \mdd \tau \, , \\
&=& \int_0^{\sigma} A(\tau) \big( g(\tau) -  \delta_2/\delta \big) \, \mdd \tau \, ,
\end{eqnarray*}
where we used the fact that  $\Psi' (\sigma) = \delta_2$, whenever $\sigma=\Phi(t)$
 is a blowup time, i.e., whenever $A(\sigma)=B(\Psi(\sigma))=1$.
This shows that \eqref{eq:TCexcess} holds.

$(v)$
Finally, by definition of the excess function $D$, we have
\begin{eqnarray*}
\Psi(\sigma) = \big(\sigma - \lambda_2 (G(\sigma) - D(\sigma))\big)/\nu_2 \, ,
\end{eqnarray*}
where using the definition of $\tg$ in \eqref{eq:TCbuffer}, we have
\begin{align*}
G(\sigma) - D(\sigma)
&=
\int_0^\sigma g(\tau) \, \mdd \tau - \int_0^{\sigma} A(\tau) \big( g(\tau) -  \delta_2/\delta \big) \, \mdd \tau \, , \\
&=
 \int_0^{\sigma}  \big( (1-A(\tau)) g(\tau)  +  A(\tau) \delta_2/\delta \big) \, \mdd \tau \, , \\
&=
 \int_0^{\sigma} \tg(\tau) \, \mdd \tau \, , \\
 &=
 \tG(\sigma)  \, .
\end{align*}
This shows that \eqref{eq:tG} holds.

\end{proof}

We are now able to exhibit how performing the time change $\sigma=\Phi(t) \Leftrightarrow t=\Psi(\sigma)$
regularizes the action of the buffer mechanism.
Indeed, Proposition~\ref{prop:bufferTCdPMF} reveals that the PDE problem governing buffered dPMF dynamics 
involves a constant diffusion coefficient with value $1/2$ in the time-changed picture (see \eqref{eq:bufferTCPDE}). 
As a result, the time-changed buffer mechanism degenerates in the sense that 
it corresponds to identical buffer functions $L(z)=K(z)=z/2$ within the analytical framework of Section~\ref{sec:bufferAnalysis}.
This implies that the output rate of the buffer mechanism, i.e., 
the time-changed firing rate $g$, must be continuous, even at the exit times of regularized blowups.
This is by contrast with the firing rate $f$ in the original time picture, 
which exhibits jump discontinuities at blowup exit times by Proposition~\ref{prop:buffReg}.


\subsection{Persistence of the buffer mechanism in the time-changed picture}\label{sec:buffPers}
Informally, the limiting process $\delta \downarrow 0$ yields physical solutions to the original dPMF problem in Definition~\ref{def:mainProb1}
by asymptotically setting the threshold value of the buffer mechanism to infinity in Definition~\ref{def:bufferPDE}.
Although it is not obvious in the original-time picture, it is interesting to note that even with infinite threshold, physical dPMF dynamics retained 
their buffered nature in the time-changed picture when $\delta \downarrow 0$.
This is simply because  in the time-changed picture, the buffer parameter $\delta_2/\delta$ featured in Proposition~\ref{prop:bufferTCdPMF} 
converges to the finite value $\delta_2/\delta \to 1/\lambda_2$ when $\delta \downarrow 0$.
Specifically, we have the following:

\begin{proposition}\label{def:lmTCdPMF}
\begin{subequations}\label{all:limTCDdPMF}
Given generic initial conditions $(q_0,g_0)$ in $\mathcal{M}(\mathbbm{R}^+) \times \mathcal{M}(\mathbbm{R}^-)$, the density function $(\sigma,x) \mapsto q(\sigma,x)$ of the time-changed dPMF dynamics solves the PDE problem
\begin{align}\label{eq:limTCPDE}
\partial_\sigma q = \left( \left( \lambda_1-\frac{\nu_1}{\nu_2} \lambda_2 \right) \tg(\sigma) + \frac{\nu_1}{\nu_2} \right) \partial_x q+\frac{1}{2} \partial_x^2 q+ \partial_\sigma G_\epsilon(\sigma)   \delta_\Lambda \, , \quad q(\sigma,0)=0
\end{align}
on $\mathbbm{R}^+ \times\mathbbm{R}^+$  with cumulative rate  $G$ and cumulative reset rate $G_\epsilon$ such that
\begin{eqnarray*}\label{eq:limTCgeta}
&\partial_\sigma G(\sigma) = g(\sigma) = \partial_x q(\sigma,0)/2 \, , \quad G(0)=G_0(0) \, ,& \\
& G_\epsilon(\sigma) = \int_0^\infty  G(\sigma-\eta) \, \mdd P_\epsilon (\Psi(\sigma)-\Psi(\sigma -\eta) ) \, , & \label{eq:limTCgeta2}
\end{eqnarray*}
where the inverse time change $\Psi=\Phi^{-1}$ is specified via 
\begin{eqnarray}\label{eq:limtG}
\Psi(\sigma) = (\sigma - \lambda_2 \tG(\sigma))/\nu_2 =  \left(\sigma - \lambda_2 \int_0^\sigma \tg(\tau) \, \mdd \tau \right)/\nu_2 \, .
\end{eqnarray}
and where $\tg$  is defined as the $(1/\lambda_2)$-buffered version of $g$.
\end{subequations}
\end{proposition}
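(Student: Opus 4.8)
The proof is the $\delta=0$ analogue of Proposition~\ref{prop:bufferTCdPMF}: the value $1/\lambda_2$ of the buffer parameter arises because the parameter $\delta_2/\delta=1/(\lambda_2+\nu_2\delta)$ appearing there converges to $1/\lambda_2$ as $\delta\downarrow0$, even though the lower Lipschitz bound $\delta_2=\delta/(\lambda_2+\nu_2\delta)$ of $\mId$ collapses to $0$ (so that $\Psi$ may develop genuine flat sections, as is allowed in $\mIz$). Concretely, I would take $\Psi\in\mIz$ to be the unique physical inverse time change from Theorem~\ref{th:mainRes}, i.e.\ the unique solution of the fixed-point equation \eqref{eq:physFP}$=$\eqref{eq:mainTCdyn4}, with cumulative rate $G=G[\Psi]\in C_1(\mathbbm{R}^+)$ and density $q=q[\Psi]$ the unique solution of the time-changed problem \eqref{all:TCPDE} (Proposition~\ref{prop:solG}); then $g=\partial_xq(\cdot,0)/2=\partial_\sigma G$ and $G$ already satisfies the renewal equation \eqref{eq:Grenew}, which gives the constraints on $G$ and $G_\epsilon$ claimed in the proposition. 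That this $(\Psi,G)$ is the compact $\delta\downarrow0$ limit of the $\delta$-buffered pairs $(\Psi_\delta,G_\delta)$ is Theorem~\ref{th:mainRes2} (via the extraction argument of Section~\ref{sec:recov} and Proposition~\ref{prop:GHEta}); I would invoke this only to motivate the value $1/\lambda_2$, since the remainder of the proof is a direct verification that $\Psi$ satisfies \eqref{all:limTCDdPMF}.

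Next I would introduce the limiting buffered objects directly from $\Psi$, rather than attempting to pass the discontinuous quantities $A_\delta$ through the limit. Set $\tG(\sigma)=(\sigma-\nu_2\Psi(\sigma))/\lambda_2$, so that $\Psi(\sigma)=(\sigma-\lambda_2\tG(\sigma))/\nu_2$, which is exactly \eqref{eq:limtG} with $\tg:=\partial_\sigma\tG=(1-\nu_2\Psi')/\lambda_2$ (the right-continuous Radon–Nikodym derivative, well defined since $\Psi$ is $1/\nu_2$-Lipschitz) and $\tG(0)=0$. Define the excess function $D(\sigma)=G(\sigma)-\tG(\sigma)=G(\sigma)+(\nu_2\Psi(\sigma)-\sigma)/\lambda_2$, which coincides with the excess function of \eqref{eq:firstD}; by \eqref{eq:physFP}, $\nu_2\Psi(\sigma)\geq\big[\sup_{0\le\tau\le\sigma}(\tau-\lambda_2G(\tau))\big]_+\geq\sigma-\lambda_2G(\sigma)$, so $D\geq0$, $D$ is continuous, and $D(0)=G(0)-\tG(0)=G_0(0)$, matching the initial condition in \eqref{eq:TCexcess}. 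Being the difference of a $C_1$ function and a Lipschitz function, $D$ is locally Lipschitz with $\partial_\sigma D=g-\tg$ a.e. Finally the PDE \eqref{eq:limTCPDE} follows by algebra: substituting $\Psi'=(1-\lambda_2\tg)/\nu_2$ into the drift \eqref{eq:TCdrift}, $\mu=(\nu_1-\lambda_1\nu_2/\lambda_2)\Psi'+\lambda_1/\lambda_2$, collapses it to $\mu=(\lambda_1-\nu_1\lambda_2/\nu_2)\tg+\nu_1/\nu_2$, so \eqref{eq:TCdyn} becomes \eqref{eq:limTCPDE}.

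It then remains to identify $\tg$ as the $(1/\lambda_2)$-buffered version of $g$, i.e.\ to establish \eqref{eq:TCbuffer}, \eqref{eq:TCindicator}, \eqref{eq:TCexcess}; here I would mirror the argument in the proof of Proposition~\ref{prop:PsiToPhi}. Applying Lemma~\ref{lem:Sup} to the $C_1$ function $\sigma\mapsto\Gamma(\sigma)=(\sigma-\lambda_2G(\sigma))/\nu_2$ — the positive part in \eqref{eq:physFP} being inactive except possibly on an initial interval when $G_0(0)>0$, on which $\Psi'=0$ — shows that outside a countable set $\mathcal{D}$ one has $\Psi'=0$ on the open blowup set $\mathcal{B}_\sigma=\{D>0\}$ (where the running supremum is locally constant) and $\Psi'=(1-\lambda_2g)/\nu_2$ on $\mathbbm{R}^+\setminus(\mathcal{B}_\sigma\cup\mathcal{D})$. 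Writing $A=\mathbbm{1}_{\mathcal{B}_\sigma}$, this reads $\tg=(1-\nu_2\Psi')/\lambda_2=(1-A)g+A/\lambda_2$ a.e., hence $\partial_\sigma D=g-\tg=A(g-1/\lambda_2)$, which is \eqref{eq:TCexcess}. To recover the precise indicator in \eqref{eq:TCindicator}: on $\{D=0\}$ we have $\tg=g$; if $g(\sigma)>1/\lambda_2$ there, then $\Gamma$ is strictly decreasing near $\sigma$, forcing the running supremum to be locally constant and $\Psi'(\sigma)=0$, i.e.\ $\tg(\sigma)=1/\lambda_2\neq g(\sigma)$, a contradiction. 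Thus $\{g>1/\lambda_2\}\subseteq\mathcal{B}_\sigma$ up to a null set, so $\mathbbm{1}_{\{g>1/\lambda_2\}\cup\{D>0\}}=\mathbbm{1}_{\{D>0\}}=A$ (the set $\{g=1/\lambda_2\}$ being irrelevant, as $L(z_\delta)=K(z_\delta)$ there, exactly as in Proposition~\ref{prop:PsiToPhi}), which establishes \eqref{eq:TCbuffer}–\eqref{eq:TCindicator} and completes the verification.

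The main obstacle I anticipate is precisely this last identification: making rigorous the equivalence between the running-supremum fixed point \eqref{eq:physFP} and the buffer-mechanism formulation at the degenerate parameter $\delta=0$, where $\delta_2\to0$ and $\Psi$ genuinely flattens. The argument is structurally the $\delta=0$ limit of Proposition~\ref{prop:bufferTCdPMF}, but one must check that Lemma~\ref{lem:Sup} still applies (it does, since $G=G[\Psi]$ is $C_1$ for every $\Psi\in\mIz$ by Proposition~\ref{prop:solG}) and that neither the countable exceptional set $\mathcal{D}$ nor the null set $\{g=1/\lambda_2\}$ affects the a.e.\ identities or the characterization of $\mathcal{B}_\sigma$ as $\{D>0\}$; an alternative, which I consider less clean, is to pass $\delta\downarrow0$ directly in Proposition~\ref{prop:bufferTCdPMF} using $D_\delta=E_\delta\circ\Psi_\delta\to D$ compactly, but that route requires controlling the limit of the discontinuous indicators $A_\delta=\mathbbm{1}_{\{g_\delta>\delta_2/\delta\}\cup\{D_\delta>0\}}$, which is why the direct fixed-point verification above is preferable.
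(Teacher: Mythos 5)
Your proof is correct but follows a genuinely different route from the paper's. The paper proves the proposition by passing to the $\delta\downarrow 0$ limit of the $\delta$-buffered family: it uses the compact convergence $D_n\to D$ from Section~\ref{sec:recov}, invokes the blowup-trigger representation $D_n(\sigma)=G_n(\sigma)-G_n(S_{0,n}(\sigma))-(\delta_{2,n}/\delta_n)(\sigma-S_{0,n}(\sigma))$ from Proposition~\ref{prop:bufferTCdPMF}, extracts an adherent point $L_0(\sigma)$ of $S_{0,n}(\sigma)$, identifies $L_0(\sigma)=S_0(\sigma)$ via \eqref{eq:PsiFromG}, and concludes with the closed-form identity $D(\sigma)=\int_{S_0(\sigma)}^\sigma(g(\tau)-1/\lambda_2)\,\mdd\tau$. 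You instead verify the $(1/\lambda_2)$-buffer equations directly for the fixed-point solution of \eqref{eq:physFP}, without reference to the $\delta>0$ family: you set $\tG=(\mathrm{Id}-\nu_2\Psi)/\lambda_2$ and $D=G-\tG$, check nonnegativity, the initial datum, and the drift algebra, and then use Lemma~\ref{lem:Sup} (applied to $\Gamma=(\mathrm{Id}-\lambda_2G)/\nu_2$) plus the continuity of $g=G'$ to show $\Psi'\in\{0,\Gamma'\}$ a.e., with $\Psi'=0$ precisely on $\{D>0\}$ and $\{g>1/\lambda_2\}\subseteq\{D>0\}$. This is in effect the $\delta=0$ specialization of the sufficiency direction of Proposition~\ref{prop:PsiToPhi}. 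What your route buys is a self-contained argument that is more explicit about the indicator-function characterization \eqref{eq:TCindicator}, which the paper's proof does not separately verify (it treats the integral identity for $D$ as ``equivalent'' to the buffer mechanism without pause). What the paper's route buys is a shorter derivation and, incidentally, a constructive interpretation of $S_0(\sigma)$ as the limit of the trigger times $S_{0,n}(\sigma)$ of the regularized dynamics; it also consciously avoids passing discontinuous indicators through a limit, which is exactly the pitfall you flag in your final paragraph and which the direct fixed-point verification sidesteps cleanly. Both proofs stand.
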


\begin{proof}
We only have to show that given $\delta_n \downarrow 0$, the physical dPMF 
solution parametrized by $\Psi = \lim_{n \to \infty} \Psi_n$ obeys a $(1/\lambda_2)$-buffer
mechanism.
This is equivalent to showing that the physical dPMF solution admits an excess function 
$D$ specified as
\begin{eqnarray}
D(\sigma) = \int_{S_0(\sigma)}^\sigma ( g(\tau) - 1/\lambda_2 ) \, \mdd \tau \, ,
\end{eqnarray}
where as usual, $S_0(\sigma)$ is the last-blowup-trigger time.

To show this, we proceed in three steps:

$(i)$ On one hand, by the results of Section~\ref{sec:recov}, we know that  any converging sequence $\delta_n \downarrow 0$
specifies a  compactly convergent sequence of pairs  $(\Psi_n,G_n) \to (\Psi,G)$, where 
$G_n$ and $G$ denote the cumulative rate function associated to $\Psi_n$ and $\Psi$, respectively.
Therefore, by Proposition~\ref{prop:bufferTCdPMF}, for each $n \in \mathbbm{N}$, 
the excess function
\begin{align*}
\sigma \mapsto D_n(\sigma) =  G_n(\sigma)+ (\nu_2 \Psi_n(\sigma)-\sigma)/\lambda_2  \, , 
\end{align*}
also compactly converges toward the limit excess function
\begin{align}\label{eq:comp1}
\sigma \mapsto D(\sigma) =  G(\sigma) + (\nu_2 \Psi(\sigma) -\sigma )/\lambda_2  \, ,
\end{align}
which was originally  introduced in \eqref{eq:firstD}.
 
$(ii)$ On the other hand, by Proposition~\ref{prop:bufferTCdPMF}, the excess functions $D_n$ may be expressed in terms 
of the last-blowup-trigger time $S_{0,n}(\sigma)$ as
\begin{align*}
D_n(\sigma) 
&= \int_{S_{0,n}(\sigma)}^\sigma (g_n(\tau) - \delta_{2,n}/\delta_n) \, \mdd \tau \, , \\
&= G_n( \sigma) - G_n(S_{0,n}(\sigma)) - (\delta_{2,n}/\delta_n) (\sigma-S_{0,n}(\sigma)) \, . 
\end{align*}
Since we have $0 \leq S_{0,n}(\sigma) \leq \sigma$, let us consider an extraction $n_k$, $k \in \mathbbm{N}$,
such that $S_{0,n_k}$ converges toward some adherent point $L_0(\sigma) \in [0,\sigma]$.
By compact convergence of $G_n \to G$ and $D_n \to D$, we must have
\begin{align}\label{eq:comp2}
D(\sigma) = G(\sigma)-G(L_0(\sigma)) -   (\sigma-L_0(\sigma))/\lambda_2 \, ,
\end{align}
where we have used the fact that $\delta_{2,n}/\delta_n \to 1/\lambda_2$ as $\delta_n \to 0$.

$(iii)$ Equating \eqref{eq:comp1} and \eqref{eq:comp2}, we obtain that
\begin{align*}
\Psi(\sigma) = (L_0(\sigma)-\lambda_2 G(L_0(\sigma)))/ \nu_2 \, .
\end{align*}
As $\Psi$ and $G$ satisfy \eqref{eq:PsiFromG}, this implies that
\begin{align*}
\frac{1}{\nu_2} \big(L_0(\sigma)-\lambda_2 G(L_0(\sigma))\big) = \frac{1}{\nu_2} \sup_{0 \leq \tau \leq \sigma} \big( \tau-\lambda_2 G(\tau) \big) \, ,
\end{align*}
which shows that $L_0(\sigma)$ must be equal to the last-blowup-trigger time $S_0(\sigma)$.
Therefore
\begin{align*}
D(\sigma) = G(\sigma)-G(S_0(\sigma)) -   (\sigma-S_0(\sigma))/\lambda_2 =  \int_{S_0(\sigma)}^\sigma ( g(\tau) - 1/\lambda_2 ) \, \mdd \tau\, ,
\end{align*}
which concludes the proof.

\end{proof}


\subsection{Delays preclude eternal blowups}\label{sec:delayBlowup}
We conclude by demonstrating that in addition to the advantages discussed in Remark~\ref{rem:globSol}, 
considering refractory delays precludes the occurrence of nonphysical eternal blowups.
Specifically, we show in the following that given the delay lower bound $\epsilon>0$,
choosing $\delta<\epsilon$ guarantees that blowup durations are uniformly bounded,
thus precluding infinite-size (eternal) blowups:

\begin{proposition}
If the buffer parameter $\delta$ is such that $\delta <\epsilon$,  the time-changed, blowup durations satisfy $U_k-S_k \leq \lambda_2+\nu_2\delta$ for all $k \in \mathcal{K}$.
\end{proposition}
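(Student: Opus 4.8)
The plan is to push the question back to the original-time picture, where the buffer mechanism acts transparently, bound the original-time blowup durations using the refractory lower bound $\epsilon$, and then transport the bound through the $\delta$-buffered time change.

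Fix $k \in \mathcal{K}$ and let $(T_k,V_k) = (\Psi_\delta(S_k),\Psi_\delta(U_k))$ be the original-time blowup interval corresponding to $(S_k,U_k)$, so that $S_k = \Phi_\delta(T_k)$ and $U_k = \Phi_\delta(V_k)$ for the continuous, strictly increasing $\delta$-buffered time change $\Phi_\delta(t) = \nu_2 t + \lambda_2 \tF(t)$ from Theorem~\ref{th:deltaGlobSol}. First I would note that on $(T_k,V_k)$ one has $E>0$, hence the blowup indicator $B \equiv 1$, hence the buffered rate $\tf \equiv 1/\delta$ by \eqref{eq:buffer}; consequently $\Phi_\delta' \equiv \nu_2 + \lambda_2/\delta = 1/\delta_2$ on this interval, so that
\[
U_k - S_k \;=\; \Phi_\delta(V_k) - \Phi_\delta(T_k) \;=\; (V_k - T_k)/\delta_2 .
\]
Since $\delta/\delta_2 = \lambda_2 + \nu_2\delta$, it then suffices to prove $V_k - T_k \leq \delta$.

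For that bound I would use the excess-function identity. By continuity of $E$ (a consequence of Proposition~\ref{prop:buffReg}) and maximality of $(T_k,V_k)$ one has $E(T_k)=0$, and since $\partial_t E = f - 1/\delta$ on $(T_k,V_k)$ — where $f = F'$ is the firing rate, bounded on compacts by Corollary~\ref{corr:fpx}, so that $F$ is continuous — integration gives $E(t) = F(t) - F(T_k) - (t-T_k)/\delta$ for $t \in (T_k,V_k]$. Arguing by contradiction, if $V_k - T_k > \delta$ then $t = T_k+\delta \in (T_k,V_k)$ and
\[
E(T_k+\delta) \;=\; F(T_k+\delta) - F(T_k) - 1 .
\]
The crucial input is the refractory estimate $F(s+\epsilon) - F(s) \leq 1$ valid for all $s \geq 0$ — the same inequality established inside the proof of Theorem~\ref{th:globSol}, whose argument rests only on conservation of probability and on the refractory delays being supported in $[\epsilon,\infty)$, and hence applies unchanged to $\delta$-buffered dynamics. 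Since $\delta < \epsilon$ and $F$ is nondecreasing, $F(T_k+\delta)-F(T_k) \leq F(T_k+\epsilon)-F(T_k) \leq 1$, whence $E(T_k+\delta) \leq 0$, contradicting $E>0$ on $(T_k,V_k)$. Therefore $V_k - T_k \leq \delta$, and combining with the first step, $U_k - S_k = (V_k-T_k)/\delta_2 \leq \delta/\delta_2 = \lambda_2 + \nu_2\delta$; in particular $V_k < \infty$, so eternal blowups are excluded.

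I expect the only genuinely delicate step to be the verification that the estimate $F(s+\epsilon)-F(s)\leq 1$ from the proof of Theorem~\ref{th:globSol} transfers verbatim to $\delta$-buffered dynamics: one must check the buffered analogue of Proposition~\ref{prop:inactProp}, i.e. the identity $\int_0^\infty p(t,x)\,\mdd x = 1 - \int_{[0,\infty)}(1-P_\epsilon(s))\,\mdd F(t-s)$, together with the first-passage representation of $F(t+\epsilon)-F(t)$ with $\Phi$ replaced by $\Phi_\delta$. Both follow from the $\delta$-buffered PDE still obeying $\partial_t\!\left(\int_0^\infty p\,\mdd x\right) = f_\epsilon - f$ with $f_\epsilon = f*p_\epsilon$ supported in $[\epsilon,\infty)$, so this is routine once unwound. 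All remaining steps are elementary bookkeeping about the time change and the excess function.
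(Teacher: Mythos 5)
Your proof is correct, and it takes a genuinely different route from the paper's. The paper carries out the argument entirely in the time-changed picture: it first proves the preliminary bound $U_k-S_k < \epsilon/\delta_2$ by contradiction (using the forward delay function $\gamma(\sigma)=\Phi(\Psi(\sigma)+\epsilon)-\sigma$ and the cumulative rate $G$), and only then applies the exit characterization $\frac{G(U_k)-G(S_k)}{U_k-S_k}=\delta_2/\delta$ together with $G(U_k)-G(S_k)\leq 1$ to conclude. You work instead in the original-time picture and reduce everything to a single contradiction: once you observe that $\Phi_\delta'\equiv 1/\delta_2$ on $(T_k,V_k)$, the claim becomes $V_k-T_k\leq\delta$, and the excess-function identity $E(T_k+\delta)=F(T_k+\delta)-F(T_k)-1\leq 0$ (from $\delta<\epsilon$, monotonicity of $F$, and the refractory estimate) kills $V_k-T_k>\delta$ in one stroke. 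The two arguments use the same core ingredients — the refractory bound $F(t+\epsilon)-F(t)\leq 1$, the condition $\delta<\epsilon$, and the slope $1/\delta_2$ of $\Phi_\delta$ during blowups — but yours dispenses with the paper's intermediate step, giving a shorter and cleaner proof; the paper's version, by contrast, makes the role of $\gamma(\sigma)$ and the time-changed exit condition \eqref{def:U} explicit, which ties more directly into the surrounding time-changed framework. You are also right to flag the transfer of $F(t+\epsilon)-F(t)\leq 1$ from Theorem~\ref{th:globSol} to the $\delta$-buffered setting as the one nontrivial check: the paper invokes it without comment, and your observation that the derivation rests only on conservation of probability and the support of $P_\epsilon$, both preserved under buffering, is exactly the verification that is needed.
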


\begin{proof} We process in three steps:

$(i)$ In the proof of Proposition~\ref{th:globSol}, we establish that for all $t \geq 0$, $F(t+\epsilon)-F(t) \leq 1$.
To lift this result to the time-changed picture with $\sigma=\Phi(t) \Leftrightarrow t=\Psi(\sigma)$, observe that
\begin{align*}
F(t+\epsilon)-F(t)
&= G(\Phi(t+\epsilon)) - G(\Phi(t)) \, , \\
&= G(\Phi(\Psi(\sigma)+\epsilon)) - G(\sigma) \, , \\
&= G(\sigma+\gamma(\sigma)) - G(\sigma) \, ,
\end{align*}
where we have defined the forward delay function $\gamma(\sigma) = \Phi(\Psi(\sigma)+\epsilon) - \sigma  \geq \nu_2 \epsilon> 0$.
Thus, for all $\sigma \geq 0$, we have $G(\sigma+\gamma(\sigma)) - G(\sigma) \leq 1$.

$(ii)$ Consider a full-blowup interval $(S,U) \subset \mathcal{B}_\sigma$.
Let us show by contradiction that if $\delta <\epsilon$, we must have  $U - S< \epsilon/\delta_2$.
Suppose $U-S \geq \epsilon/\delta_2$. Then, $\Psi(U)-\Psi(S) = \delta_2 (U-S) \geq \epsilon$, so that 
$\gamma(S)=\epsilon/\delta_2$.
In particular, using $(i)$, we have $G(S+\gamma(S))-G(S) \leq 1$, so that
\begin{eqnarray*}
\frac{G(S+\gamma(S))-G(S)}{\gamma(S)} \leq \frac{1}{\gamma(S)} = \frac{\delta_2}{\epsilon} < \frac{\delta_2}{\delta} \, ,
\end{eqnarray*}
where the last strict inequality follows from our assumption that $\delta <\epsilon$.
Since $G$ is $C_1$ and $U$ is defined as
\begin{align}\label{def:U}
U  
&
=  \inf \left\{ \xi > \sigma  \, \bigg \vert \, \int_{S}^\xi (g(\tau) - \delta_2/\delta)\, \mdd \tau \leq 0 \right\}  \, , \nonumber\\
&
= \inf \left\{ \xi > S  \, \bigg \vert \, \frac{G(\xi) - G(S)}{\xi-S} \leq \frac{\delta_2}{\delta}  \right\} \, ,
\end{align}
this implies that $U<S+\gamma(S)=S+\epsilon/\delta_2$.
This contradicts our original assumption that $U -S\geq \epsilon/\delta_2$.

$(iii)$ By $(ii)$, assuming $\delta<\epsilon$ implies that $U-S < \epsilon/\delta_2$, so that 
$\Psi(U)-\Psi(S) = \delta_2 (U-S) \leq \epsilon$, and thus $U-S \leq \gamma(S)$.
Then, using $(i)$, we have $G(U)-G(S) \leq 1$, so that
\begin{eqnarray*}
\frac{1}{U-S} \geq \frac{G(U)-G(S)}{U-S} = \frac{\delta_2}{\delta} = \frac{1}{\lambda_2+\nu_2 \delta} \, ,
\end{eqnarray*}
where the second to last equality follows from the definition of $U$ in \eqref{def:U}.
This shows that if $\delta<\epsilon$, we have $U-S \leq \lambda_2+\nu_2 \delta$.

\end{proof}

\begin{remark}
Eternal blowups will be avoided for all physical limit dynamics as long as these are obtained via 
limiting processes that satisfy the criterion $\delta<\epsilon$.
For instance, eternal blowups do not occur in nonbuffered solutions $(\delta=0)$ with vanishing refractory period $(\epsilon=0)$ 
if these are defined via limiting processes of the form $0 \leq \delta_n<\epsilon_n \downarrow 0$.
\end{remark}

\section{Proofs}\label{sec:proofs}


\subsection{Specification of generic initial conditions}\label{sec:initproofs}

\begin{proof}[Proof of Proposition~\ref{prop:initLink}]

Observe first that \eqref{eq:initLink3} necessarily follows from \eqref{eq:initLink2} as $\sigma \mapsto G_0(\sigma)=-g((\sigma,0))$
is a nondecreasing, \cadlag function on $\mathbbm{R}^-$.
To see why, consider $U_k$, $k \in \mathcal{K}$ defined as in \eqref{eq:initLink2}.
If $U_k$ is a continuity point of $G_0$ \eqref{eq:initLink3} holds.
Suppose then that $U_k$ is discontinuity point.
Then $G_0$ must have a positive jump discontinuity in $U_k$ so that
\begin{eqnarray*}
\lim_{\sigma \downarrow U_k} G_0(\sigma) = G_0(U_k^-) < G_0(U_k) = \lim_{\sigma \uparrow U_k} G_0(\sigma) \, .
\end{eqnarray*}
But, by \eqref{eq:initLink2}, it must be that 
$$G_0(U_k^-)- G_0(S_k)  \geq (U_k - S_k ) / \lambda_2\, , $$
so that we also have 
$$\lim_{\sigma \uparrow U_k} G_0(\sigma)  = G_0(U_k) > G_0(U_k^-) \geq  G_0(S_k) + (U_k - S_k ) / \lambda_2\, , $$
which contradicts the definition of $U_k$ as a first-passage time.
Therefore, $U_k$ must always be a continuity point and \eqref{eq:initLink3} always holds.

Let us then show that given nonexplosive initial conditions $(p_0,f_0)$, the class of measure $(q_0,g_0)$
defined in Proposition~\ref{prop:initLink} specifies generic initial conditions.
This amounts to checking that \eqref{eq:TCinit} in Definition~\ref{def:TCinit} holds. 
Consider the increasing function $\Phi_0: \mathbbm{R}^- \to  \mathbbm{R}^-$, $t \mapsto \Phi_0(t) = \nu_2 t + \lambda_2 F_0(t)$,
and its nondecreasing, continuous inverse $\Psi_0=\Phi_0^{-1}$.
For all $\sigma \leq 0$, plugging $t=\Psi(\sigma)$ in the definition of $\Phi_0$ yields
\begin{eqnarray*}
\Phi_0(\Psi_0(\sigma)) = \nu_2 \Psi_0(\sigma)+\lambda_2 F_0(\Psi_0(\sigma)) \, ,
\end{eqnarray*}
so that by definition of $G_0$, if $\sigma \notin \cup_{k \in \mathcal{K}} [S_k,U_k)$, we have
\begin{eqnarray*}
\Psi_0(\sigma) 
= \frac{1}{\nu_2}  \big( \Phi_0(\Psi_0(\sigma)) - \lambda_2 F_0(\Psi_0(\sigma)) \big) 
= \frac{1}{\nu_2}  \big( \sigma-\lambda_2 F_0(\Psi_0(\sigma)) \big) 
= \frac{1}{\nu_2}  \big( \sigma-\lambda_2 G_0(\sigma)) \big) \, ,
\end{eqnarray*}
where we have used that $\Phi_0$ is one-to-one  $\mathbbm{R}^- \to \mathbbm{R}^- \setminus \cup_{k \in \mathcal{K}} [S_k,U_k)$
and the definition of $G_0$  on $\mathbbm{R}^- \setminus \cup_{k \in \mathcal{K}} [S_k,U_k)$ given in \eqref{eq:initLink1}.

Moreover, consider $l \in \mathcal{K}$.
Since the intervals $[S_k,U_k]$, $k \in \mathcal{K}$, are nonoverlapping,
for all $n>0$, there is $\sigma_n \notin  \cup_{k \in \mathcal{K}} [S_k,U_k]$ such that $0 < S_l-\sigma_n<1/n$.
In particular, by continuity of $\Psi_0$, we have
\begin{eqnarray*}
\Psi_0(S_l) = \lim_{n \to \infty} \Psi_0(\sigma_n) 
= \lim_{n \to \infty}  \frac{1}{\nu_2}  \big( \sigma_n -\lambda_2 F_0(\Psi_0(\sigma_n)) \big)
= \frac{1}{\nu_2}  \big( S_l -\lambda_2 G_0(S_l)) \big) \, ,
\end{eqnarray*}
where we have used the definition of $G_0$ on  $ \{ S_k \}_{k \in \mathcal{K}}$ given in \eqref{eq:initLink1}.

Then, by \eqref{eq:initLink2}, we also have that if there is $k \in \mathcal{K}$ such that $S_k \leq \sigma < U_k$,
we must have
\begin{eqnarray*}
\frac{1}{\nu_2} \big( \sigma - \lambda_2 G_0(\sigma) \big) \leq \frac{1}{\nu_2} \big( S_k - \lambda_2 G_0(S_k) \big) = \Psi_0(S_k) = \Psi_0(\sigma)\, ,
\end{eqnarray*}
where last equality follows from the fact that $\Psi_0$ is flat on $[S_k,U_k]$.
Thus $\Psi_0(\sigma) \geq (\tau - \lambda_2 G_0(\tau) )/ \nu_2$ for all $\sigma \leq 0$,
and since $\Psi_0$ is nondecreasing, this implies that
\begin{eqnarray*}
\Psi_0(\sigma) =\sup_{\tau \leq \sigma} \Psi_0(\tau) \geq \frac{1}{\nu_2}\sup_{\tau \leq \sigma} \big( \tau - \lambda_2 G_0(\tau) \big) \, .
\end{eqnarray*}
At the same time, defining the time
\begin{eqnarray*}
S_0(\sigma) = \sum_{k \in \mathcal{K}} S_k \mathbbm{1}_{\{S_k < \sigma <U_k\}} + \left(1- \sum_{k \in \mathcal{K}} \mathbbm{1}_{\{S_k < \sigma <U_k\}} \right) \sigma \, ,
\end{eqnarray*}
which satisfies $-\infty< S_0(\sigma) \leq \sigma$, we have for all $\sigma \leq 0$
\begin{eqnarray*}
\Psi_0(\sigma)
=
 \big( S_0(\sigma)-\lambda_2 G_0(S_0(\sigma)) \big) /\nu_2 \, .
\end{eqnarray*}


Therefore $g_0$ and $\Psi_0$ defined above satisfies \eqref{eq:TCinitd}  and. \eqref{eq:TCinita} for $G_0(0)=0$, which
directly follows from the fact that by construction $\Psi_0(0)=0$.
The boundedness condition \eqref{eq:TCinitb} directly follows from \eqref{eq:mainProb2b}.
To establish \eqref{eq:TCinitc}, first observe that by integration by parts for Stieltjes integrals, we have
\begin{align*}
\int_{(S_n,0]} &(1-Q_0(\sigma)) \, \mdd G_0(\sigma) \\
&=
\big[(1-Q_0(\sigma))G_0(\sigma) \big]^0_{S_n^-} + \int_{(S_n,0]}   G_0(\sigma) \, \mdd Q_0(\sigma)) \, , \\
&=
G_0(0) - (1-Q_0(S_n^-))G_0(S_n^-) + \int_{(S_n,0]}   G_0(\sigma) \, \mdd Q_0(\sigma)) \, , \\
&=
F_0(0) - (1-P_\epsilon(-T_n))F_0(T_n) + \int_{(S_n,0]}   G_0(\sigma) \, \mdd P_\epsilon(\Psi_0(\sigma)) \, , 
\end{align*}
Remember then that the nonincreasing function $\sigma \mapsto P_\epsilon(-\Psi_0(\sigma))$ is flat on
$\cup_{k \mathcal{K}} [S_k, U_k]$, so that the restriction of $G_0$ on $[S_k, U_k]$ does not contribute 
to the integral term in the last equality above.
In particular, we may write 
\begin{align*}
\int_{(S_n,0]}   G_0(\sigma) \, \mdd P_\epsilon(-\Psi_0(\sigma))
=
\int_{(S_n,0]}   F_0(\Psi_0(\sigma)) \, \mdd P_\epsilon(-\Psi_0(\sigma)) \, ,
\end{align*}
which corresponds to set $g_0$ to its maximum possible value on $[S_k, U_k]$ according to \eqref{eq:initLink2},
i.e., $g_0=1/\lambda_2$.
Since $\Psi_0$ is continuous, the substitution formula is licit~\cite{Falkner:2012} so that we have
\begin{align*}
\int_{(S_n,0]}   G_0(\sigma) \, \mdd P_\epsilon(-\Psi_0(\sigma))
=
\int_{(T_n,0]}   F_0(t) \, \mdd P_\epsilon(-t) \, .
\end{align*}
Moreover, remembering that $T_n=\Psi_0(S_n) \to- \infty$ when $n \to \infty$, we have
\begin{eqnarray*}
\lim_{n \to \infty} (1-P_\epsilon(-T_n))F_0(T_n) = 0 \, ,
\end{eqnarray*}
by the same argument as in the proof of Proposition~\ref{prop:inactProp}.
This shows that
\begin{align*}
\int_{(-\infty,0]} (1-Q_0(\sigma)) \, \mdd G_0(\sigma) 
&= 
\lim_{n \to \infty} \int_{(S_n,0]} (1-Q_0(\sigma)) \, \mdd G_0(\sigma) \, , \\
&= 
F_0(0) - \int_{[0, \infty)}   F_0(-t) \, \mdd P_\epsilon(t) \, , \\
& = 
\int_{[0, \infty)}  (1-P_\epsilon(t)) \, \mdd F_0(-t) \, ,
\end{align*}
so that \eqref{eq:TCinitb} follows from \eqref{eq:mainProb2c}.
This concludes the proof.

\end{proof}


\subsection{Buffer mechanism proofs}\label{sec:buffproofs}

\begin{proof}[Proof of Proposition~\ref{prop:buffExist}]
Pick $T>0$.
If $\{ t  \, \vert \, 0<t<T , \,  z(t)> z_\delta \} = \varnothing$, the $\delta$-buffered problem \eqref{all:bufferMech} clearly admits 
a unique solution on $[0,T)$ given by $E=B=0$ and $\theta = K \circ z$.
Suppose then that $\{ t  \, \vert \, 0< t<T , \,  z(t)> z_\delta \} \neq \varnothing$.
For all $T>0$, consider the decomposition of the open set $\{ t  \, \vert \, 0<t<T , \,  z(t)> z_\delta \} $ as a nonempty countable union of 
nonoverlapping open intervals, which can be thought of blowup seeds.

\emph{(a) Case of a finite number of blowup seeds.} 
If the blowup seeds are finite, a solution to the $\delta$-buffered problem \eqref{all:bufferMech} can
be constructed iteratively as follows. By assumption, there is $K$, $0<K<\infty$, such that
\begin{eqnarray*}
\{ t  \, \vert  \, 0< t<T , \, z(t)> z_\delta \} = \bigcup_{k=1}^K (a_k,b_k) \, ,
\end{eqnarray*}
with $0<a_1<b_1<a_2<b_2< \ldots \leq T$.
Set  $T_1=a_1$ and define the sequence of blowup trigger times $T_l$, $1 \leq l \leq K$, and exit times $V_l$, $1 \leq l \leq K$ ,
via
\begin{eqnarray*}
V_k =  \inf \left\{ t> T_k \, \bigg \vert \, \int_{T_k}^t \big(L(z(s))-1/\delta \big) \, \mdd s < 0 \right\} \quad \mathrm{and} \quad
T_{l+1} = \inf_{1 \leq k \leq K} \left\{ a_k > V_l \right\}  \, ,
\end{eqnarray*}
with the usual convention that $ \inf \varnothing = \infty$.
Next, for all $t$, $0<t<T$, define the last-blowup-trigger time $T_0(t)$ and the last blowup exit time $V_0(t)$ as
\begin{eqnarray*}
T_0(t) = \sup_{1 \leq l \leq K} \left\{ T_l \leq t \right\}  \quad \mathrm{and} \quad V_0(t) = \sup_{1 \leq l \leq K} \left\{ V_l \leq t \right\} \, ,
\end{eqnarray*}
with the  usual convention that $ \sup \varnothing = -\infty$
Specify the excess function as 
\begin{eqnarray*}
E(t)  = \mathbbm{1}_{\{ T_0(t)>V_0(t) \}} \int_{T_0(t)}^{t} (L(z(s)) - 1/\delta) \, \mdd s \, .
\end{eqnarray*}
and set $B$ and $\theta$ according to \eqref{eq:buff1}.
This implies  that for all $t$, $0<t<T$, we have
\begin{eqnarray*}
B(t)=\sum_{l=1}^K \mathbbm{1}_{(T_l \wedge T,V_l \wedge T)}(t) \geq \sum_{k=1}^K \mathbbm{1}_{(a_k,b_k)}(t) \, .
\end{eqnarray*}
Then, $(\theta,B,E)$ solves the $\delta$-buffered problem \eqref{all:bufferMech} on $(0,T]$.
Uniqueness follows from considering the solution sequentially on $[0,T_1]$ and on the 
consecutive nonempty intervals of the form $[T_l, V_l) \cap [0,T)$,  $[V_l, T_{l+1}) \cap [0,T)$, $1 \leq l \leq K$.

\emph{(b) Case of an infinite number of blowup seeds.} 
If the interval decomposition of $\{ t> 0 \, \vert  \,  t<T , \, z(t)> z_\delta \}$ is infinite, we may write it as
\begin{eqnarray*}
\{t  \, \vert  \, 0< t<T , \, z(t)> z_\delta \} = \bigcup_{i \in \mathbb{N}} (A_i,B_i) \, ,
\end{eqnarray*}
where the intervals are listed by decreasing size: $\Delta_0=B_0-A_0 \geq \Delta_1=B_1-A_1 \geq \ldots$.
Clearly, $\sum_{i \in \mathbbm{N}} \Delta_i \leq T$.
For all $n\in \mathbbm{N}$, there is $I_n>0$ such that $\sum_{i > I_n} \Delta_i \leq 1/n$.
Let us then specify $E_n$ as the excess function for the unique solution of the 
$\delta$-buffered problem \eqref{all:bufferMech} for the modified continuous input function
\begin{eqnarray*}
z_n(t) = 
\left\{
\begin{array}{ccc}
 z(t)  & \: \: \mathrm{if} \: \: & t \notin \cup_{i > I_n}(A_i,B_i) \, , \\
1/\delta  & \: \: \mathrm{if}  \: \: &  t \in \cup_{i > I_n}(A_i,B_i)  \, .  \\  
\end{array}
\right.
\end{eqnarray*}
This solution exists and is unique by $(a)$ as $z_n$ is defined as a continuous function with
finite  blowup seed decomposition: 
\begin{eqnarray*}
\{ t \, \vert  \, 0< t<T , \, z_n(t)> z_\delta \} = \bigcup_{i=1}^{I_n} (A_i,B_i) \, .
\end{eqnarray*}

\emph{(i) Existence.} 
By construction, for all $n \in \mathbbm{N}$, $z_{n+1} \geq z_n$.
Consider $E_{n+1}$ and $E_n$ the excess functions associated to the unique solutions
to the $\delta$-buffered problem \eqref{all:bufferMech} on $[0,T)$ for $z_{n+1}$ and $z_n$,
respectively.

Let us show that $E_{n+1} \geq E_{n}$.
Pick $t$, $0<t<T$, if $E_n(t)=0$, there is nothing to show.
If $E_n(t)>0$, consider $T_{n,0}(t)$, the last-blowup-trigger time for $E_n$.
By definition, for all $s$, $T_{n,0}(t) <s \leq t$, we have
\begin{equation*}
0<E_n(s) = \int_{T_{n,0}(s)}^s \big(L(z_n(u)) -1/\delta\big) \, \mdd u \leq  \int_{T_{n,0}(s)}^s \big(L(z_{n+1}(u)) -1/\delta \big) \, \mdd u \, .
\end{equation*}
The above inequality shows that $T_{n+1,0}(s) \leq T_{n,0}(s)$. This implies that
\begin{eqnarray*}
E_{n+1}(s) = E_{n+1}(T_{n,0}(t)) +  \int_{T_{n,0}(t)}^s \big(L(z_{n+1}(u)) -1/\delta\big) \, \mdd u \geq E_{n+1}(T_{n,0}(t)) +  E_n(s) \, ,
\end{eqnarray*}
so that in particular $E_{n+1}(t) \geq E_n(t)$, as $E_{n+1}(T_{n,0}(t)) \geq 0$.
Therefore, the functions $E_n$ specify an increasing sequence of functions on $[0,T)$.
Using $E_n$ to specify $(B_n,\theta_n)$ according to \eqref{eq:buff1}, this implies that 
for all $t$, $0<t<T$, $E_n(t)$, $B_n(t)$, and $\theta_n(t)$ are pointwise increasing, 
upper bounded, and thus converging, sequences toward (lower semi-continuous) limit functions $E$, $B$, $\theta$, respectively.
Moreover, for all $T>0$, by continuity of $z$ on $[0,T]$, $E_n$, $B_n$, and $\theta_n$ are uniformly upper bounded by 
integrable functions on $[0,T]$, whereas $z_n$ uniformly converges toward $z$ on $[0,T]$.
By the monotone convergence theorem, \eqref{eq:buff3} is satisfied by  $(B,\theta,E)$, and $E$ is necessarily continuous.
This shows that  $(B,\theta,E)$ solves the $\delta$-buffer problem on $[0,T)$, for arbitrary $T>0$, and thus on $\mathbbm{R}^+$.

\emph{(ii) Uniqueness.} 
Suppose that $\theta$ solves the $\delta$-buffer problem \eqref{all:bufferMech} for a continuous function
 $z$ with  blowup indicator function $B$ and excess function $E$.
Consider the modified input function $z_n$ as defined in $(i)$, whose $\delta$-buffer problem admits
a unique solution as specified in $(a)$, which we denote by $\theta_n$, with blowup indicator 
and excess function denoted by $B_n$ and $E_n$, respectively.
Clearly, we have $z \geq z_n$, which implies that $E \geq E_n$, where $E$ and $E_n$ are the excess functions
associated to $z$ and $z_n$. This follows from the exact same argument as in $(i)$, 
where we show that $z_{n+1} \geq z_n$ implies $E_{n+1} \geq E_n$, 

Next, for all $t$, $0<t<T$, consider 
\begin{eqnarray*}
T_0(t) = \sup \{ 0<s \leq t \, \vert \, E(s) = 0 \} \quad \mathrm{and} \quad T_{n,0}(t) = \sup \{ 0<s \leq t \, \vert \, E_{n,0}(s) = 0 \} \, .
\end{eqnarray*}
As $E \geq E_n$, we have $0=E(T_0(t)) \geq E_n(T_0(t)) \geq 0$ so that $E_n(T_0(t))=0$, and necessarily $T_0(t) \leq T_{n,0}(t)$.
Moreover, we have
\begin{eqnarray*}
E(t) - E_n(t)
&=&
\int_{T_0(t)}^t \big(L(z(s)) -1/\delta \big) \, \mdd s - \int_{T_{n,0}(t)}^t \big(L(z_n(s)) -1/\delta \big) \, \mdd s \, , \\
&=&
\int_{T_0(t)}^t \big(L(z(s)) -L(z_n(s)) \big) \, \mdd s + \int_{T_0(t)}^{T_{n,0}(t)} \big(L(z_n(s)) -1/\delta \big) \, \mdd s \, .
\end{eqnarray*}
Pick $\epsilon>0$.
As $L$ is continuous, there is $d_1>0$ such that $\vert z-z_\delta \vert <d_1$ implies $\vert L(z)-1/\delta \vert \leq \epsilon/(2T)$.
As $z$ is continuous, it is also uniformly continuous on $[0,T]$, so that there is $d_2>0$ such that
$\vert t-s \vert < d_2$ implies $\vert z(t) - z(s) \vert< d_1$.
But then by construction, for all $n>N_1 = 1/d_2$, we have for all $t$, $t<0<T$,
\begin{eqnarray*}
\vert z(t) - z_n(t) \vert \leq  \sup_{i \geq I_n} \left( \sup_{t,s \in (A_i,B_i)} \vert z(t) - z(s)  \vert \right) \leq  d_1 \, ,
\end{eqnarray*}
as $\sum_{i>I_n} \Delta_i \leq 1/n < d_2$ where $\Delta_i=B_i-A_i$.
This shows that  for all $n>N_1$
\begin{eqnarray*}
\int_{T_0(t)}^t \big(L(z(s)) -L(z_n(s)) \big) \, \mdd s \leq \epsilon\, .
\end{eqnarray*}

Then, observe that since $E_n(T_0(t))=0$, we have
\begin{eqnarray*}
\int_{T_0(t)}^{T_{n,0}(t)} B_n(s) \big(L(z_n(s)) -1/\delta \big) \, \mdd s
= E_n(T_{n,0}(t)) - E_n(T_0(t)) = 0 \, .
\end{eqnarray*}
Therefore, we can write
\begin{align*}
\int_{T_0(t)}^{T_{n,0}(t)}  & \big(L(z_n(s)) -1/\delta \big) \, \mdd s \\
&=
\int_{T_0(t)}^{T_{n,0}(t)} (1-B_n(s)) \big(L(z_n(s)) -1/\delta \big) \, \mdd s \, , \\
&\leq
\int_{T_0(t)}^{T_{n,0}(t)} \left(1- \sum_{i=1}^{I_n} \mathbbm{1}_{(A_i,B_i)}(s)\right) \big(L(z_n(s)) -1/\delta \big) \, \mdd s \, , \\
&\leq
\int_{T_0(t)}^{T_{n,0}(t)} \left( \sum_{i \geq 1} \mathbbm{1}_{(A_i,B_i)}(s) - \sum_{i=1}^{I_n} \mathbbm{1}_{(A_i,B_i)}(s))\right) \big(L(z_n(s)) -1/\delta \big) \, \mdd s \, , 
\end{align*}
where the first inequality follows from the fact that by construction, we have 
\begin{eqnarray*}
B_n = \sum_{i=1}^{I_n} \mathbbm{1}_{(T_i \wedge T,V_i \wedge T)} \geq \sum_{i=1}^{I_n} \mathbbm{1}_{(A_i,B_i)} \, ,
\end{eqnarray*}
and where the last inequality follows from the fact that
\begin{eqnarray*}
L(z_n(s)) -1/\delta &\leq& \mathbbm{1}_{\{L(z_n(s)) >1/\delta\}} (L(z_n(s)) -1/\delta) \, , \\
&=& \mathbbm{1}_{\{z_n>z_\delta\}} (L(z_n(s)) -1/\delta) \, ,  \\
&=&  \sum_{i \geq 1} \mathbbm{1}_{(A_i,B_i)}(s)  (L(z_n(s)) -1/\delta) \, .
\end{eqnarray*}
This implies that
\begin{eqnarray*}
\int_{T_0(t)}^{T_{n,0}(t)} \big(L(z_n(s)) -1/\delta \big) \, \mdd s
&\leq&
 \sum_{i > I_n} \int_{T_0(t)}^{T_{n,0}(t)}  \mathbbm{1}_{(A_i,B_i)}(s)  \big(L(z_n(s)) -1/\delta \big) \, \mdd s \, ,  \\
&\leq&  L(\Vert z \Vert_{[0,T]})  \sum_{i > I_n} \Delta_i < L(\Vert z \Vert_{[0,T]})/n \, .
\end{eqnarray*}
Altogether, setting $N_2=(\epsilon/2L(\Vert z \Vert_{[0,T]})$, for all $n>\max(N_1,N_2)$, we have
\begin{eqnarray*}
0 \leq E(t) - E_n(t)  \leq \epsilon \, ,
\end{eqnarray*}
showing that  $E =\lim_{n \to \infty} E_n$. This shows that there is a unique excess function $E$, and therefore
a unique solution $\theta$  to the $\delta$-buffer problem \eqref{all:bufferMech}.

\end{proof}


\begin{proof}[Proof of Proposition~\ref{prop:buffReg}]
We proceed in three steps:

$(i)$ \emph{Candidate set $\mathcal{D}$ for discontinuity times}.  
Observe that by definition, $L(z)=K(z)$ when $z=z_\delta$ so that the function 
\begin{eqnarray*}
(z,E) \in \mathbbm{R} \times \mathbbm{R} \mapsto \theta(z,E) = B(z,E) L(z) + (1-B(z,E)) K(z)
\end{eqnarray*}
with $B(z,E)=\mathbbm{1}_{\{z>z_\delta \} \cup \{E>0\}}$ is continuous everywhere except on $\{(z,E) \, \vert  \, E=0  , \,  z<z_\delta\}$.
Since $E$ is necessarily continuous, this shows that $\theta(t)=B(z(t),E(t))$ is continuous everywhere except possibly on the set
\begin{eqnarray*}
\mathcal{D} = \partial \{ t > 0 \, \vert \, E(t)=0 \} \cap \{ z(t) < z_\delta \}  = \partial \{ t > 0 \, \vert \, E(t)=0 \} \cap \{ \theta(t) < 1/\delta \} \, ,
\end{eqnarray*}
where $\partial \{ t > 0 \, \vert \, E(t)=0 \}$ is the boundary of the set $\{ t \geq 0 \, \vert \, E(t)=0 \}$ and where the equality follows from Lemma~\ref{lem:buffcond}.
Observe that by continuity of $E$, we must have  $\mathcal{D} \subset  \{ t > 0 \, \vert \, E(t)=0 , \, \theta(t) < 1/\delta \}$.

$(ii)$ \emph{Right-continuity of $\theta$.} Pick $t \in \mathcal{D}$.
As the set $\{ t \geq 0 \, \vert \, \theta(t) < 1/\delta \} = \{ t \geq 0 \, \vert \,  z(t)<z_\delta \} $ is open by continuity of $z$,  there is $t'>t$ such that $\theta< 1/\delta$ on $[t,t')$, which implies that $B(s)=\mathbbm{1}_{\{ E(s) > 0\}}$ for all $t \leq s <t'$.
But then, by \eqref{eq:buff3} with $E(t)=0$, we must have that $E=0$ on the whole interval $[t,t')$, so that $\theta=K \circ z$ on $[t,t')$.
In particular, $\theta$ is right-continuous in $t$. This implies that the set $\mathcal{D}$ is at most countable.
Furthermore, we have
\begin{eqnarray*}
\theta(t)= K(z(t)) < 1/\delta \quad \mathrm{with} \quad z(t)<z_\delta \, ,
\end{eqnarray*}
and because $t  \in \partial \{ t > 0 \, \vert \, E(t)=0 \}$, there exists a sequence $t_n \to t$ with $t_n<t$ such that $E(t_n)>0$.
Any such sequence satisfies
\begin{eqnarray*}
\theta(t_n)= L(z(t_n)) \xrightarrow[n \to \infty]{}  L(z(t)) > K(z(t)) \, .
\end{eqnarray*}

$(iii)$ \emph{Left limits of $\theta$.} To prove that $\theta$ is left continuous in all $t \in \mathcal{D}$, observe that either $(a)$ there is a left open interval $(t',t)$, $t'<t$, over which $E(t)>0$, $(b)$ there is a left open interval $(t',t)$, $t'<t$, over which $E(t)=0$ or $(c)$ there are sequences $t'_n \to t$ and $t_n \to t$, with $t_n'<t$, and such that $E(t'_n)=0$ and $E(t_n)>0$.
If $(a)$ holds, for all $t'<s<t$, we have $\theta= L \circ z$ so that left continuity follows from the continuity of $z$.
A similar result follows from $\theta= K \circ z$ if $(b)$ holds.
Suppose that $(c)$ holds, then $z(T_0(t_n))= z_\delta$ and $T_0(t_n)\to t$ therefore by continuity of  $z$, we have $z(t)=z_\delta$,
 but this contradicts the fact that $t\in \mathcal{D}$, so $\theta$ must be left continuous.

 \end{proof}



\subsection{Proofs about the running-supremum function}\label{sec:RSfunction}

\begin{proof}[Proof of Corollary \ref{cor:PsiGamma}]
Since $\tT\leq \Theta$, their inverse functions $\tGamma=\tT^{-1}$ and $\Gamma=\Theta^{-1}$ are such that $\Gamma \leq \tGamma$.
Moreover, we have
$\tT\ (v)-\tT\ (t) \leq (v-t)/\delta$,
so that for all $\sigma=\Phi (t)>0$, $\tau=\Phi (t)>0$, $\tGamma(\sigma) -\tGamma(\tau) \geq \delta (\sigma - \tau)$.
Therefore $\sigma \mapsto \tGamma (\sigma) - \delta \sigma$ is a nondecreasing function $\mathbbm{R}^+ \to \mathbbm{R}^+$.
This implies that for all $\sigma>0$, we have
\begin{eqnarray*}
\tGamma(\sigma) - \delta \sigma 
=
 \sup_{0 \leq \tau \leq \sigma} ( \tGamma (\tau) - \delta \tau) 
 \geq 
 \sup_{0 \leq \tau \leq \sigma} ( \Gamma(\tau) - \delta \tau) \, , 
\end{eqnarray*}
where the inequality follows from the fact that $\Gamma \leq \tGamma$.
To conclude, let us show the opposite equality.
If $\tGamma(\sigma) = \Gamma(\sigma)$, there is nothing to show.
Suppose then that $\tGamma(\sigma) > \Gamma(\sigma)$, which means that 
$E(\tGamma(\sigma))>0$, so that $t=\tGamma(\sigma)$ is in a full-blowup interval.
Consider the last-blowup-trigger time $T_0(t)$ and define $S_0(\sigma)=\tT(T_0(t))=\tT(T_0(\tGamma(\sigma)))$.
Then, \eqref{eq:ThetaT}, which always holds if $\Theta(0)=E(0)=0$, reads 
\begin{eqnarray*}
\Theta(\tGamma(\sigma)) - \Theta(\tGamma(S_0(\sigma)))   = E(\tGamma(\sigma)) + (\tGamma(\sigma)-\tGamma(S_0(\sigma)))/\delta \, .
\end{eqnarray*}
Remembering that $\Theta=\tT+E=\tGamma^{-1}+E$, this means that
\begin{eqnarray*}
\sigma - \Theta(\tGamma(S_0(\sigma)))   =  (\tGamma(\sigma)-\tGamma(S_0(\sigma)))/\delta \, .
\end{eqnarray*}
Using the fact that $\tGamma(S_0(\sigma))=\Gamma(S_0(\sigma))=\Theta^{-1}(S_0(\sigma))$, this further simplifies to
\begin{eqnarray*}
\sigma  - S_0(\sigma)   =  (\tGamma(\sigma)-\Gamma(S_0(\sigma)))/\delta \, .
\end{eqnarray*}
But then, since $S_0(\sigma) \leq \sigma$, we have
\begin{eqnarray*}
\tGamma(\sigma) - \delta \sigma 
=
\Gamma(S_0(\sigma)) - \delta S_0(\sigma)
\leq 
\sup_{0 \leq \tau \leq \sigma} ( \Gamma(\tau) - \delta \tau)  \, ,
\end{eqnarray*}
which is the desired inequality and concludes the proof.
\end{proof}

\begin{proof}[Proof of Lemma~\ref{lem:Sup}]
It is clear from \eqref{eq:supGamma} that $\sigma \mapsto \tGamma(\sigma) \geq \Gamma$ is a nondecreasing function
that inherits the continuity of $\Gamma$ via the running $\sup$ operator, which may be replaced by a $\max$ for continuous $\Gamma$
operator.

$(i)$ Let us show that  $\tGamma'(\sigma)=0$ for all $\sigma > 0$ such that $\Gamma'(\sigma)< 0$ or $\Gamma(\sigma)<\tGamma(\sigma)$.

If $\Gamma'(\sigma)<0$, there is $d>0$, such that $\Gamma$ is decreasing on $(\sigma-d,\sigma+d)$,
so that its maximum value over $[0,\sigma]$ must be attained for some $S\leq \sigma-d$.
This means that $\tGamma(\sigma)= \Gamma(S)$ is constant on $(\sigma-d,\sigma+d)$.
In particular $\tGamma'(\sigma)=0$.

Similarly, if $\tGamma(\sigma)>\Gamma(\sigma)$, there is $d>0$, such that $\Gamma<\tGamma$ on $(\sigma-d,\sigma+d)$,
so that the maximum value of $\tGamma$ over $[0,\sigma]$ must be attained for some $S\leq \sigma-d$.
This again implies that $\tGamma'(\sigma)=0$.

$(ii)$
Consider then $\sigma>0$ such that $\Gamma'(\sigma)=0$ and $\Gamma(\sigma)=\tGamma(\sigma)$.
Let us introduce for all $\tau>0$
\begin{align*}
S(\tau) = \sup \{ \xi \geq 0 \, \vert \, \Gamma(\xi) = \tGamma(\tau)  \}  = \sup \{ \xi \geq 0 \, \vert \, \Gamma(\xi) = \max_{0 \leq \xi \leq \tau}  \Gamma(\xi)  \} \leq \tau \, 
\end{align*}
which allows one to write for all $\tau \geq 0$, $\tGamma(\tau)=  \Gamma(S(\tau))$.
Observe  that $S(\sigma)=\sigma$ since we assume $\Gamma(\sigma)=\tGamma(\sigma)$.

If there is $\tau>\sigma$ such that $S(\sigma)=S(\tau)$, then for all $\xi$, $\sigma \leq \xi \leq \tau$, $\tGamma(\xi)=\tGamma(\sigma)$.
Otherwise, for all $\tau>\sigma$ we necessarily have $S(\sigma)<S(\tau)$  and 
\begin{eqnarray*}
  \Gamma(S(\tau))  = \tGamma(\tau) = \max_{0 \leq \xi \leq \tau} \Gamma(\xi)> \max_{0 \leq \xi \leq \sigma}  \Gamma(\xi) =  \tGamma(\sigma) = \tGamma(S(\sigma)) \, .
\end{eqnarray*}
Moreover, we have
\begin{eqnarray*}
\frac{\tGamma(\tau) - \tGamma(\sigma)}{\tau - \sigma}
&=&
\frac{\Gamma(S(\tau)) - \Gamma(S(\sigma))}{\tau - \sigma} \, , \\
&=&
 \frac{\Gamma(S(\tau)) - \Gamma(S(\sigma))}{S(\tau) - S(\sigma)}  \left(  \frac{S(\tau)-S(\sigma)}{\tau - \sigma} \right)\, . \\
&\leq&
 \frac{\Gamma(S(\tau)) - \Gamma(\sigma)}{S(\tau) - \sigma}  \, . 
\end{eqnarray*}
where the last inequality follows from the facts that $\Gamma(S(\tau)) \geq \Gamma(S(\sigma))$, $S(\sigma)=\sigma$ and $\sigma < S(\tau) \leq \tau$.
Similar arguments hold under the assumption that $\tau<\sigma$, allowing one to write
\begin{eqnarray*}
0 \leq w_{\tGamma} (\sigma,\tau) \leq \frac{\Gamma(S(\tau)) - \Gamma(\sigma)}{S(\tau) - \sigma} \xrightarrow[\tau \to \sigma]{} \Gamma'(\sigma) = 0\, .
\end{eqnarray*}
This shows that $\tGamma'(\sigma)=0$ if $\Gamma'(\sigma)=0$ and $\Gamma(\sigma)=\tGamma(\sigma)$.

$(iii)$
Let us finally assume that $\Gamma'(\sigma) > 0$ and $\Gamma(\sigma)=\tGamma(\sigma)$, so that there is $d_1>0$ such that $\Gamma$ is increasing on $(\sigma-d_1,\sigma+d_1)$.
Let us introduce
\begin{align*}
s(\tau) = \inf \{ \xi \geq 0 \, \vert \, \Gamma(\xi) = \tGamma(\tau)  \}  = \sup \{ \xi \geq 0 \, \vert \, \Gamma(\xi) = \max_{0 \leq \xi \leq \tau}  \Gamma(\xi)  \} \leq S(\tau) \leq \tau \, .
\end{align*}

Suppose $s(\sigma)=\sigma$.
Then $\Gamma(\sigma-d_1) \leq \tGamma(\sigma-d_1)<\tGamma(\sigma)=\Gamma(\sigma)$.
Thus, since $\Gamma$ is continuous increasing on $[\sigma-d_1,\sigma]$, there exists a unique $d_2$, $0<d_2 \leq d_1$ such that $\Gamma(s-d_2)=\tGamma(\sigma-d_1)$.
We necessarily have that $\tGamma=\Gamma$ on the interval $(\sigma-d_2,\sigma+d_1)$, so that in particular $\tGamma'(\sigma)=\Gamma'(\sigma)$.
This shows that a non-differentiable point $\sigma$ must be such that $\Gamma'(\sigma) > 0$, $\Gamma(\sigma)=\tGamma(\sigma)$ and $s(\sigma)<\sigma$.
But then $\tGamma$ is constant on $(s(\sigma),\sigma]$ and $\tGamma_-'(\sigma)=0$, whereas $\tGamma=\Gamma$ on the interval $[\sigma,\sigma+d_1)$ 
so that $\tGamma_+'(\sigma)=\Gamma'(\sigma)$.
There can only be a countable number of such points for being defined as endpoints of nonoverlapping, nonempty intervals of the form $(s(\sigma),\sigma)$.

This concludes the proof.
\end{proof}


\subsection{First-passage density bounds}\label{sec:boundh}

\begin{proposition}\label{prop:boundh}
\begin{subequations}\label{eq:allboundh}
For fixed $x>0$ and all $\Psi \in \mId$,  the density $\sigma \mapsto h(\sigma,0,x)$ of the the first-passage time $\sigma_{0,x}$, $x>0$, satisfies for all $\sigma, \tau$, $0 \leq \tau \leq \sigma$
\begin{align}\label{eq:boundh1}
h(\sigma,\tau,x)  &\leq  \left( \frac{x}{\sqrt{(\sigma-\tau)^3}} +  \frac{\mu_M-\mu_m}{\sqrt{\sigma-\tau}} \right)\exp\left(- \frac{\left(x-\int_\tau^\sigma \mu(\xi) \mdd \xi \right)^2}{2(\sigma-\tau)} \right) \, .
\end{align}
where $\mu_m =(\lambda_1/\lambda_2) \wedge (\nu_1/\nu_2)$ and  $\mu_M =(\lambda_1/\lambda_2) \vee (\nu_1/\nu_2)$.
Moreover, for fixed $x>0$, there is a constant $\sigma^\star(x)>0$ that is independent of $\Psi \in \mId$, such that for all $\sigma$, $0\leq\sigma \leq \sigma^\star(x)$, we have
\begin{align}\label{eq:boundh2}
\Vert h(\cdot ,\tau,x) \Vert_{\tau, \sigma}  
\leq  
\left( \frac{x}{\sqrt{(\sigma-\tau)^3}} +  \frac{\mu_M-\mu_m}{\sqrt{\sigma-\tau}} \right)e^{- \frac{\left(x-\mu_M \right)^2}{2(\sigma-\tau)} }  \xrightarrow[\sigma \downarrow 0]{}  0 \, , 
\end{align}
uniformly in $\Psi \in \mId$.
\end{subequations}
\end{proposition}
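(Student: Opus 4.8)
The plan is to read $h(\sigma,\tau,x)=\partial_\sigma H(\sigma,\tau,x)$ as the first‑passage density of a one–dimensional diffusion and then to use the classical Gaussian/Durbin representation of such densities, keeping explicit track of how the bounded measurable drift $\mu$ enters (recall that for every $\Psi\in\mId$, $\delta>0$, one has $\Psi'\in[0,1/\nu_2]$, hence $\mu_m\le\mu\le\mu_M$). Concretely, with $\widetilde{W}_\sigma=x+(W_\sigma-W_\tau)-\int_\tau^\sigma\mu$ a Brownian motion with drift $-\mu(\cdot)$ started from $x$ at time $\tau$, $\sigma_{\tau,x}$ is the first hitting time of $0$ by $\widetilde{W}$. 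Writing $\mu=\mu_m+(\mu-\mu_m)$ and $R(\sigma)=\int_\tau^\sigma(\mu-\mu_m)$, one has $\widetilde{W}_\sigma=\overline{W}_\sigma-R(\sigma)$, where $\overline{W}$ is a Brownian motion with constant drift $-\mu_m$ from $x$ and $R$ is nondecreasing, $R(\tau)=0$, Lipschitz with $0\le R'=\mu-\mu_m\le\mu_M-\mu_m$; thus $\sigma_{\tau,x}$ is the first passage of $\overline{W}$ to the Lipschitz moving boundary $R$. This reformulation is what produces the average drift in the exponent: since $\mu_m(\sigma-\tau)+R(\sigma)=\int_\tau^\sigma\mu$, the (unkilled) density of $\overline{W}_\sigma-R(\sigma)$ at $0$ is $\tfrac{1}{\sqrt{2\pi(\sigma-\tau)}}\exp\!\big(-(x-\int_\tau^\sigma\mu)^2/(2(\sigma-\tau))\big)$.

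For \eqref{eq:boundh1} I would then invoke a Durbin‑type representation $h(\sigma,\tau,x)=b(\sigma)\,\tfrac{1}{\sqrt{2\pi(\sigma-\tau)}}\exp\!\big(-(x-\int_\tau^\sigma\mu)^2/(2(\sigma-\tau))\big)$, where $b(\sigma)=\lim_{v\uparrow\sigma}(\sigma-v)^{-1}\,\mathbb{E}\big[(\overline{W}_v-R(v))\,\mathbbm{1}_{\{\sigma_{\tau,x}>v\}}\mid \overline{W}_\sigma=R(\sigma)\big]$ is Durbin's infinitesimal‑flux factor, and bound $b(\sigma)\le \tfrac{x}{\sigma-\tau}+(\mu_M-\mu_m)$ — the first term being the initial distance $x$ to the boundary divided by the elapsed time, the second the oscillation of the boundary slope $R'$. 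Since $1/\sqrt{2\pi}<1$, this gives $b(\sigma)\,\tfrac{1}{\sqrt{2\pi(\sigma-\tau)}}\le\big(\tfrac{x}{(\sigma-\tau)^{3/2}}+\tfrac{\mu_M-\mu_m}{(\sigma-\tau)^{1/2}}\big)$, which is exactly \eqref{eq:boundh1}. An equivalent route is to start from the single‑layer‑potential/Volterra representation of $h$ already set up in the proof of Proposition~\ref{prop:solG}: the renewal relation between the killed and unkilled densities of $\overline{W}$, differentiated at the boundary, turns into a Volterra equation of the second kind for $h$ with an integrable Abel‑type kernel ($O((\sigma-s)^{-1/2})$ near $s=\sigma$) and a source term $\tfrac{(x-\int_\tau^\sigma\mu)^+}{2\sqrt{2\pi}(\sigma-\tau)^{3/2}}\exp\!\big(-(x-\int_\tau^\sigma\mu)^2/(2(\sigma-\tau))\big)$; a generalized Grönwall inequality for such kernels then yields a bound of the same form. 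Either way, all quantities depend on $\Psi$ only through $\mu$, and on $\mu$ only through $[\mu_m,\mu_M]$, so the bound is automatically uniform over $\mId$ for every $\delta>0$ and does not involve $P_\epsilon$.

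The bound \eqref{eq:boundh2} is then an elementary consequence of \eqref{eq:boundh1}. Using $\mu_m(\sigma'-\tau)\le\int_\tau^{\sigma'}\mu\le\mu_M(\sigma'-\tau)$, as long as $\sigma'-\tau<x/\mu_M$ one has $(x-\int_\tau^{\sigma'}\mu)^2\ge(x-\mu_M(\sigma'-\tau))^2$, hence $h(\sigma',\tau,x)\le\psi(\sigma'-\tau)$ with $\psi(t)=\big(\tfrac{x}{t^{3/2}}+\tfrac{\mu_M-\mu_m}{t^{1/2}}\big)\exp\!\big(-(x-\mu_M t)^2/(2t)\big)$. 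The function $\psi$ is continuous on $(0,x/\mu_M)$ and $\psi(t)\to0$ as $t\downarrow0$, since the super‑exponential factor $\exp(-x^2/(2t)(1+o(1)))$ dominates the polynomial blow‑up; hence $\psi$ attains its maximum over $(0,x/\mu_M)$ at some $\sigma^\star(x)>0$ depending only on $x,\mu_m,\mu_M$, and for $\sigma-\tau\le\sigma^\star(x)$ the function $\psi$ is nondecreasing on $(0,\sigma-\tau]$. Therefore $\Vert h(\cdot,\tau,x)\Vert_{\tau,\sigma}=\sup_{\tau\le\sigma'\le\sigma}h(\sigma',\tau,x)\le\psi(\sigma-\tau)$, which tends to $0$ as $\sigma\downarrow\tau$, uniformly in $\Psi\in\mId$ because $\psi$ does not depend on $\Psi$.

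The main obstacle I expect is the quantitative estimate $b(\sigma)\le x/(\sigma-\tau)+(\mu_M-\mu_m)$ (equivalently, the Grönwall/resolvent estimate for the Abel‑type Volterra equation). The drift $\mu$ is merely bounded and measurable — it is an affine function of $\Psi'\in L^\infty$ — so one cannot integrate by parts against it, and the first‑passage bridge appearing in Durbin's factor carries a drift that is singular (though pathwise integrable) at its endpoint; handling these points requires either a careful bridge computation, a mollification of $\mu$ followed by a stability/limit argument through the Volterra representation, or a direct resolvent bound for the integral equation. Every other step — the reformulation as a Lipschitz‑boundary first‑passage problem, the identification of the Gaussian factor, and the deduction of \eqref{eq:boundh2} — is routine.
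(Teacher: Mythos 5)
Your proposal correctly identifies the shape of the argument (express $h$ as a Gaussian density times a prefactor, bound the prefactor, then deduce the uniform small-time estimate), and you are also right that the second part $\eqref{eq:boundh2}$ is an elementary consequence of $\eqref{eq:boundh1}$: your argument there is essentially the paper's (use $\mu \leq \mu_M$ to replace $\int\mu$ by $\mu_M(\sigma-\tau)$ in the exponent when $\sigma-\tau \leq x/\mu_M$, note the resulting function of $\sigma-\tau$ vanishes at $0$, is unimodal, and does not depend on $\Psi$).

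However, for the key estimate $\eqref{eq:boundh1}$ there is a genuine gap, and you flag it yourself: you never prove the bound $b(\sigma)\le x/(\sigma-\tau)+(\mu_M-\mu_m)$ on the Durbin flux factor, nor the corresponding resolvent estimate in the Volterra formulation. The difficulty you anticipate (handling conditioned processes or singular kernels, possibly after mollifying $\mu$) is indeed where a Durbin-based argument would bog down, and a crude Gr\"onwall iteration on the Abel kernel would typically destroy the Gaussian suppression in the exponent --- it would produce an exponentially growing resolvent, not the sharp factor $\exp(-(x-\int\mu)^2/(2(\sigma-\tau)))$. The paper avoids both routes by following Peskir: it starts from the master integral equation
\begin{equation*}
C\!\left(\tfrac{z}{\sqrt{\sigma}}\right)=\int_0^\sigma C\!\left(\tfrac{z-x+M(\tau)}{\sqrt{\sigma-\tau}}\right) h(\tau,0,x)\,\mathrm{d}\tau \, ,
\end{equation*}
differentiates once in $z$ and once in $\sigma$ (at $z = x - M(\sigma)$) to obtain two exact identities: a closed Volterra formula for $h$,
\begin{equation*}
h(\sigma,0,x)=\frac{x-M(\sigma)}{\sqrt{\sigma^3}}e^{-\frac{(x-M(\sigma))^2}{2\sigma}} + \int_0^\sigma \frac{M(\sigma)-M(\tau)}{\sqrt{(\sigma-\tau)^3}}e^{-\frac{(M(\sigma)-M(\tau))^2}{2(\sigma-\tau)}}h(\tau,0,x)\,\mathrm{d}\tau \, ,
\end{equation*}
and the companion normalization identity
\begin{equation*}
\frac{1}{\sqrt{\sigma}}e^{-\frac{(x-M(\sigma))^2}{2\sigma}}=\int_0^\sigma \frac{1}{\sqrt{\sigma-\tau}}e^{-\frac{(M(\sigma)-M(\tau))^2}{2(\sigma-\tau)}}h(\tau,0,x)\,\mathrm{d}\tau \, .
\end{equation*}
The step your proposal is missing is that, after bounding the Volterra kernel via $(M(\sigma)-M(\tau))/(\sigma-\tau)\le\mu_M$, the remaining integral is \emph{exactly evaluated} by the second identity, not estimated by iteration. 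This produces the bound $\big(\tfrac{x-M(\sigma)}{\sigma^{3/2}}+\tfrac{\mu_M}{\sigma^{1/2}}\big)e^{-(x-M(\sigma))^2/(2\sigma)}$ in one line, and $\mu_m\le M(\sigma)/\sigma$ then trades $x-M(\sigma)$ for $x$ while absorbing $-\mu_m$ into the other term. That exact resolvent cancellation --- available because Peskir's second identity is precisely the Volterra equation whose kernel you would otherwise have to iterate --- is the idea you would need to make either your Durbin or your Gr\"onwall route deliver $\eqref{eq:boundh1}$ with the correct Gaussian exponent.
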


We prove the above results by following analytical methods developed in \cite{Peskir:2002aa}.

\begin{proof}
	It is enough to shoe the result for $\tau=0$.
Defining the cumulative function 
$$C(y)=\int_y^\infty e^{-x^2/2}/\sqrt{2 \pi} \, \mdd x \, ,$$
the density $\sigma \mapsto h(\sigma,0,x)$ of the the first-passage time $\sigma_{0,x}$, $x>0$
satisfies the integral equation 
\begin{eqnarray}\label{eq:master}
C\left( \frac{z}{\sqrt{\sigma}} \right)=\int_0^\sigma C\left( \frac{z-x+M(\tau)}{\sqrt{\sigma-\tau}}\right)h(\tau,0,x) \, \mdd \tau \, ,
\end{eqnarray}
where $M(\tau)=\int_0^\tau \mu(\xi) \, \mdd \xi$.

On one hand, differentiating with respect to $z>x-\int_0^\sigma\mu(\xi) \mdd \xi $ and letting $z\downarrow x-\int_0^\sigma\mu(\xi) \, \mdd\xi$, we obtain
\begin{equation}\label{vcgg}
\frac{1}{\sqrt{\sigma}}\exp\left(- \frac{\left(x-M(\sigma)  \right)^2}{2\sigma} \right)
=\int_0^\sigma \frac{1}{\sqrt{\sigma-\tau}}\exp\left(
-\frac{\left(M(\sigma)-M(\tau)\right)^2}{2(\sigma-\tau)}
\right) h(\tau,0,x) \, \mdd\tau \, .
\end{equation}

On the other hand, letting $z\downarrow x-\int_0^\sigma\mu(u)du$ in \eqref{eq:master} first and then differentiating with respect to $\sigma$ 
yields 
\begin{align*}
\frac{\mdd}{\mdd \sigma} \left[ C\left( \frac{x-M(\sigma)}{\sqrt{\sigma}} \right) \right]= \frac{1}{2} h(\sigma,0,x)+ \int_0^\sigma \frac{\mdd}{\mdd \sigma}  \left[ C\left( \frac{M(\sigma)-M(\tau)}{\sqrt{\sigma-\tau}}\right)\right] h(\tau,0,x)  \, \mdd \tau \, . 
\end{align*}
Evaluating the derivatives in the expression above and using \eqref{vcgg} to simplify the resulting expression (see~\cite{Peskir:2002aa}), we obtain
\begin{align}\label{vcgg2}
h(\sigma,0,x)=& \frac{x-M(\sigma)}{\sqrt{\sigma^3}}\exp\left(- \frac{\left(x-M(\sigma)\right)^2}{2\sigma} \right) \nonumber\\
& \hspace{20pt}+\int_0^\sigma \frac{M(\sigma)-M(\tau)}{\sqrt{(\sigma-\tau)^3}}\exp\left(
-\frac{\left(M(\sigma)-M(\tau)\right)^2}{2(\sigma-\tau)}
\right) h(\tau,0,x) \, \mdd \tau \, .
\end{align}
For all $\Psi \in \mId$, we have 
\begin{eqnarray*}
\mu_m=\frac{\lambda_1}{\lambda_2} \wedge \frac{\nu_1}{\nu_2} 
\leq
\frac{\int_\tau^\sigma \mu(\xi) \, \mdd \xi}{\sigma-s} = \frac{M(\sigma)-M(\tau)}{\sigma-\tau} 
\leq 
\frac{\lambda_1}{\lambda_2} \vee \frac{\nu_1}{\nu_2} = \mu_M \, .
\end{eqnarray*}

Therefore, we have 
\begin{align*}
\int_0^\sigma \frac{M(\sigma)-M(\tau)}{\sqrt{(\sigma-\tau)^3}} & \exp\left(
-\frac{\left(M(\sigma)-M(\tau) \right)^2}{2(\sigma-\tau)}
\right) h(\tau,0,x) \, \mdd \tau \, , \\
& \leq \mu_M \int_0^\sigma \frac{1}{\sqrt{\sigma-\tau}}\exp\left(
-\frac{\left(M(\sigma)-M(\tau)\right)^2}{2(\sigma-\tau)}
\right) h(\tau,0,x) \, \mdd \tau \, , \\
& = \frac{\mu_M }{\sqrt{\sigma}}\exp\left(- \frac{(x-M(\sigma))^2}{2\sigma} \right) \, ,
\end{align*}
where the last equality follows from  \eqref{vcgg}.
Thus, using \eqref{vcgg2} allows us to give the following upper bound for $h(\sigma,0,x)$:
\begin{align*}
h(\sigma,0,x)
&\leq \left( \frac{x-M(\sigma)}{\sqrt{\sigma^3}} +  \frac{\mu_M}{\sqrt{\sigma}} \right) \exp\left(- \frac{(x-M(\sigma))^2}{2\sigma} \right) \, , \\
&\leq  \left( \frac{x}{\sqrt{\sigma^3}} +  \frac{\mu_M-\mu_m}{\sqrt{\sigma}} \right)\exp\left(- \frac{\left(x-\int_0^\sigma \mu(\xi) \mdd \xi \right)^2}{2\sigma} \right) \, .
\end{align*}
This shows \eqref{eq:boundh1}.

Moreover, for $\sigma\leq x/\mu_M$ we have 
\begin{align*}
h(\sigma,0,x) \leq  h_M(\sigma,x) \, ,
\end{align*}
where
\begin{align*}
h_M(\sigma) &= \left( \frac{x}{\sqrt{\sigma^3}} +  \frac{\mu_M-\mu_m}{\sqrt{\sigma}} \right)\exp\left(- \frac{(x-\mu_M \sigma)^2}{2\sigma} \right) \,.
\end{align*}
One can check that for fixed $x>0$, $h_M$ is nonnegative, smooth, unimodal function on 
$\mathbbm{R}^+$ with $h_M(0)=0$ and $\lim_{\sigma \to \infty} h_M(\sigma) =0$.
In particular for all $x$, there is $\sigma_M^\star>0$, such that
\begin{eqnarray}
h_M(\sigma_M^\star) = \Vert h_M  \Vert_{0, \infty} \, .
\end{eqnarray}
Setting $\sigma^\star= x / \mu_M \wedge \sigma_M^\star>0$, this implies 
that for all $\sigma$, $0 \leq \sigma \leq \sigma^\star$, we have
\begin{eqnarray*}
\Vert h(\cdot, 0 , x) \Vert_{0,\sigma} \leq h_M(\sigma)  \xrightarrow[\sigma \to 0]{}  0 \, .
\end{eqnarray*}
This shows \eqref{eq:boundh2}.

\end{proof}


\subsection{Contraction argument proofs}\label{sec:buffcontract}

\begin{lemma}\label{ineqgrownwall}
    Consider a continuous function $f:[0,a]\rightarrow \mathbbm{R}^+$ satisfying the integral inequality
\begin{align*}
    f(x)\leq \frac{M}{\sqrt{x}}+C\int_0^x \frac{f(t)}{\sqrt{x-t}} \mdd t  \, , \quad  0 \leq x \leq a \, ,
\end{align*}
    for some nonnegative constants $M,C>0$. Then, the following inequality holds 
\begin{align*}
    f(x)\leq M(1/\sqrt{x}+\pi C)(2\pi C^2xe^{\pi C^2x}+1) \,  , \quad  0 \leq x \leq a \, .
\end{align*}
\end{lemma}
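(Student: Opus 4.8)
The plan is to prove this Gr\"onwall-type inequality for weakly singular kernels by the standard iteration (Picard-type) argument. First I would set $g(x) = f(x) - M/\sqrt{x}$ is not quite the right substitution since the singular term does not integrate nicely; instead I would iterate the inequality directly. Write $\mathcal{K}\phi(x) = C\int_0^x \phi(t)/\sqrt{x-t}\,\mdd t$ for the Volterra operator with the $1/\sqrt{\cdot}$ kernel, so the hypothesis reads $f \leq u_0 + \mathcal{K}f$ with $u_0(x) = M/\sqrt{x}$. Since $\mathcal{K}$ is a positive operator (it maps nonnegative functions to nonnegative functions) and $f \geq 0$, iterating gives $f \leq \sum_{n=0}^{N-1} \mathcal{K}^n u_0 + \mathcal{K}^N f$ for every $N$, provided I can control the remainder term $\mathcal{K}^N f \to 0$.

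The key computation is to evaluate $\mathcal{K}^n u_0$ explicitly. The point is that the half-integral kernel composes via the Beta function: $\int_0^x (x-t)^{-1/2} t^{-1/2}\,\mdd t = B(1/2,1/2) = \pi$, and more generally $\int_0^x (x-t)^{-1/2} t^{k/2 - 1/2}\,\mdd t$ produces a ratio of Gamma functions times a power of $x$. Carrying this out, one finds that $\mathcal{K}^n u_0(x)$ splits into the "odd" iterates, which stay proportional to $x^{\text{something}-1/2}$ and sum to a term of the form $M/\sqrt{x}$ times a series in $(\pi C^2 x)$, and the "even" iterates, which become bounded (the $1/\sqrt{x}$ singularity is killed after one application of $\mathcal{K}$, producing a factor involving $\sqrt{\pi}$). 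Summing the two interlaced geometric-type series and bounding the resulting hypergeometric series crudely by $e^{\pi C^2 x}$, one arrives at a bound of the shape $M(1/\sqrt{x} + \pi C)(2\pi C^2 x e^{\pi C^2 x} + 1)$, matching the claimed form; here the $\pi C$ comes from pulling $\mathcal{K}$ once past the singular term and the $(2\pi C^2 x e^{\pi C^2 x}+1)$ absorbs the series. I would keep the constants generous rather than optimal, since the statement only asks for this particular (non-sharp) bound.

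The remaining point is to justify that the remainder $\mathcal{K}^N f(x) \to 0$ as $N \to \infty$ for each fixed $x \in [0,a]$. Since $f$ is continuous on $[0,a]$ it is bounded there, say by $\|f\|_\infty$, and then $\mathcal{K}^N f(x) \leq \|f\|_\infty \mathcal{K}^N \mathbf{1}(x)$; the same Beta-function computation shows $\mathcal{K}^N \mathbf{1}(x) = (C\sqrt{\pi})^N x^{N/2}/\Gamma(N/2+1)$, which tends to $0$ superexponentially. Hence the partial sums converge and the bound $f(x) \leq \sum_{n \geq 0} \mathcal{K}^n u_0(x)$ holds, and it remains only to recognize the explicit sum as being dominated by the stated expression.

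\textbf{Main obstacle.} The genuinely fiddly part is the bookkeeping of the interlaced iterates: $\mathcal{K}$ applied to $x^{-1/2}$ gives a constant (times $\sqrt{\pi}$), $\mathcal{K}$ applied to a constant gives $x^{1/2}$ (times $\sqrt{\pi}$), $\mathcal{K}$ applied to $x^{1/2}$ gives $x$ (up to Gamma factors), etc., so the series naturally separates into even and odd blocks with different powers of $x$ and different Gamma-function coefficients. Getting these two series to combine into the clean product form $(1/\sqrt{x}+\pi C)(2\pi C^2 x e^{\pi C^2 x}+1)$ — rather than just "some convergent series" — requires a careful but elementary comparison of Gamma-function ratios with factorials (e.g. $\Gamma(n+1/2)/\Gamma(1/2) \leq n!$ and $1/\Gamma(n/2+1) \leq 2^n/n!$-type estimates). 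I do not expect any conceptual difficulty, only the need to be patient with the constants; alternatively one could shortcut by comparing $f$ to the solution of the corresponding integral equation with equality, whose Mittag-Leffler-type solution is classical, and then bounding that.
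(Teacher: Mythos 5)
Your proposal is correct in principle but follows a genuinely different route from the paper's proof, and it stops short of nailing down the constants. You iterate the inequality indefinitely, building the full Neumann series $\sum_n \mathcal{K}^n u_0$ and controlling the remainder $\mathcal{K}^N f$ via the Beta-function composition rule — essentially reproving a version of Henry's singular Gr\"onwall inequality with its Mittag-Leffler-type bound, and then coarsening that series to the stated expression. This works, but as you anticipate it forces you to split the iterates into odd and even blocks and compare Gamma-function ratios to factorials (e.g.\ one needs $\frac{4^k k!}{(2k)!}\leq \frac{2}{(k-1)!}$ for $k\geq 1$, which holds by a ratio-test induction since $a_{k+1}/a_k=\frac{2}{2k+1}\leq\frac{1}{k}$); you defer this bookkeeping rather than carry it out. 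The paper takes a shorter path that sidesteps the series entirely: it substitutes the hypothesis into itself exactly \emph{once}. The key observation is that the half-integral kernel composed with itself is the ordinary integral — $\int_0^x (x-t)^{-1/2}(t-u)^{-1/2}\,\mdd t=\pi$ for any $u<x$ — so a single iteration converts the singular Volterra inequality into the regular one
\begin{equation*}
f(x)\leq \frac{M}{\sqrt{x}}+\pi CM+\pi C^2\int_0^x f(t)\,\mdd t\,,
\end{equation*}
to which classical differential Gr\"onwall applied to $x\mapsto\int_0^x f$ gives the stated bound directly with no Gamma estimates. Your route buys generality (it is the standard method when the kernel exponent is not $1/2$ and does not self-compose to a constant), while the paper's one-iteration trick buys brevity and is tuned to the $1/2$-kernel. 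Both are sound; if you wanted to complete your version you would need to actually prove the interlaced-series bound rather than asserting it, whereas the paper's route requires no such accounting.
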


\begin{proof}
    Applying the integral inequality to $f(t)$ inside the integral we get
\begin{align*}
      f(x)&\leq \frac{M}{\sqrt{x}}+C\int_0^x\frac{1}{\sqrt{x-t}} \left(\frac{M}{\sqrt{t}}+C\int_0^t \frac{f(u)}{\sqrt{t-u}} \, \mdd u \right) \mdd t \, ,\\
      &=\frac{M}{\sqrt{x}}+CM\int_0^x\frac{1}{\sqrt{t}\sqrt{x-t}}dt+C^2\int_0^x\int_0^t \frac{f(u)}{\sqrt{x-t}\sqrt{t-u}} \mdd u\, \mdd t \, .
\end{align*}
Performing the change of variable $u=\sqrt{t/x}$ in the single integral yields 
   $$
   \int_0^x\frac{1}{\sqrt{t}\sqrt{x-t}} \, \mdd t=\int_0^1\frac{2}{\sqrt{1-u^2}} \, \mdd u=\pi \, .
   $$
Changing the order of integration in the double integral yields
\begin{align*}
    \int_0^x\int_0^t \frac{f(u)}{\sqrt{x-t}\sqrt{t-u}} \mdd u\, \mdd t 
    &=
\int_0^x f(u)\int_u^x \frac{1}{\sqrt{x-t}\sqrt{t-u}} \mdd t\,\mdd u \, ,\\
&=\int_0^x f(u)\int_0^{x-u} \frac{1}{\sqrt{x-u-y}\sqrt{y}} \, \mdd y\, \mdd u \, , \\
&=\pi\int_0^x f(u)\, \mdd u
\end{align*}
Therefore, we get
\begin{eqnarray}\label{vcghd}
    f(x)\leq\frac{M}{\sqrt{x}}+\pi CM + \pi C^2 \int_0^x f(t) \, \mdd t \, .
\end{eqnarray}
Then, applying the differential form of Gronwall's inequality to $x \mapsto \int_0^x f(t)dt$, we have
\begin{align*}
    \int_0^x f(t)dt&\leq e^{\pi C^2x}\int_0^x \left(\frac{M}{\sqrt{t}}+\pi CM\right) \, \mdd t \, ,
    \\
    &=e^{\pi C^2x}(2M\sqrt{x}+\pi CMx) \, , \\
    &\leq2xe^{\pi C^2x}(M/\sqrt{x}+\pi CM) \, .
\end{align*}
Therefore, substituting in \eqref{vcghd}, we  obtain the desired inequality:
\begin{align*}
    f(x)&\leq (M/\sqrt{x}+\pi CM)(2\pi C^2xe^{\pi C^2x}+1) \, .
\end{align*}

\end{proof}
\begin{proposition}
Consider the cumulative distribution $H(\sigma,t,x)=P[\sigma_{x,t}\leq \sigma]$ of the first passage-time 
$\sigma_{\tau,x}=\inf \left\{ \sigma >\tau \, \big \vert \, W_\tau=x, \, W_\sigma <  \int_\tau^\sigma \mu(\xi) \, \mdd \xi \right\} $, 
 where $W$ denotes the standard Brownian motion and where $\mu$ is the drift term defined in \eqref{eq:TCdrift}.
 Then:
 
 $(i)$ if $\sigma-t<1/(8\mu_M^2)$  with  $\mu_M =(\lambda_1/\lambda_2) \vee (\nu_1/\nu_2)$, for all $0\leq t<\sigma$ and $x,y>0$, we have 
$$
|H(t,\sigma,x)-H(t,\sigma,y)|\leq\frac{4|x-y|}{\sqrt{\sigma-t}} \, .
$$

$(ii)$ given $c >0$, if $y>x>2\sqrt{c}(\sigma-t)^{1/4}+\mu_M(\sigma-t)$, then 
 $$|H(t,\sigma,x)-H(t,\sigma,y)|\leq \frac{|x-y|}{c}\, . $$
\end{proposition}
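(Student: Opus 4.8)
The plan is to recast both estimates as a bound on the law of a running maximum and then to reduce everything to the first‑passage density already controlled by Proposition~\ref{prop:boundh}. First I would fix $0\le\tau<\sigma$, set $T=\sigma-\tau$, and note that since $\mu$ is deterministic, $s\mapsto W_s-W_\tau$ is a standard Brownian motion from $0$; writing $\bar W_u=-(W_{\tau+u}-W_\tau)$, $b(u)=\int_\tau^{\tau+u}\mu(\xi)\,\mdd\xi$ and $X_u=\bar W_u+b(u)$, one gets the identity $H(\sigma,\tau,x)=\Prob{\sup_{0\le u\le T}X_u\ge x}=:\Prob{V\ge x}$, so for $x\le y$ that $|H(\sigma,\tau,x)-H(\sigma,\tau,y)|=\Prob{x\le V<y}$. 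On $\{V\ge x\}$ the continuous process $X$ (with $X_0=0<x$) hits $x$ at a first time $\rho\le T$ which is an $\mathcal F^{\bar W}$–stopping time equal in law to $\sigma_{\tau,x}-\tau$, and $\sup_{[0,\rho]}X=x$. Since $X_u-X_\rho=(\bar W_u-\bar W_\rho)+(b(u)-b(\rho))$ with $b(u)-b(\rho)\ge0$ for $u\ge\rho$, the strong Markov property gives $\Prob{\sup_{\rho\le u\le T}(X_u-X_\rho)\ge y-x\mid\mathcal F_\rho}\ge\Prob{|N(0,T-\rho)|\ge y-x}=2(1-\Phi(\tfrac{y-x}{\sqrt{T-\rho}}))$; hence, using $2\Phi(w)-1\le\sqrt{2/\pi}\,w$ and $h(\cdot,\tau,x)=\partial_\sigma H(\cdot,\tau,x)$,
\[
|H(\sigma,\tau,x)-H(\sigma,\tau,y)|\le\int_\tau^\sigma\Bigl(2\Phi\bigl(\tfrac{y-x}{\sqrt{\sigma-s}}\bigr)-1\Bigr)h(s,\tau,x)\,\mdd s\le\sqrt{\tfrac2\pi}\,|y-x|\int_\tau^\sigma\frac{h(s,\tau,x)}{\sqrt{\sigma-s}}\,\mdd s.
\]
Thus both claims reduce to a uniform (in the admissible time change) bound on $J(x):=\int_\tau^\sigma h(s,\tau,x)(\sigma-s)^{-1/2}\,\mdd s$.

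For claim $(i)$ I would invoke the integral identity \eqref{vcgg} obtained in the proof of Proposition~\ref{prop:boundh}, which with $M(\sigma)=\int^\sigma\mu$ reads $\int_\tau^\sigma (\sigma-s)^{-1/2}e^{-(M(\sigma)-M(s))^2/(2(\sigma-s))}h(s,\tau,x)\,\mdd s=(\sigma-\tau)^{-1/2}e^{-(x-(M(\sigma)-M(\tau)))^2/(2(\sigma-\tau))}\le(\sigma-\tau)^{-1/2}$. Because $|M(\sigma)-M(s)|\le\mu_M(\sigma-s)$, the Gaussian factor in the integrand is at least $e^{-\mu_M^2(\sigma-s)/2}\ge e^{-\mu_M^2(\sigma-\tau)/2}$, so $J(x)\le e^{\mu_M^2(\sigma-\tau)/2}(\sigma-\tau)^{-1/2}$. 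Under $\sigma-\tau<1/(8\mu_M^2)$ this is at most $e^{1/16}(\sigma-\tau)^{-1/2}$, and since $\sqrt{2/\pi}\,e^{1/16}<4$ the first display finishes $(i)$.

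For claim $(ii)$ I would instead feed in the pointwise bound \eqref{eq:boundh1}, $h(s,\tau,x)\le\bigl(\tfrac{x}{(s-\tau)^{3/2}}+\tfrac{\mu_M-\mu_m}{(s-\tau)^{1/2}}\bigr)e^{-(x-b(s))^2/(2(s-\tau))}$, noting that $x>\mu_M(\sigma-\tau)\ge b(s)$ and that the exponent $\phi(v):=(x-\mu_Mv)^2/(2v)$ is decreasing on $(0,x/\mu_M]\supset(0,\sigma-\tau]$, so $(x-b(s))^2/(2(s-\tau))\ge\phi(\sigma-\tau)=(x-\mu_M(\sigma-\tau))^2/(2(\sigma-\tau))>2c/\sqrt{\sigma-\tau}$ by hypothesis. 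Splitting this exponent as $e^{-\phi(v)}\le e^{-c/\sqrt{\sigma-\tau}}e^{-\phi(v)/2}$ and using the remaining factor $e^{-\phi(v)/2}$ — which near $v=0$ dominates $e^{-x^2/(8v)}$ — to absorb the $v^{-3/2}$ singularity (the substitution $w=x^2/(8v)$ turns the dangerous piece into a finite $\Gamma$–type integral), one gets $J(x)\le C(\mu_M)(\sigma-\tau)^{-1/2}e^{-c/\sqrt{\sigma-\tau}}$, whence $\sqrt{2/\pi}\,J(x)=\tfrac1c\cdot\sqrt{2/\pi}\,C\,\tfrac{c}{\sqrt{\sigma-\tau}}e^{-c/\sqrt{\sigma-\tau}}\le\tfrac1c$ once the constant is checked, since $we^{-w}\le1/e$. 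The exponent $1/4$ in the hypothesis $x>2\sqrt c(\sigma-t)^{1/4}+\mu_M(\sigma-t)$ is exactly calibrated so that $\phi(\sigma-\tau)$ is comparable to $c/\sqrt{\sigma-\tau}$.

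The hard part will be the uniform control of $J(x)=\int_\tau^\sigma h(s,\tau,x)(\sigma-s)^{-1/2}\,\mdd s$: one must simultaneously cope with the integrable but genuine $(s-\tau)^{-3/2}$ blow‑up of the first‑passage density near $s=\tau$ — which forbids crudely discarding the Gaussian factor — and, in $(ii)$, arbitrate the two competing uses of that same Gaussian factor (it must tame the singularity at one endpoint while producing the $e^{-c/\sqrt{\sigma-\tau}}$ smallness at the other), all while keeping every constant below the prescribed thresholds $4$ and $1/c$. By contrast, Step~1 (the strong Markov argument, the identification $\rho$ with $\sigma_{\tau,x}-\tau$, and the elementary inequality $2\Phi(w)-1\le\sqrt{2/\pi}\,w$) and the manipulations of Step~2 are routine.
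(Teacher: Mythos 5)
Your Step~1 --- the strong-Markov/reflection argument that on $\{V\geq x\}$, with $\rho$ the hitting time of level $x$, one has $\Prob{V\geq y\mid\mathcal F_\rho}\geq 2(1-\Phi((y-x)/\sqrt{T-\rho}))$, combined with $2\Phi(w)-1\leq\sqrt{2/\pi}\,w$ --- is correct and yields
\[
\vert H(\sigma,\tau,x)-H(\sigma,\tau,y)\vert\;\leq\;\sqrt{2/\pi}\,\vert y-x\vert\,J(x),\qquad J(x)=\int_\tau^\sigma\frac{h(s,\tau,x)}{\sqrt{\sigma-s}}\,\mdd s.
\]
This is a genuinely different route from the paper's. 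The paper instead notes that in \eqref{eq:master} the $x$-dependence sits entirely in the free boundary term, extracts a \emph{Volterra integral inequality} for $\vert H(\cdot,\tau,x)-H(\cdot,\tau,y)\vert$ with singular kernel $\mu_M/\sqrt{\sigma-\cdot\,}$, and closes it with the singular Gr\"onwall lemma (Lemma~\ref{ineqgrownwall}). You replace the Gr\"onwall step by a direct estimate of $J(x)$; for $(i)$ your argument is correct and arguably cleaner: the $\tau$-shifted \eqref{vcgg} together with $(M(\sigma)-M(s))^2\leq\mu_M^2(\sigma-s)^2$ gives $J(x)\leq e^{\mu_M^2(\sigma-\tau)/2}(\sigma-\tau)^{-1/2}$, and $\sqrt{2/\pi}\,e^{1/16}\approx 0.85<4$.

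For $(ii)$ your constants do not close. First, the asserted domination $e^{-\phi(v)/2}\leq e^{-x^2/(8v)}$ needs $(x-\mu_M v)^2\geq x^2/2$, i.e.\ $\mu_M v\leq(1-1/\sqrt2)x$, which the hypothesis $x>\mu_M(\sigma-\tau)$ does not give on all of $(0,\sigma-\tau]$. Second, after you split off $e^{-c/\sqrt{\sigma-\tau}}$ and keep $e^{-\phi(v)/2}$, the half-exponent inverse-Gaussian integral $\int_0^\infty x\,v^{-3/2}e^{-(x-\mu_M v)^2/(4v)}\,\mdd v=2\sqrt\pi$ already forces the $[0,(\sigma-\tau)/2]$ contribution to be $\geq 2\sqrt{2\pi/(\sigma-\tau)}$, i.e.\ $C\geq2\sqrt{2\pi}\approx5.0$, whereas your final step $\sqrt{2/\pi}\,C\,we^{-w}\leq1$ via $we^{-w}\leq e^{-1}$ requires $C\leq e\sqrt{\pi/2}\approx3.4$. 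Third, the $v\uparrow\sigma-\tau$ endpoint contributes a term of size $\mu_M$, not $(\sigma-\tau)^{-1/2}$, so the claimed $T$-uniform $C(\mu_M)$ cannot exist. The repair stays within your framework: reuse \eqref{vcgg} as in $(i)$ but retain the Gaussian factor on the left; since $x-\int_\tau^\sigma\mu\geq x-\mu_M(\sigma-\tau)>2\sqrt c\,(\sigma-\tau)^{1/4}$, this gives $J(x)\leq e^{\mu_M^2(\sigma-\tau)/2}(\sigma-\tau)^{-1/2}e^{-2c/\sqrt{\sigma-\tau}}$, and with $\sigma-\tau\leq1/(8\mu_M^2)$ and $w=c/\sqrt{\sigma-\tau}$, $\sqrt{2/\pi}\,J(x)\leq\sqrt{2/\pi}\,e^{1/16}c^{-1}\,we^{-2w}\leq\sqrt{2/\pi}\,e^{1/16}/(2ec)<1/c$. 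Be aware that the restriction $\sigma-\tau\leq1/(8\mu_M^2)$ is also implicitly required in the paper's proof of $(ii)$, which plugs the bound on $M_a$ into \eqref{ineq:fdfds} derived under that very hypothesis; it is not stated in the proposition but does hold in the one place Lemma~\ref{lem:FP} is invoked.
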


\begin{proof}
 Denoting the cumulative distribution of $\sigma_{x,t}$ by $s \mapsto H(s,t,x)$ and 
setting $z=x-\int_t^\sigma \mu(u)\, \mdd u$ in \eqref{eq:master}, we get 
\begin{eqnarray*}
C\left( \frac{x-\int_t^\sigma \mu(u) \, \mdd u}{\sqrt{\sigma-t}} \right)=\int_t^\sigma C\left( \frac{-\int_s^\sigma\mu(u) \, \mdd u}{\sqrt{\sigma-s}}\right)  H( \mdd s,t,x) \, .
\end{eqnarray*}
Since $H(t,t,x)=0$ and $C(0)=1/2$, integration by parts  yields
\begin{eqnarray*}
\frac{1}{2}H(t,\sigma,x)=
C\left( \frac{x-\int_t^\sigma \mu(u)\, \mdd u}{\sqrt{\sigma-t}} \right)+\int_t^\sigma H(t,s,x) \, \partial_s\left[C\left( \frac{-\int_s^\sigma\mu(u)\, \mdd u}{\sqrt{\sigma-s}}\right) \right] \, \mdd s \, .
\end{eqnarray*}
We notice that the differential term in the right-hand-side integral does not depend on $x$. 
Therefore, introducing $y>0$, we have
\begin{align}\label{mastercol}
    \frac{1}{2}|H(t,\sigma,x)-H(t,\sigma,y)|&\leq  \left|C\left( \frac{x-\int_t^\sigma \mu(u)\, \mdd u}{\sqrt{\sigma-t}} \right)-C\left( \frac{y-\int_t^\sigma \mu(u)\, \mdd u}{\sqrt{\sigma-t}} \right)\right| \nonumber \\
    &+\int_t^\sigma|H(t,s,x)-H(t,s,y)|\left| \partial_s \left[C\left( \frac{-\int_s^\sigma\mu(u)du}{\sqrt{\sigma-s}}\right) \right]\right| \, \mdd s \, .
\end{align}
We want to bound the two right-hand side terms. 
We use the mean value theorem to bound first term as
\begin{align*}
    \left|C\left( \frac{x-\int_t^\sigma \mu(u) \, \mdd u}{\sqrt{\sigma-t}} \right)-C\left( \frac{y-\int_t^\sigma \mu(u)\, \mdd u}{\sqrt{\sigma-t}} \right)\right|\leq \frac{|x-y|}{\sqrt{\sigma-t}}\sup_{s\in I}\left\{ 
    \frac{e^{-s^2/2}}{\sqrt{2\pi}}\right\} \, ,
\end{align*}
where we have defined the interval
$$I=\left(\frac{x-\int_t^\sigma \mu(u)du}{\sqrt{\sigma-t}},\frac{y-\int_t^\sigma \mu(u)du}{\sqrt{\sigma-t}}\right).$$
To bound the second teerm, we evaluate
\begin{align*}
\left| \partial_s \left[C\left( \frac{-\int_s^\sigma\mu(u)du}{\sqrt{\sigma-s}}\right) \right]\right| 
=
    &\left| \mu(s)\sqrt{\sigma-s}-\frac{\int_s^\sigma\mu(u) \, \mdd u}{2\sqrt{\sigma-s}}\right|\frac{e^{-\left( \frac{\int_s^\sigma\mu(u)du}{\sqrt{\sigma-s}}\right)^2}}{\sqrt{2\pi}(\sigma-s)} \leq \frac{\mu_M}{\sqrt{2\pi}\sqrt{\sigma-s}} \, ,
\end{align*} 
  where we have used that $\mu(s)\leq \mu_M$. 
   
In order to apply Lemma~\ref{ineqgrownwall} on the interval $[0,a]$, let us introduce the following bound 
$$
\frac{M_a}{2} = \sup_{\sigma\leq a}\sup_{s\in I}\left\{ 
    \frac{e^{-s^2/2}}{\sqrt{2\pi}}\right\}=\frac{1}{\sqrt{2\pi}}\exp\left[-
    \left(0 \vee  \left(\frac{x\wedge y-\int_t^a \mu(u)du}{\sqrt{a-t}}\right)
    \right)^2\right] \, ,
$$
which clearly satisfies $M_a<\sqrt{2}/\sqrt{\pi}$.
Plugging the various upper bounds obtained above in \eqref{mastercol},  for all $\sigma \leq a$, we obtain the following inequality
$$
|H(t,\sigma,x)-H(t,\sigma,y)|\leq \frac{|x-y|}{\sqrt{\sigma-t}}M_a+\frac{\sqrt{2}\mu_M}{\sqrt{\pi}}\int_t^\sigma \frac{|H(t,s,x)-H(t,s,y)|}{\sqrt{\sigma-s}} \, \mdd s\, ,
$$
which allows on to apply Lemma~\ref{ineqgrownwall} for  the function $f:[0,a-t]\rightarrow [0,\infty)$ with 
$$
f(z)=|H(t,t+z,x)-H(t,t+z,y)| \, ,
$$
Setting $z=a-t$, this leads to
$$
|H(t,a,x)-H(t,a,y)|\leq |x-y|M_a\left(\frac{1}{\sqrt{a-t}}+ \sqrt{2\pi}\mu_M\right)(4\mu_M^2(a-t)e^{2\mu_M^2(a-t)}+1) \, ,
$$
so for $a-t\leq 1/(8\mu_M^2)$, we get

\begin{equation}\label{ineq:fdfds}
   \big |H(t,a,x)-H(t,a,y) \big| \leq \frac{2(1/2e^{1/4}+1)}{\sqrt{a-t}}M_a|x-y|\leq \frac{4M_a}{\sqrt{a-t}}|x-y|
\end{equation}
This establishes $(i)$. 

To prove $(ii)$, notice that $\int_t^a \mu(u) \, \mdd u\leq \mu_M(a-t)$ so that
\begin{align*}
    x-\int_t^a \mu(u) \, \mdd u \geq x-\mu_M(a-t)\geq 2\sqrt{c}(a-t)^{1/4} \, ,
\end{align*}
and therefore, we have
\begin{align*}
    M_a&=\sqrt{\frac{2}{\pi}}e^{-
    \left(\frac{x-\int_t^a \mu(u) \, \mdd u}{\sqrt{a-t}}
    \right)^2} \leq e^{-\frac{4c}{\sqrt{a-t}}}\leq \frac{\sqrt{a-t}}{4c} \, .
\end{align*}
Plugging the above inequality into \eqref{ineq:fdfds} gives the desired result.

\end{proof}

\begin{proof}[Proof of Lemma \ref{lem:Hprop}]
Let us first notice that given the definition of the drift $\mu$ given in \eqref{eq:TCdrift} we have
\begin{align*}
\int_\tau^\sigma \mu_b(\xi) \, \mdd \xi 
&= -\left( \nu_1 - \frac{\lambda_1}{\lambda_2} \nu_2\right) (\Psi_b(\sigma)-\Psi_b(\tau)) +  \frac{\lambda_1}{\lambda_2} (\sigma - \tau) \, , \\
&= -\left( \nu_1 - \frac{\lambda_1}{\lambda_2} \nu_2\right) (\Psi_b(\sigma)-\Psi_a(\sigma) + \Psi_a(\tau)-\Psi_b(\tau)) +  \int_\tau^\sigma \mu_a(\xi) \, \mdd \xi  \, .
\end{align*}
Therefore, setting $C=2 \vert \nu_1 - \nu_2 \lambda_1/\lambda_2 \vert$, the condition $\Vert \Psi_a-\Psi_b \Vert_{0,\sigma} \leq \beta$ implies
that for all $0\leq \tau\leq \sigma$
\begin{align*}
 (x-C \beta) -\int_\tau^\sigma \mu_a(\xi) \mdd \xi \leq x-\int_\tau^\sigma \mu_b(\xi) \mdd \xi \leq (x+C \beta) -\int_\tau^\sigma \mu_a(\xi) \, \mdd \xi \, .
\end{align*}
The above chain of inequalities  indicates that given the same driving Wiener process $W$,
the first passage times  
\begin{align*}
\sigma_{b,\tau,x} 
&= \inf \left\{ \sigma \leq \tau \, \Big \vert \, W_\tau=x , \, , W_\sigma < \int_\tau^\sigma \mu_b(\xi) \, \mdd \xi \right\} \, , \\
\sigma_{a,\tau,x-C\beta} 
&= \inf \left\{ \sigma \leq \tau \, \Big \vert \, W_\tau=x-C\beta , \, , W_\sigma < \int_\tau^\sigma \mu_a(\xi) \, \mdd \xi \right\} \, , \\
\sigma_{a,\tau,x+C\beta} 
&= \inf \left\{ \sigma \leq \tau \, \Big \vert \, W_\tau=x+C\beta , \, , W_\sigma < \int_\tau^\sigma \mu_a(\xi) \, \mdd \xi \right\} \, , 
\end{align*}
are path-wise ordered in the sense that $\sigma_{a,\tau,x-C\beta} \leq \sigma_{b,\tau,x} \leq \sigma_{a,\tau,x+C\beta}$.
As $H_b(\sigma, \tau , x)= \Prob{\sigma_{b,\tau,x}}$ by definition, this directly implies that 
\begin{align*}
\Prob{\sigma_{a,\tau,x+C\beta}} = H_a(\sigma, \tau , x+C\beta) \leq   H_b(\sigma, \tau , x) \leq H_a(\sigma, \tau , x- C\beta)= \Prob{\sigma_{a,\tau,x-C\beta} }\, .
\end{align*}
This proves \eqref{eq:lemHprop1}.
Observe that the above inequalities remain valid when $x-C\beta<0$ with the convention that $H(\sigma,\tau,x)=1$ if $x<0$.

To justify \eqref{eq:lemHprop2}, let us assume with no loss of generality that $\tau_a<\tau_b$ and introduce $\Psi_c(\sigma)=\Psi_a(\sigma+\tau_a-\tau_b)$.
Then by the definition, we have $H_a(\sigma,\tau_a,x)=H_c(\sigma+\tau_b-\tau_a,\tau_b,x)$  so that
\begin{align*}
H_a(\sigma,\tau_a,x)-H_a(\sigma,\tau_b,x)
&=H_c(\sigma,\tau_b,x)-H_a(\sigma,\tau_b,x) \, + \\
& \hspace{40pt} (H_c(\sigma+\tau_b-\tau_a,\tau_b,x)- H_c(\sigma,\tau_b,x)) \, , \\
&=H_c(\sigma,\tau_b,x)-H_a(\sigma,\tau_b,x)+\int_\sigma^{\sigma+\tau_b-\tau_a}h_c(\xi,\tau_b,x)\, \mdd \xi \, , \\
&=H_c(\sigma,\tau_b,x)-H_a(\sigma,\tau_b,x)+\int_{\sigma-\tau_b+\tau_a}^{\sigma}h_a(\xi,\tau_a,x)\, \mdd \xi \, .
\end{align*}
Then, \eqref{eq:lemHprop2} follows from applying \eqref{eq:lemHprop1} to the difference
$H_c(\sigma,\tau_b,x)-H_a(\sigma,\tau_b,x)$ and using the fact that 
$\Vert \Psi_c-\Psi_a\Vert \leq  \vert \tau_2-\tau_1\vert /\nu_2$ as $\Psi_a \in \mId$.

\end{proof}


\subsection{Extraction argument proofs}\label{sec:buffextract}


\begin{proof}[Proof of Lemma \ref{lem:Hconv}]
Consider $\xi>0$ and set $\beta_n=\Vert \Psi_n-\Psi \Vert_{0,\xi}$ for convenience.
By Lemma \ref{lem:Hprop}, \eqref{eq:lemHprop1}, for all $x > 0$, we have
\begin{align*}
\vert H_n(\sigma,\tau,x)-H(\sigma,\tau,x) \vert
\leq 
&  \big[ H_n(\sigma,\tau;x)-H(\sigma,\tau,x+C\beta_n) \big] \\& \vee  \big[ H_n(\sigma,\tau,x-C\beta_n) - H(\sigma,\tau,x)  \big] \nonumber \, ,
\end{align*}
where $C=2|\nu_1-\lambda_1\nu_2/\lambda_1|$. 
Given $x>0$, we can always assume $n$ to be large enough so that $x/2>C\beta_n$. 
Then, there exists $C_0$ such that for $\sigma-\tau\leq C_0$, we have $x/2\geq (\sigma-\tau)^{1/3}+\mu_M(\sigma-\tau)$. 
As a result, the hypothesis for Lemma \ref{lem:FP},  \eqref{eq:lemFP2}, to hold are satisfied if $\sigma-\tau\leq C_1=C_0\wedge1/(8\mu_M^2)$, 
and we have:
$$
\vert H_n(\sigma,\tau,x-C\beta_n) - H(\sigma,\tau,x) \vert\leq C\sqrt{\sigma-\tau}\beta_n\leq C\sqrt{C_1}\beta_n
$$
Let us then examine the case  $\sigma-\tau> C_1$.
In this case, following the proof of Lemma~\ref{lem:FP}, we have the bound: 
\begin{align*}
\vert H_n(\sigma,\tau,x-C\beta_n) &- H(\sigma,\tau,x) \vert \\
&\leq C\beta_n\left(\frac{1}{\sqrt{\sigma-\tau}}+\sqrt{2\pi}\mu_M\right)\left(4\mu^2_M(\sigma-\tau)e^{2\mu^2_M(\sigma-\tau)}+1\right)\\
&\leq C\left(1/\sqrt{C_1}+\sqrt{2\pi}\mu_M \right)\left(4\mu^2_M\xi e^{2\mu^2_M\xi}+1 \right)\beta_n
\end{align*}
In any case, we see that for every $\sigma,\tau,$ with $0\leq\tau\leq\sigma\leq\xi$ 
$$
\vert H_n(\sigma,\tau,x-C\beta_n) - H(\sigma,\tau,x) \vert\leq C_2\beta_n
$$
where $C_2=C\sqrt{C_1}\vee C\left(1/\sqrt{C_1}+\sqrt{2\pi}\mu_M \right)\left(4\mu^2_M\xi e^{2\mu^2_M\xi}+1 \right)$.
This establishes the desired result.
\end{proof}

\begin{proof}[Proof of Lemma \ref{lem:HLipsch}]
    Consider $\tau_1<\tau_2$ we have by the second part of Lemma~\ref{lem:Hprop} that
    \begin{align}\label{eq:HLip1}
        \vert H(\sigma,\tau_1,x)-H(\sigma,\tau_2,x) \vert \leq  
        & \big(  \big[ H(\sigma,\tau_2,x)-H(\sigma,\tau_2, x+C\vert \tau_2-\tau_1 \vert /\nu_2 )  \big] \\
        & \vee   \big[ H(\sigma,\tau_2, x-C \vert \tau_2-\tau_1 \vert /\nu_2 ) - H(\sigma,\tau_2,x)  \big] \big) \nonumber\\
        &+ \Vert h(\cdot,\tau_1,x)\Vert_{\tau_1, \sigma} \vert \tau_2-\tau_1 \vert  \, , \nonumber
    \end{align}
    where $C=2|\nu_1-\lambda_1\nu_2/\lambda_1|$. We can assume $|\tau_1-\tau_2|\leq \nu_2 x/(2C)$. Then, there exists $C_0$ such that for $\sigma-\tau\leq C_0$, we have $x/2\geq (\sigma-\tau)^{1/3}+\mu_M(\sigma-\tau)$. 
As a result, the hypothesis for Lemma \ref{lem:FP},  \eqref{eq:lemFP2} are satisfied if $\sigma-\tau\leq C_1=C_0\wedge1/(8\mu_M^2)$, 
and we have:
$$
\vert H(\sigma,\tau,x-C \vert \tau_2-\tau_1 \vert /\nu_2 ) - H(\sigma,\tau,x) \vert\leq \sqrt{\sigma-\tau}C \vert \tau_2-\tau_1 \vert /\nu_2 \leq \sqrt{C_1}C \vert \tau_2-\tau_1 \vert /\nu_2 
$$
Let us then examine the case  $\sigma-\tau> C_1$.
In this case, following the proof of Lemma~\ref{lem:FP}, we have the bound: 
\begin{align*}
\vert H(\sigma,\tau,x-&C \vert \tau_2-\tau_1 \vert /\nu_2 ) - H(\sigma,\tau,x) \vert \\
&\leq C \vert \tau_2-\tau_1 \vert /\nu_2 \left(\frac{1}{\sqrt{\sigma-\tau}}+\sqrt{2\pi}\mu_M\right)\left(4\mu^2_M(\sigma-\tau)e^{2\mu^2_M(\sigma-\tau)}+1\right) \, ,\\
&\leq C/\nu_2\left(1/\sqrt{C_1}+\sqrt{2\pi}\mu_M \right)\left(4\mu^2_M\xi e^{2\mu^2_M\xi}+1 \right)\vert \tau_2-\tau_1 \vert \, .
\end{align*}
This shows that the term in parenthesis in the right-hand side of \eqref{eq:HLip1}
admits a Lipschitz bound that does not depend on $\Psi \in \mIz$.
The final result follows from Proposition \ref{prop:boundh}, which shows that $\Vert h(\cdot,\tau_1,x)\Vert_{\tau_1, \sigma} \vert$
is uniformly bounded with respect to $\Psi \in \mIz$.

\end{proof}

\bibliographystyle{imsart-number}

\end{document}